\documentclass[11pt]{article}
\usepackage{enumerate}
\usepackage[OT1]{fontenc}
\usepackage[usenames]{color}
\usepackage{smile}
\usepackage[colorlinks,
            linkcolor=red,
            anchorcolor=blue,
            citecolor=blue
            ]{hyperref}
\usepackage{mathrsfs}
\usepackage{fullpage}
\usepackage{hyperref}
\usepackage[protrusion=true, expansion=true]{microtype}
\usepackage{float}
\usepackage{subfigure}
\usepackage{amsfonts,amsmath,amssymb,amsthm,url,xspace}
\usepackage{tikz}
\usepackage{verbatim}
\usetikzlibrary{arrows,shapes}
\usepackage{mathtools}
\usepackage{authblk}
\usepackage[bottom]{footmisc}
\usepackage{enumitem}
\usepackage{mathtools}
\usepackage{lscape}
\usepackage{caption, adjustbox}

\usepackage{algorithm}
\usepackage{algorithmicx,algpseudocode}
\usepackage[title]{appendix}
\ifx\counterwithout\undefined\usepackage{chngcntr}\fi
\counterwithout{equation}{section}

\usepackage{math}
\allowdisplaybreaks[1]
\mathtoolsset{showonlyrefs=true}

\usepackage{xargs}
\usepackage[colorinlistoftodos,prependcaption,textsize=tiny]{todonotes}
\newcommandx{\unsure}[2][1=]{\todo[linecolor=red,backgroundcolor=red!25,bordercolor=red,#1]{#2}}
\newcommandx{\change}[2][1=]{\todo[linecolor=blue,backgroundcolor=blue!25,bordercolor=blue,#1]{#2}}
\newcommandx{\info}[2][1=]{\todo[linecolor=OliveGreen,backgroundcolor=OliveGreen!25,bordercolor=OliveGreen,#1]{#2}}
\newcommandx{\improvement}[2][1=]{\todo[linecolor=Plum,backgroundcolor=Plum!25,bordercolor=Plum,#1]{#2}}

\usepackage{alphalph}

\allowdisplaybreaks

\algrenewcommand{\algorithmiccomment}[1]{\hfill$\blacktriangleright$ #1}
\algnewcommand{\LongComment}[1]{\hfill$\triangleright$ #1}


\brokenpenalty=1000
\emergencystretch=2em
\hyphenpenalty=0
\exhyphenpenalty=0 
\spaceskip=0.33em plus 0.0em minus 0.2em
\setlength{\parskip}{1.5pt}%

\begin{document}

\title{Derivative-Free Sequential Quadratic Programming for\\ Equality-Constrained Stochastic Optimization}

\author{Sen Na}
\affil{School of Industrial and Systems Engineering, Georgia Institute of Technology}

\date{}

\maketitle

\begin{abstract}

We consider solving nonlinear optimization problems with a stochastic objective and deterministic equality constraints, assuming that only zero-order information is available for both the objective and constraints, and that the objective is also subject to random sampling noise. Under this setting, we propose a Derivative-Free Stochastic Sequential Quadratic Programming (DF-SSQP) method, which employs an $\ell_2$ merit function to adaptively select the stepsize.
Due to the lack of derivative information, we adopt a \textit{simultaneous perturbation stochastic approximation} (SPSA) technique to randomly estimate the gradients and Hessians of both the objective and constraints. This approach requires only a dimension-independent number of zero-order evaluations -- as few as eight -- at each~iteration step.~A key distinction between our derivative-free method and existing derivative-based line-search or trust-region SSQP methods lies in the intricate random bias introduced into the gradient~and~Hessian estimates of the objective and constraints, brought about by stochastic zero-order approximations. To address this issue, we introduce an online debiasing technique based on momentum-style estimators that properly aggregate past gradient and Hessian estimates to reduce stochastic noise, while avoiding excessive memory costs via a moving averaging scheme. 
Under standard assumptions, we establish the global almost-sure convergence of the proposed DF-SSQP method. Notably,~we~\mbox{further}~\mbox{complement}~the~global analysis with local convergence guarantees by demonstrating that the rescaled iterates exhibit asymptotic normality, with a limiting covariance matrix resembling the minimax optimal covariance achieved by derivative-based methods, albeit larger due to the \mbox{absence}~of derivative information. Our local~analysis enables online statistical inference of model parameters leveraging DF-SSQP. Numerical experiments on benchmark nonlinear problems demonstrate both the global and local behavior of DF-SSQP.

\end{abstract}

\section{Introduction}\label{sec:1}

We consider solving nonlinear equality-constrained stochastic optimization problems:
\begin{equation}\label{Intro_StoProb}  
\min_{\bx \in \mR^d}\; f(\bx) = \mE_\P[F(\bx; \xi)], \quad \text{s.t.} \quad c(\bx) = \b0,
\end{equation}
where $f: \mR^d \rightarrow \mR$ denotes the stochastic objective function, $F(\cdot; \xi): \mR^d \rightarrow \mR$ denotes its realization with sample $\xi\sim \P$, and $c:\mR^d \rightarrow \mR^m$ denotes the deterministic equality constraints. Problem \eqref{Intro_StoProb} appears widely in a variety of applications in statistical machine learning and operations research, including constrained maximum likelihood estimation \citep{Dupacova1988Asymptotic}, multi-stage stochastic optimization \citep{Veliz2014Stochastic}, reinforcement learning \citep{Achiam2017Constrained}, portfolio management \citep{Cakmak2005Portfolio}, and network optimization \citep{Shakkottai2007Network}.

There exist numerous methods for solving constrained optimization problems, including projection-based methods, penalty methods, augmented Lagrangian methods, and sequential quadratic programming (SQP) methods. Among these, SQP is arguably one of the most effective methods for both small- and large-scale problems \citep{Nocedal2006Numerical}. It avoids the need of projection steps, which can be intractable for general constraints, and is robust to initialization, less affected by ill-conditioning issues, and flexible in incorporating advanced computational techniques, such as line search, trust region, and quasi-Newton updates.

In recent years, designing stochastic SQP (SSQP)-based methods for solving constrained stochastic optimization problems has attracted growing interest. \cite{Berahas2021Sequential} introduced the first~SSQP method for equality-constrained stochastic problems, which employs an $\ell_1$-penalized merit function and an adaptive mechanism for selecting both the penalty parameter and the stepsize, aiming to enforce a sufficient reduction on the $\ell_1$ merit function. The authors also established the ``liminf" convergence for the expectation of the KKT residual. Following \cite{Berahas2021Sequential}, several algorithmic and theoretical advancements have emerged. On the algorithmic side,
\cite{Berahas2023Stochastic} introduced the~step~decomposition in SSQP to address rank-deficient constraint Jacobians; \cite{Curtis2024Stochastic} incorporated an inexact quadratic program solver to improve computational efficiency; \cite{Berahas2023Accelerating} accelerated SSQP by leveraging variance reduction techniques; \cite{Curtis2023Stochastic, Curtis2024Sequential} extended SSQP to include deterministic box constraints; \cite{Fang2024Fully} further complemented these methods by designing a trust-region SSQP scheme, where the search direction and \mbox{stepsize}~(i.e.,~the~\mbox{trust-region}~radius) are computed jointly; and \cite{Shen2025Sequential} generalized the design of SSQP to expectation equality-constrained problems. On the theoretical side, \cite{Curtis2023Worst} and \cite{Na2025Statistical} analyzed the worst-case iteration and sample complexity of SSQP, considering constant and decaying stepsizes, respectively;
\cite{Lu2024Variance} established similar complexity results for stochastic penalty methods with variance reduction;
\cite{Curtis2025Almost} investigated the convergence 
behavior of the Lagrange multiplier; and \cite{Berahas2025Sequential, Fang2025High} addressed the high-probability first- and second-order iteration complexities under probabilistic oracles.

In addition to the above literature, recent studies have also observed that adaptively increasing the batch size in SSQP can significantly enhance performance. For example, \cite{Na2022adaptive} proposed the first SSQP method under this setup, where the derivatives of an augmented Lagrangian merit function, as well as the stepsize from stochastic line search, are computed with the batch size adaptively~determined based on probabilistic error bounds. Subsequently, \cite{Na2023Inequality} employed active-set strategy to accommodate nonlinear inequality constraints; \cite{Qiu2023sequential} developed a robust SSQP scheme; \cite{Berahas2022Adaptive} incorporated a norm test condition into SSQP, originally proposed for~SGD \citep{Bollapragada2018Adaptive}; \cite{Fang2024Trust} extended SSQP studies to establish second-order convergence guarantees using trust-region techniques; and \cite{Berahas2025Retrospective} designed a retrospective approximation SSQP scheme to achieve optimal gradient evaluation complexity.
Moreover, constrained stochastic problems are also related to the broader context of noisy optimization. We refer to \cite{Sun2023trust, Lou2024Noise, Oztoprak2023Constrained, Sun2024Trust, Berahas2025line, Berahas2025Optimistic, Curtis2025interior} for such studies. However, we 
mention that those methods are designed to be robust to (deterministic) adversarial noise, which is significantly \mbox{different}~from~methods designed for stochastic settings.

Although the aforementioned literature provides versatile computational methodologies for solving Problem \eqref{Intro_StoProb}, showing promising global convergence guarantees and iteration/sample complexities under favorable assumptions, the existing methods are all derivative-based.~This means that they require the evaluation of the gradient (actually, in many cases, the Hessian as well) of the objective and constraints. Such a requirement is restrictive for many applications where gradients are either unavailable or too expensive to compute. 
For example, in hyperparameter optimization, the goal is to tune parameters in neural networks or machine learning models to achieve the best output. While the output may be smooth with respect to some tuning parameters, computing higher-order information beyond zero-order is often infeasible due to the inherently black-box nature of the problem. Similarly, in PDE-constrained optimization, the objective function depends on the solution of the PDE. Gradients of the objective are typically computed using adjoint methods, which involve solving an additional adjoint PDE that~has comparable computational costs  as solving the original (state) PDE, effectively doubling the cost per iteration.
This significant computational burden associated with gradient evaluations motivates the desire of a \textbf{Derivative-Free SSQP} method (DF-SSQP) in the present paper.

Throughout the paper, we assume that only zero-order information is available for both the objective and constraints, and the objective evaluation is accessible only through realizations $F(\cdot;\xi)$. This setup situates our work within the broad framework of derivative-free optimization (DFO). DFO methods do not require the accessibility of derivatives, making them widely applicable to complex and even black-box problems. Representative DFO methods include finite-difference methods, model-based methods, coordinate search and pattern-search methods, and Nelder-Mead methods, among others. As the first trial, this paper leverages (randomized) finite-difference approximations to estimate the derivatives,~a~technique that has a long history in optimization and statistics, dating back to \cite{Kiefer1952Stochastic}. In particular, in the univariate $(d=1)$ and unconstrained case, \cite{Kiefer1952Stochastic} approximated the objective gradient by drawing a sample $\xi_k\sim\P$ and computing
\begin{equation*}
\hat{\nabla}F(\bx_{k};\xi_k)=\frac{F(\bx_{k}+b_k;\xi_k)-F(\bx_{k};\xi_k)}{b_k},
\end{equation*}
where $b_k>0$ is a deterministic sequence going to zero as $k\rightarrow\infty$. With $\hat{\nabla}F(\bx_{k};\xi_k)$, we then perform stochastic gradient descent update as $\bx_{k+1}=\bx_k - \alpha_k\hat{\nabla}F(\bx_{k};\xi_k)$. \cite{Blum1954Multidimensional} later extended this KW method to the multivariate case and established its almost sure convergence. These pioneering works have since been extended from various perspectives under different setups. 
To reduce the number of zero-order evaluations at each step, several randomized approximation methods have been proposed. 
\cite{Koronacki1975Random} employed a sequence of random unit vectors that are independent and uniformly distributed on the unit sphere and provided sufficient conditions for the convergence of the method. Later, \cite{Spall1992Multivariate, Spall2000Adaptive, Chen1999Kiefer} refined this approach to generic random directions, referring to the new method as \textit{Simultaneous Perturbation Stochastic Approximation} (SPSA). Numerous studies have shown that randomized approximations like SPSA significantly reduce the required number of observations or measurements. For a $d$-dimensional problem, the number of function evaluations required by the SPSA method is only $1/d$ of those required by the deterministic approximation, making it \textit{dimension-independent}. We refer to \cite{Spall2003Introduction, Kushner2012Stochastic, Bhatnagar2013Stochastic} for literature review of the SPSA and to \cite{Chen1988Lower, Hall2003Sequential, Dippon2003Accelerated, Mokkadem2007companion, Broadie2011General, Rasonyi2022Convergence, Chen2024Online, DuYi2024Derivative} for more KW-type algorithms and their empirical investigations. See also \cite{Conn2009Introduction, Larson2019Derivative, Custodio2017Chapter} for broad review of derivative-free~methods.

In this paper, we leverage the SPSA technique to randomly estimate the gradients (as well as Hessians if local convergence is an interest) of the objective and constraints of Problem \eqref{Intro_StoProb}. Specifically, at each iteration $\bx_k$, we generate a sample $\xi_k\sim\P$ and  a random direction $\bDelta_k\in\mR^d$, and approximate the objective gradient $\nabla F(\bx_k;\xi_k)\in\mR^d$ and the constraint Jacobian $\nabla c(\bx_k)\in\mR^{m\times d}$ as (the Hessian approximation is introduced in Section \ref{sec:2-1})
\begin{equation}\label{snequ:2}
\begin{aligned}
\hat{\nabla}F(\bx_k;\xi_k) & = \frac{F(\bx_k + b_k \bDelta_k; \xi_k) - F(\bx_k - b_k \bDelta_k; \xi_k)}{2b_k}\bDelta_k^{-1},\\
\hnabla c(\bx_k) & = \frac{c(\bx_k + b_k \bDelta_k)-c(\bx_k - b_k \bDelta_k)}{2b_k}\bDelta_k^{-T},
\end{aligned}
\end{equation}
where $b_k>0$ is still a deterministic sequence going to zero as $k\rightarrow\infty$, and $\bDelta_k^{-1}\coloneqq (\frac{1}{\bDelta_{k}^1},\ldots,\frac{1}{\bDelta_{k}^d})\in\mR^d$ is entrywise reciprocal of $\bDelta_k=(\bDelta_k^1,\ldots,\bDelta_k^d)$.

Applying the SPSA technique to SSQP introduces a key challenge: all gradient and Hessian estimates of the objective and constraints are subject to intricate random bias brought by both random direction $\bDelta_k$ and finite-difference approximation. In contrast, existing derivative-based line-search or trust-region SSQP methods all rely on \textit{unbiased} gradient and Hessian estimates. This bias not only poses fundamental difficulties in the analysis but also impairs the convergence of the method. As shown~even~for~unconstrained problems in \cite{Berahas2019Derivative, Sun2023trust}, methods with biased \mbox{derivative}~estimates converge only to a region near the optimal solution, whose radius expands as the bias level increases, ultimately leading to deterioration of the method.
To address this challenge, we propose an online debiasing technique based on momentum-style estimators, which properly aggregate all past gradient and Hessian estimates to eliminate noise, while avoiding excessive memory costs via the moving average scheme. Under reasonable assumptions, we demonstrate that the KKT residual of the iteration sequence $\bx_k $, along with the least-squares estimates of the dual variables, converges to zero almost surely from any initialization. More significantly, we complement the global analysis, primarily focused in the majority of existing SSQP literature, with new local convergence guarantees by showing~that~the~rescaled iterates exhibit asymptotic normality:
\begin{equation}\label{snequ:1}
1/\sqrt{\baralpha_k}\cdot(\boldsymbol{x}_k - \tx, \boldsymbol{\lambda}_k - \tlambda) \stackrel{d}{\longrightarrow} \mathcal{N}\left(\boldsymbol{0}, \bSigma^\star\right),
\end{equation}
where $\baralpha_k$ is the adaptive random stepsize and the limiting covariance matrix $\tSigma$ is given by a sandwich form (see Section \ref{sec:4} for details):
\begin{equation}\label{equ:SigmaStar}
\bSigma^\star \coloneqq (\nabla^2\mL(\tx,\tlambda))^{-1}\diag\rbr{\mE\sbr{\bDelta^{-1}\bDelta^T\text{Cov}(\nabla F(\tx;\xi))\bDelta\bDelta^{-T}}, \0} (\nabla^2\mL(\tx,\tlambda))^{-1}.
\end{equation}
Here, $\mL(\bx,\blambda) = f(\bx) + c^T(\bx)\blambda$ denotes the Lagrangian function, and the expectation is taken over~the randomness in $\bDelta$. We show that the covariance $\bSigma^\star$ in \eqref{equ:SigmaStar} closely resembles the \textit{minimax optimal covariance} achieved by derivative-based methods \citep{Duchi2021Asymptotic, Davis2024Asymptotic, Na2025Statistical, Du2025Online}:
\begin{equation}\label{equ:SigmaOpt}
\bSigma^\star_{op} = (\nabla^2\mL(\tx,\tlambda))^{-1}\diag\rbr{\text{Cov}(\nabla F(\tx;\xi)), \0} (\nabla^2\mL(\tx,\tlambda))^{-1}.
\end{equation}
However, $\bSigma^\star\succeq \bSigma^\star_{op}$ due to the absence of gradient computations. Furthermore, we show that
\begin{equation}\label{equ:SigmaDiff}
\|\tSigma - \bSigma^\star_{op}\| \asymp O(d),
\end{equation}
where $\asymp$ denotes the precise order in the sense that $d/C\leq \|\tSigma - \bSigma^\star_{op}\|\leq Cd$ for some constant $C$.

We would like to further elucidate our local convergence results \eqref{snequ:1}--\eqref{equ:SigmaDiff}, which concern the statistical efficiency of DF-SSQP. Existing derivative-based SSQP methods primarily focused on global convergence guarantees (or non-asymptotic convergence guarantees), with two notable exceptions in \cite{Na2025Statistical} and \cite{Du2025Online} that showed both SSQP and its averaged version can achieve optimal statistical efficiency \eqref{equ:SigmaOpt}, matching that of projection-based methods in \cite{Duchi2021Asymptotic, Davis2024Asymptotic, Jiang2025Online} for solving Problem \eqref{Intro_StoProb}.
This paper further extends this line of research, showing that the limiting covariance $\tSigma$ of DF-SSQP reflects a \textit{trade-off between statistical and computational efficiency}.~Derivative-based SSQP prioritizes statistical efficiency at the expense of computational efficiency, while DF-SSQP emphasizes computational efficiency but inevitably sacrifices certain statistical efficiency. In particular, DF-SSQP only computes dimension-independent number of function evaluations to approximate derivatives, while its statistical efficiency gap to the optimum (i.e., $\|\tSigma - \bSigma^\star_{op}\|$) sharply grows linearly with the dimension $d$. 
Compared to global analysis, our local analysis requires quantifying all sources of uncertainty in the method, including randomness in sampling (i.e., $\xi_k$), computation (i.e., $\bDelta_k$), and adaptivity (i.e., $\baralpha_k$).
Overall, our local results enable~online statistical inference for the solution $(\tx, \tlambda)$ based on the iterates $(\bx_k,\blambda_k)$ generated by DF-SSQP, which is of broad interest in statistics and machine learning applications. We demonstrate~the global~and local behavior of DF-SSQP through extensive numerical experiments on benchmark nonlinear problems.$\quad$

\subsection{Notation}

We use $\|\cdot\|$ to denote the $\ell_2$-norm for vectors and the operator norm for matrices. We let $I$ denote the identity matrix and $\b0$ denote the zero vector or matrix. Their dimensions are clear from the context. For the constraint $c: \mathbb{R}^d \to \mathbb{R}^m$, we define $ G(\bx) \coloneqq \nabla  c(\bx) \in \mathbb{R}^{m \times d}$ as its Jacobian matrix. For~$1\leq j\leq m$, we use the superscript $c^j(\bx)$ to denote the $j$-th component of $c(\bx)$; and for any iteration index $k$, we let $c_k= c(\bx_k)$ and $G_k= G(\boldsymbol{x}_k)=\nabla  c(\boldsymbol{x}_k)$ (similarly, $\nabla\mathcal{L}_k=\nabla\mathcal{L}(\boldsymbol{x}_k,\boldsymbol{\lambda}_k)$, etc.). We also use $O(\cdot)$ to denote the big-$O$ notation in the usual sense; that is, $a_k = O(b_k)$ if $|a_k|/|b_k|$ is bounded. Additionally, $O_p(\cdot)$ and $o_p(\cdot)$ denote big- and little-$O$ notation in probability sense, respectively.

\subsection{Structure of the paper}

In Section \ref{sec:2}, we introduce the design of our DF-SSQP method. The global convergence guarantee is presented in Section \ref{sec:3}, followed by the local convergence guarantee in Section \ref{sec:4}. Numerical~experiments are presented in Section \ref{sec:5}, and the conclusions are summarized in Section~\ref{sec:6}.~Additional~theoretical results and all proofs are provided in the appendix.

\section{Derivative-Free Stochastic Sequential Quadratic Programming}\label{sec:2}

In this section, we propose the DF-SSQP method, which is summarized in Algorithm \ref{Alg:DF-SSQP}. In Section \ref{sec:2-1}, we introduce the gradient and Hessian estimates of the objective and constraints using a randomized~finite-difference approximation, along with our debiasing, momentum-style step. Then, in \mbox{Section}~\ref{sec:2-2}, we~provide a detailed explanation of each step of DF-SSQP.

\subsection{Debiased derivatives via averaging}\label{sec:2-1}

Given the $k$-th iterate $\bx_k$, we draw a sample $\xi_{k}\sim \mathcal{P}$ and two independent random directions $\bDelta_{k},\tilde{\bDelta}_{k}\in\mR^d$. Let $\mathcal{P}_{\bDelta}$ denote the distribution of the random directions. Throughout the paper, we assume that $\bDelta\sim \mathcal{P}_{\bDelta}$ has mutually independent components, each symmetrically distributed about zero with absolute values bounded both from above and below (cf. Assumption \ref{ass:Delta}).

\vskip0.2cm

\noindent$\bullet$  \textbf{Gradient Estimate.} 
Let $\{b_{k}\}$ and $\{\beta_k\}$ be predefined positive sequences. As introduced in Section \ref{sec:1}, we approximate the objective gradient $\nabla F(\bx_k;\xi_k)\in\mR^d$ and constraint Jacobian $G_k = \nabla c(\bx_k)\in\mR^{m\times d}$ by $\hnabla F(\bx_k;\xi_k)$ and $\hnabla c(\bx_k)$, as defined in \eqref{snequ:2}. Unlike existing derivative-based SSQP methods, we further perform a debiasing step by (online) averaging the past estimates as
\begin{equation}\label{snequ:3}
\bar{\boldsymbol g}_k  =(1-\beta_k) \bar{\boldsymbol g}_{k-1}+\beta_k\hat{\nabla}F(\bx_k;\xi_k) \quad\quad \text{and} \quad\quad \barG_k = (1-\beta_k) \barG_{k-1} + \beta_k \hnabla c(\bx_k).
\end{equation}
This moving averaging technique is essential to our method. In Lemma \ref{lemma:average almost sure}, we will show the almost sure convergence of $\barg_k$ to $\nabla f_k$ and $\barG_k$ to $G_k$. In contrast, simple approximations $\hat{\nabla} F(\bx_k; \xi_k)$ and $\hat{\nabla} c(\bx_k)$ cannot be sufficiently close to their exact counterparts $\nabla f_k$ and $G_k$.

\vskip0.2cm

\noindent$\bullet$ \textbf{Hessian Estimate.}
The Hessian estimate is only necessary when local convergence property is of interest (cf. Section \ref{sec:4}). To estimate the objective and constraint Hessians, we let $\{\tilde{b}_{k}\}$ be another~predefined positive sequence. We first compute the gradient estimates:
\begin{equation}\label{snequ:12}
\begin{aligned}
\tilde{\nabla} F(\bx_k \pm b_k \bDelta_k;\xi_k) & = \frac{F(\boldsymbol x_{k}\pm b_{k} \boldsymbol \Delta_{k}+\tilde{b}_{k} \tilde{\boldsymbol \Delta}_{k}; \xi_{k}) - F(\boldsymbol x_{k}\pm b_{k} \boldsymbol \Delta_{k}; \xi_{k})}{\tilde{b}_k}\tbDelta_k^{-1} \in\mR^d,\\
\tilde{\nabla} c(\bx_k \pm b_k \bDelta_k) & = \frac{c(\bx_k \pm b_k \bDelta_k + \tilde{b}_{k} \tilde{\boldsymbol \Delta}_{k}) - c(\bx_k \pm b_k \bDelta_k) }{\tb_k}\tbDelta_k^{-T} \in\mR^{m\times d}.
\end{aligned}
\end{equation}
Here, we use $\tnabla$ to distinguish it from $\hnabla$, where $\tnabla$ employs a one-sided finite-difference approximation. 
This reduces the number of function evaluations as $F(\boldsymbol x_{k}\pm b_{k} \boldsymbol \Delta_{k}; \xi_{k})$ and $c(\bx_k \pm b_k \bDelta_k)$ are already computed from the gradient estimation. With the above estimates, we then estimate the Hessians as
\begin{equation}\label{snequ:4}
\begin{aligned}
\hnabla^2F(\bx_k;\xi_k) & = \frac{1}{2}\sbr{\frac{\delta \tilde{\nabla} F(\bx_k \pm b_k \bDelta_k;\xi_k)}{2b_k}\bDelta_k^{-T} + \bDelta_k^{-1}\frac{\{\delta \tilde{\nabla} F(\bx_k \pm b_k \bDelta_k;\xi_k)\}^T}{2b_k}}, \\
\hnabla^2c^j(\bx_k) & = \frac{1}{2}\sbr{\frac{\delta\tnabla c^j(\bx_k \pm b_k \bDelta_k)}{2b_k}\bDelta_k^{-T} + \bDelta_k^{-1} \frac{\{\delta\tnabla c^j(\bx_k \pm b_k \bDelta_k)\}^T}{2b_k}}, \quad \text{for }\; 1\leq j\leq m,
\end{aligned}
\end{equation}
where
\begin{equation}\label{snequ:11}
\begin{aligned}
\delta \tilde{\nabla} F(\bx_k \pm b_k \bDelta_k;\xi_k) & = \tilde{\nabla} F(\bx_k + b_k \bDelta_k;\xi_k) - \tilde{\nabla} F(\bx_k - b_k \bDelta_k;\xi_k) \in\mR^d,\\
\delta \tilde{\nabla} c^j(\bx_k \pm b_k \bDelta_k) & = \tilde{\nabla} c^j(\bx_k + b_k \bDelta_k) - \tnabla c^j(\bx_k - b_k \bDelta_k) \in\mR^d,
\end{aligned}
\end{equation}
and $\tilde{\nabla} c^j$ is the transpose of the $j$-th row of $\tilde{\nabla} c$. 
Since the Hessians are not crucial for the convergence of the algorithm, and the debiasing step can perform either weighted averaging as in \eqref{snequ:3} or uniform averaging (i.e., equal weights) as in \cite{Na2022Hessian}, and will actually focus on the Lagrangian Hessian, we defer its introduction to the algorithm description in Section \ref{sec:2-2}. (The gradient averaging weight~$\beta_k$ plays a crucial role while, in contrast, the Hessian averaging weight can be arbitrary.)

\subsection{Algorithm design}\label{sec:2-2}

Let us define $\mL(\boldsymbol{x}, \boldsymbol{\lambda}) = f(\boldsymbol{x}) + \boldsymbol{\lambda}^T c(\boldsymbol{x})$ as the Lagrangian function of \eqref{Intro_StoProb}, where $\boldsymbol{\lambda} \in \mathbb{R}^m$ denotes the dual vector. Under certain constraint qualifications, a necessary condition for $(\boldsymbol x^\star,\boldsymbol \lambda^\star)$ being a local~solution to \eqref{Intro_StoProb} is the KKT conditions:
\begin{equation}\label{equ:KKT}
\nabla\mL(\boldsymbol x^{\star},\boldsymbol{\lambda}^{\star})=\begin{pmatrix}\nabla_{\boldsymbol x}\mathcal{L}(\boldsymbol x^{\star},\boldsymbol{\lambda}^{\star})\\\nabla_{\boldsymbol\lambda}\mathcal{L}(\boldsymbol x^{\star},\boldsymbol{\lambda}^{\star})\end{pmatrix}=\begin{pmatrix}\nabla f(\boldsymbol{x}^{\star})+G(\boldsymbol{x}^{\star})^T\boldsymbol{\lambda}^{\star}\\c(\boldsymbol{x}^{\star})\end{pmatrix}=\begin{pmatrix}\boldsymbol{0}\\\boldsymbol{0}\end{pmatrix}.
\end{equation}
Our method can be regarded as an application of Newton's method to the equation $\nabla\mL(\bx,\blambda) = \b0$, involving three steps: gradient and Hessian estimation, computation of the Newton direction, and update of the primal-dual iterates with a properly selected stepsize. The method requires prespecified positive sequences $\{b_{k}, \tilde{b}_{k}, \alpha_k, \beta_k\}$ and four parameters $\sigma, \varepsilon \in (0,1)$, $\psi\geq0$, $p\geq 1$. The method~is~initialized at $(\bx_0, \blambda_0)\in\mR^d\times\mR^m$, $\barg_{-1}\in\mR^d$, $\barG_{-1}\in\mR^{m\times d}$, $\barB_{-1} = I\in\mR^{d\times d}$, and $\tau_{-1}, \nu_{-1}>0$.

Given $(\bx_k,\boldsymbol \lambda_k)$ at the $k$-th iteration, we first obtain the gradient and Jacobian estimators $\barg_k$ and $\barG_k$ as in \eqref{snequ:3}. To exhibit promising local properties, we also compute the Hessian estimators $\hnabla^2F(\bx_k;\xi_k)$~and $\{\hnabla^2c^j(\bx_k)\}_{j=1}^m$ as in \eqref{snequ:4}. Then, we need to regularize the Jacobian $\barG_k$ as
\begin{equation}\label{snequ:9}
\tG_k=\barG_k+ \delta_{k}^G,
\end{equation}
where $\delta_k^G\in\mR^{m\times d}$ is a perturbation/regularization matrix such that $\tG_k$ has full row rank. After~obtaining this $\tG_k$, we then compute the following three quantities:
\begin{equation}\label{snequ:5}
\bar{\nabla}_{\boldsymbol x} \L_{k} = \bar{\boldsymbol g}_k+\tG_k^T\boldsymbol\lambda_k, \quad \hnabla^2_{\boldsymbol x} \L_{k}=\hnabla^2F(\bx_k;\xi_k)+\sum_{j=1}^{m}\boldsymbol{\lambda}_{k}^{j} \hnabla^2 c^j(\bx_k), \quad \barB_{k} = (1-\beta_k)\barB_{k-1} + \beta_k\hnabla^2_{\boldsymbol x} \L_{k}.
\end{equation}
Here, $\bar{\nabla}_{\bx} \L_{k}$ and $\barB_k$ denote the (debiased) estimates of the Lagrangian gradient and Hessian with respect to $\bx$. We emphasize that (i) we can simply set $\barB_k = I$ for the purpose of global convergence; and (ii) the Hessian averaging weight is not as crucial as that of the gradient averaging. For simplicity, we~use the same weight $\beta_k$, although uniform averaging with $\beta_k = 1/k$ also works.

To ensure that the Newton system is well-defined, we also have to regularize the Hessian $\barB_k$ as:$\;\;$
\begin{equation}\label{snequ:6}
\tB_k = \barB_k + \delta_k^B,
\end{equation}
where $\delta_k^B\in\mR^{d\times d}$ is a perturbation/regularization matrix such that $\tB_k$ is positive definite in the null space $\text{ker}(\tG_k)$. With the above derivative approximations, we then solve the following Newton system:
\begin{equation}\label{def:acc2}
\underbrace{
\begin{pmatrix}
\tB_k & \tG_k^T\\
\tG_k & \0
\end{pmatrix}}_{\tW_k}\underbrace{\begin{pmatrix}
\tDelta \bx_k\\
\tDelta \blambda_k
\end{pmatrix}}_{\tDelta\bz_k} = - \underbrace{\begin{pmatrix}
\bnabla_{\bx}\mL_k\\
c_k
\end{pmatrix}}_{\bnabla \mL_k},	
\end{equation}
where $\tW_k$ and $\bnabla\mathcal{L}_{k}$ represent the Lagrangian Hessian and gradient, and $\tDelta\bz_{k}$ is the (exact) Newton direction. We mention that the regularizations in \eqref{snequ:9} and \eqref{snequ:6} are intended to ensure that $\tW_k$ is invertible and the system \eqref{def:acc2} is well-defined \cite[Lemma 16.1]{Nocedal2006Numerical}.

After obtaining the Newton direction $\tDelta\bz_k = (\tDelta \bx_k, \tDelta \blambda_k)$, we update the primal-dual iterate with a properly selected stepsize $\baralpha_k$ as:
\begin{equation*}
(\bx_{k+1}, \blambda_{k+1}) = (\bx_k, \blambda_k) + \baralpha_k (\tDelta \bx_k, \tDelta \blambda_k). 
\end{equation*}
Similar to \cite{Berahas2021Sequential, Berahas2023Stochastic, Berahas2023Accelerating} and many references therein, the stepsize $\baralpha_k$ is selected to achieve a sufficient reduction on an $\ell_2$ merit function:
\begin{equation*}
\phi_{\tau}(\bx)=\tau f(\boldsymbol x)+\|c(\boldsymbol x)\|.
\end{equation*}
In particular, given $\tau>0$, we define its local model at $\bx_k$ along the direction $\bd\in\mR^d$ as
\begin{equation*}
q(\bd; \tau, \bx_k, \barg_k,\tB_k)=\tau\left(f_k + \barg_k^T\bd+\frac{1}{2}\max\{\boldsymbol d^T\tB_k\boldsymbol d,0\}\right)+\|c_k+\tG_k \bd\|.
\end{equation*}
When $\bd$ satisfies $c_k+\tG_k \bd = \0$ as in \eqref{def:acc2}, the reduction of the local model is given by
\begin{equation}\label{snequ:10}
\Delta q(\bd; \tau, \bx_k, \barg_k,\tB_k) \coloneqq q(\0; \tau, \bx_k, \barg_k,\tB_k) - q(\bd; \tau, \bx_k, \barg_k,\tB_k) = -\tau (\barg_k^T\bd+0.5\max\{\boldsymbol d^T\tB_k\boldsymbol d,0\}) + \|c_k\|.
\end{equation}
The above formula motivates us to define
\begin{equation} \label{def:acc3}
\tau_k^{\text{trial}}\leftarrow\begin{cases}
\infty & \text{if }\; \bar{\boldsymbol g}_k^T\tilde{\Delta} \boldsymbol{x}_k+ \max\{\tilde{\Delta} \boldsymbol{x}_k^T\tB_k\tilde{\Delta} \boldsymbol{x}_k, 0\}\leq0,\\
\frac{(1-\sigma)\|c_k\|}{\barg_k^T\tilde{\Delta} \boldsymbol{x}_k+ \max\{\tilde{\Delta} \boldsymbol{x}_k^T\tB_k\tilde{\Delta} \boldsymbol{x}_k, 0\}} & \text{otherwise},
\end{cases}
\end{equation}
followed by the rule of updating $\tau_k$ from $\tau_{k-1}$ as
\begin{equation}\label{def:acc4}
\tau_k\leftarrow\begin{cases}{\tau}_{k-1}&\text{if }\; {\tau}_{k-1}\leq\tau_k^{\text{trial}}, \\
(1-\epsilon)\tau_k^{\text{trial}}&\text{otherwise}.\end{cases}
\end{equation}
Since the above merit parameter rule ensures $\tau_k\leq\tau_k^{\text{trial}}$, it follows that
\begin{equation}\label{reduction}
\Delta q(\tDelta\bx_k; \tau_k, \boldsymbol x_k,\bar{\boldsymbol g}_k,\tB_k)\geq\frac{1}{2}\tau_k  \max\{\tilde{\Delta} \boldsymbol{x}_k^T\tB_k\tilde{\Delta} \boldsymbol{x}_k, 0\}+\sigma\|c_k\|.
\end{equation}

Next, we define the updating rule for a ratio parameter $\nu_k$, which builds a connection between the reduction of the local model $q(\tDelta\bx_k; \tau_k, \bx_k, \bar{\boldsymbol g}_k,\tB_k)$ and the magnitude of the step $\|\tDelta \bx_k\|^2$. In particular, we let
\begin{equation}\label{def:acc6}
\nu_k\leftarrow\begin{cases}
\nu_{k-1} & \text{if }\; \nu_{k-1}\leq\nu_k^{\text{trial}},\\(1-\epsilon)\nu_k^{\text{trial}}&\text{otherwise},
\end{cases} \quad\quad \text{where }\quad  {\nu}_{k}^{\text{trial}}\leftarrow\frac{\Delta q(\tDelta\bx_k; \tau_k, \bx_k, \bar{\boldsymbol g}_k,\tB_k)}{\|\tilde{\Delta} \boldsymbol{x}_k\|^{2}}.
\end{equation}
This definition ensures $\nu_k\leq \nu_k^{\text{trial}} = \Delta q(\tDelta\bx_k; \tau_k, \bx_k, \bar{\boldsymbol g}_k,\tB_k)/\|\tilde{\Delta} \boldsymbol{x}_k\|^{2}$. In the end, our adaptive random stepsize $\baralpha_k$ can be selected from any scheme as long as, for a prespecified sequence $\{\alpha_k\}$ and $p\geq 1$,
\begin{equation}\label{snequ:7}
\frac{\nu_k\alpha_k}{\tau_k\kappa_{\nabla f}+\kappa_{\nabla c}} \leq \baralpha_k \leq \frac{\nu_k\alpha_k}{\tau_k\kappa_{\nabla f}+\kappa_{\nabla c}} + \psi \alpha_k^p,
\end{equation}
where $\kappa_{\nabla f}$ and $\kappa_{\nabla c}$ are (estimated) Lipschitz constants of $\nabla f$ and $\nabla c$. We summarize the above DF-SSQP method in Algorithm \ref{Alg:DF-SSQP} and explain the above stepsize selection in the following remark.

\begin{remark}
The above stepsize selection condition \eqref{snequ:7} follows existing designs of derivative-based SSQP \citep{Berahas2021Sequential, Berahas2022Adaptive,Curtis2024Sequential, Curtis2024Stochastic, Na2025Statistical}. Essentially, we just set $\baralpha_k = O(\alpha_k)$, while to introduce the adaptivity into the method, we multiply $\alpha_k$ by the ratio~$\nu_k/(\tau_k\kappa_{\nabla f}+\kappa_{\nabla c})$ and are allowed to increment it with an adaptivity gap $\psi\alpha_k^p$. 
The adaptivity gap is crucial as it distinguishes our random stepsize schemes from deterministic stepsize schemes ($\psi=0$). In the theoretical analysis, we will provide a condition on $p$ to control the adaptivity gap, and the commonly used setting in aforementioned works, $p=2$, will automatically satisfy the condition. 
The ratio $\nu_k/(\tau_k\kappa_{\nabla f}+\kappa_{\nabla c})$, though depends on $k$, will stabilize when $k$ is sufficiently large under proper assumptions. It is less crucial in our study where $\alpha_k$ is a decaying stepsize and determines the convergence rate (i.e., the method still works in the same way if $\alpha_k\leq \baralpha_k\leq \alpha_k+\psi\alpha_k^p$); but the ratio can be particularly effective~when $\alpha_k=\alpha$ is a constant. The inspiration of the ratio comes from imposing the Armijo~condition:
\begin{equation}\label{snequ:8}
\phi_{\tau_k}(\bx_k + \baralpha_k\tDelta\bx_k) \leq \phi_{\tau_k}(\bx_k) - \gamma\baralpha_k \Delta q(\tDelta\bx_k; \tau_k, \boldsymbol x_k,\bar{\boldsymbol g}_k,\tB_k)\quad \quad \text{for }\; \gamma\in(0,1).
\end{equation}
In fact, applying the Taylor's expansion and noting that $\kappa_{\nabla f}$ and $\kappa_{\nabla c}$ are Lipschitz constants of $\nabla f$ and $\nabla c$, we know for $\baralpha_k\leq 1$ that
\begin{align*}
\phi_{\tau_k}(\bx_k & + \baralpha_k\tDelta\bx_k) = \tau_{k} f(\bx_k + \baralpha_k\tDelta\bx_k) +\|c(\bx_k + \baralpha_k\tDelta\bx_k)\| \\
& \leq  \tau_k(f_k + \baralpha_k \nabla f_k^T\tDelta\bx_k) + \|c_k + \baralpha_k G_k\tDelta\bx_k\|+  \frac{1}{2}(\tau_{k}\kappa_{\nabla f} + \kappa_{\nabla c})\baralpha_k^2\|\tDelta\bx_k\|^2\\
& = \phi_{\tau_k}(\bx_k) + \baralpha_k(\tau_{k}\nabla f_k^T\tDelta\bx_k + \|c_k+G_k\tDelta\bx_k\| - \|c_k\|) + \frac{1}{2}(\tau_{k}\kappa_{\nabla f} + \kappa_{\nabla c})\baralpha_k^2\|\tDelta\bx_k\|^2\\
& \stackrel{\mathclap{\eqref{snequ:10}}}{\leq} \phi_{\tau_k}(\bx_k) - \baralpha_k\Delta q(\tDelta\bx_k;\tau_k, \bx_k, \nabla f_k, \tB_k) + \baralpha_k\|c_k+G_k\tDelta\bx_k\|+ \frac{1}{2}(\tau_{k}\kappa_{\nabla f} + \kappa_{\nabla c})\baralpha_k^2\|\tDelta\bx_k\|^2.
\end{align*}
Supposing for the moment that $\barg_k\rightarrow\nabla f_k$ and $\tG_k \rightarrow G_k$ (as proved in Lemma \ref{lemma:average almost sure}), we use $c_k + \tG_k\tDelta\bx_k = \0$ from \eqref{def:acc2} and have for large enough $k$ that ($\lesssim$ only means for ``intuition")
\begin{equation*}
\phi_{\tau_k}(\bx_k + \baralpha_k\tDelta\bx_k) \lesssim  \phi_{\tau_k}(\bx_k) - \baralpha_k\Delta q(\tDelta\bx_k;\tau_k, \bx_k, \barg_k, \tB_k) + \frac{1}{2}(\tau_{k}\kappa_{\nabla f} + \kappa_{\nabla c})\baralpha_k^2\|\tDelta\bx_k\|^2.
\end{equation*}
Combining the above display with \eqref{snequ:8}, we know \eqref{snequ:8} can be satisfied as long as
\begin{equation*}
\baralpha_k \leq \frac{2(1-\gamma)\Delta q(\tDelta\bx_k;\tau_k, \bx_k, \barg_k, \tB_k)}{(\tau_{k}\kappa_{\nabla f} + \kappa_{\nabla c})\|\tDelta\bx_k\|^2}.
\end{equation*}
Note that $\nu_k/(\tau_{k}\kappa_{\nabla f} + \kappa_{\nabla c})$ is a lower bound of the above right-hand side corresponding to $\gamma=1/2$. 
\end{remark}

\begin{algorithm}[tbh!]\caption{Derivative-Free Stochastic SQP (DF-SSQP)}\label{Alg:DF-SSQP}
\begin{algorithmic}[1]
\State \textbf{Input:} initial iterate $(\bx_0, \blambda_0)\in\mR^d\times\mR^m$, $\barg_{-1}\in\mR^d$, $\barG_{-1}\in\mR^{m\times d}$, $\barB_{-1}=I$, ${\tau}_{-1}, {\nu}_{-1}>0$;~positive sequences $\{b_{k}, \tilde{b}_{k}, \alpha_k, \beta_k\}$, tuning parameters $\sigma, \varepsilon \in (0,1)$, $\psi\geq0$, $p\geq 1$.
\For {$k=0,1,\cdots,$}
\State Compute derivative approximations with debiasing steps to obtain $\tG_k$, $\tB_k$, $\bnabla_{\bx}\mL_k$.
\State  Solve Newton system \eqref{def:acc2} to obtain $(\tDelta \bx_k, \tDelta \blambda_k)$.
\State Compute $\tau_k$ as in \eqref{def:acc4}, $\nu_k$ as in \eqref{def:acc6}, and then select any random stepsize $\baralpha_k$ as in \eqref{snequ:7}.
\State Update $(\bx_{k+1}, \blambda_{k+1}) \leftarrow (\bx_k, \blambda_k) + \baralpha_k (\tDelta \bx_k, \tDelta \blambda_k).$
\EndFor
\end{algorithmic}
\end{algorithm}

\section{Global Convergence Analysis} \label{sec:3}

In this section, we establish the global almost sure convergence guarantee for Algorithm \ref{Alg:DF-SSQP}. We begin by stating assumptions.

\begin{assumption}\label{ass:1-1}
Let $\mathcal{X} \subseteq \mathbb{R}^d$ be an open convex set that contains the evaluation sequences $\{\bx_k, \bx_k \pm b_k \bDelta_k, \boldsymbol{x}_k \pm b_k \boldsymbol{\Delta}_k + \widetilde{b}_k \widetilde{\boldsymbol{\Delta}}_k\}$. We assume that the objective $f(\bx)$ and constraints $c(\bx)$ are thrice differentiable, with bounded first, second, and third derivatives over $\mX$, and $f(\bx)$ is bounded below by~$f_{\text{inf}}$ over $\mX$. Moreover, we assume there exist constants $\kappa_{c}$, $\kappa_{1,G}$, $\kappa_{2,G}$,  $\kappa_{1,\tG}$, $\kappa_{2,\tG}>0$ such that
\begin{equation*}
\|c_k\|\leq \kappa_{c},\quad\quad\kappa_{1,G} \cdot I \preceq G_k G_k^T\preceq \kappa_{2,G} \cdot I,\quad\quad  \kappa_{1,\tG} \cdot I \preceq \tilde{G}_k \tilde{G}_k^T \preceq \kappa_{2,\tG} \cdot I, \quad\forall k\geq 0.
\end{equation*}
Similarly, we assume the regularization $\delta_k^B$ in \eqref{snequ:6} ensures that $\tB_k$ satisfies $\boldsymbol{x}^T\tB_k\boldsymbol{x}\geq\kappa_{1,\tB}\|\boldsymbol{x}\|^2$ for any $\boldsymbol{x}\in\{\boldsymbol{x}\in\mathbb{R}^d:\tG_k\boldsymbol{x}=\boldsymbol{0}\}$ and $\|\tB_k\|\leq\kappa_{2,\tB}$, for some constants $\kappa_{1,\tB},\kappa_{2,\tB}>0$. 	
\end{assumption}

Assumption \ref{ass:1-1} is standard in the SSQP and/or derivative-free optimization literature. In particular, the existence of an open convex set $\mX$ and the boundedness of the associated quantities of the objective and constraints within the set have been widely imposed in \cite{Bertsekas1982Constrained, Berahas2021Sequential, Berahas2023Stochastic, Curtis2024Stochastic, Fang2024Fully, Fang2024Trust}. The requirement for thrice differentiability arises from derivative-free, simultaneous perturbation techniques \citep{Spall1992Multivariate, Spall2000Adaptive, Spall2003Introduction}.~This~\mbox{assumption}~can~certainly be relaxed if we are only concerned with global convergence without approximating Hessians.$\quad\;\;$

The exact Jacobian $G_k$ is assumed to have full row rank, which is also commonly assumed in the aforementioned literature. \cite{Berahas2023Stochastic} relaxed the full-rank condition to a rank-deficient~scenario, although that study employs more sophisticated (derivative-based) designs with weaker convergence guarantees. In addition, we assume our regularization $\delta_k^G$ in \eqref{snequ:9} perturbs $\barG_k$ to $\tG_k$ to ensure that $\tG_k$ is also full row-rank. In the subsequent analysis, we further require $[\kappa_{1,G}, \kappa_{2,G}]\subseteq (\kappa_{1,\tG}, \kappa_{2,\tG})$ to have the perturbation vanish in the limit, provided we can show $\barG_k\rightarrow G_k$ as $k\rightarrow\infty$. Analogously, we assume $\delta_k^B$ in \eqref{snequ:6} perturbs $\barB_k$ to $\tB_k$ to ensure that $\tB_k$ is lower bounded in the null space $\text{ker}(\tG_k)$. As introduced earlier, this condition, together with the full row-rank condition of $\tG_k$, ensures the well-definedness of the Newton system \eqref{def:acc2}.

\begin{assumption}\label{ass:2-1}
For any $ \xi \sim \mathcal{P} $ and $ \bx \in \mathcal{X}$, we assume $\mathbb{E}[ F\left(\boldsymbol{x}; \xi \right) \mid \bx] = f(\boldsymbol{x})$ and there exists~a~constant $\Upsilon_{m} > 0$ such that 
\begin{subequations}\label{cond:grad}
\begin{align}
\text{Bounded $r$-moment}:& \quad\quad \mE[\|\nabla F(\boldsymbol{x}; \xi) - \nabla f(\boldsymbol{x})\|^r\mid \bx]  \leq \Upsilon_{m}, \label{cond:grad:smo}\\
\text{Uniformly bounded}:& \quad\quad \|\nabla F(\boldsymbol{x}; \xi) - \nabla f(\boldsymbol{x})\| \leq \Upsilon_{m}. \label{cond:grad:bound}
\end{align}
\end{subequations}
\end{assumption}

We note that \eqref{cond:grad:bound} implies \eqref{cond:grad:smo} if we redefine $\Upsilon_{m} \leftarrow \Upsilon_{m}^r$ in \eqref{cond:grad:smo}. In general, we only assume~that $\nabla F(\bx;\xi)$ has a bounded $r$-moment for some appropriate $r \geq 1$ as in \eqref{cond:grad:smo} when studying the properties of the finite-difference estimate $\hat{\nabla} F(\bx;\xi)$ in \eqref{snequ:2} and the debiased estimate $\barg$ in \eqref{snequ:3}. However, we impose the stronger condition \eqref{cond:grad:bound} to establish the global convergence guarantee of DF-SSQP, in line with~the existing SSQP literature \citep{Berahas2021Sequential, Berahas2023Stochastic, Curtis2024Stochastic, Na2022adaptive, Na2023Inequality, Fang2024Fully, Fang2024Trust}.

While unconstrained methods only require a bounded variance condition, the boundedness condition is crucial for constrained methods to ensure the stabilization of the merit and ratio parameters $(\tau_k,\nu_k)$. This stabilization is provably guaranteed only when gradients are bounded, even in deterministic settings \citep{Bertsekas1982Constrained}. Stabilizing these parameters is important for asymptotic analysis, as we want the iterates to reduce the same merit function (at least for all sufficiently large $k$), rather than a different merit function at each step.
That being said, condition \eqref{cond:grad:bound} naturally holds for finite-sum problems in machine learning, which are a key application of DFO methods. Additionally, the~\mbox{boundedness}~of~gradient noise can be replaced by a uniform Lipschitz continuity condition on the objective functions $F(\bx;\xi)$. We mention that \cite{Sun2023trust, Sun2024Trust} imposed a bounded gradient noise condition and incorporated the bound into the design of a trust-region method. Our study differs from theirs in that~$\Upsilon_{m}$ is unknown in our setting.

The next assumption regards the distribution $\P_{\bDelta}$ of the random direction $\bDelta\in\mR^d$, which is standard in the simultaneous perturbation literature \citep{Spall1992Multivariate, Spall2000Adaptive, Spall2003Introduction} and can be satisfied~by various direction generation distributions; e.g., $\bDelta$ has independent Rademacher entries.

\begin{assumption}\label{ass:Delta}
For $k\geq 0$, we assume $\bDelta_k, \tbDelta_k\sim\P_{\bDelta}$ are independent. For any $\bDelta\sim\P_{\bDelta}$, we~assume $\bDelta$ has mutually independent entries, each symmetrically distributed about zero with absolute value bounded both from above and below by some constants $\kappa_{\bDelta_1}, \kappa_{\bDelta_2}>0$:
\begin{equation*}
\kappa_{\bDelta_1} \leq |\bDelta^j| \leq \kappa_{\bDelta_2},\quad\quad \text{for }\; 1\leq j \leq d.
\end{equation*}
Here, the superscript $j$ denotes the $j$-th entry of $\bDelta$.
\end{assumption}

Finally, to ease later presentation, we state several polynomial sequences in the next assumption.

\begin{assumption}\label{ass:3-1}
We let 
\begin{equation*}
\alpha_k = \frac{\iota_{1}}{(k+1)^{p_1}},\quad\quad \beta_k= \frac{\iota_{2}}{(k+1)^{p_2}}, \quad\quad b_k = \frac{\iota_{3}}{(k+1)^{p_3}}, \quad\quad \tilde{b}_k = \frac{\iota_{4}}{(k+1)^{p_4}},
\end{equation*}
where $\iota_i, p_i>0$ for $i = 1,2,3,4$.
\end{assumption}

In the next subsection, we present preliminary guarantees for derivative approximations, which~serve as the foundation for establishing the global convergence of DF-SSQP.

\subsection{Guarantees for derivative approximations}

Let us introduce some additional notation. We define $\F_{-1}\subseteq\F_0\subseteq\F_1\cdots$ as a filtration of $\sigma$-algebras, where $\mF_k=\sigma(\{\xi_i,\boldsymbol \Delta_{i},\widetilde{\boldsymbol \Delta}_{i}\}_{i=0}^{k})$, $\forall k\geq 0$ contains all the randomness before performing the $(k+1)$-th iteration, and $\mF_{-1} = \sigma(\{\bx_0,\blambda_0\})$ is the trivial $\sigma$-algebra. For a random vector/matrix sequence $\{Y_k\}$ and a deterministic scalar sequence $\{y_k\}$, we write $Y_k=O(y_k)$ if $\|Y_k\|/y_k$ is uniformly bounded over sample paths. Recall that we denote $c_k= c(\bx_k)$, $G_k= \nabla c_k = \nabla  c(\boldsymbol{x}_k)$ (similar for $\nabla f_k$, $\nabla^2f_k$ etc.) for notational simplicity.

Our first result characterizes the bias of the randomized gradient and Hessian approximations. We observe that the conditional bias converges to zero as $k$ goes to infinity almost surely.

\begin{lemma}\label{lemma:finite difference for gradient}
Under Assumptions \ref{ass:1-1}, \ref{ass:2-1}, \ref{ass:Delta}, we have for $1\leq j\leq m$,
\begin{alignat*}{2}
\mE[\hnabla F(\bx_k;\xi_k)-\nabla f_k\mid\mF_{k-1}]& = O(b_k^2) ,\hskip2.45cm
\mE[ \hnabla c_k-\nabla c_k\mid\mF_{k-1}]&& = O(b_k^2),\\
\mE[\hnabla^2F(\bx_k;\xi_k)-\nabla^2 f_k\mid\mF_{k-1}] & = O(b_k+\tb_k^2/b_k),\quad\quad \mE[\hnabla^2 c^j_k-\nabla^2 c^j_k\mid\mF_{k-1}]&& = O(b_k+\tb_k^2/b_k).
\end{alignat*}
\end{lemma}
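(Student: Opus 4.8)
The plan is to separate the two sources of randomness — the sampling noise $\xi_k$ and the perturbation directions $\bDelta_k,\tbDelta_k$ — and dispose of them in sequence via the tower property. First I would condition on the larger $\sigma$-field generated by $\mF_{k-1}$ together with $(\bDelta_k,\tbDelta_k)$. Given this information, every evaluation point $\bx_k$, $\bx_k\pm b_k\bDelta_k$, $\bx_k\pm b_k\bDelta_k+\tb_k\tbDelta_k$ is fixed, and each of $\hnabla F(\bx_k;\xi_k)$, $\hnabla^2 F(\bx_k;\xi_k)$ is an affine combination of the values $F(\cdot;\xi_k)$ at these points with $(\bDelta_k,\tbDelta_k)$-measurable coefficients. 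Since $\mE[F(\bx;\xi_k)\mid\bx]=f(\bx)$ by Assumption \ref{ass:2-1} and $\xi_k$ is independent of $(\bDelta_k,\tbDelta_k,\mF_{k-1})$, taking the conditional expectation over $\xi_k$ replaces every $F(\cdot;\xi_k)$ by $f(\cdot)$, so that $\mE[\hnabla F(\bx_k;\xi_k)\mid\mF_{k-1},\bDelta_k,\tbDelta_k]=\hnabla f(\bx_k)$ and $\mE[\hnabla^2 F(\bx_k;\xi_k)\mid\mF_{k-1},\bDelta_k,\tbDelta_k]=\hnabla^2 f(\bx_k)$, where $\hnabla f,\hnabla^2 f$ denote the identical finite-difference formulas applied to the deterministic $f$. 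This reduces both objective claims to a bias analysis of purely deterministic finite-difference estimators over the random directions; the constraint estimates are already deterministic in $\xi$, so the same argument applies to each scalar $c^j$ verbatim. Throughout I would invoke the direction moments implied by Assumption \ref{ass:Delta}: independence and symmetry give $\mE[\bDelta^j]=0$, $\mE[1/\bDelta^j]=0$, $\mE[\bDelta^i/\bDelta^j]=\mathbf{1}\{i=j\}$, and $\mE[\bDelta^a\bDelta^b/\bDelta^j]=0$ for all $a,b,j$, all finite because $|\bDelta^j|\in[\kappa_{\bDelta_1},\kappa_{\bDelta_2}]$.

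For the gradient and Jacobian bounds, I would apply a third-order Taylor expansion of $f$ (resp.\ $c^j$) at $\bx_k$ along $\pm b_k\bDelta_k$. The zeroth- and second-order terms are even in $b_k$ and cancel exactly in the central difference, the first-order term yields $(\nabla f_k^T\bDelta_k)\bDelta_k^{-1}$, and the surviving third-order Lagrange remainder is $O(b_k^3)$ by the uniform bound on the third derivatives in Assumption \ref{ass:1-1}. Dividing by $2b_k$ and multiplying by the bounded vector $\bDelta_k^{-1}$ leaves $(\nabla f_k^T\bDelta_k)\bDelta_k^{-1}+O(b_k^2)$, whose $j$-th main-term component has conditional mean $(\nabla f_k)^j$ because $\mE[\bDelta^i/\bDelta^j]=\mathbf{1}\{i=j\}$. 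Averaging over $\bDelta_k$ then gives $\nabla f_k+O(b_k^2)$, and identically $\nabla c_k^j+O(b_k^2)$.

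For the Hessian bounds the estimator is nested, so I would expand from the inside out. For the inner forward difference at a fixed $\by\in\{\bx_k\pm b_k\bDelta_k\}$, a second-order expansion of $f$ along $\tb_k\tbDelta_k$ gives $\tnabla f(\by)=\big(\nabla f(\by)^T\tbDelta_k+\tfrac{\tb_k}{2}\tbDelta_k^T\nabla^2 f(\by)\tbDelta_k+O(\tb_k^2)\big)\tbDelta_k^{-1}$. The key observation is that the $O(\tb_k)$ Hessian term vanishes in direction-expectation: every $\mE[\tbDelta_k^a\tbDelta_k^b/\tbDelta_k^j]=0$ (the diagonal piece $a=b\neq j$ vanishes precisely because $\mE[1/\tbDelta_k^j]=0$, and $a=b=j$ because $\mE[\tbDelta_k^j]=0$), so $\mE[\tnabla f(\by)\mid\mF_{k-1},\bDelta_k]=\nabla f(\by)+O(\tb_k^2)$. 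Differencing across $\by=\bx_k\pm b_k\bDelta_k$, a second-order expansion of $\nabla f$ yields $\nabla f(\bx_k+b_k\bDelta_k)-\nabla f(\bx_k-b_k\bDelta_k)=2b_k\nabla^2 f_k\bDelta_k+O(b_k^2)$, where — and this is where thrice- rather than four-times differentiability is felt — the two second-order remainders involve the third derivative at distinct intermediate points and cancel only up to $O(b_k^2)$. Dividing by $2b_k$ gives $\nabla^2 f_k\bDelta_k+O(b_k)+O(\tb_k^2/b_k)$. Finally, multiplying by $\bDelta_k^{-T}$ and averaging over $\bDelta_k$ reconstructs the full Hessian, since the $(p,q)$-entry of $\mE[(\nabla^2 f_k\bDelta_k)\bDelta_k^{-T}\mid\mF_{k-1}]$ equals $\sum_r(\nabla^2 f_k)^{pr}\mE[\bDelta_k^r/\bDelta_k^q]=(\nabla^2 f_k)^{pq}$, while the remainder retains its order because $1/\bDelta_k^q$ is bounded; symmetrizing as in \eqref{snequ:4} preserves this and gives $\nabla^2 f_k+O(b_k+\tb_k^2/b_k)$, with the constraint Hessians following identically.

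The main obstacle will be the bias bookkeeping inside the Hessian estimator, where the two scales $(b_k,\tb_k)$ and the two independent direction fields $(\bDelta_k,\tbDelta_k)$ interact, and two cancellations must be identified in the right order. First, the potential leading $O(\tb_k)$ bias of the one-sided inner gradient must be shown to vanish in direction-expectation — this is exactly where the symmetry of $\tbDelta_k$, i.e.\ $\mE[1/\tbDelta_k^j]=0$, is indispensable. Second, the outer multiplication by $\bDelta_k^{-T}$ together with $\mE[\bDelta_k^r/\bDelta_k^q]=\mathbf{1}\{r=q\}$ must be seen to recover the off-diagonal Hessian entries and not merely the diagonal. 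The delicate point is to trace the surviving remainder to be exactly $O(b_k+\tb_k^2/b_k)$, and in particular to attribute the $O(b_k)$ term to the non-cancelling third-derivative remainder of the central-difference gradient; by contrast, the gradient and Jacobian estimates are routine once the $\xi_k$-reduction of the first step is in place.
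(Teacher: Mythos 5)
Your proposal is correct and follows essentially the same route as the paper: condition out $\xi_k$ via $\mE[F(\bx;\xi)\mid\bx]=f(\bx)$, Taylor-expand along $\pm b_k\bDelta_k$ (and $\tb_k\tbDelta_k$ for the inner one-sided difference), use the direction-moment identities $\mE[\bDelta^i/\bDelta^j]=\1_{i=j}$ and $\mE[\tbDelta^a\tbDelta^b/\tbDelta^j]=0$ from symmetry and independence to recover the exact derivative and kill the $O(\tb_k)$ term, and bound the surviving Lagrange remainders (whose intermediate points depend on the directions, hence no further cancellation) by the uniform third-derivative bound. Your accounting of the two cancellations and of the final $O(b_k+\tb_k^2/b_k)$ order matches the paper's Appendix B.1 step for step.
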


\begin{proof}
See Appendix \ref{appendix lemma:finite difference for gradient}. 
\end{proof}

In the following lemma, we demonstrate the almost sure convergence of the unconditional bias in the debiased gradient and Hessian approximations computed via the moving averaging technique.

\begin{lemma}\label{lemma:average almost sure}
Under Assumptions \ref{ass:1-1}, \ref{ass:2-1}\eqref{cond:grad:smo}, \ref{ass:Delta}, \ref{ass:3-1}, we further assume that
\begin{equation}\label{cond:3.6}
p_2\in(0.5,1], \quad\quad p_1>p_2, \quad\quad  p_3>0.5-0.5p_2, \quad\quad r\geq 2,\quad\quad r(p_1-p_2)>1. 
\end{equation}
Then, we have $\barg_k - \nabla f_k\rightarrow \0$ and $\barG_k- G_k\rightarrow \0$ as $k\rightarrow\infty$ almost surely. Furthermore, if $\delta_k^G$ ensures~that $[\kappa_{1,G},\kappa_{2,G}]\subseteq(\kappa_{1,\tG},\kappa_{2,\tG})$, then there exists a (potentially random) $K^\star_G<\infty$ such that for all $k\geq K^\star_G$, $\tG_k = \barG_k$, i.e., $\delta_k^G = \0$.
\end{lemma}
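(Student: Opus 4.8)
The plan is to treat the moving average \eqref{snequ:3} as a stochastic-approximation recursion that tracks the slowly drifting targets $\nabla f_k$ and $G_k$, and to separate the error into a forgotten initialization, a bias-plus-drift part, and a martingale-noise part. I focus on $e_k \coloneqq \barg_k - \nabla f_k$; the Jacobian error $\barG_k - G_k$ is entirely analogous and in fact simpler, since $c$ is noise-free and its only fluctuation comes from $\bDelta_k$. First I would write $\hnabla F(\bx_k;\xi_k) = \nabla f_k + \vartheta_k + \zeta_k$, where $\vartheta_k \coloneqq \mE[\hnabla F(\bx_k;\xi_k)\mid\mF_{k-1}] - \nabla f_k = O(b_k^2)$ is the finite-difference bias supplied by Lemma \ref{lemma:finite difference for gradient}, and $\zeta_k$ is a martingale difference, $\mE[\zeta_k\mid\mF_{k-1}] = \0$. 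The key quantitative input is that $\mE[\|\zeta_k\|^2\mid\mF_{k-1}] = O(1)$: although $\hnabla F$ carries the factor $1/(2b_k)$, using a common sample $\xi_k$ at the two perturbed points makes the noisy part of the central difference $O(b_k)$ (its leading term, after dividing by $2b_k$, is $(\nabla F(\bx_k;\xi_k) - \nabla f_k)^T\bDelta_k$), so the $1/b_k$ cancels; Assumption \ref{ass:2-1}\eqref{cond:grad:smo} with $r \ge 2$ together with the two-sided bound on $\bDelta_k$ in Assumption \ref{ass:Delta} then bound the conditional second moment. Substituting into \eqref{snequ:3} gives the linear recursion $e_k = (1-\beta_k)e_{k-1} + d_k + \beta_k\zeta_k$ with the $\mF_{k-1}$-measurable term $d_k \coloneqq (1-\beta_k)(\nabla f_{k-1} - \nabla f_k) + \beta_k\vartheta_k$.

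Next I would bound the target drift. The Newton step satisfies $\|\tDelta\bx_k\| = O(\|\barg_k\| + \|c_k\|) = O(\|e_k\| + 1)$: rewriting \eqref{def:acc2} in terms of the \emph{shifted} multiplier $\blambda_k + \tDelta\blambda_k$ decouples $\tDelta\bx_k$ from the a priori unbounded $\blambda_k$, expressing it as the leading block of $-\tW_k^{-1}(\barg_k, c_k)$, with $\tW_k^{-1}$ bounded under the regularizations of Assumption \ref{ass:1-1}, $\|c_k\| \le \kappa_c$, and $\|\barg_k\| \le \|e_k\| + \|\nabla f_k\| = \|e_k\| + O(1)$. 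Combined with $\baralpha_k = O(\alpha_k)$ (from \eqref{snequ:7}, using $\nu_k \le \nu_{-1}$, $\tau_k\kappa_{\nabla f}\ge 0$, and $p \ge 1$), Lipschitzness of $\nabla f$ with constant $\kappa_{\nabla f}$ yields $\|\nabla f_{k-1} - \nabla f_k\| = O(\alpha_{k-1}(1 + \|e_{k-1}\|))$, so that $\|d_k\| = O(\alpha_k(1+\|e_{k-1}\|) + \beta_k b_k^2)$. The $\|e_{k-1}\|$-proportional part will be harmless because $\alpha_k = o(\beta_k)$ (here $p_1 > p_2$ enters), and the remaining drift is deterministically $O(\alpha_k)$ since $\nabla f$ is bounded.

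With these in hand I would run a Robbins--Siegmund argument on the Lyapunov function $V_k = \|e_k\|^2$. Squaring the recursion, taking conditional expectation (which annihilates the martingale cross-term $\langle(1-\beta_k)e_{k-1}+d_k, \zeta_k\rangle$), and applying Young's inequality to split $\|d_k\|$ gives, for all large $k$,
\[
\mE[\|e_k\|^2 \mid \mF_{k-1}] \le \left(1 - \tfrac{1}{2}\beta_k\right)\|e_{k-1}\|^2 + O\!\left(\frac{\alpha_k^2}{\beta_k} + \beta_k b_k^4 + \beta_k^2\right),
\]
where the contraction factor $1-\tfrac12\beta_k$ results from absorbing the $\|e_{k-1}\|$-proportional drift (possible since $\alpha_k = o(\beta_k)$) and $\mE[\|\zeta_k\|^2\mid\mF_{k-1}] = O(1)$ is used for the last term. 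The three error terms are summable precisely under \eqref{cond:3.6}: $\sum_k\alpha_k^2/\beta_k < \infty$ from $2p_1 - p_2 > 1$ (implied by $r(p_1-p_2)>1$ at $r=2$), $\sum_k\beta_k b_k^4 < \infty$ from $2p_3 + p_2 > 1$, and $\sum_k\beta_k^2 < \infty$ from $p_2 > 0.5$. Robbins--Siegmund then shows that $\|e_k\|^2$ converges a.s.\ and $\sum_k\beta_k\|e_{k-1}\|^2 < \infty$ a.s.; since $\sum_k\beta_k = \infty$ (as $p_2 \le 1$), the limit must be $0$, i.e.\ $e_k \to \0$ a.s. For the full range $r \ge 2$ in \eqref{cond:3.6}, one replaces $\|e_k\|^2$ by $\|e_k\|^r$ and uses the corresponding $r$-th-moment recursion, which is exactly why the drift condition is stated as $r(p_1-p_2)>1$ rather than $2p_1-p_2>1$. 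The same argument applies verbatim to $\barG_k - G_k$, whose noise stems only from $\bDelta_k$.

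For the ``furthermore'' claim, $\barG_k \to G_k$ a.s.\ together with $\kappa_{1,G} I \preceq G_k G_k^T \preceq \kappa_{2,G} I$ and the \emph{open} inclusion $[\kappa_{1,G},\kappa_{2,G}] \subset (\kappa_{1,\tG},\kappa_{2,\tG})$ implies that, along each sample path, $\barG_k\barG_k^T$ eventually has all its eigenvalues strictly inside $(\kappa_{1,\tG},\kappa_{2,\tG})$; thus $\barG_k$ already meets the full-row-rank requirement imposed on $\tG_k$, so no perturbation is needed and $\tG_k = \barG_k$ (that is, $\delta_k^G = \0$) for all $k \ge K^\star_G$, with $K^\star_G < \infty$ a finite path-dependent random index. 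I expect the main obstacle to be obtaining \emph{almost-sure} (not merely $L^2$) convergence when $p_2 \le 1$: then $\sum_k\mE\|e_k\|^2 = \infty$, so summability of second moments cannot deliver the a.s.\ statement, and the martingale noise enters through $k$-dependent weights that rule out a direct martingale-convergence theorem. The Robbins--Siegmund supermartingale argument through the self-recursion is what circumvents this; the secondary technical point is the dual-shift boundedness of $\tDelta\bx_k$ and the absorption of the $\|e_{k-1}\|$-proportional drift, without which the target drift cannot be controlled under the moment-only Assumption \ref{ass:2-1}\eqref{cond:grad:smo}.
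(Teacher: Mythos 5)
Your proof is essentially correct but runs on different machinery than the paper's. You share all the preliminary estimates with the paper: the $O(b_k^2)$ conditional bias from Lemma \ref{lemma:finite difference for gradient}, the $O(1)$ conditional second moment of $\hnabla F(\bx_k;\xi_k)$ via the integral representation of the central difference (the paper's \eqref{aequ:3}--\eqref{aequ:B5}), and the control of the target drift $\|\nabla f_{k}-\nabla f_{k-1}\|=O(\alpha_{k-1}\|\tDelta\bx_{k-1}\|)$ through the dual-shifted Newton system so that $\|\tDelta\bx_{k-1}\|\leq \|\tW_{k-1}^{-1}\|(\|\barg_{k-1}\|+\|c_{k-1}\|)$, exactly as in \eqref{aequ:4}. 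Where you diverge is the convergence engine: the paper feeds these estimates into the cited stochastic-approximation result of Ruszczy\'nski (Lemma \ref{technical lemma:1}), whose nontrivial condition (d), $\|\nabla f_{k+1}-\nabla f_k\|/\beta_k\to 0$ a.s., is verified by showing $\sum_k\mE[\|\nabla f_{k+1}-\nabla f_k\|^r]/\beta_k^r<\infty$ — this is precisely where $r(p_1-p_2)>1$ enters for them. You instead build a self-contained Robbins--Siegmund supermartingale argument on $\|e_k\|^2$, absorbing the $\|e_{k-1}\|$-proportional drift into the contraction via $\alpha_k=o(\beta_k)$; this is valid and even yields the extra byproduct $\sum_k\beta_k\|e_k\|^2<\infty$, but as you correctly flag, the $L^2$ version requires $2p_1-p_2>1$, which \eqref{cond:3.6} does \emph{not} guarantee when $r>2$ and $p_1-p_2$ is small. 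Your fix — running the recursion on $\|e_k\|^r$ so the drift contributes $\alpha_k^r/\beta_k^{r-1}$, summable since $rp_1-(r-1)p_2=r(p_1-p_2)+p_2>1+p_2$ — is the right repair and explains the role of $r(p_1-p_2)>1$ in your framework, but it is only asserted: making it rigorous requires the standard $r$-th moment inequality $\mE[\|a+\beta_k\zeta_k\|^r\mid\mF_{k-1}]\leq\|a\|^r+C_r(\beta_k^2\|a\|^{r-2}\mE[\|\zeta_k\|^2\mid\mF_{k-1}]+\beta_k^r\mE[\|\zeta_k\|^r\mid\mF_{k-1}])$ for $\mF_{k-1}$-measurable $a$, plus a Young-inequality absorption of the $\|a\|^{r-2}$ factor, and you should write this out since it carries the full weight of the lemma for general $r$. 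The ``furthermore'' part matches the paper's Weyl-inequality argument. Relative to the paper, your route is more elementary and self-contained (no external SA lemma) at the cost of a heavier moment computation; the paper's route is shorter on the page but hides the work inside the cited lemma and its condition (d).
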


\begin{proof}
See Appendix \ref{appendix lemma:average almost sure}.
\end{proof}

The next lemma establishes the convergence rate in expectation of $\barg_k$ and $\barG_k$.

\begin{lemma}\label{final fundamental lemma}
	
Under Assumptions \ref{ass:1-1}, \ref{ass:2-1}\eqref{cond:grad:smo}, \ref{ass:Delta}, \ref{ass:3-1}, we further assume that
\begin{equation}\label{cond:3.7}
p_2\in(0, 1],\quad\quad p_1>p_2,\quad\quad r\geq 2,\quad\quad \iota_{2}>0.5>p_3/\iota_{2} \;\;(\text{if } p_2=1),\quad\quad p_1<1+\iota_{2} \;\;(\text{if } p_2=1).
\end{equation}
Then, we have
\begin{equation*}
\mE[\left\|\barg_k-\nabla f_k\right\|^2]=O(\beta_k+b_k^4+\alpha_k^2/\beta_k^2), \quad\quad \mE[\left\|\bar G_k-G_k\right\|^2]=O(\beta_k+b_k^4+\alpha_k^2/\beta_k^2).
\end{equation*}
\end{lemma}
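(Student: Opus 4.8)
The plan is to reduce the claim to a scalar recursion for $a_k \coloneqq \mE[\|\barg_k - \nabla f_k\|^2]$ and solve it; the bound for $\barG_k - G_k$ follows identically and is in fact easier, since $c$ is deterministic and carries no sampling noise. Writing $e_k \coloneqq \barg_k - \nabla f_k$, substituting $\barg_{k-1} = e_{k-1} + \nabla f_{k-1}$ into the averaging update \eqref{snequ:3}, and regrouping gives the exact error recursion
\[
e_k = (1-\beta_k)e_{k-1} - (1-\beta_k)\,w_k + \beta_k\,u_k, \qquad w_k \coloneqq \nabla f_k - \nabla f_{k-1},\quad u_k \coloneqq \hnabla F(\bx_k;\xi_k) - \nabla f_k.
\]
I would split the innovation as $u_k = \delta_k + \zeta_k$, where $\delta_k \coloneqq \mE[\hnabla F(\bx_k;\xi_k)\mid\mF_{k-1}] - \nabla f_k$ is the $\mF_{k-1}$-measurable bias and $\zeta_k \coloneqq \hnabla F(\bx_k;\xi_k) - \mE[\hnabla F(\bx_k;\xi_k)\mid\mF_{k-1}]$ is a martingale difference. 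By Lemma \ref{lemma:finite difference for gradient}, $\|\delta_k\| = O(b_k^2)$ uniformly over sample paths, so $\|\delta_k\|^2 = O(b_k^4)$. Since $\bx_k$ is $\mF_{k-1}$-measurable, so is $w_k$; grouping all predictable terms into $P_k \coloneqq (1-\beta_k)e_{k-1} - (1-\beta_k)w_k + \beta_k\delta_k$ leaves $e_k = P_k + \beta_k\zeta_k$, and orthogonality yields $\mE[\|e_k\|^2\mid\mF_{k-1}] = \|P_k\|^2 + \beta_k^2\,\mE[\|\zeta_k\|^2\mid\mF_{k-1}]$.

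Two structural estimates supply the three error terms. First, I would prove $\mE[\|\zeta_k\|^2\mid\mF_{k-1}] = O(1)$. This is exactly where the SPSA construction is used: because both evaluations share the common sample $\xi_k$ and $F(\cdot;\xi)$ is smooth, a Taylor expansion of the symmetric two-point quotient cancels the $O(1/b_k)$ contribution and gives $\hnabla F(\bx_k;\xi_k) = (\nabla F(\bx_k;\xi_k)^T\bDelta_k)\bDelta_k^{-1} + O(b_k^2)$; the leading term has bounded conditional second moment by the bounded entries and reciprocals of $\bDelta$ (Assumption \ref{ass:Delta}), the $r\geq 2$ moment bound \eqref{cond:grad:smo}, and the boundedness of $\|\nabla f_k\|$ (Assumption \ref{ass:1-1}). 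The point is that there is no $1/b_k^2$ variance blow-up, so the noise contributes only $\beta_k^2\cdot O(1)$ per step, accumulating to $O(\beta_k)$. Second, I would bound the drift: eliminating the dual variable in \eqref{def:acc2} in favor of $\blambda_k + \tDelta\blambda_k$ shows that $\tDelta\bx_k$ solves a saddle system with right-hand side $(-\barg_k, -c_k)$, independent of $\blambda_k$; with $\|\tW_k^{-1}\|$ uniformly bounded (Assumption \ref{ass:1-1}) and $\|c_k\|\leq\kappa_c$, this gives $\|\tDelta\bx_k\| \leq C(\|e_k\| + 1)$. Because $\tau_k$ and $\nu_k$ are nonincreasing in \eqref{def:acc4}--\eqref{def:acc6} (hence bounded by $\tau_{-1},\nu_{-1}$) and $\tau_k\kappa_{\nabla f} + \kappa_{\nabla c} \geq \kappa_{\nabla c} > 0$, the stepsize obeys $\baralpha_k = O(\alpha_k)$ deterministically, so $\|w_k\| \leq \kappa_{\nabla f}\,\baralpha_{k-1}\|\tDelta\bx_{k-1}\|$ yields $\mE[\|w_k\|^2] = O(\alpha_{k-1}^2(a_{k-1}+1))$.

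Combining these via Young's inequality, $\|P_k\|^2 \leq (1+\beta_k)(1-\beta_k)^2\|e_{k-1}\|^2 + 2(1+\beta_k^{-1})[(1-\beta_k)^2\|w_k\|^2 + \beta_k^2\|\delta_k\|^2]$, and using $(1+\beta_k)(1-\beta_k)^2 \leq 1-\beta_k$, I take full expectations to obtain
\[
a_k \leq \Big(1 - \beta_k + O(\alpha_{k-1}^2/\beta_k)\Big)\,a_{k-1} + O(\alpha_{k-1}^2/\beta_k) + O(\beta_k b_k^4) + O(\beta_k^2).
\]
Since $p_1 > p_2$ forces $\alpha_k^2/\beta_k = o(\beta_k)$, the coupling term is absorbed and the multiplier is eventually $\leq 1 - \beta_k/2$. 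I would then invoke a standard recursion lemma: for $a_k \leq (1-\beta_k/2)a_{k-1} + \gamma_k$ with $\gamma_k = O(\alpha_k^2/\beta_k + \beta_k b_k^4 + \beta_k^2)$ one has $a_k = O(\gamma_k/\beta_k) = O(\alpha_k^2/\beta_k^2 + b_k^4 + \beta_k)$, exactly the asserted rate (using $\alpha_{k-1}\asymp\alpha_k$ and $b_{k-1}\asymp b_k$ for polynomial sequences). The remaining parts of \eqref{cond:3.7} --- $p_2\in(0,1]$ together with the boundary conditions $\iota_2 > 0.5 > p_3/\iota_2$ and $p_1 < 1+\iota_2$ when $p_2 = 1$ --- are precisely the hypotheses under which this recursion lemma delivers $a_k = O(\gamma_k/\beta_k)$ in the delicate harmonic regime $\beta_k\asymp 1/k$, where the product $\prod_j(1-\beta_j/2)$ decays only polynomially.

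The main difficulty is twofold and self-referential. First, the $O(1)$ conditional second moment of $\zeta_k$ must be established without any variance blow-up, which relies entirely on the common-random-number structure and the smoothness of the realization $F(\cdot;\xi)$ (not merely of $f$); this is the crux that makes a dimension-independent number of evaluations viable. Second, the target drift $w_k$ feeds back on the quantity being bounded, since $\|\tDelta\bx_{k-1}\|$ --- and hence $\mE[\|w_k\|^2]$ --- depends on $a_{k-1}$. Showing this feedback is strictly lower order (via $p_1 > p_2$, so that it neither destroys the $1-\beta_k/2$ contraction nor worsens the final rate) is the delicate step, and it is the reason the deterministic stepsize bound $\baralpha_k = O(\alpha_k)$ and the monotonicity of $(\tau_k,\nu_k)$ must be secured before the scalar recursion can be closed.
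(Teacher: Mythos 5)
Your route is genuinely different from the paper's: you close a one-step mean-square recursion for $a_k=\mE[\|\barg_k-\nabla f_k\|^2]$, whereas the paper fully unrolls the update \eqref{snequ:3} into a martingale-difference sum, a bias sum, and a drift sum, and bounds each term via Lemma~\ref{technical lemma:2}. Your ingredients --- the bias/martingale split of the innovation, conditional orthogonality, the $O(1)$ conditional second moment of the SPSA estimator, and the drift bound obtained by eliminating $\blambda_k$ from the saddle system --- are all sound and mirror the paper's, and for $p_2<1$ (where $\prod_{j=i+1}^{k}(1-\beta_j/2)$ decays super-polynomially) your recursion does deliver the stated rate. The feedback of $a_{k-1}$ through the drift is also handled correctly via $\alpha_k^2/\beta_k=o(\beta_k)$.

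The gap is in the boundary case $p_2=1$, which \eqref{cond:3.7} explicitly admits with only $\iota_2>0.5$. After Young's inequality with parameter $\beta_k$, your contraction factor is $1-\beta_k$ (further relaxed to $1-\beta_k/2$), so the homogeneous weight is $\prod_{j=i+1}^{k}(1-\beta_j/2)\asymp (i/k)^{\iota_2/2}$ and the accumulated noise $\sum_{i}\prod_{j=i+1}^{k}(1-\beta_j/2)\,\beta_i^2$ is of order $k^{-\min\{\iota_2/2,\,1\}}$, which is \emph{not} $O(\beta_k)=O(k^{-1})$ unless $\iota_2>2$. Concretely, the induction $a_{k-1}\leq M\beta_{k-1}\Rightarrow a_k\leq M\beta_k$ forces $M(1/2-1/\iota_2)\geq C$, unsolvable for $\iota_2\leq 2$; the analogous bookkeeping for the drift and bias terms would require $p_1<1+\iota_2/4$ and $p_3<\iota_2/8$ rather than the stated $p_1<1+\iota_2$ and $p_3<\iota_2/2$. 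So your claim that \eqref{cond:3.7} is ``precisely'' what your recursion lemma needs is false. The paper avoids this loss because it exploits martingale orthogonality across the \emph{entire} unrolled sum, so the relevant weight is $\prod_{j=i+1}^{k}(1-\beta_j)^2$ --- effectively contraction exponent $2\iota_2$ --- and Lemma~\ref{technical lemma:2}(a) with $a_1=a_2=1$ needs only $2-1/\iota_2>0$. Your argument is repairable: apply Young with parameter $\epsilon\beta_k$ for small $\epsilon>0$, giving contraction $1-(2-\epsilon)\beta_k+O(\beta_k^2)$ at the cost of a constant factor $1/\epsilon$ on the forcing terms, after which $\iota_2>1/(2-\epsilon)$ recovers the stated range. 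A minor secondary point: the pathwise expansion $\hnabla F(\bx_k;\xi_k)=(\nabla F(\bx_k;\xi_k)^T\bDelta_k)\bDelta_k^{-1}+O(b_k^2)$ presumes bounded third derivatives of the realization $F(\cdot;\xi)$, which the assumptions do not provide; the $O(1)$ second moment should instead come from the exact first-order integral representation of the symmetric difference quotient together with \eqref{cond:grad:smo}, and the $O(b_k^2)$ bound is available only for the conditional bias (Lemma~\ref{lemma:finite difference for gradient}), where $F$ is first replaced by $f$.
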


\begin{proof}
See Appendix \ref{appendix lemma:final fundamental lemma}.
\end{proof}

The convergence rate in expectation established in Lemma \ref{final fundamental lemma} resembles the rate shown in \cite{Na2024Asymptotically}; however, it includes an additional term $b_k^4$, which arises from the bias introduced by our derivative-free estimator. Notably, the result of Lemma \ref{final fundamental lemma} can be improved through local analysis, as the direction $\tilde{\Delta} \boldsymbol{x}_k$ is merely treated as a term with bounded second moment in the global analysis, while it is shown to vanish in the local analysis. 
Further details on refining the bound of $\tilde{\Delta} \boldsymbol{x}_k$ will be provided in the statistical inference analysis in Section \ref{sec:4}. Specifically, see Lemma \ref{lem:local:rate} and Lemma \ref{lem:2} in Appendix \ref{pf:lem:local:rate} for the improvement of the error term $\alpha_k^2/\beta_k^2$.

\subsection{Global almost sure convergence}

In this subsection, we establish the global almost sure convergence of DF-SSQP. We first decompose~the direction step $\tDelta \bx_k$ as a tangential step $\bu_k$ and a normal step $\bv_k$ as
\begin{equation} \label{decomposition}
\tilde{\Delta} \boldsymbol{x}_k = \boldsymbol u_k + \boldsymbol v_k, \quad \text{where} \quad \boldsymbol u_k \in \text{Null}(\tilde{G}_k) \quad \text{and} \quad \boldsymbol v_k \in \text{Range}(\tilde{G}_k^T).
\end{equation}
The first lemma establishes an upper bound for $\boldsymbol v_k$ in terms of $c_k$ in (i), a lower bound for the curvature of $\tB_k$ along $\tDelta \bx_k$ in terms of $\bu_k$ in (ii), and a lower bound on the reduction of the local model in (iii).

\begin{lemma}\label{thm:convergence1}
Under Assumption \ref{ass:1-1}, there exist constants $ \kappa _{v}, \kappa _{u}, \kappa _{q } >0$ such that the following~statements hold true for all $k\geq 0$.
\begin{enumerate}[label=(\alph*),topsep=2pt]
\setlength\itemsep{0.0em}
\item $\boldsymbol v_k$ satisfies max$\{ \| \boldsymbol v_k\|, \| \boldsymbol v_k\|^2\} \leq \kappa _v\|c_k\|$.
\item If $\|\boldsymbol u_k\|^2 \geq \kappa _{u} \|\boldsymbol v_k\|^2$, then $\tilde{\Delta} \boldsymbol{x}_k^T\tB_k\tilde{\Delta} \boldsymbol{x}_k\geq 0.5\kappa_{1,\tB}\|\boldsymbol u_k\|^2$.
\item The reduction of the local model satisfies $\Delta q(\tDelta\bx_k; \tau_k, \boldsymbol x_k,\bar{\boldsymbol g}_k,\tB_k)\geq\kappa _{q }\tau_k(\|\tDelta \bx_k\|^2+\|c_k\|)$.
\end{enumerate}	
\end{lemma}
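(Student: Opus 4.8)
The plan is to work directly from the two block rows of the Newton system \eqref{def:acc2} together with the orthogonal decomposition \eqref{decomposition}, noting that $\text{Null}(\tG_k)\perp\text{Range}(\tG_k^T)$ so that $\boldsymbol u_k\perp\boldsymbol v_k$. The second block row reads $\tG_k\tDelta\bx_k = -c_k$, which collapses to $\tG_k\boldsymbol v_k = -c_k$ since $\tG_k\boldsymbol u_k=\boldsymbol 0$. Writing $\boldsymbol v_k = \tG_k^T\boldsymbol y_k$ and using the full-row-rank bound $\kappa_{1,\tG}I\preceq \tG_k\tG_k^T\preceq\kappa_{2,\tG}I$ from Assumption \ref{ass:1-1}, I recover $\boldsymbol v_k = -\tG_k^T(\tG_k\tG_k^T)^{-1}c_k$, whose operator norm (that of the right pseudoinverse of $\tG_k$) gives $\|\boldsymbol v_k\|\leq \|c_k\|/\sqrt{\kappa_{1,\tG}}$. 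For part (a) I would then invoke the uniform bound $\|c_k\|\leq\kappa_c$ to get $\|\boldsymbol v_k\|^2\leq\|c_k\|^2/\kappa_{1,\tG}\leq(\kappa_c/\kappa_{1,\tG})\|c_k\|$, so that $\max\{\|\boldsymbol v_k\|,\|\boldsymbol v_k\|^2\}\leq\kappa_v\|c_k\|$ with $\kappa_v\coloneqq\max\{1/\sqrt{\kappa_{1,\tG}},\,\kappa_c/\kappa_{1,\tG}\}$.

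For part (b), I would expand $\tDelta\bx_k^T\tB_k\tDelta\bx_k = \boldsymbol u_k^T\tB_k\boldsymbol u_k + 2\boldsymbol u_k^T\tB_k\boldsymbol v_k + \boldsymbol v_k^T\tB_k\boldsymbol v_k$ and bound the three terms separately: the null-space curvature condition gives $\boldsymbol u_k^T\tB_k\boldsymbol u_k\geq\kappa_{1,\tB}\|\boldsymbol u_k\|^2$ (as $\boldsymbol u_k\in\text{Null}(\tG_k)$), while $\|\tB_k\|\leq\kappa_{2,\tB}$ bounds the cross and last terms by $2\kappa_{2,\tB}\|\boldsymbol u_k\|\|\boldsymbol v_k\|$ and $\kappa_{2,\tB}\|\boldsymbol v_k\|^2$. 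Substituting $\|\boldsymbol v_k\|\leq\|\boldsymbol u_k\|/\sqrt{\kappa_u}$ from the hypothesis $\|\boldsymbol u_k\|^2\geq\kappa_u\|\boldsymbol v_k\|^2$ yields
\[
\tDelta\bx_k^T\tB_k\tDelta\bx_k \;\geq\; \Big(\kappa_{1,\tB} - \tfrac{2\kappa_{2,\tB}}{\sqrt{\kappa_u}} - \tfrac{\kappa_{2,\tB}}{\kappa_u}\Big)\|\boldsymbol u_k\|^2,
\]
and the claim follows by fixing $\kappa_u$ large enough that the parenthesized factor is at least $\tfrac12\kappa_{1,\tB}$.

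For part (c), I would start from the reduction bound \eqref{reduction}, namely $\Delta q\geq\tfrac12\tau_k\max\{\tDelta\bx_k^T\tB_k\tDelta\bx_k,0\}+\sigma\|c_k\|$, and split on the same dichotomy used in (b). When $\|\boldsymbol u_k\|^2\geq\kappa_u\|\boldsymbol v_k\|^2$, part (b) makes the curvature term at least $0.5\kappa_{1,\tB}\|\boldsymbol u_k\|^2$; since $\|\tDelta\bx_k\|^2=\|\boldsymbol u_k\|^2+\|\boldsymbol v_k\|^2\leq(1+1/\kappa_u)\|\boldsymbol u_k\|^2$ by orthogonality, this lower-bounds $\tau_k\|\tDelta\bx_k\|^2$, while $\sigma\|c_k\|$ covers $\tau_k\|c_k\|$. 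In the complementary case $\|\boldsymbol u_k\|^2<\kappa_u\|\boldsymbol v_k\|^2$, part (a) gives $\|\tDelta\bx_k\|^2\leq(1+\kappa_u)\|\boldsymbol v_k\|^2\leq(1+\kappa_u)\kappa_v\|c_k\|$, so the whole quantity $\|\tDelta\bx_k\|^2+\|c_k\|$ is $O(\|c_k\|)$ and is absorbed by $\sigma\|c_k\|$.

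The one point I regard as the main obstacle is reconciling the scaling: the target lower bound carries the factor $\tau_k$ on both $\|\tDelta\bx_k\|^2$ and $\|c_k\|$, whereas \eqref{reduction} supplies the constraint reduction as a bare $\sigma\|c_k\|$. I would resolve this by invoking the monotonicity of the merit-parameter rule \eqref{def:acc4}, which guarantees $\tau_k\leq\tau_{-1}$ for all $k$, so that $\sigma\|c_k\|\geq(\sigma/\tau_{-1})\tau_k\|c_k\|$; this permits converting freely between the $\sigma$- and $\tau_k$-scaled constraint terms and selecting a single $\kappa_q>0$ valid in both cases. Beyond this bookkeeping, the only remaining subtlety is checking that $\kappa_u$ (fixed in (b)) and $\kappa_v$ (fixed in (a)) feed consistently into the final choice of $\kappa_q$; the underlying estimates are otherwise routine.
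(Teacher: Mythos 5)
Your proposal is correct and follows essentially the same route as the paper's proof: the same pseudoinverse bound for $\boldsymbol v_k$ with $\kappa_v=\max\{1/\sqrt{\kappa_{1,\tG}},\kappa_c/\kappa_{1,\tG}\}$, the same quadratic-form expansion with $\kappa_u$ chosen so that $\kappa_{1,\tB}-2\kappa_{2,\tB}/\sqrt{\kappa_u}-\kappa_{2,\tB}/\kappa_u\geq\tfrac12\kappa_{1,\tB}$, and the same case split for (c). Your resolution of the $\sigma\|c_k\|$ versus $\tau_k\|c_k\|$ scaling via $\tau_k\leq\tau_{-1}$ is exactly how the paper handles it, with the $\tau_{-1}$ absorbed into $\kappa_q=\min\{\kappa_u\kappa_{1,\tB}/(4(1+\kappa_u)),\,\sigma/(2\tau_{-1}),\,\sigma/(2\kappa_v\tau_{-1}(1+\kappa_u))\}$.
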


\begin{proof}
See Appendix \ref{appendix thm:convergence1}.
\end{proof}

In the next lemma, we demonstrate the stabilization of the merit and ratio parameters~$(\tau_k,\nu_k)$,~which is the only, yet crucial, result for which we require the boundedness condition \eqref{cond:grad:bound}.

\begin{lemma}\label{parameter stabilize}
Under Assumptions \ref{ass:1-1}, \ref{ass:2-1}\eqref{cond:grad:bound}, \ref{ass:Delta}, there exist a (potentially random) $K^\star_{\tau\nu}<\infty$ and deterministic constants $\tilde{\tau}, \tilde{\nu}>0$ such that for all $k\geq K^\star_{\tau\nu}$, $\tau_k = \tau_{K^\star_{\tau\nu}} \geq \tilde{\tau}$ and $\nu_k = \nu_{K^\star_{\tau\nu}} \geq \tilde{\nu}$.
\end{lemma}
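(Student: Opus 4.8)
The plan is to exploit the fact that, by construction in \eqref{def:acc4} and \eqref{def:acc6}, both $\{\tau_k\}$ and $\{\nu_k\}$ are non-increasing sequences; stabilization will then follow once I exhibit \emph{deterministic} positive lower bounds on the trial quantities $\tau_k^{\text{trial}}$ and $\nu_k^{\text{trial}}$ that are uniform over all sample paths. The first task is therefore to collect uniform, sample-path-independent bounds. Writing the finite-difference estimate \eqref{snequ:2} via the mean value theorem as $\hat{\nabla}F(\bx_k;\xi_k) = \left(\nabla F(\zeta_k;\xi_k)^T\bDelta_k\right)\bDelta_k^{-1}$ for some $\zeta_k$ between $\bx_k - b_k\bDelta_k$ and $\bx_k + b_k\bDelta_k$, I would bound $\|\hat{\nabla}F(\bx_k;\xi_k)\|\leq \|\nabla F(\zeta_k;\xi_k)\|\,\|\bDelta_k\|\,\|\bDelta_k^{-1}\|$. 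Here \eqref{cond:grad:bound} (and not merely the moment bound \eqref{cond:grad:smo}) gives $\|\nabla F(\zeta_k;\xi_k)\|\leq \kappa_{\nabla f}+\Upsilon_{m}$ \emph{deterministically}, while Assumption~\ref{ass:Delta} gives $\|\bDelta_k\|\,\|\bDelta_k^{-1}\|\leq d\kappa_{\bDelta_2}/\kappa_{\bDelta_1}$; the averaging recursion \eqref{snequ:3} then propagates this into a uniform bound $\|\barg_k\|\leq M$. Note that only boundedness of $\barg_k$, not its convergence, is needed, which is precisely why this lemma invokes the stronger condition \eqref{cond:grad:bound}.

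With $\|\barg_k\|\leq M$ in hand, I would next control the primal step and the trial dual $\blambda_k^+ \coloneqq \blambda_k + \tDelta\blambda_k$. Rewriting the Newton system \eqref{def:acc2} in the variables $(\tDelta\bx_k,\blambda_k^+)$, it decouples from the current multiplier $\blambda_k$ and reads $\tW_k(\tDelta\bx_k,\blambda_k^+) = -(\barg_k, c_k)$. Assumption~\ref{ass:1-1} (the bounds on $\tG_k\tG_k^T$, together with null-space positivity and the norm bound on $\tB_k$) guarantees that $\tW_k$ is invertible with $\|\tW_k^{-1}\|$ uniformly bounded \cite[Lemma 16.1]{Nocedal2006Numerical}, so $\|\tDelta\bx_k\|$ and $\|\blambda_k^+\|$ are bounded by a deterministic constant $\bar C$ (using $\|c_k\|\leq\kappa_c$); crucially, this bound is independent of the possibly unbounded $\blambda_k$. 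Taking the inner product of the first block of \eqref{def:acc2} with $\tDelta\bx_k$ and using $\tG_k\tDelta\bx_k=-c_k$ yields the key identity
\[
\barg_k^T\tDelta\bx_k + \tDelta\bx_k^T\tB_k\tDelta\bx_k = c_k^T\blambda_k^+ .
\]
I then split on the sign of $\tDelta\bx_k^T\tB_k\tDelta\bx_k$. When it is nonnegative, the denominator in \eqref{def:acc3} equals $c_k^T\blambda_k^+\leq\|c_k\|\,\|\blambda_k^+\|$, so either it is nonpositive (giving $\tau_k^{\text{trial}}=\infty$) or $\tau_k^{\text{trial}}\geq(1-\sigma)/\bar C$. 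When it is negative, Lemma~\ref{thm:convergence1}(b) in contrapositive forces $\|\boldsymbol u_k\|^2<\kappa_u\|\boldsymbol v_k\|^2$, so by orthogonality of the decomposition \eqref{decomposition} and Lemma~\ref{thm:convergence1}(a), $\|\tDelta\bx_k\|\leq\sqrt{1+\kappa_u}\,\kappa_v\|c_k\|$; the denominator is $\barg_k^T\tDelta\bx_k\leq M\sqrt{1+\kappa_u}\,\kappa_v\|c_k\|$, again bounding $\tau_k^{\text{trial}}$ below by a deterministic constant. Taking the minimum produces $\hat\tau>0$ with $\tau_k^{\text{trial}}\geq\hat\tau$ for all $k$.

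Stabilization of $\tau_k$ then follows from a standard monotonicity argument: whenever \eqref{def:acc4} triggers a decrease, $\tau_k=(1-\epsilon)\tau_k^{\text{trial}}<(1-\epsilon)\tau_{k-1}$, so after $n$ decreases $\tau_k\leq(1-\epsilon)^n\tau_{-1}$; since every post-decrease value is at least $(1-\epsilon)\hat\tau>0$, only finitely many decreases can occur. Hence there is an a.s.\ finite (random) $K^\star_\tau$ after which $\tau_k\equiv\tau_{K^\star_\tau}$, and a simple induction gives the deterministic lower bound $\tau_k\geq\tilde\tau\coloneqq\min\{\tau_{-1},(1-\epsilon)\hat\tau\}$. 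For the ratio parameter, Lemma~\ref{thm:convergence1}(c) gives $\nu_k^{\text{trial}}=\Delta q(\tDelta\bx_k;\tau_k,\bx_k,\barg_k,\tB_k)/\|\tDelta\bx_k\|^2\geq\kappa_q\tau_k\geq\kappa_q\tilde\tau$, a deterministic positive lower bound; the identical argument applied to \eqref{def:acc6} yields stabilization of $\nu_k$ with $\nu_k\geq\tilde\nu\coloneqq\min\{\nu_{-1},(1-\epsilon)\kappa_q\tilde\tau\}$. Setting $K^\star_{\tau\nu}=\max\{K^\star_\tau,K^\star_\nu\}$ finishes the proof.

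The main obstacle is ensuring that the lower bound $\hat\tau$ is genuinely \emph{deterministic}: this hinges on the uniform boundedness of $\hat{\nabla}F(\bx_k;\xi_k)$ despite the potentially large reciprocal factor $\bDelta_k^{-1}$, which is exactly why the uniform noise bound \eqref{cond:grad:bound} — rather than only the $r$-moment bound \eqref{cond:grad:smo} used elsewhere — is indispensable here. The remaining care is bookkeeping in the two-case denominator estimate and verifying that $\blambda_k^+$ is bounded \emph{independently} of the current multiplier, which the reformulated Newton system makes transparent.
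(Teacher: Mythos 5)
Your proposal is correct and follows essentially the same route as the paper: bound the denominator of $\tau_k^{\text{trial}}$ above by a constant multiple of $\|c_k\|$ via the Newton system, the tangential/normal decomposition, and the uniform bound on $\barg_k$ supplied by \eqref{cond:grad:bound}, then conclude stabilization from the multiplicative decrease rule and handle $\nu_k$ via Lemma \ref{thm:convergence1}(c). The only (harmless) deviation is algebraic: you contract the first Newton block with $\tDelta\bx_k$ to get the identity $\barg_k^T\tDelta\bx_k + \tDelta\bx_k^T\tB_k\tDelta\bx_k = c_k^T\blambda_k^+$ and invoke the contrapositive of Lemma \ref{thm:convergence1}(b) in the negative-curvature case, whereas the paper contracts with $\bu_k^T$ and directly drops the nonnegative term $\bu_k^T\tB_k\bu_k$.
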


\begin{proof}
See Appendix \ref{appendix parameter stabilize}.
\end{proof}

Then, we establish the liminf-type convergence guarantee for the reduction of the local model, which is a key step toward proving the limit-type convergence guarantee for Algorithm \ref{Alg:DF-SSQP}. Let us~denote $(\Delta \bx_k,\Delta\blambda_k)$ to be the solution of \eqref{def:acc2}, but with $\barg_k$ replaced by $\nabla f_k$ and $\tilde{G}_k$ replaced by $G_k$.

\begin{lemma}\label{convergence2}

Under Assumptions \ref{ass:1-1}, \ref{ass:2-1}\eqref{cond:grad:smo}, \ref{ass:Delta}, \ref{ass:3-1}, we further assume that (i) $\delta_k^G$ ensures $[\kappa_{1,G},\kappa_{2,G}]\subseteq(\kappa_{1,\tG},\kappa_{2,\tG})$, (ii) $p_1, p_2, p_3, r$ satisfy
\begin{equation}\label{cond:3.10}
p_1\in(0.75,1],\quad\quad p_2\in(0.5,2p_1-1),\quad\quad p_3>0.5-0.5p_2,\quad\quad r(p_1-p_2)>1,
\end{equation}
and (iii) the statement of Lemma \ref{parameter stabilize} holds (ensured by \eqref{cond:grad:bound}). Then, we have almost surely
\begin{equation*}
\liminf_{k \to \infty} \Delta q(\tDelta\bx_k;\tau_k, \bx_k, \barg_k, \tB_k)=0 \quad\quad\text{ and }\quad\quad \liminf_{k \to \infty} (\|\Delta \boldsymbol{x}_k\|+\|c_k\|)=0.
\end{equation*}
\end{lemma}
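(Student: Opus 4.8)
The plan is to run a summability argument on a single, frozen merit function obtained after the parameters stabilize. First I would invoke Lemma~\ref{parameter stabilize} together with the last part of Lemma~\ref{lemma:average almost sure} to fix a random but a.s.\ finite index $K^\star$ beyond which $\tau_k\equiv\tilde{\tau}$, $\nu_k\equiv\tilde{\nu}$, and $\tG_k=\barG_k$; since $\baralpha_k\le C\alpha_k\to0$, I may also assume $\baralpha_k\le1$ there. For $k\ge K^\star$ I work with the fixed merit function $\phi_{\tilde{\tau}}$ and start from the rigorous Taylor bound derived in the Remark, namely
\[
\phi_{\tilde{\tau}}(\bx_{k+1})\le\phi_{\tilde{\tau}}(\bx_k)-\baralpha_k\Delta q(\tDelta\bx_k;\tilde{\tau},\bx_k,\nabla f_k,\tB_k)+\baralpha_k\|c_k+G_k\tDelta\bx_k\|+\tfrac12(\tilde{\tau}\kappa_{\nabla f}+\kappa_{\nabla c})\baralpha_k^2\|\tDelta\bx_k\|^2.
\]
Converting this to the computable reduction via \eqref{snequ:10}, $\Delta q(\cdot;\nabla f_k)=\Delta q(\cdot;\barg_k)-\tilde{\tau}(\nabla f_k-\barg_k)^T\tDelta\bx_k$, and using $c_k+\tG_k\tDelta\bx_k=\0$ from \eqref{def:acc2} with $\tG_k=\barG_k$ to write $\|c_k+G_k\tDelta\bx_k\|=\|(G_k-\barG_k)\tDelta\bx_k\|$, I obtain the recursion $\Phi_{k+1}\le\Phi_k-\baralpha_k\Delta q_k+R_k$, where $\Phi_k:=\phi_{\tilde{\tau}}(\bx_k)-\tilde{\tau}f_{\text{inf}}\ge0$, $\Delta q_k:=\Delta q(\tDelta\bx_k;\tilde{\tau},\bx_k,\barg_k,\tB_k)\ge0$ by \eqref{reduction}, and the error term is $R_k:=\baralpha_k\tilde{\tau}(\nabla f_k-\barg_k)^T\tDelta\bx_k+\baralpha_k\|G_k-\barG_k\|\,\|\tDelta\bx_k\|+\tfrac12(\tilde{\tau}\kappa_{\nabla f}+\kappa_{\nabla c})\baralpha_k^2\|\tDelta\bx_k\|^2$.

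The crux of the argument, and its main obstacle, is to control the biased derivative errors so that $\sum_k|R_k|<\infty$ almost surely. A preliminary observation is that $\|\tDelta\bx_k\|$ stays bounded along the path: rewriting \eqref{def:acc2} in the variables $(\tDelta\bx_k,\blambda_k+\tDelta\blambda_k)$ cancels $\blambda_k$ from the right-hand side, so $\tDelta\bx_k$ is the primal block of $-\tW_k^{-1}(\barg_k,c_k)$; with $\|\tW_k^{-1}\|$ uniformly bounded (Assumption~\ref{ass:1-1}), $\|c_k\|\le\kappa_c$, and $\barg_k\to\nabla f_k$ bounded (Lemma~\ref{lemma:average almost sure}, so $\sup_k\|\barg_k\|<\infty$ a.s.), this gives $\sup_k\|\tDelta\bx_k\|<\infty$ a.s. Since moreover $\baralpha_k\le C\alpha_k$ deterministically, Cauchy--Schwarz reduces $\sum_k|R_k|<\infty$ to the three sums $\sum_k\alpha_k\|\nabla f_k-\barg_k\|$, $\sum_k\alpha_k\|G_k-\barG_k\|$, and $\sum_k\alpha_k^2$. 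Taking expectations, applying Jensen, and inserting the $L^2$ rates of Lemma~\ref{final fundamental lemma} (whose hypotheses hold under \eqref{cond:3.10}, where $p_2<2p_1-1\le1$ rules out the $p_2=1$ case), $\mE\|\barg_k-\nabla f_k\|=O(\sqrt{\beta_k}+b_k^2+\alpha_k/\beta_k)$ and likewise for $\barG_k$, I get summands of orders $(k+1)^{-(p_1+p_2/2)}$, $(k+1)^{-(p_1+2p_3)}$, and $(k+1)^{-(2p_1-p_2)}$, each with exponent strictly above $1$ precisely under \eqref{cond:3.10}: $p_1>0.75$ and $p_2>0.5$ handle the first, $p_3>0.5-0.5p_2$ (hence $p_1+2p_3>1+(p_1-p_2)>1$) the second, and $p_2<2p_1-1$ the third, while $\sum_k\alpha_k^2<\infty$ follows from $2p_1>1$. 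Tonelli then upgrades finite expected sums to a.s.\ finite sums, so $\sum_k|R_k|<\infty$ a.s.

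Summing the recursion from $K^\star$ and using $\Phi_k\ge0$ and $\sum_k|R_k|<\infty$ yields $\sum_k\baralpha_k\Delta q_k<\infty$ a.s. Because $\baralpha_k\ge\tilde{\nu}\alpha_k/(\tilde{\tau}\kappa_{\nabla f}+\kappa_{\nabla c})$ for $k\ge K^\star$, $\Delta q_k\ge0$, and $\sum_k\alpha_k=\infty$ (as $p_1\le1$), the standard contradiction argument (if $\liminf_k\Delta q_k=\delta>0$ then $\sum_k\alpha_k\Delta q_k=\infty$) gives $\liminf_{k\to\infty}\Delta q_k=0$ almost surely, which is the first claim.

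Finally I would pass from $\Delta q_k$ to the primal quantities. Along a subsequence realizing the liminf, Lemma~\ref{thm:convergence1}(c) gives $\kappa_q\tilde{\tau}(\|\tDelta\bx_k\|^2+\|c_k\|)\le\Delta q_k\to0$, hence $\|\tDelta\bx_k\|\to0$ and $\|c_k\|\to0$ along it. To replace $\tDelta\bx_k$ by the exact-data step $\Delta\bx_k$, I rewrite both \eqref{def:acc2} and its exact-data counterpart in their respective shifted-multiplier variables, so that both primal steps become $\blambda_k$-free, and a standard linear-system perturbation bound yields $\|\tDelta\bx_k-\Delta\bx_k\|\le C(\|\barg_k-\nabla f_k\|+\|\tG_k-G_k\|)\to0$ a.s.\ by Lemma~\ref{lemma:average almost sure}. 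Consequently $\|\Delta\bx_k\|\to0$ along the same subsequence, giving $\liminf_{k\to\infty}(\|\Delta\bx_k\|+\|c_k\|)=0$ almost surely.
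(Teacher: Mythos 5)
Your proposal is correct and shares the paper's skeleton (freeze $(\tau_k,\nu_k,\tG_k)$ after a random index, run the Taylor/descent inequality on the frozen $\ell_2$ merit function, control the three error terms via Cauchy--Schwarz and the $L^2$ rates of Lemma~\ref{final fundamental lemma}, and use $\sum_k\alpha_k=\infty$), but it diverges from the paper at two points. First, where you show $\sum_k|R_k|<\infty$ almost surely (via Tonelli and the pathwise bound $\sup_k\|\tDelta\bx_k\|<\infty$) and then telescope the recursion pathwise, the paper instead takes conditional expectations $\mE[\cdot\mid\mF_{k-1}]$ and invokes the Robbins--Siegmund theorem to conclude $\sum_k\alpha_k\Delta q_k<\infty$ a.s.; your route is more elementary and works here because the error terms are absolutely summable in expectation, which is exactly what the paper verifies anyway in its display \eqref{aequ:10}. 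Second, for the claim about $\|\Delta\bx_k\|+\|c_k\|$, you extract a subsequence along which $\Delta q_k\to0$ and push it through Lemma~\ref{thm:convergence1}(c) plus the perturbation bound $\|\tDelta\bx_k-\Delta\bx_k\|\to0$; the paper instead establishes the stronger fact $\sum_k\alpha_k(\|\Delta\bx_k\|^2+\|c_k\|)<\infty$ a.s. Your shortcut suffices for the lemma as stated, but note that the stronger weighted-summability conclusion is what the proof of Theorem~\ref{final convergence} actually consumes (see \eqref{aaequ:3}), so if this lemma is meant to feed that theorem you would want to record the summability version rather than only the liminf.

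Two small points to tighten. You should justify that the exact-data system defining $(\Delta\bx_k,\Delta\blambda_k)$ is solvable before applying a perturbation bound: Assumption~\ref{ass:1-1} only gives $\tB_k\succeq\kappa_{1,\tB}I$ on $\ker(\tG_k)$, not on $\ker(G_k)$, and the paper devotes an argument (Lemma~\ref{technical lemma:3} together with $\barG_k\to G_k$) to transfer the reduced positive definiteness and obtain a uniform bound on the inverse of the exact-data KKT matrix for all large $k$. Also, your lower bound $\baralpha_k\geq\tilde{\nu}\alpha_k/(\tilde{\tau}\kappa_{\nabla f}+\kappa_{\nabla c})$ has $\tilde{\tau}$ (a lower bound on $\tau_k$) in the denominator where you need an upper bound such as $\tau_{-1}$; this is cosmetic since $\tau_k$ is nonincreasing and stabilizes, but as written the inequality points the wrong way.
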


\begin{proof}
See Appendix \ref{appendix convergence2}.
\end{proof}

We note that the condition \eqref{cond:3.10} implies both \eqref{cond:3.6} and \eqref{cond:3.7}. In particular, since $p_1\leq 1$, we have $2p_1-1\leq \min\{p_1, 1\}$; thus, \eqref{cond:3.10} implies $p_2<\min\{p_1, 1\}$ as required by \eqref{cond:3.6} and \eqref{cond:3.7}. Furthermore,~using $p_2>0.5$ and $p_1\leq 1$, we obtain $r(p_1-p_2)>1\Rightarrow r(p_1-0.5)>1\Rightarrow r>2$; thus, the condition $r\geq 2$ in \eqref{cond:3.6} and \eqref{cond:3.7} is also satisfied. In addition, since $p_1>0.75$ implies $2p_1-1>0.5$, we note that a feasible region always exists for our parameters $\{p_1,p_2,p_3, r\}$.

In the next theorem, we establish the global convergence guarantee of Algorithm \ref{Alg:DF-SSQP}. Given the primal iterate $\bx_k$ generated by Algorithm \ref{Alg:DF-SSQP}, we define the least squares estimate of the dual solution $\boldsymbol \lambda_k^\star$ as $\blambda_k^\star = -[\tG_k\tG_k^T]^{-1}\tG_k\barg_k$ (note that Assumption \ref{ass:1-1} ensures $\tG_k$ has full row rank,~making $\boldsymbol{\lambda}_k^\star$ well-defined). The next theorem states that the KKT residual of the primal solution $\bx_k$, along with its~least-squares dual estimate $\blambda_k^\star$, converges to zero from any initialization almost surely.

\begin{theorem}\label{final convergence}
Under the same conditions as in Lemma \ref{convergence2}, we have
\begin{equation*}
\lim_{k \to \infty} (\|\nabla f_k+ G_k^T\boldsymbol\lambda_k^\star\|_2+\|c_k\|_2)=0\quad \text{almost surely}.
\end{equation*}
\end{theorem}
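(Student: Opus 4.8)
The plan is to upgrade the $\liminf$-type guarantee of Lemma \ref{convergence2} to a full limit, and to transfer it from the (approximate) Newton step to the KKT residual built from the least-squares multiplier. Throughout I work on the probability-one event on which the conclusions of Lemmas \ref{lemma:average almost sure}, \ref{parameter stabilize}, and \ref{convergence2} hold simultaneously; in particular $\barg_k\to\nabla f_k$, $\tG_k\to G_k$ (with $\tG_k=\barG_k$ for all large $k$), and $(\tau_k,\nu_k)\to(\tilde\tau_\infty,\tilde\nu_\infty)$ with $\tilde\tau_\infty\geq\tilde\tau>0$, $\tilde\nu_\infty\geq\tilde\nu>0$. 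I introduce the deterministic residual map
\[
\mathcal{K}(\bx)=\big\|(I-P(\bx))\nabla f(\bx)\big\|+\|c(\bx)\|,\qquad P(\bx)=G(\bx)^T[G(\bx)G(\bx)^T]^{-1}G(\bx),
\]
which is the KKT residual evaluated with the \emph{exact-data} least-squares multiplier $\blambda_k^\sharp\coloneqq-[G_kG_k^T]^{-1}G_k\nabla f_k$, so that $\nabla f_k+G_k^T\blambda_k^\sharp=(I-P_k)\nabla f_k$. Since $\barg_k\to\nabla f_k$, $\tG_k\to G_k$, and $G_kG_k^T\succeq\kappa_{1,G}I$ is uniformly invertible, the map producing the algorithm's multiplier $\blambda_k^\star=-[\tG_k\tG_k^T]^{-1}\tG_k\barg_k$ is continuous with uniformly bounded derivative, so $\blambda_k^\star-\blambda_k^\sharp\to\0$ and the theorem's residual differs from $\mathcal{K}(\bx_k)$ by an $o(1)$ term almost surely. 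Hence it suffices to prove $\mathcal{K}(\bx_k)\to0$.

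Next I record three structural facts. (i) $\mathcal{K}$ is Lipschitz on $\mX$ with some constant $L$: under Assumption \ref{ass:1-1}, $\nabla f$, $G$, $c$ are Lipschitz with bounded derivatives and $GG^T\succeq\kappa_{1,G}I$, so $P(\cdot)$ and the projected gradient are Lipschitz. (ii) The approximate step is uniformly bounded: eliminating $\blambda_k$ shows $(\tDelta\bx_k,\,\blambda_k+\tDelta\blambda_k)$ solves the saddle system $\tW_k(\cdot)=-(\barg_k,c_k)$, whose inverse is uniformly bounded via $\kappa_{1,\tB},\kappa_{2,\tB},\kappa_{1,\tG},\kappa_{2,\tG}$, so $\|\tDelta\bx_k\|\leq C(\|\barg_k\|+\|c_k\|)\leq C'$; consequently $\|\bx_{k+1}-\bx_k\|=\baralpha_k\|\tDelta\bx_k\|\leq C'\baralpha_k$, and with (i), $|\mathcal{K}(\bx_{k+1})-\mathcal{K}(\bx_k)|\leq LC'\baralpha_k$. (iii) $\mathcal{K}$ is controlled by the step: applying $(I-P_k)$ to the exact Newton row $\tB_k\Delta\bx_k+G_k^T\mu_k=-\nabla f_k$ and using $(I-P_k)G_k^T=\0$ gives $(I-P_k)\nabla f_k=-(I-P_k)\tB_k\Delta\bx_k$, so $\mathcal{K}(\bx_k)\leq\kappa_{2,\tB}\|\Delta\bx_k\|+\|c_k\|$; since a standard perturbation bound for the two saddle systems yields $\|\tDelta\bx_k-\Delta\bx_k\|\lesssim\|\tG_k-G_k\|+\|\barg_k-\nabla f_k\|\to0$, we obtain $\mathcal{K}(\bx_k)\leq\kappa_{2,\tB}\|\tDelta\bx_k\|+\|c_k\|+o(1)$.

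The core is then a contradiction argument. The supermartingale (Robbins--Siegmund) analysis underlying Lemma \ref{convergence2} in fact delivers the summability $\sum_k\baralpha_k\,\Delta q(\tDelta\bx_k;\tau_k,\bx_k,\barg_k,\tB_k)<\infty$ almost surely (this is precisely how $\liminf\Delta q=0$ is concluded), whereas $\sum_k\baralpha_k=\infty$ since $\baralpha_k\geq\tilde\nu\alpha_k/(\tilde\tau_\infty\kappa_{\nabla f}+\kappa_{\nabla c})$ and $\sum_k\alpha_k=\infty$ when $p_1\leq1$. Suppose for contradiction that $\limsup_k\mathcal{K}(\bx_k)=2\rho>0$; combined with $\liminf_k\mathcal{K}(\bx_k)=0$ (Lemma \ref{convergence2} together with fact (iii)), I extract infinitely many disjoint up-crossing intervals $[m_j,n_j]$ with $\mathcal{K}(\bx_{m_j})\leq\rho/2$, $\mathcal{K}(\bx_{n_j})\geq\rho$, and $\mathcal{K}(\bx_k)\geq\rho/2$ for $m_j<k\leq n_j$. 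By fact (ii), $\rho/2\leq\mathcal{K}(\bx_{n_j})-\mathcal{K}(\bx_{m_j})\leq LC'\sum_{k=m_j}^{n_j-1}\baralpha_k$, so each interval satisfies $\sum_{k=m_j}^{n_j-1}\baralpha_k\geq\eta\coloneqq\rho/(2LC')$. On each interval, fact (iii) forces $\|\tDelta\bx_k\|+\|c_k\|\geq\delta>0$ for all large $k$, hence $\|\tDelta\bx_k\|^2+\|c_k\|\geq\min\{\delta^2/4,\delta/2\}$, and Lemma \ref{thm:convergence1}(c) with $\tau_k\geq\tilde\tau$ gives $\Delta q(\tDelta\bx_k;\cdots)\geq\kappa_q\tilde\tau\min\{\delta^2/4,\delta/2\}=\varpi>0$. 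Thus $\sum_{k=m_j}^{n_j-1}\baralpha_k\,\Delta q(\cdots)\geq\varpi\eta>0$ for every $j$, and summing over the infinitely many disjoint intervals contradicts the summability above. Therefore $\limsup_k\mathcal{K}(\bx_k)=0$, i.e. $\mathcal{K}(\bx_k)\to0$, and by the first step the theorem's residual converges to zero almost surely.

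The main obstacle is securing $\sum_k\baralpha_k\,\Delta q(\cdots)<\infty$ almost surely: this is the genuinely stochastic ingredient, inherited from the supermartingale argument behind Lemma \ref{convergence2}, and it is indispensable — the $\liminf$ statement together with the slowly-varying property $|\mathcal{K}(\bx_{k+1})-\mathcal{K}(\bx_k)|\leq LC'\baralpha_k$ alone would not suffice, since a slowly oscillating sequence can have $\liminf 0$ and $\limsup>0$. The remaining work, namely the Lipschitz/crossing argument and the step-to-residual comparisons in facts (i)--(iii), is deterministic and routine once the probability-one event is fixed.
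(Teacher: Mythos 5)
Your proposal is correct and follows essentially the same route as the paper: both reduce the statement to the exact-data least-squares residual $\nabla f_k+G_k^T(-[G_kG_k^T]^{-1}G_k\nabla f_k)$, bound it by $\kappa_{2,\tilde B}\|\Delta\bx_k\|$ via the exact Newton row, and then upgrade the liminf from Lemma \ref{convergence2} to a full limit by an up-crossing contradiction that combines the almost-sure summability of the weighted model reduction, the divergence of $\sum_k\alpha_k$, and the $O(\bar\alpha_k)$ bound on successive residual increments. The only cosmetic difference is that you lower-bound $\Delta q$ on each crossing interval and contradict $\sum_k\bar\alpha_k\Delta q<\infty$ directly, whereas the paper multiplies the crossing inequality by $(\epsilon^\star)^2$ to contradict $\sum_k\alpha_k\|\nabla f_k+G_k^T\blambda_k^{\star\text{true}}\|^2<\infty$; these are equivalent via Lemma \ref{thm:convergence1}(c).
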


\begin{proof}
See Appendix \ref{appendix final convergence}.
\end{proof}

We note that our almost sure convergence result matches those established for both line-search-based SSQP methods \citep{Na2022adaptive, Na2023Inequality, Curtis2025Almost} and trust-region-based SSQP methods \citep{Fang2024Fully, Fang2024Trust}.
This almost sure guarantee differs from some prior works that established a liminf-type convergence guarantee for the expected KKT residual \citep{Berahas2021Sequential, Berahas2023Stochastic}. Furthermore, all prior works studied derivative-based methods, while our almost sure result is established for derivative-free SSQP schemes by leveraging the simultaneous perturbation technique.

\section{Local Asymptotic Normality}\label{sec:4}

In this section, we establish the local asymptotic normality guarantee for the iterates $(\boldsymbol x_k,\boldsymbol\lambda_k)$ of Algorithm \ref{Alg:DF-SSQP}. To set the stage for statistical inference, we first introduce several local assumptions that~aim to characterize the algorithm's asymptotic behavior.

\begin{assumption}\label{ass:4.1}
We assume $\boldsymbol x_k\to\boldsymbol x^\star$ as $k\rightarrow\infty$ to a strict local solution $\boldsymbol x^\star$ that satisfies: 
\begin{enumerate}[label=(\alph*),topsep=2pt]
\setlength\itemsep{0.0em}
\item Linear Independence Constraint Qualification (LICQ): $G^\star=\nabla c(\bx^\star)$ has full row rank.
\item Second-Order Sufficient Condition (SOSC): let $\tlambda\in\mR^m$ be the unique Lagrangian multiplier vector satisfying the KKT conditions \eqref{equ:KKT}. We assume $\boldsymbol{x}^T\nabla_{\boldsymbol x}^2\mathcal{L}(\tx,\tlambda)\boldsymbol{x}\geq\kappa_{1,B}\|\boldsymbol{x}\|^2$ for any $\boldsymbol{x}\in\{\boldsymbol{x}\in\mathbb{R}^d:G^\star\boldsymbol{x}=\boldsymbol{0}\}$ and $\|\nabla_{\boldsymbol x}^2\mathcal{L}^\star\|\leq\kappa_{2,B}$ for some constants $\kappa_{1,B}, \kappa_{2,B}>0$.
\end{enumerate}
\end{assumption}

\begin{assumption} \label{ass:4.2}
We assume the Hessian of the sample function has bounded variance near the solution $\tx$. That is, for some $\delta>0$ and any $\bx\in \mX\cap\{\bx:\|\bx - \tx\|_2\leq \delta\}$, there exists a constant $\Upsilon_{n}$ such~that
\begin{equation*}
\mE[\|\nabla^2F(\bx;\xi)-\nabla^2f(\bx)\|^2\mid \bx] \leq \Upsilon_n.
\end{equation*}
\end{assumption}

\begin{assumption}\label{ass:4.3}
We assume almost surely, $\tau_k=\tau$, $\nu_k=\nu$, $\forall k\geq K^\star_{\tau\nu}$ for a (potentially random)~index $K^\star_{\tau\nu}<\infty$ and two deterministic constants $\tau, \nu>0$.
\end{assumption}

Assumption \ref{ass:4.1} is standard in the literature for analyzing the local asymptotic behavior of both deterministic and stochastic algorithms for solving constrained nonlinear nonconvex problems \citep{Bertsekas1982Constrained,Nocedal2006Numerical,Duchi2021Asymptotic, Davis2024Asymptotic, Na2025Statistical}. It is also well known that LICQ and SOSC are necessary conditions even for establishing the asymptotic normality of offline $M$-estimation \cite[Chapter 5]{Shapiro2021Lectures}, which ensure the limiting covariance matrix of the $M$-estimator is well-defined (see \cite[(1.3)]{Na2025Statistical} for more~details).

Similar to the conditions on the perturbation $\delta_k^G$ in \eqref{snequ:9}, we will later require that the perturbation $\delta_k^B$ in \eqref{snequ:6} perturbs $\barB_k$ to $\tB_k$ such that the bounds of $\tB_k$ satisfy $[\kappa_{1,B}, \kappa_{2,B}]\subseteq(\kappa_{1,\tB}, \kappa_{2,\tB})$. This~ensures that the perturbation $\delta_k^B$ vanishes in the limit as long as $\barB_k\rightarrow \nabla_{\bx}^2\mL^\star$ as $k\rightarrow\infty$. We also recall that the Hessian approximation is only used to achieve favorable local convergence properties; hence,~the~bounded variance condition is imposed only locally in Assumption \ref{ass:4.2}.

Assumption \ref{ass:4.3} enforces that the merit and ratio parameters $(\tau_k, \nu_k)$ stabilize almost surely at some constants $(\tau, \nu)$. By Lemma \ref{parameter stabilize}, we know that the boundedness condition \eqref{cond:grad:bound} ensures $(\tau_k, \nu_k)$ always stabilize, although the limiting values $(\tau_\infty, \nu_\infty) = (\tau_{K^\star_{\tau\nu}}, \nu_{K^\star_{\tau\nu}})$ may vary across different runs. 
On~the other hand, the assumption that $(\tau_\infty, \nu_\infty) = (\tau, \nu)$ are constants is made solely to streamline the~analysis and highlight the core derivation. In particular, $(\tau_k, \nu_k)$ only play a role in affecting the stepsize $\baralpha_k$ via the factor $\nu_k / (\tau_k \kappa_{\nabla f} + \kappa_{\nabla c})$ in \eqref{snequ:7}, which, as shown in Theorem \ref{thm:normality}, may scale the variance of the limiting normal distribution. 
Since $(\tau_k, \nu_k)$ are updated multiplicatively by a factor of $1 - \epsilon$ and are constrained within deterministic lower and upper bounds (cf. Lemma \ref{parameter stabilize}), we know the limiting pair $(\tau_\infty, \nu_\infty)$ can only take finitely many discrete values $\{(\tau_{(i)}, \nu_{(i)})\}_{i=1}^N$, forming a discrete distribution.~Consequently,~the factor $\nu_\infty / (\tau_\infty \kappa_{\nabla f} + \kappa_{\nabla c})$ also follows a discrete distribution with finite support.
Therefore, by adjusting the filtration from $\mF_k$ to the trace filtration $\mF_k \cap \{(\tau_\infty, \nu_\infty) = (\tau_{(i)}, \nu_{(i)})\}$\footnote{The trace filtration contains all randomness from $\bx_0$ to $\bx_k$ conditioned on the event $\{(\tau_\infty, \nu_\infty) = (\tau_{(i)}, \nu_{(i)})\}$.}, we can follow the same line of analysis and obtain a limiting mixture normal distribution with $N$ components, where each~component has the weight $P(\{(\tau_\infty, \nu_\infty) = (\tau_{(i)}, \nu_{(i)})\})$. Since this extension is tedious and of limited interest and contribution, we leave it for future work.

In the following lemma, we demonstrate that the iterates $(\boldsymbol x_k,\boldsymbol{\lambda}_k)$ converge almost surely to the local solution $(\tx, \tlambda)$. Note that the conditions on $(p_1, p_2, p_3, r)$ and $\delta_k^G$ below are implied by (i.e.,~weaker than) those required for the global convergence in Theorem \ref{final convergence} (i.e., Lemma \ref{convergence2}).

\begin{lemma}\label{iterate convergence}
Under Assumptions \ref{ass:1-1}, \ref{ass:2-1}\eqref{cond:grad:smo}, \ref{ass:Delta}, \ref{ass:3-1}, \ref{ass:4.1}, we further assume that (i) $\delta_k^G$ ensures $[\kappa_{1,G},\kappa_{2,G}]\subseteq(\kappa_{1,\tG},\kappa_{2,\tG})$, and (ii) $p_1, p_2, p_3, r$ satisfy
\begin{equation}\label{cond:4.4}
p_1\in(0.5,1],\quad\quad p_2\in(0.5,p_1),\quad\quad p_3>0.5-0.5p_2,\quad\quad r(p_1-p_2)>1.
\end{equation}
Then, $(\boldsymbol x_k,\boldsymbol\lambda_k) \to (\tx,\tlambda)$ as $k\rightarrow \infty$ almost surely.
\end{lemma}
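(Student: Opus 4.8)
The plan is to exploit that Assumption~\ref{ass:4.1} already grants $\boldsymbol{x}_k\to\tx$ almost surely, so the only genuinely new content is the dual convergence $\boldsymbol{\lambda}_k\to\tlambda$. First I would record the preliminary convergences. Condition~\eqref{cond:4.4} implies~\eqref{cond:3.6} (with $p_1\le1$ and $p_2>0.5$, the requirement $r(p_1-p_2)>1$ forces $r>2$, and the remaining inequalities carry over), so Lemma~\ref{lemma:average almost sure} applies; together with condition~(i) and the continuity of $\nabla f,\nabla c$ along $\boldsymbol{x}_k\to\tx$, it yields $\barg_k\to\nabla f(\tx)$ and $\tG_k=\barG_k\to G^\star$ almost surely, where $G^\star$ has full row rank by LICQ.

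Next I would show that the primal step vanishes, $\tDelta\bx_k\to\0$. Splitting $\tDelta\bx_k=\boldsymbol{u}_k+\boldsymbol{v}_k$ as in~\eqref{decomposition}, Lemma~\ref{thm:convergence1}(a) gives $\|\boldsymbol{v}_k\|\le\kappa_v\|c_k\|$, and $c_k=c(\boldsymbol{x}_k)\to c(\tx)=\0$, so $\boldsymbol{v}_k\to\0$. Let $P_k$ be the orthogonal projector onto $\text{Null}(\tG_k)$. Projecting the top block of~\eqref{def:acc2} annihilates both $\tG_k^T\tDelta\blambda_k$ and $\tG_k^T\blambda_k$ and leaves $P_k\tB_k\tDelta\bx_k=-P_k\barg_k$, whence $\kappa_{1,\tB}\|\boldsymbol{u}_k\|\le\|P_k\barg_k\|+\kappa_{2,\tB}\|\boldsymbol{v}_k\|$. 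Since $\tG_k\to G^\star$ forces $P_k\to P^\star$ (the projector onto $\text{Null}(G^\star)$), and stationarity gives $\nabla f(\tx)=-(G^\star)^T\tlambda\in\text{Range}((G^\star)^T)$, hence $P^\star\nabla f(\tx)=\0$, we obtain $P_k\barg_k\to\0$ and therefore $\boldsymbol{u}_k\to\0$. Thus $\tDelta\bx_k\to\0$.

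Multiplying the top block of~\eqref{def:acc2} by $[\tG_k\tG_k^T]^{-1}\tG_k$ produces the identity $\blambda_k^+:=\blambda_k+\tDelta\blambda_k=\blambda_k^\star-\boldsymbol{w}_k$, where $\boldsymbol{w}_k:=[\tG_k\tG_k^T]^{-1}\tG_k\tB_k\tDelta\bx_k$ and $\blambda_k^\star=-[\tG_k\tG_k^T]^{-1}\tG_k\barg_k$. Crucially, neither quantity involves $\blambda_k$, so both can be controlled first: $\blambda_k^\star\to-[G^\star(G^\star)^T]^{-1}G^\star\nabla f(\tx)=\tlambda$ (again using $\nabla f(\tx)=-(G^\star)^T\tlambda$), while $\boldsymbol{w}_k\to\0$ because $\|\tB_k\|\le\kappa_{2,\tB}$ and $\tDelta\bx_k\to\0$; in particular the full-step multiplier $\blambda_k^+\to\tlambda$ \emph{independently of} $\blambda_k$. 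This also delivers nonsummability of the stepsizes. Contracting the top block with $\tDelta\bx_k$ and using $\tG_k\tDelta\bx_k=-c_k$ gives $\barg_k^T\tDelta\bx_k+\tDelta\bx_k^T\tB_k\tDelta\bx_k=c_k^T\blambda_k^+$, so the denominator of $\tau_k^{\text{trial}}$ in~\eqref{def:acc3} is at most $\|c_k\|\,\|\blambda_k^+\|$ up to a lower-order curvature term (controlled by Lemma~\ref{thm:convergence1}), forcing $\tau_k^{\text{trial}}$ to be bounded below; as $\tau_k$ is nonincreasing it cannot be driven to zero, and via $\nu_k^{\text{trial}}\ge\kappa_q\tau_k$ (Lemma~\ref{thm:convergence1}(c)) neither can $\nu_k$. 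Hence $\baralpha_k\ge\nu_k\alpha_k/(\tau_k\kappa_{\nabla f}+\kappa_{\nabla c})$ is bounded below by a positive multiple of $\alpha_k=\iota_1(k+1)^{-p_1}$ with $p_1\le1$, giving $\sum_k\baralpha_k=\infty$ while $\baralpha_k\to0$.

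Finally, writing $\boldsymbol{e}_k:=\blambda_k-\blambda_k^\star$, the update $\blambda_{k+1}=\blambda_k+\baralpha_k\tDelta\blambda_k$ combined with $\blambda_k^+=\blambda_k^\star-\boldsymbol{w}_k$ yields the scalar-gain recursion $\boldsymbol{e}_{k+1}=(1-\baralpha_k)\boldsymbol{e}_k-\baralpha_k\boldsymbol{w}_k-(\blambda_{i+1}^\star-\blambda_i^\star)|_{i=k}$. Unrolling it and invoking the standard averaging fact that a $\baralpha_i$-weighted average of a null sequence vanishes when $\sum_k\baralpha_k=\infty$ handles the $\baralpha_k\boldsymbol{w}_k$ term, while the unweighted increments $\blambda_{i+1}^\star-\blambda_i^\star$ are treated by Abel summation, which converts them into $\baralpha_i$-weighted averages of $\blambda_i^\star-\tlambda\to\0$ plus boundary terms controlled by $\prod_{j}(1-\baralpha_j)\to0$. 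This gives $\boldsymbol{e}_k\to\0$, hence $\blambda_k\to\tlambda$, completing the proof. The main obstacle I expect is precisely this last recursion: the increment $\blambda_{i+1}^\star-\blambda_i^\star$ carries no small gain, so closing the argument hinges on the Abel-summation reweighting together with $\sum_k\baralpha_k=\infty$; this in turn is why establishing $\tau_\infty,\nu_\infty>0$ without the uniform bound~\eqref{cond:grad:bound} — resolved above by the $\blambda_k$-free convergence of $\blambda_k^+$ — is the other delicate point.
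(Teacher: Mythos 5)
Your proposal is correct, and it reaches the paper's key intermediate fact --- that the full Newton-step multiplier $\blambda_k + \tDelta\blambda_k$ converges to $\tlambda$ independently of $\blambda_k$, after which the damped update with $\sum_k\baralpha_k=\infty$ closes the argument --- by a genuinely different route. The paper introduces the exact-derivative reference direction $(\Delta\bx_k,\Delta\blambda_k)$ (the solution of \eqref{def:acc2} with $\barg_k,\tG_k$ replaced by $\nabla f_k, G_k$), shows $\|(\Delta\bx_k,\blambda_k^{\text{sub}}-\tlambda)\|\lesssim\|\bx_k-\tx\|$ via the uniformly bounded inverse KKT matrices, and then transfers to $(\tDelta\bx_k,\tDelta\blambda_k)$ by a perturbation bound using Lemma \ref{lemma:average almost sure}; you instead eliminate the dual block directly, using the null-space projection to get $\bu_k\to\0$ (hence $\tDelta\bx_k\to\0$) and the least-squares formula to get $\blambda_k+\tDelta\blambda_k=\blambda_k^\star-\boldsymbol{w}_k\to\tlambda$. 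Both are sound; yours is more self-contained at the level of the linear algebra, while the paper's reference-direction device recycles machinery already built for Lemma \ref{convergence2}. Two further remarks. First, you explicitly justify $\sum_k\baralpha_k=\infty$ by deriving pathwise lower bounds on $\tau_k,\nu_k$ from the identity $\barg_k^T\tDelta\bx_k+\tDelta\bx_k^T\tB_k\tDelta\bx_k=c_k^T(\blambda_k+\tDelta\blambda_k)$ together with the boundedness of $\blambda_k+\tDelta\blambda_k$ (which you obtained without circularity); the paper simply asserts this nonsummability, so your treatment is, if anything, more careful --- note that Lemma \ref{parameter stabilize} is formally stated under \eqref{cond:grad:bound}, which is not assumed here, so the pathwise argument is genuinely needed. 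Second, your final recursion in $\boldsymbol{e}_k=\blambda_k-\blambda_k^\star$ with Abel summation of the increments $\blambda_{k+1}^\star-\blambda_k^\star$ is correct but unnecessarily involved: since you already have $\blambda_k+\tDelta\blambda_k\to\tlambda$, writing $\blambda_{k+1}-\tlambda=(1-\baralpha_k)(\blambda_k-\tlambda)+\baralpha_k(\blambda_k+\tDelta\blambda_k-\tlambda)$ and unrolling, as the paper does, avoids the incremental terms entirely.
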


\begin{proof}
See Appendix \ref{appendix iterate convergence}.
\end{proof}

With the convergence of the iterates, we further illustrate the convergence of the Hessian approximations. Noting that together with the convergence of $\tG_k$ in Lemma \ref{lemma:average almost sure}, we obtain the convergence of the KKT matrix $\tW_k$ in \eqref{def:acc2}. Note that for Hessian convergence, we additionally impose Assumption \ref{ass:4.2} and a condition on $p_4$ (cf. $\tilde{b}_k = \iota_{4}/(k+1)^{p_4}$) upon the conditions in Lemma \ref{iterate convergence}.

\begin{lemma}\label{lem:Hessian Convergence}
	
Under Assumptions \ref{ass:1-1}, \ref{ass:2-1}\eqref{cond:grad:smo}, \ref{ass:Delta}, \ref{ass:3-1}, \ref{ass:4.1}, \ref{ass:4.2}, we further assume that (i) $\delta_k^G$ ensures $[\kappa_{1,G},\kappa_{2,G}]\subseteq(\kappa_{1,\tG},\kappa_{2,\tG})$, and (ii) $p_1,p_2,p_3,p_4,r$ satisfy
\begin{equation}\label{cond:4.5}
p_1\in(0.5,1], \quad\quad p_2\in(0.5,p_1),\quad\quad p_3>0.5-0.5p_2, \quad\quad p_4>0.5p_3,\quad\quad r(p_1-p_2)>1.
\end{equation}
Then, $\barB_k\rightarrow\nabla_{\bx}^2\mL^\star$ as $k\rightarrow\infty$ almost surely. Furthermore, if $\delta_k^B$ ensures $[\kappa_{1,B}, \kappa_{2,B}]\subseteq(\kappa_{1,\tB}, \kappa_{2,\tB})$, then there exists a (potentially random) $K_B^\star<\infty$ such that for all $k\geq K_B^\star$, $\tB_k = \barB_k$, i.e., $\delta_k^B = \0$.
\end{lemma}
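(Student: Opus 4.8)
The plan is to mirror the weighted-average, almost-sure argument already used for $\barg_k$ and $\barG_k$ in Lemma \ref{lemma:average almost sure}, now applied to the recursion $\barB_k = (1-\beta_k)\barB_{k-1} + \beta_k\hnabla_{\bx}^2\mL_k$ from \eqref{snequ:5}, where $\hnabla_{\bx}^2\mL_k = \hnabla^2 F(\bx_k;\xi_k) + \sum_{j=1}^m\blambda_k^j\hnabla^2 c^j(\bx_k)$. Writing $\nabla_{\bx}^2\mL_k := \nabla^2 f_k + \sum_{j=1}^m\blambda_k^j\nabla^2 c_k^j$ for the true Lagrangian Hessian at the current iterate, I would split each averaged increment as
$$\hnabla_{\bx}^2\mL_k - \nabla_{\bx}^2\mL^\star = \underbrace{\big(\nabla_{\bx}^2\mL_k - \nabla_{\bx}^2\mL^\star\big)}_{\text{drift}} + \underbrace{\big(\mE[\hnabla_{\bx}^2\mL_k\mid\mF_{k-1}] - \nabla_{\bx}^2\mL_k\big)}_{\text{conditional bias}} + \underbrace{\big(\hnabla_{\bx}^2\mL_k - \mE[\hnabla_{\bx}^2\mL_k\mid\mF_{k-1}]\big)}_{\text{martingale noise}}.$$
Because $\sum_k\beta_k=\infty$ while $\beta_k\to0$, the moving-average weights asymptotically form a probability vector concentrating on large indices and the initialization contribution vanishes, so it suffices to show that the $\beta$-weighted average of each of the three pieces tends to $\0$ almost surely.

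For the drift, Lemma \ref{iterate convergence} gives $(\bx_k,\blambda_k)\to(\tx,\tlambda)$ almost surely, so boundedness and continuity of $\nabla^2 f$ and $\nabla^2 c^j$ on $\mX$ (Assumption \ref{ass:1-1}) yield $\nabla_{\bx}^2\mL_k\to\nabla_{\bx}^2\mL^\star$, and a weighted average of a convergent sequence shares its limit. For the conditional bias, Lemma \ref{lemma:finite difference for gradient} gives $\mE[\hnabla^2 F(\bx_k;\xi_k)-\nabla^2 f_k\mid\mF_{k-1}]=O(b_k+\tb_k^2/b_k)$ and the same order for each $\hnabla^2 c^j_k$; combined with $\sup_k\|\blambda_k\|<\infty$, the conditional bias of $\hnabla_{\bx}^2\mL_k$ is $O(b_k+\tb_k^2/b_k)$, which tends to zero precisely because \eqref{cond:4.5} imposes $p_3>0$ together with the new condition $p_4>0.5p_3$ (so that $\tb_k^2/b_k\to0$). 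For the martingale noise I would establish a uniform conditional second-moment bound $\mE[\|\hnabla_{\bx}^2\mL_k - \mE[\hnabla_{\bx}^2\mL_k\mid\mF_{k-1}]\|^2\mid\mF_{k-1}]=O(1)$ and then invoke the same weighted-sum (Robbins--Siegmund type) convergence argument as in Lemma \ref{lemma:average almost sure}; since $p_2>0.5$ from \eqref{cond:4.5} gives $\sum_k\beta_k^2<\infty$, the weighted noise sum converges to $\0$ almost surely.

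The crux, and the main obstacle, is this second-moment control of the Hessian estimator. Even though $\hnabla^2 F$ and $\hnabla^2 c^j$ are formed by dividing a second-order finite difference by the small factor $b_k\tb_k$ and multiplying by the reciprocal perturbations $\bDelta_k^{-1},\tbDelta_k^{-1}$, the genuine second-difference structure forces the numerator to be itself $O(b_k\tb_k)$, so the estimator remains $O(1)$ in scale. Making this rigorous requires Taylor expanding the realization $F(\cdot;\xi_k)$ (and $c(\cdot)$) along $b_k\bDelta_k$ and $\tb_k\tbDelta_k$, showing the leading term reproduces a symmetrized $\tbDelta^T\nabla^2F\bDelta$-type quadratic whose randomness is governed by the Hessian noise $\nabla^2 F-\nabla^2 f$, bounded in second moment by Assumption \ref{ass:4.2}, and that the finite-difference remainders stay bounded after division by $b_k\tb_k$; the reciprocal factors are tamed by the two-sided bounds of Assumption \ref{ass:Delta}. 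Care is needed because the estimator is evaluated at the moving random iterate $\bx_k$ and uses the drifting multiplier $\blambda_k$, so all expansions must hold uniformly over $\mX$ and the conditioning on $\mF_{k-1}$ must treat $\bx_k$ as fixed while $\xi_k,\bDelta_k,\tbDelta_k$ are fresh.

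Finally, for the vanishing of the regularization I would combine $\barB_k\to\nabla_{\bx}^2\mL^\star$ with $\tG_k\to G^\star$ (Lemma \ref{lemma:average almost sure}) and SOSC (Assumption \ref{ass:4.1}(b)). The latter gives $\boldsymbol{x}^T\nabla_{\bx}^2\mL^\star\boldsymbol{x}\geq\kappa_{1,B}\|\boldsymbol{x}\|^2$ on $\ker(G^\star)$ and $\|\nabla_{\bx}^2\mL^\star\|\leq\kappa_{2,B}$, with $[\kappa_{1,B},\kappa_{2,B}]\subseteq(\kappa_{1,\tB},\kappa_{2,\tB})$. Since the Rayleigh quotient of $\barB_k$ restricted to the converging null space $\ker(\tG_k)$ depends continuously on $(\barB_k,\tG_k)$, for all large $k$ the unregularized $\barB_k$ already satisfies $\boldsymbol{x}^T\barB_k\boldsymbol{x}\geq\kappa_{1,\tB}\|\boldsymbol{x}\|^2$ on $\ker(\tG_k)$ and $\|\barB_k\|\leq\kappa_{2,\tB}$, i.e., exactly the conditions required of $\tB_k$. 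Hence one may take $\delta_k^B=\0$ for all $k\geq K_B^\star$, paralleling the $\delta_k^G=\0$ conclusion of Lemma \ref{lemma:average almost sure}.
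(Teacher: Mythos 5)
Your overall strategy is the same as the paper's: unroll the recursion $\barB_k=(1-\beta_k)\barB_{k-1}+\beta_k\hnabla^2_{\bx}\mL_k$ into a $\beta$-weighted average, split each increment into drift ($\nabla_{\bx}^2\mL_k-\nabla_{\bx}^2\mL^\star$, handled by Lemma \ref{iterate convergence} and continuity), conditional bias (handled by Lemma \ref{lemma:finite difference for gradient} with $p_3>0$ and $p_4>0.5p_3$), and martingale noise (handled by a second-moment bound plus a weighted martingale convergence argument), and finish the $\delta_k^B=\0$ claim by continuity of the reduced Hessian's smallest eigenvalue along the converging null spaces. All of this matches the paper, including the final step, which the paper makes rigorous via the Davis--Kahan $\sin\theta$ theorem applied to $\tG_k^T(\tG_k\tG_k^T)^{-1}\tG_k\to (G^\star)^T(G^\star(G^\star)^T)^{-1}G^\star$.

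There is one genuine gap, and it sits exactly at the step you yourself flag as the crux. You assert a \emph{uniform} conditional second-moment bound $\mE[\|\hnabla^2_{\bx}\mL_k-\mE[\hnabla^2_{\bx}\mL_k\mid\mF_{k-1}]\|^2\mid\mF_{k-1}]=O(1)$ and say the Taylor expansions "must hold uniformly over $\mX$," invoking Assumption \ref{ass:4.2} for the Hessian noise of $F$. But Assumption \ref{ass:4.2} only bounds $\mE[\|\nabla^2F(\bx;\xi)-\nabla^2f(\bx)\|^2\mid\bx]$ for $\bx$ in the local ball $\mX\cap\{\|\bx-\tx\|\le\delta\}$; there is no global second-moment control on the sample Hessian, so the uniform bound you need for the objective part of the noise is simply not available over all of $\mX$. (For the constraint part there is no issue: $c$ is deterministic and $\hnabla^2c^j_k$ admits a deterministic bound from the integral representation of the second difference together with Assumptions \ref{ass:1-1} and \ref{ass:Delta}.) The paper closes this by localizing: it splits the weighted noise sum for $\hnabla^2F$ according to the indicator $\1_{\|\bx_i-\tx\|\le\delta'}$ and a deterministic threshold index $K'$, disposes of the finitely many (per sample path) terms with $\bx_i$ outside the ball using $\bx_i\to\tx$ a.s.\ and the decay $\prod_{j}(1-\beta_j)\to0$, and only applies the second-moment bound and the martingale convergence theorem to the localized remainder. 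Without this localization your martingale step does not go through as written. A secondary, more minor point: your $\sup_k\|\blambda_k\|<\infty$ is a path-dependent (random) constant, so to feed the noise term into a Ruszczy\'nski/martingale lemma with a deterministic moment bound you should, as the paper does, split $\sum_j\blambda_k^j\hnabla^2c^j_k$ into a $(\blambda_k-\tlambda)$ part (vanishing deterministically bounded factor) and a $\tlambda$ part (fixed coefficients multiplying centered noise).
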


\begin{proof}
See Appendix \ref{appendix Hessian Convergence}.
\end{proof}

To proceed to establishing the asymptotic normality guarantee of the iterate, we next provide the local convergence rates of the iterate and the gradient approximation. We use $\bz_k = (\bx_k - \tx, \blambda_k - \tlambda)$ to denote the error of the primal-dual pair, and define two matrices used frequently later
\begin{equation*}
W^\star = \nabla^2\mL^\star = \begin{pmatrix}
\nabla_{\bx}^2\mL^\star & (G^\star)^T\\
G^\star & \0
\end{pmatrix}\quad\text{ and }\quad \tOmega = \begin{pmatrix}
\mE[\bDelta^{-1}\bDelta^T\text{Cov}(\nabla F(\tx;\xi))\bDelta\bDelta^{-T}] & \0\\
\0 & \0
\end{pmatrix}.
\end{equation*}
Our local neighborhood is characterized by a stopping time, defined for any $k_0 \geq 0$ and $\epsilon>0$ as~follows:
\begin{align}\label{equ:def:tau}
& \tau_{k_0}(\epsilon)  = \inf\bigg\{k\geq k_0: \|\bz_k\| >\epsilon^2 \text{ OR } \|\tW_k^{-1}\|>\frac{1}{\epsilon} \text{ OR } \|\nabla\mL_k - \tW_k\bz_k\| > 0.25\epsilon^2\|\bz_k\| \nonumber \\
&  \hskip2.2cm \text{ OR } \|\nabla\mL_k\|>\frac{\|\bz_k\|}{\epsilon} \text{ OR } \delta_k^G\neq \0 \text{ OR } \delta_k^B \neq \0 \text{ OR } \|(\bx_k, \blambda_k)\|>\frac{1}{\epsilon} \nonumber\\
&\hskip2.2cm \text{ OR }  \|\nabla\mL_k - W^\star\bz_k\| >\frac{\|\bz_k\|^2}{\epsilon} \text{ OR } \frac{\nu_k}{\tau_k\kappa_{\nabla f} +\kappa_{\nabla c}}\neq \frac{\nu}{\tau\kappa_{\nabla f} +\kappa_{\nabla c}}\eqqcolon\zeta \bigg\}.
\end{align}
As expected, when $\epsilon$ is chosen sufficiently small, for each run of the algorithm, there always exists a~(potentially random) $\tilde{k}_0 > 0$ such that $\tau_{k_0}(\epsilon) = \infty$ for all $k_0 \geq \tilde{k}_0$.

With the definition \eqref{equ:def:tau}, we have the following local convergence rate result.

\begin{lemma}\label{lem:local:rate}
Under Assumptions \ref{ass:1-1}, \ref{ass:2-1}\eqref{cond:grad:smo}, \ref{ass:Delta}, \ref{ass:3-1}, and we further assume that 
\begin{equation}\label{cond:4.6}
p_1\in(0, 1],\quad\quad p_2\in(0, p_1),\quad\quad r\geq 2,\quad\quad  \zeta\iota_{1}>0.5 \;\;(\text{if } p_1=1).
\end{equation}
Then, for any $\epsilon\in(0, 1 - 0.5/(\zeta\iota_{1})\1_{p_1=1})$, there exists a deterministic integer $\bar{k}_0>0$ such that for~any $k_0\geq \bar{k}_0$, there exists a constant $\Upsilon(k_0)$ (depending on $k_0$) such that 
\begin{equation*}
\max\cbr{\mE[\|\bz_k\|^2 \1_{\tau_{k_0}(\epsilon)>k}],\; \mE[\|\bnabla\mL_k - \nabla\mL_k\|^2\1_{\tau_{k_0}(\epsilon)>k}]} \leq \Upsilon(k_0)\rbr{\beta_k + b_k^4} \quad\quad \text{ for any } k\geq k_0.
\end{equation*}

\end{lemma}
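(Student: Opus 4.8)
The plan is to derive a coupled pair of recursive inequalities for the two quantities $a_k \coloneqq \mE[\|\bz_k\|^2\1_{\tau_{k_0}(\epsilon)>k}]$ and $e_k \coloneqq \mE[\|\bnabla\mL_k - \nabla\mL_k\|^2\1_{\tau_{k_0}(\epsilon)>k}]$, and then solve the system by induction on the claimed envelope $\Upsilon(k_0)(\beta_k + b_k^4)$. The crucial local improvement over Lemma \ref{final fundamental lemma} is that, on $\{\tau_{k_0}(\epsilon)>k\}$, the Newton direction is genuinely small: from $\tDelta\bz_k = -\tW_k^{-1}\bnabla\mL_k$ in \eqref{def:acc2}, the stopping-time bounds $\|\tW_k^{-1}\|\leq 1/\epsilon$ and $\|\nabla\mL_k\|\leq\|\bz_k\|/\epsilon$, together with $\bnabla\mL_k = \nabla\mL_k + (\bnabla\mL_k-\nabla\mL_k)$, give $\|\tDelta\bx_k\|\lesssim \|\bz_k\| + \|\bnabla\mL_k - \nabla\mL_k\|$. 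Hence the increment $\baralpha_k\|\tDelta\bx_k\|$ is controlled by $\|\bz_k\|$ and the current gradient-approximation error, rather than being merely $O(1)$ as in the global analysis; this is precisely what eliminates the $\alpha_k^2/\beta_k^2$ term.

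For $e_k$, I would unroll the moving-average recursion \eqref{snequ:3}, writing $\barg_k - \nabla f_k = (1-\beta_k)(\barg_{k-1}-\nabla f_{k-1}) + (1-\beta_k)(\nabla f_{k-1}-\nabla f_k) + \beta_k(\hnabla F(\bx_k;\xi_k)-\nabla f_k)$, with the analogous identity for $\barG_k - G_k$. I split the last term into its conditional mean, which is $O(b_k^2)$ by Lemma \ref{lemma:finite difference for gradient}, and a conditionally-mean-zero remainder of bounded second moment (bounded because the \emph{same} sample $\xi_k$ appears in both evaluations in \eqref{snequ:2}, so the leading variance is governed by the gradient noise $\nabla F-\nabla f$ under Assumption \ref{ass:2-1}\eqref{cond:grad:smo}, not by function noise divided by $b_k$). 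The drift obeys $\|\nabla f_{k-1}-\nabla f_k\|\lesssim \baralpha_{k-1}\|\tDelta\bx_{k-1}\| \lesssim \alpha_{k-1}(\|\bz_{k-1}\| + \|\bnabla\mL_{k-1}-\nabla\mL_{k-1}\|)$ by the local step bound. Squaring, taking conditional expectation (so the mean-zero noise decouples), and then full expectation with the indicator yields
\begin{equation*}
e_k \lesssim (1-\beta_k)e_{k-1} + \tfrac{\alpha_{k-1}^2}{\beta_k}(a_{k-1}+e_{k-1}) + \beta_k b_k^4 + \beta_k^2,
\end{equation*}
where $\beta_k b_k^4$ and $\beta_k^2$ are the steady-state bias and noise contributions of the average.

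For $a_k$, I would substitute $\bz_{k+1}=\bz_k - \baralpha_k\tW_k^{-1}\bnabla\mL_k$ and the linearization $\nabla\mL_k = \tW_k\bz_k + \boldsymbol{r}_k$ with $\|\boldsymbol{r}_k\|\leq 0.25\epsilon^2\|\bz_k\|$ (valid on the event) to obtain $\bz_{k+1} = (1-\baralpha_k)\bz_k - \baralpha_k\tW_k^{-1}\boldsymbol{r}_k - \baralpha_k\tW_k^{-1}(\bnabla\mL_k-\nabla\mL_k)$. Expanding the square, the first two terms contract at rate $1-c\baralpha_k$ with $c=1-0.25\epsilon>0$; the genuine difficulty is the cross term $\baralpha_k\bz_k^T\tW_k^{-1}(\bnabla\mL_k-\nabla\mL_k)$, since $\tW_k$, $\boldsymbol{r}_k$, and the gradient error all share the step-$k$ randomness. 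I would handle it by conditioning on $\mF_{k-1}$: the conditional mean of $\bnabla\mL_k-\nabla\mL_k$ is the $\mF_{k-1}$-measurable part of the averaged error, whose squared expectation is $\lesssim e_{k-1} + \alpha_{k-1}^2(a_{k-1}+e_{k-1}) + \beta_k^2 b_k^4$, while the mean-zero part enters only through the $\baralpha_k^2$ variance term. A Cauchy--Schwarz/Young split then gives $a_k \lesssim (1-c\baralpha_k)a_{k-1} + \alpha_k(e_{k-1} + b_k^4) + \alpha_k^2(a_{k-1}+e_{k-1})$, so that $a_k$ is ultimately forced by $e_k$.

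Finally, I would close the system by induction, verifying under \eqref{cond:4.6} that every forcing term is $O(\beta_k(\beta_k+b_k^4))$ for $e_k$ and $O(\baralpha_k(\beta_k+b_k^4))$ for $a_k$ — using $p_2<p_1$ so that $\alpha_k^2/\beta_k = o(\beta_k)$, and the boundary condition $\zeta\iota_{1}>0.5$ when $p_1=1$ to ensure the contraction $c\baralpha_k\asymp\alpha_k$ dominates the decay of $\beta_k + b_k^4$ — which propagates $\Upsilon(k_0)(\beta_k+b_k^4)$ one step forward. The main obstacles are (i) the measurability bookkeeping for the indicator, using $\{\tau_{k_0}(\epsilon)>k\}\subseteq\{\tau_{k_0}(\epsilon)>k-1\}\in\mF_{k-1}$ while index-$k$ bounds such as $\|\tW_k^{-1}\|\leq 1/\epsilon$ hold on $\{\tau_{k_0}(\epsilon)>k\}$; and (ii) showing the bias cross-term is truly higher order rather than $O(\alpha_k\sqrt{\beta_k})$, which hinges on the conditional mean of the averaged gradient error being of order $\sqrt{e_{k-1}}$ and not merely bounded.
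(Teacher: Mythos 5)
Your route is essentially sound and reaches the same estimates, but it organizes them differently from the paper. The paper does \emph{not} run a single induction on $k$: its Lemma \ref{lem:1} keeps the gradient-error bound in fully unrolled form (a weighted sum of $\{\mE[(\|\bnabla\mL_i-\nabla\mL_i\|^2+\|\bz_i\|^2)\1_{\tau_{k_0}>i}]\}^{1/2}$ over $i\le k$, evaluated via Lemma \ref{technical lemma:2}), and then Lemma \ref{lem:2} bootstraps in a separate parameter $q$: starting from the crude bound $O(1)$, each pass through the coupled inequalities improves the residual by a factor $(\alpha_k/\beta_k)^2$, yielding $\Upsilon_3(k_0)(\beta_k+b_k^4+(\alpha_k/\beta_k)^{2q})$, and finitely many passes suffice since $2q(p_1-p_2)$ eventually exceeds $\min\{p_2,4p_3\}$. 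Your one-step recursions — with the key local ingredient $\baralpha_{k-1}\|\tDelta\bx_{k-1}\|\1_{\tau_{k_0}>k-1}=O(\alpha_{k-1}(\|\bz_{k-1}\|+\|\bnabla\mL_{k-1}-\nabla\mL_{k-1}\|))$, which is exactly the refinement the paper exploits — replace the bootstrap by a direct induction on $k$, which is arguably cleaner and makes transparent why the $\alpha_k^2/\beta_k^2$ term of Lemma \ref{final fundamental lemma} disappears: the drift forcing $\frac{\alpha_{k-1}^2}{\beta_k}(a_{k-1}+e_{k-1})$ is $o(\beta_k)\cdot(a_{k-1}+e_{k-1})$ once $p_1>p_2$.

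One caveat you should address to actually close your induction: propagating a \emph{single} constant $\Upsilon(k_0)$ for both $a_k$ and $e_k$ does not work in general. In the $a$-recursion the coupling term is $\Upsilon_1\alpha_k e_{k-1}$ with $\Upsilon_1$ of order $\epsilon^{-3}$ (it absorbs $\|\tW_k^{-1}\|\le 1/\epsilon$ and the Young split), while the contraction headroom is only $2(1-\epsilon)\zeta\alpha_k a_{k-1}$; if $a_{k-1}$ and $e_{k-1}$ share the same envelope constant, the induction step requires $\Upsilon_1<2(1-\epsilon)\zeta$, which need not hold. The standard fix is to carry two constants with $\Upsilon_a\gtrsim\Upsilon_1\Upsilon_e/\zeta$ (or a weighted Lyapunov function $a_k+M e_k$); the back-coupling into the $e$-recursion then costs $\frac{\alpha_k^2}{\beta_k^2}\Upsilon_a$, which is absorbed for $k\ge\bar k_0$ since $\alpha_k/\beta_k\to0$ and the ratio $\Upsilon_a/\Upsilon_e$ is fixed in advance. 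The paper's bootstrap in $q$ sidesteps this entirely because each pass treats the previous bound as given rather than as part of a simultaneous induction, at the cost of constants growing over the (finitely many) passes. With that adjustment, and the indicator bookkeeping you already flag (pass from $\1_{\tau_{k_0}>k}$ to the $\mF_{k-1}$-measurable $\1_{\tau_{k_0}>k-1}$ before conditioning), your argument goes through.
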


\begin{proof}
See Appendix \ref{pf:lem:local:rate}.
\end{proof}

The above lemma also leads to the local convergence rate of the Hessian approximation.

\begin{lemma}\label{lem:local:rate:Hessian}
Under the setup of Lemma \ref{lem:local:rate} and additionally supposing Assumptions \ref{ass:4.1}, \ref{ass:4.2} hold~and $p_4>0.5p_3$, we have
\begin{equation*}
\|\tW_k - W^\star\|^2\1_{\tau_{k_0}(\epsilon)>k} = O_p\rbr{\beta_k+b_k^2+\tilde{b}_k^4/b_k^2}.
\end{equation*}
\end{lemma}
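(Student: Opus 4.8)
The plan is to split the KKT-matrix error into its Hessian and Jacobian blocks and bound each separately. Since
\[
\tW_k - W^\star = \begin{pmatrix} \tB_k - \nabla_{\bx}^2\mL^\star & (\tG_k - G^\star)^T \\ \tG_k - G^\star & \0 \end{pmatrix},
\]
the operator norm satisfies $\|\tW_k - W^\star\|^2 \lesssim \|\tB_k - \nabla_{\bx}^2\mL^\star\|^2 + \|\tG_k - G^\star\|^2$, so it suffices to treat the two blocks on the localization event $\{\tau_{k_0}(\epsilon)>k\}$. For the Jacobian block, note that on this event $\delta_k^G = \0$, hence $\tG_k = \barG_k$; writing $\tG_k - G^\star = (\barG_k - G_k) + (G_k - G^\star)$ and using $\|G_k - G^\star\| \lesssim \|\bx_k - \tx\| \leq \|\bz_k\|$ by the Lipschitzness of $\nabla c$ (Assumption \ref{ass:1-1}), both pieces are controlled by Lemma \ref{lem:local:rate}: the averaged-Jacobian error obeys the same $\beta_k + b_k^4$ bound as $\barg_k$, and $\mE[\|\bz_k\|^2\1_{\tau_{k_0}(\epsilon)>k}] \lesssim \beta_k + b_k^4$. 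Thus $\|\tG_k - G^\star\|^2\1_{\tau_{k_0}(\epsilon)>k} = O_p(\beta_k + b_k^4)$, which is dominated by the claimed rate.

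The heart of the argument is the Hessian block. On $\{\tau_{k_0}(\epsilon)>k\}$ we also have $\delta_k^B = \0$, so $\tB_k = \barB_k$ (consistent with Lemma \ref{lem:Hessian Convergence}), and it remains to bound $\|\barB_k - \nabla_{\bx}^2\mL_k\|$ together with $\|\nabla_{\bx}^2\mL_k - \nabla_{\bx}^2\mL^\star\|$; the latter is again $O_p(\|\bz_k\|)$ by third-order smoothness. For the former I would unroll the moving-average recursion \eqref{snequ:5},
\[
\barB_k - \nabla_{\bx}^2\mL_k = (1-\beta_k)(\barB_{k-1} - \nabla_{\bx}^2\mL_{k-1}) + (1-\beta_k)(\nabla_{\bx}^2\mL_{k-1} - \nabla_{\bx}^2\mL_k) + \beta_k(\hnabla_{\bx}^2\mL_k - \nabla_{\bx}^2\mL_k),
\]
splitting the last increment into its conditional bias and a martingale-difference noise term. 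By Lemma \ref{lemma:finite difference for gradient}, the conditional bias $\mE[\hnabla_{\bx}^2\mL_k - \nabla_{\bx}^2\mL_k \mid \mF_{k-1}]$ is $O(b_k + \tilde{b}_k^2/b_k)$, while the mean-zero noise term has bounded conditional second moment on the localization set, combining Assumption \ref{ass:4.2} (bounded Hessian variance near $\tx$) with the two-sided bounds on the reciprocal directions from Assumption \ref{ass:Delta} applied to the two-scale construction \eqref{snequ:12}--\eqref{snequ:4}. The drift term $\nabla_{\bx}^2\mL_{k-1} - \nabla_{\bx}^2\mL_k$ is $O_p(\|\bz_k - \bz_{k-1}\|) = O_p(\baralpha_{k-1}\|\tDelta\bz_{k-1}\|)$ by smoothness and the stepsize rule \eqref{snequ:7}, and here the local analysis crucially improves on the global one: since $\|\tDelta\bz_k\|$ vanishes at the rate of $\|\bz_k\|$ (rather than being merely bounded), this drift contributes only a lower-order term. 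Applying the same weighted-sum estimate used for gradient averaging in Lemmas \ref{final fundamental lemma} and \ref{lem:local:rate} (and paralleling the Hessian-averaging analysis referenced in the construction of \eqref{snequ:4}) yields
\[
\mE[\|\barB_k - \nabla_{\bx}^2\mL_k\|^2\1_{\tau_{k_0}(\epsilon)>k}] = O(\beta_k) + O\big((b_k + \tilde{b}_k^2/b_k)^2\big) + (\text{dominated drift term}),
\]
the three contributions being the averaging variance, the squared bias, and the drift. Using $(b_k + \tilde{b}_k^2/b_k)^2 \asymp b_k^2 + \tilde{b}_k^4/b_k^2$ gives the stated order, and the $O_p$ form follows from this expectation bound via Markov's inequality together with the fact that $\tau_{k_0}(\epsilon) = \infty$ eventually for $k_0$ large.

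The main obstacle will be the careful bookkeeping in the Hessian recursion: cleanly isolating the mean-zero noise from the bias, and verifying that the noise has uniformly bounded conditional second moment on the localization event. This is precisely where Assumption \ref{ass:4.2} and the nested one-sided/two-sided finite-difference structure of \eqref{snequ:12}--\eqref{snequ:4} enter, since the estimator carries products of reciprocal directions whose moments must be shown finite using the lower bound $\kappa_{\bDelta_1}$ in Assumption \ref{ass:Delta}. A secondary point is confirming that the condition $p_4 > 0.5p_3$ is exactly what forces the inner-difference bias $\tilde{b}_k^2/b_k \to 0$, so that the Hessian estimate is consistent and the bias term decays at the advertised polynomial rate under \eqref{cond:4.6}.
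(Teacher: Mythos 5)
Your proposal is correct and follows essentially the same route as the paper: split $\tW_k - W^\star$ into the Hessian and Jacobian blocks, use the localization event to set $\delta_k^G=\delta_k^B=\0$, unroll the Hessian moving average into a martingale-noise term ($O_p(\beta_k)$ via Assumption \ref{ass:4.2}), a finite-difference bias term ($O(b_k^2+\tilde b_k^4/b_k^2)$ via Lemma \ref{lemma:finite difference for gradient} and $p_4>0.5p_3$), and a drift term controlled by the local rate of $\|\bz_k\|$ from Lemma \ref{lem:local:rate}. The only point where your sketch is lighter than the paper is the final $O_p$ conversion: Assumption \ref{ass:4.2} bounds the Hessian-noise second moment only near $\tx$, so the early-index summands (before the iterates enter that neighborhood) cannot be folded into a single expectation bound and must instead be handled path-by-path via the geometric decay of $\prod(1-\beta_l)$, exactly the $O_p(1/k)$ argument the paper uses for its terms $\K_{1,1}^k$ and $\K_{1,2}^k$.
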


\begin{proof}
See Appendix \ref{pf:lem:local:rate:Hessian}.
\end{proof}

Combining all above lemmas, we are ready to state asymptotic normality result.

\begin{theorem}\label{thm:normality}

Under Assumptions \ref{ass:1-1}, \ref{ass:2-1}\eqref{cond:grad:smo}, \ref{ass:Delta}, \ref{ass:3-1}, \ref{ass:4.1}, \ref{ass:4.2}, \ref{ass:4.3}, and we further assume that (i) $\delta_k^G$ ensures $[\kappa_{1,G},\kappa_{2,G}]\subseteq(\kappa_{1,\tG},\kappa_{2,\tG})$ and $\delta_k^B$ ensures $[\kappa_{1,B}, \kappa_{2,B}]\subseteq(\kappa_{1,\tB}, \kappa_{2,\tB})$, (ii)~$p, p_1, p_2, p_3, p_4, r$~\mbox{satisfy}
\begin{equation}\label{cond:4.8}
\begin{gathered}
p_1\in(0.5, 1], \quad p_2\in(0.5,p_1), \quad p_3>\max\cbr{0.5-0.5p_2, 0.25p_1}, \quad p_4> 0.5p_3+0.25(p_1-p_2),\\
p> 1.5-0.5p_2/p_1, \quad\quad r(p_1-p_2)>1, \quad\quad r\geq 3,
\end{gathered}
\end{equation}
and $\zeta\iota_{1}>0.5$ if $p_1=1$. Then, we have
\begin{equation}\label{equ:normality}
1/\sqrt{\baralpha_k}\cdot(\bx_k- \tx, \blambda_k -\tlambda)\stackrel{d}{\longrightarrow}\N\rbr{\0,\; \omega\cdot(W^\star)^{-1}\tOmega (W^\star)^{-1}}\quad \text{with }\;\;  \omega = \begin{cases*}
\frac{\zeta\iota_{1}}{2\zeta\iota_{1}-1} & \text{ if } $p_1=1$,\\
0.5 &  \text{ if } $p_1<1$.
\end{cases*}
\end{equation}
\end{theorem}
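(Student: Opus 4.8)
The plan is to linearize the Newton update around $(\tx,\tlambda)$ and reduce the claim to a martingale central limit theorem (CLT) for the rescaled error $\bz_k=(\bx_k-\tx,\blambda_k-\tlambda)$. First I would localize on the event $\{\tau_{k_0}(\epsilon)=\infty\}$, where every bound in \eqref{equ:def:tau} holds for all $k$; since Lemmas \ref{iterate convergence} and \ref{lem:Hessian Convergence} give $(\bx_k,\blambda_k)\to(\tx,\tlambda)$, $\tW_k\to W^\star$, and $\delta_k^G=\delta_k^B=\0$ almost surely, each run admits a finite $\tilde k_0$ with $\tau_{k_0}(\epsilon)=\infty$ once $k_0\ge\tilde k_0$, so it suffices to prove the CLT on this event and let $k_0\to\infty$. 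Using $\tilde\Delta\bz_k=-\tW_k^{-1}\bnabla\mL_k$, the stopping-time bound $\|\nabla\mL_k-W^\star\bz_k\|\le\|\bz_k\|^2/\epsilon$, the convergence $\tW_k\to W^\star$, and $\baralpha_k=\zeta\alpha_k+O(\alpha_k^p)$ (Assumption \ref{ass:4.3}), I would recast the update $\bz_{k+1}=\bz_k-\baralpha_k\tW_k^{-1}\bnabla\mL_k$ as
\begin{equation*}
\bz_{k+1}=(1-\baralpha_k)\bz_k-\baralpha_k (W^\star)^{-1}\begin{pmatrix}(\barg_k-\nabla f_k)+(\barG_k-G_k)^T\blambda_k\\\0\end{pmatrix}+\boldsymbol R_k,
\end{equation*}
where $\boldsymbol R_k$ gathers the quadratic Taylor remainder $O(\|\bz_k\|^2)$, the preconditioning error $(\tW_k^{-1}W^\star-I)\bz_k$, and the adaptivity gap $O(\alpha_k^p)$. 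It is important to keep the Jacobian-estimation noise $(\barG_k-G_k)^T\blambda_k$ together with the gradient noise, rather than discarding it, because the two interact in the covariance computation below.

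I would next verify that $\boldsymbol R_k$ is negligible after dividing by $\sqrt{\baralpha_k}$. The rate bounds of Lemmas \ref{lem:local:rate} and \ref{lem:local:rate:Hessian}, namely $\mE[\|\bz_k\|^2\1_{\tau_{k_0}(\epsilon)>k}]=O(\beta_k+b_k^4)$ and $\|\tW_k-W^\star\|^2\1_{\tau_{k_0}(\epsilon)>k}=O_p(\beta_k+b_k^2+\tilde b_k^4/b_k^2)$, are exactly calibrated to the exponent constraints in \eqref{cond:4.8}: $p>1.5-0.5p_2/p_1$ kills the adaptivity gap, $p_3>0.25p_1$ forces the finite-difference bias $b_k^2=o(\sqrt{\alpha_k})$, and $p_2<p_1$ together with $p_4>0.5p_3+0.25(p_1-p_2)$ make the quadratic and preconditioning terms vanish relative to $\sqrt{\baralpha_k}$. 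This reduces the problem to the leading linear recursion driven by the combined derivative noise.

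The crux is identifying the limiting covariance, where the two stepsize scales $\{\beta_k\}$ and $\{\baralpha_k\}$ interact. I would unfold the moving averages \eqref{snequ:3}, writing the combined noise as a weighted sum of per-step martingale increments $\boldsymbol\zeta_\ell=\hnabla F(\bx_\ell;\xi_\ell)+\hnabla c(\bx_\ell)^T\blambda_\ell-\mE[\,\cdot\mid\mF_{\ell-1}]$, plus the finite-difference bias $O(b_\ell^2)$ (Lemma \ref{lemma:finite difference for gradient}) and the drift feedback $\nabla f_{\ell-1}-\nabla f_\ell=O(\baralpha_{\ell-1}\|\bz_{\ell-1}\|)$. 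Because $p_2<p_1$, the averaging runs on a strictly faster scale than the iterate, so the bias and feedback are lower order, and the integrated autocovariance of the fast variable collapses to its one-step intensity, cancelling the $\beta_k$ dependence. The key cancellation is that, since both finite-difference estimates in \eqref{snequ:2} share the same direction $\bDelta_\ell$, a Taylor expansion gives $\boldsymbol\zeta_\ell\approx\bDelta_\ell^{-1}\bDelta_\ell^T\big(\nabla F(\tx;\xi_\ell)-\nabla f(\tx)\big)+\bDelta_\ell^{-1}\bDelta_\ell^T\,\nabla_{\bx}\mL(\bx_\ell,\blambda_\ell)$ up to lower-order terms, and the second piece vanishes at the solution since $\nabla_{\bx}\mL^\star=\0$; the deterministic-gradient direction noise from the objective thus cancels against that from the (noise-free) constraints, leaving only the sampling noise whose conditional covariance converges to $\mE[\bDelta^{-1}\bDelta^T\text{Cov}(\nabla F(\tx;\xi))\bDelta\bDelta^{-T}]$, i.e.\ the nonzero block of $\tOmega$.

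With effective per-step intensity $(W^\star)^{-1}\tOmega(W^\star)^{-1}$, the limit follows from a standard CLT for $\bz_{k+1}=(1-\baralpha_k)\bz_k-\baralpha_k\boldsymbol n_k$ with drift $I$ and $\baralpha_k\asymp\zeta\iota_1(k+1)^{-p_1}$: solving the Lyapunov recursion for the rescaled predictable quadratic variation yields $\omega=1/2$ when $p_1<1$ and $\omega=\zeta\iota_1/(2\zeta\iota_1-1)$ when $p_1=1$, the condition $\zeta\iota_1>1/2$ being precisely what renders the $p_1=1$ recursion contractive enough for a nondegenerate limit. The Lindeberg condition is checked using the bounded reciprocals of $\bDelta$ (Assumption \ref{ass:Delta}) and the $r\ge3$ moment bound. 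I expect the main obstacle to be this covariance collapse: rigorously showing the fast gradient-averaging scale contributes its integrated intensity rather than its $O(\beta_k)$ instantaneous variance, establishing the $\bDelta_\ell^{-1}\bDelta_\ell^T\nabla_{\bx}\mL_\ell$ cancellation uniformly over the averaging window while $\blambda_\ell$ still varies, and controlling the cross terms between $\boldsymbol R_k$ and the leading noise so they disappear after rescaling.
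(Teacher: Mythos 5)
Your proposal is correct and follows essentially the same route as the paper's proof: localize via the stopping time $\tau_{k_0}(\epsilon)$ and a transfer lemma, linearize the Newton recursion around $(\tx,\tlambda)$ with remainders controlled by Lemmas \ref{lem:local:rate} and \ref{lem:local:rate:Hessian}, unfold the moving average into per-step martingale increments whose two-time-scale weights collapse the $\beta_k$ dependence, exploit $\nabla_{\bx}\mL^\star=\0$ so that only the sampling noise survives in $\tOmega$, and conclude with a martingale CLT plus Lindeberg under $r\geq 3$. The obstacles you flag (the covariance collapse, the cancellation while $\blambda_\ell$ varies, the cross terms) are exactly the points the paper resolves in its treatment of $\C_{4,1}^k$--$\C_{4,5}^k$.
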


\begin{proof}
See Appendix \ref{pf:thm:normality}.
\end{proof}

We note that the conditions on $\{p,p_1,p_2,p_3,p_4,r\}$ can be easily satisfied. The condition \eqref{cond:4.8} implies \eqref{cond:4.4}, \eqref{cond:4.5}, \eqref{cond:4.6}, thereby ensuring that Lemmas \ref{iterate convergence}, \ref{lem:Hessian Convergence}, \ref{lem:local:rate}, \ref{lem:local:rate:Hessian} naturally hold.
We strengthen the condition on $r$ from $r(p_1-p_2)>1$ (as used in \eqref{cond:4.4}, \eqref{cond:4.5}) to additionally require $r\geq 3$, which ensures that the gradient estimate $\nabla F(\bx;\xi)$ has a bounded third moment and is standard in establishing asymptotic normality guarantee \citep{Davis2024Asymptotic, Na2025Statistical}.
On the other hand, the conditions~on $\{p_1, p_2\}$ in \eqref{cond:4.8} for local convergence are weaker than those in \eqref{cond:3.10} for global convergence. The technical reason for this relaxation is that we are able to refine the bound on $\tilde{\Delta}\bx_k$ and show that it vanishes in probability in local analysis. This can be seen by comparing Lemma \ref{final fundamental lemma} with Lemma \ref{lem:local:rate}, where~the former contains the term $\alpha_k^2 / \beta_k^2$, while the latter does not.

The above theorem illustrates that the rescaled primal-dual error by the random stepsize converges in distribution to a Gaussian distribution with mean zero and covariance $\omega\cdot (W^\star)^{-1}\tOmega (W^\star)^{-1}$. To achieve optimal asymptotic rate (i.e., $\sqrt{t}$-consistency), let us set $p_1=1$. Then, Theorem \ref{thm:normality} implies~that
\begin{equation*}
\sqrt{t}\cdot(\bx_k- \tx, \blambda_k -\tlambda)\stackrel{d}{\longrightarrow}\mN\rbr{\0,\; \frac{(\zeta\iota_{1})^2}{2\zeta\iota_{1}-1}\cdot (W^\star)^{-1}\tOmega (W^\star)^{-1}}.
\end{equation*}
Thus, the minimum variance is achieved by setting $\iota_{1} \coloneqq 1/\zeta$, leading to the asymptotic covariance
\begin{equation*}
\bSigma^\star \coloneqq (W^\star)^{-1}\tOmega (W^\star)^{-1}.
\end{equation*}
On the other hand, we know from \cite{Duchi2021Asymptotic, Davis2024Asymptotic, Na2025Statistical, Du2025Online} that the \textit{minimax optimal covariance} achieved by various derivative-based methods for Problem \eqref{Intro_StoProb} is given by (recall \eqref{equ:SigmaOpt})
\begin{equation*}
\bSigma^\star_{op} \coloneqq (W^\star)^{-1}\diag\rbr{\text{Cov}(\nabla F(\tx;\xi)), \0} (W^\star)^{-1}.
\end{equation*}
The next proposition shows that the proposed derivative-free method, while more computationally efficient, is less statistically efficient than derivative-based methods in the sense that $\bSigma^\star \succeq \bSigma^\star_{op}$. Moreover, the statistical efficiency gap grows linearly with the dimension $d$, even though the computational efficiency gap also becomes more and more promising, as the proposed method requires only a \textit{dimension-independent} number of function evaluations.

\begin{proposition}\label{prop:1}
Suppose $\bDelta\sim\P_{\bDelta}$ satisfies Assumption \ref{ass:Delta}. We have $\bSigma^\star \succeq \bSigma^\star_{op}$. Furthermore, there exists a constant $\Upsilon>0$ such that
\begin{equation*}
(d-1)/\Upsilon \leq \|\bSigma^\star-\bSigma^\star_{op}\| \leq \Upsilon\cdot (d-1).
\end{equation*}

\end{proposition}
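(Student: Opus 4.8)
The plan is to reduce the entire statement to the single $d\times d$ block
\begin{equation*}
M \coloneqq \mE[\bDelta^{-1}\bDelta^T C\bDelta\bDelta^{-T}], \quad\quad C \coloneqq \text{Cov}(\nabla F(\tx;\xi)),
\end{equation*}
which is the nonzero block of $\tOmega$. Since Assumption \ref{ass:4.1} makes the KKT matrix $W^\star$ symmetric and nonsingular, we have
\begin{equation*}
\bSigma^\star - \bSigma^\star_{op} = (W^\star)^{-1}\rbr{\tOmega - \diag(C, \0)}(W^\star)^{-1}, \quad\quad \tOmega - \diag(C,\0) = \diag(M - C, \0).
\end{equation*}
Because $(W^\star)^{-1}$ is symmetric, the conjugation $A\mapsto (W^\star)^{-1}A(W^\star)^{-1}$ preserves the Loewner order and, being invertible, distorts operator norms only by $\|(W^\star)^{-1}\|^2$ and $\|W^\star\|^2$. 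Hence $\bSigma^\star\succeq\bSigma^\star_{op}$ reduces to $M\succeq C$, and $\|M-C\|/\|W^\star\|^2 \leq \|\bSigma^\star - \bSigma^\star_{op}\| \leq \|(W^\star)^{-1}\|^2\|M-C\|$ reduces the two-sided estimate to $\|M-C\|\asymp(d-1)$; both $\|W^\star\|$ and $\|(W^\star)^{-1}\|$ are $O(1)$ and get absorbed into $\Upsilon$.

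For the ordering I would use that $M$ is linear in $C$ and take the spectral decomposition $C=\sum_r\sigma_r w_rw_r^T$ with $\sigma_r\geq0$. For a test vector $y\in\mR^d$, set $Z_r\coloneqq(\bDelta^Tw_r)(y^T\bDelta^{-1})$; the mutual independence and symmetry of the entries of $\bDelta$ (Assumption \ref{ass:Delta}) give $\mE[\bDelta^a/\bDelta^b]=\1_{a=b}$, so $\mE[Z_r]=w_r^Ty$. Then
\begin{equation*}
y^TMy = \sum_r\sigma_r\mE[Z_r^2]\geq \sum_r\sigma_r(\mE Z_r)^2 = \sum_r\sigma_r(w_r^Ty)^2 = y^TCy,
\end{equation*}
the inequality being just $\mE[Z_r^2]\geq(\mE Z_r)^2$. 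This yields $M\succeq C$, hence $\bSigma^\star\succeq\bSigma^\star_{op}$.

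For the norm estimate I would compute $M$ entrywise. Expanding $\bDelta^TC\bDelta=\sum_{a,b}C_{ab}\bDelta^a\bDelta^b$ and discarding every term in which a single coordinate occurs to an odd total power, only even powers survive, giving
\begin{equation*}
M_{i\ell} = 2C_{i\ell} \;\;(i\neq\ell), \quad\quad M_{ii} = C_{ii} + \mE[(\bDelta^i)^{-2}]\sum_{a\neq i}\mE[(\bDelta^a)^2]\,C_{aa}.
\end{equation*}
Writing $M-C=R+S$ with $R$ diagonal, $R_{ii}=\mE[(\bDelta^i)^{-2}]\sum_{a\neq i}\mE[(\bDelta^a)^2]C_{aa}$, and $S=C-\diag(C)$ the off-diagonal part of $C$, Assumption \ref{ass:Delta} bounds $\mE[(\bDelta^j)^2]\in[\kappa_{\bDelta_1}^2,\kappa_{\bDelta_2}^2]$ and $\mE[(\bDelta^j)^{-2}]\in[\kappa_{\bDelta_2}^{-2},\kappa_{\bDelta_1}^{-2}]$, while the diagonal of $C$ satisfies $0<\underline c\leq C_{aa}\leq\|C\|=\overline c$ for a non-degenerate bounded gradient covariance. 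Thus $\|S\|\leq2\overline c=O(1)$ and $\|R\|=\max_iR_{ii}\leq\kappa_{\bDelta_1}^{-2}\kappa_{\bDelta_2}^2\,\overline c\,(d-1)$, giving $\|M-C\|\leq\Upsilon(d-1)$. For the lower bound I would invoke the already-proved $M-C\succeq\0$, so that $\|M-C\|=\lambda_{\max}(M-C)\geq\max_i(M-C)_{ii}=\max_iR_{ii}$; taking $i^\star$ to be an index minimizing $C_{ii}$ leaves $\sum_{a\neq i^\star}C_{aa}\geq\underline c(d-1)$, hence $\max_iR_{ii}\geq\kappa_{\bDelta_2}^{-2}\kappa_{\bDelta_1}^2\underline c(d-1)$, i.e.\ $\|M-C\|\geq(d-1)/\Upsilon$.

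The main obstacle I anticipate is the lower bound rather than the ordering or the upper bound. The off-diagonal perturbation $S$ is only $O(1)$, but a priori it could cancel the $\Theta(d)$ diagonal growth of $R$ inside the operator norm; the clean way around this is precisely the semidefiniteness $M-C\succeq\0$, which forces $\|M-C\|=\lambda_{\max}(M-C)\geq\max_i(M-C)_{ii}$ and thereby isolates $R$ from $S$. Beyond this, the only delicate point is the symmetry-and-independence bookkeeping producing the exact entries of $M$ (in particular the factor $2$ on the off-diagonals and the coordinate cancellations), together with recording that the linear growth is governed by $\text{tr}(C)-\min_i C_{ii}\asymp(d-1)$, so that $\Upsilon$ depends only on $\kappa_{\bDelta_1},\kappa_{\bDelta_2}$, the eigenvalue bounds $\underline c,\overline c$ of $C$, and $\|W^\star\|,\|(W^\star)^{-1}\|$, but not on $d$.
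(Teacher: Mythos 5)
Your proof is correct, but it takes a genuinely different route from the paper's at both key steps. For the Loewner ordering, the paper completes the square: since Assumption \ref{ass:Delta} gives $\mE[\bDelta^{-1}\bDelta^T]=I$, it writes $\mE[\bDelta^{-1}\bDelta^T C\bDelta\bDelta^{-T}]-C=\mE[(\bDelta^{-1}\bDelta^T-I)\,C\,(\bDelta\bDelta^{-T}-I)]\succeq\0$ in one line, whereas you reach the same conclusion via Jensen's inequality applied to the scalars $Z_r=(\bDelta^Tw_r)(y^T\bDelta^{-1})$; both are valid, the paper's being more compact and yours making the variance interpretation explicit. For the norm estimate, the paper never computes $M$ entrywise: it sandwiches $\text{Cov}(\nabla F(\tx;\xi))$ between $\lambda_{\min}(\text{Cov}(\nabla F(\tx;\xi)))\,I$ and $\lambda_{\max}(\text{Cov}(\nabla F(\tx;\xi)))\,I$ inside the completed square, reducing both bounds to the single diagonal computation $\|\mE[(\bDelta^{-1}\bDelta^T-I)(\bDelta\bDelta^{-T}-I)]\|=(d-1)\,\mE[\bDelta^2]\,\mE[\bDelta^{-2}]$. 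Your entrywise evaluation of $M$ (the factor $2$ off the diagonal, the $\Theta(d)$ diagonal inflation $R$) is correct, and your use of $M-C\succeq\0$ to force $\|M-C\|=\lambda_{\max}(M-C)\geq\max_i(M-C)_{ii}=\max_i R_{ii}$ is exactly the right way to prevent the $O(1)$ off-diagonal part $S$ from contaminating the lower bound -- you correctly identified this as the delicate point. One difference worth noting: the paper's lower bound is proportional to $\lambda_{\min}(\text{Cov}(\nabla F(\tx;\xi)))$, while yours only requires $\min_a C_{aa}>0$, a strictly weaker nondegeneracy condition; both proofs (and the proposition itself) implicitly assume some such nondegeneracy, since the lower bound fails when the gradient covariance vanishes, so this is a shared implicit hypothesis rather than a gap in your argument.
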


\begin{proof}
See Appendix \ref{pf:prop:1}.
\end{proof}

To conclude this section, we turn our attention to performing statistical inference in practice. In particular, to conduct hypothesis testing and construct confidence intervals or regions for $(\tx, \tlambda)$, a consistent estimator of the limiting covariance in Theorem \ref{thm:normality} is required. The next proposition provides a simple plug-in estimator for this purpose.

\begin{proposition}\label{prop:2}
Under the conditions of Theorem \ref{thm:normality} and strengthen $r\geq 4$ in \eqref{cond:grad:smo}, we define
\begin{equation*}
\bSigma_k = \tW_k^{-1}\cdot \diag\rbr{\frac{1}{k+1}\sum_{t=0}^{k}\rbr{\hnabla F(\bx_t;\xi_t)+\hnabla^T c(\bx_t)\blambda_t}\rbr{\hnabla F(\bx_t;\xi_t)+\hnabla^T c(\bx_t)\blambda_t}^T,\; \0}\cdot \tW_k^{-1}
\end{equation*}
and have $\bSigma_k\rightarrow \tSigma = (W^\star)^{-1}\tOmega (W^\star)^{-1}$ as $k\rightarrow\infty$ almost surely.
\end{proposition}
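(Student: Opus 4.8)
The plan is to prove the claim by factoring $\bSigma_k$ into a product of three almost-surely convergent matrix sequences -- the two outer inverse factors $\tW_k^{-1}$ and the inner block-diagonal sample matrix -- and then using that the product of a.s.-convergent matrices converges to the product of the limits. For the outer factors I would argue that $\tW_k\to W^\star$ almost surely by combining the earlier convergence results: Lemma \ref{lemma:average almost sure} gives $\tG_k=\barG_k\to G_k$, Lemma \ref{iterate convergence} gives $\bx_k\to\tx$ and hence $G_k\to G^\star$, and Lemma \ref{lem:Hessian Convergence} gives $\tB_k=\barB_k\to\nabla_{\bx}^2\mL^\star$. Since Assumption \ref{ass:4.1} (LICQ and SOSC) guarantees that $W^\star$ is nonsingular, continuity of matrix inversion then yields $\tW_k^{-1}\to(W^\star)^{-1}$ a.s. It remains to show that the inner sample average converges a.s. to $\mE[\bDelta^{-1}\bDelta^T\text{Cov}(\nabla F(\tx;\xi))\bDelta\bDelta^{-T}]$, the nonzero block of $\tOmega$.

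The crucial step is a pathwise approximation of the summand $\boldsymbol{h}_t:=\hnabla F(\bx_t;\xi_t)+\hnabla^T c(\bx_t)\blambda_t$. A second-order Taylor expansion of the central differences in \eqref{snequ:2} gives $\hnabla F(\bx_t;\xi_t)=\bDelta_t^{-1}\bDelta_t^T\nabla F(\bx_t;\xi_t)+O(b_t^2)$ and $\hnabla^T c(\bx_t)\blambda_t=\bDelta_t^{-1}\bDelta_t^T G_t^T\blambda_t+O(b_t^2)$, where the remainders carry $\bDelta_t$-dependent but, by Assumption \ref{ass:Delta}, uniformly bounded coefficients; hence $\boldsymbol{h}_t=\bDelta_t^{-1}\bDelta_t^T\big(\nabla F(\bx_t;\xi_t)+G_t^T\blambda_t\big)+O(b_t^2)$. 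Invoking the a.s. limits $\bx_t\to\tx$, $\blambda_t\to\tlambda$ together with the KKT stationarity $(G^\star)^T\tlambda=-\nabla f(\tx)$, the parenthesized term converges to the centered gradient noise $\nabla F(\tx;\xi_t)-\nabla f(\tx)$. This is the heart of the argument, as it turns the Lagrangian-gradient estimate into a mean-zero object whose covariance is exactly $\text{Cov}(\nabla F(\tx;\xi))$. Defining the idealized summand $\boldsymbol{m}_t:=\bDelta_t^{-1}\bDelta_t^T(\nabla F(\tx;\xi_t)-\nabla f(\tx))$, which depends only on $(\xi_t,\bDelta_t)$ and is therefore i.i.d., independence of $\bDelta_t$ and $\xi_t$ gives $\mE[\boldsymbol{m}_t\boldsymbol{m}_t^T]=\mE[\bDelta^{-1}\bDelta^T\text{Cov}(\nabla F(\tx;\xi))\bDelta\bDelta^{-T}]$, matching the target block.

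Finally I would decompose $\boldsymbol{h}_t\boldsymbol{h}_t^T=\boldsymbol{m}_t\boldsymbol{m}_t^T+\boldsymbol{E}_t\boldsymbol{m}_t^T+\boldsymbol{m}_t\boldsymbol{E}_t^T+\boldsymbol{E}_t\boldsymbol{E}_t^T$ with $\boldsymbol{E}_t:=\boldsymbol{h}_t-\boldsymbol{m}_t$. For the main term I apply a strong law of large numbers to the i.i.d. sequence $\{\boldsymbol{m}_t\boldsymbol{m}_t^T\}$ to get $\frac{1}{k+1}\sum_{t=0}^k\boldsymbol{m}_t\boldsymbol{m}_t^T\to\mE[\boldsymbol{m}_0\boldsymbol{m}_0^T]$ a.s.; the strengthened moment $r\geq4$ ensures the outer products have finite second moment, which is what is needed to control the conditional variance when the SLLN is run on the genuine, adapted summand $\boldsymbol{h}_t\boldsymbol{h}_t^T$ via a martingale/Kronecker argument. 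The error $\boldsymbol{E}_t$ collects the vanishing finite-difference bias $O(b_t^2)$ and the $\bDelta_t$-bounded perturbations $\bDelta_t^{-1}\bDelta_t^T[\nabla F(\bx_t;\xi_t)-\nabla F(\tx;\xi_t)]$ and $\bDelta_t^{-1}\bDelta_t^T[G_t^T\blambda_t-(G^\star)^T\tlambda]$, both $O(\|\bx_t-\tx\|+\|\blambda_t-\tlambda\|)\to0$ a.s., so $\|\boldsymbol{E}_t\|\to0$ a.s. The Toeplitz/Cesàro lemma then gives $\frac{1}{k+1}\sum_{t=0}^k\|\boldsymbol{E}_t\|^2\to0$, and Cauchy--Schwarz against the bounded average $\frac{1}{k+1}\sum_{t=0}^k\|\boldsymbol{m}_t\|^2$ kills the cross terms, proving the inner average converges a.s. to $\tOmega$'s block; assembling with $\tW_k^{-1}\to(W^\star)^{-1}$ yields $\bSigma_k\to(W^\star)^{-1}\tOmega(W^\star)^{-1}=\tSigma$. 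I expect the main obstacle to be precisely this second stage: rigorously passing from the genuine adapted summand $\boldsymbol{h}_t\boldsymbol{h}_t^T$, whose law drifts with $\bx_t,\blambda_t,b_t$, to the i.i.d. surrogate while retaining \emph{almost-sure} (not merely in-expectation) convergence, which is exactly what forces both the uniformly bounded $\bDelta_t$-coefficients and the higher moment $r\geq4$.
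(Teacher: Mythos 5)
Your overall architecture matches the paper's: factor $\bSigma_k$ into $\tW_k^{-1}(\cdot)\tW_k^{-1}$, get $\tW_k\to W^\star$ from Lemmas \ref{lemma:average almost sure}, \ref{iterate convergence}, \ref{lem:Hessian Convergence}, and reduce to showing the inner sample average converges to $\mE[\bDelta^{-1}\bDelta^T\mathrm{Cov}(\nabla F(\tx;\xi))\bDelta\bDelta^{-T}]$. The identification of the limit via the KKT identity $\nabla f(\tx)+(G^\star)^T\tlambda=\0$ and the independence of $\bDelta_t$ and $\xi_t$ is also the right idea. However, your middle step has a genuine gap: the pathwise approximation $\boldsymbol{h}_t=\bDelta_t^{-1}\bDelta_t^T\bigl(\nabla F(\bx_t;\xi_t)+G_t^T\blambda_t\bigr)+O(b_t^2)$ with a uniformly bounded remainder, and the subsequent claim that $\bDelta_t^{-1}\bDelta_t^T[\nabla F(\bx_t;\xi_t)-\nabla F(\tx;\xi_t)]=O(\|\bx_t-\tx\|)$ almost surely, both require second (and for $O(b_t^2)$, third) derivatives of the \emph{realizations} $F(\cdot;\xi)$ to be bounded uniformly in $\xi$. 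The paper never assumes this: Assumption \ref{ass:1-1} bounds derivatives of $f=\mE[F]$ only, Assumption \ref{ass:2-1} bounds moments of $\nabla F-\nabla f$, and Assumption \ref{ass:4.2} bounds only the conditional \emph{variance} of $\nabla^2F$ near $\tx$. Under these assumptions your error term $\boldsymbol{E}_t$ is controlled in conditional mean square, not pathwise, so "$\|\boldsymbol{E}_t\|\to 0$ a.s." does not follow as stated, and the Ces\`aro/Cauchy--Schwarz step that kills the cross terms loses its foundation.

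The paper sidesteps exactly this difficulty by never forming a pathwise i.i.d.\ surrogate. It centers $\boldsymbol{h}_t\boldsymbol{h}_t^T$ at its conditional expectation and applies the strong law for square-integrable martingales (this is where $r\geq4$ enters, via the uniform bound \eqref{aequ:B7}); it then shows the conditional expectations themselves converge, first replacing $\blambda_t$ by $\tlambda$ (deterministic Lipschitz estimates, since $c$ is deterministic) and then using the integral representation \eqref{nequ:24} together with Assumption \ref{ass:4.2} to control $\mE[\|\nabla F(\bx_h+s\bDelta_h;\xi_h)-\nabla F(\tx;\xi_h)\|^2\mid\bx_h,\bDelta_h]=O(\|\bx_h-\tx\|^2+b_h^2)$ \emph{in conditional second moment}; Stolz--Ces\`aro then finishes. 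To repair your argument within the paper's assumptions you would either need to add a uniform-in-$\xi$ smoothness hypothesis on $F(\cdot;\xi)$, or restructure the error analysis so that $\boldsymbol{E}_t$ is only required to vanish in conditional mean square and its Ces\`aro average is handled by a further martingale/Borel--Cantelli argument --- which is essentially the conditioning route the paper takes.
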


\begin{proof}
See Appendix \ref{pf:prop:2}.
\end{proof}

We mention that requiring the gradient estimate $\nabla F(\bx;\xi)$ to have a bounded fourth moment (i.e., $r \geq 4$) is standard for establishing the consistency of the plug-in covariance estimator; see \cite{Chen2020Statistical, Davis2024Asymptotic, Na2025Statistical} and references therein. With the above covariance estimator in Proposition \ref{prop:2},  we can construct the confidence interval of the quantity $(\bw_\bx, \bw_\blambda)^T(\tx, \tlambda)$ for any vector $\bw=(\bw_\bx, \bw_\blambda)$ as follows:
\begin{equation*}
P\rbr{(\bw_\bx, \bw_\blambda)^T(\tx, \tlambda) \in \sbr{(\bw_\bx, \bw_\blambda)^T(\bx_k, \blambda_k) \pm z_{1-\varphi/2}\sqrt{\baralpha_k\cdot\omega\cdot\bw^T\bSigma_k\bw} } }\rightarrow 1-\varphi\quad \text{as}\quad k\rightarrow\infty.
\end{equation*}
Here, for $\varphi\in(0, 1)$, $z_{1-\varphi/2}$ denotes the $(1-\varphi/2)$-quantile of the standard Gaussian distribution.

\section{Numerical Experiment}\label{sec:5}

In this section, we compare derivative-free methods with derivative-based methods on benchmark constrained nonlinear problems in CUTEst test set \citep{Gould2014CUTEst}. For both DF-SSQP~and~derivative-based SSQP, we consider first- and second-order variants. 
The first-order methods do not estimate~$\hnabla^2_{\boldsymbol{x}} \L_k$ in \eqref{snequ:5} and instead set it as $I$. The second-order methods estimate it either via a derivative-free approach in \eqref{snequ:12}, \eqref{snequ:4}, \eqref{snequ:11}, or obtain it directly from the CUTEst package. Note that no debiasing step~is~performed for the derivative-based methods, i.e., $\beta_k = 1$ in \eqref{snequ:3} and \eqref{snequ:5}.

For both derivative-free and derivative-based SSQP, we perform 200 independent runs for each problem under each setup and set the total number of iterations to $10^5$. For DF-SSQP, we consider the setting where any order of derivatives of both the objective and constraints are inaccessible, and we apply the SPSA approach to estimate them (see \eqref{snequ:2}, \eqref{snequ:12}--\eqref{snequ:11}). The random directions $\bDelta_k$ and $\tilde{\bDelta}_k$ have independent entries drawn from the Rademacher distribution, taking values $\pm1$ with equal~probability.~We set the prespecified stepsize, momentum weight, and discretization sequences as $\alpha_k = 1/t^{0.751}$,~$\beta_k = 1/t^{0.501}$, $b_k = \tilde{b}_k = 1/t^{0.25}$, $p = 1.5$ according to \eqref{cond:3.10} and \eqref{cond:4.8}, and designate the first one-fifth of the iterations as the burn-in {period}. For derivative-based SSQP, we use the same $\alpha_k$ and $p$. The objective values, gradients, and Hessians (when applicable) are generated by adding Gaussian noise to the true deterministic quantities. Specifically, $F(\bx_k;\xi)\sim \mN(f_k, \sigma^2)$, $\nabla F(\bx_k,\xi)\sim\mN(\nabla f_k, \sigma^2(I+\1\1^T))$, and $[\nabla^2F(\bx_k;\xi)]_{i,j} \sim\mN([\nabla^2f_k]_{i,j}, \sigma^2)$. Here, $\1$ denotes the $d$-dimensional all-ones vector. We vary the noise variance as $\sigma^2 \in \{10^{-4}, 10^{-2}, 10^{-1}, 1\}$.

\subsection{Global convergence}\label{sec:5.1}

We compare the final KKT residuals, primal-dual iterate errors, computational flops per iteration, and running times of four methods: first- and second-order DF-SSQP and first- and second-order derivative-based SSQP, denoted as \texttt{DF-Id}, \texttt{DF-Hess}, \texttt{DB-Id}, and \texttt{DB-Hess}, respectively. The results are summarized in Figure~\ref{fig:1}.

\begin{figure}[!t]
\centering
\subfigure[KKT]{\includegraphics[width=0.45\textwidth]{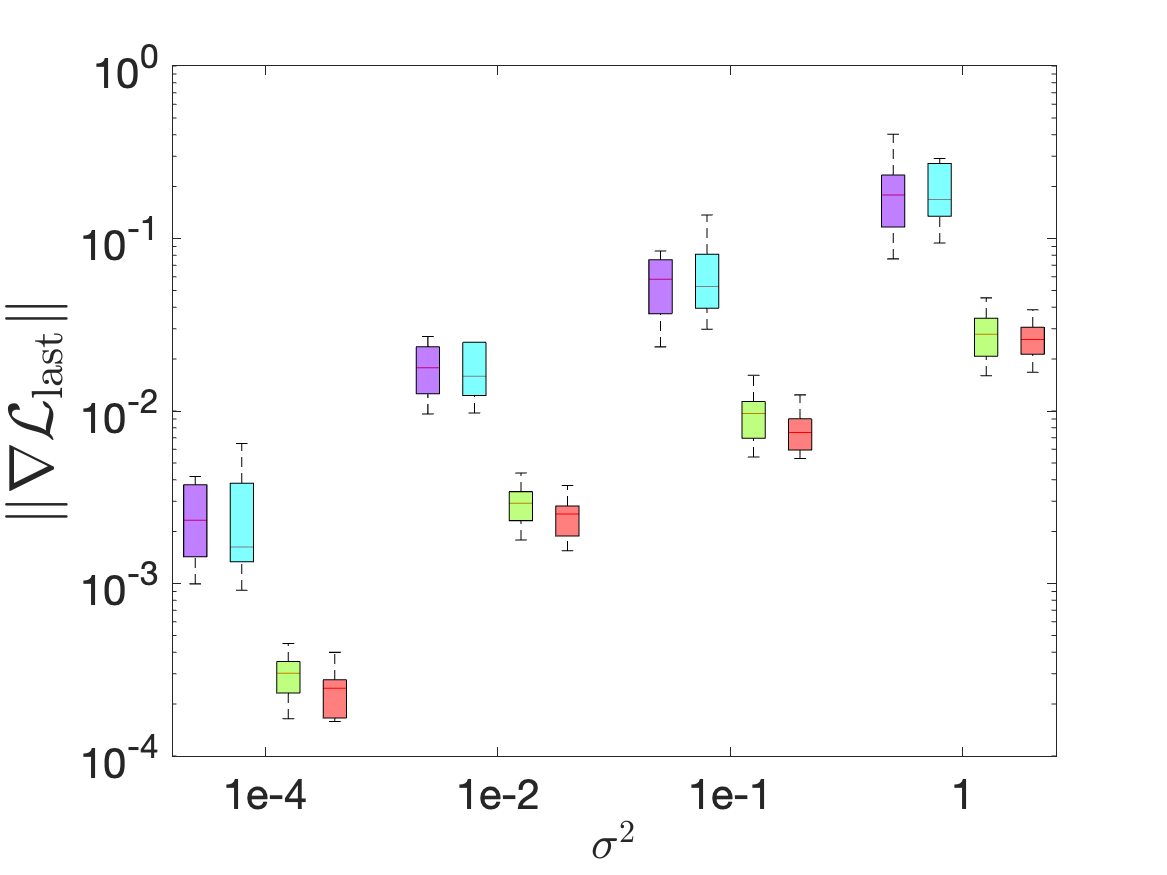}}
\subfigure[Primal-dual error]{\includegraphics[width=0.45\textwidth]{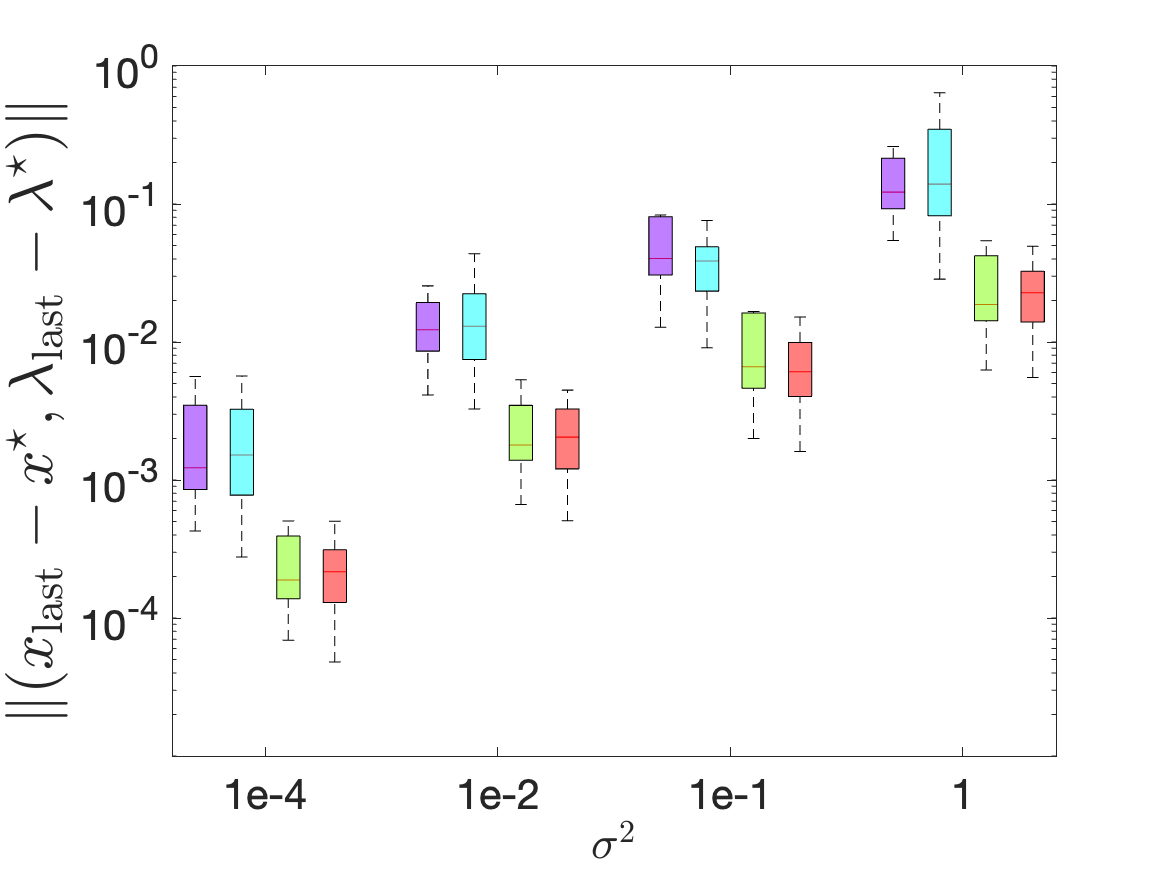}}
\vskip-0.4cm
\subfigure[Flops per iteration]{\includegraphics[width=0.45\textwidth]{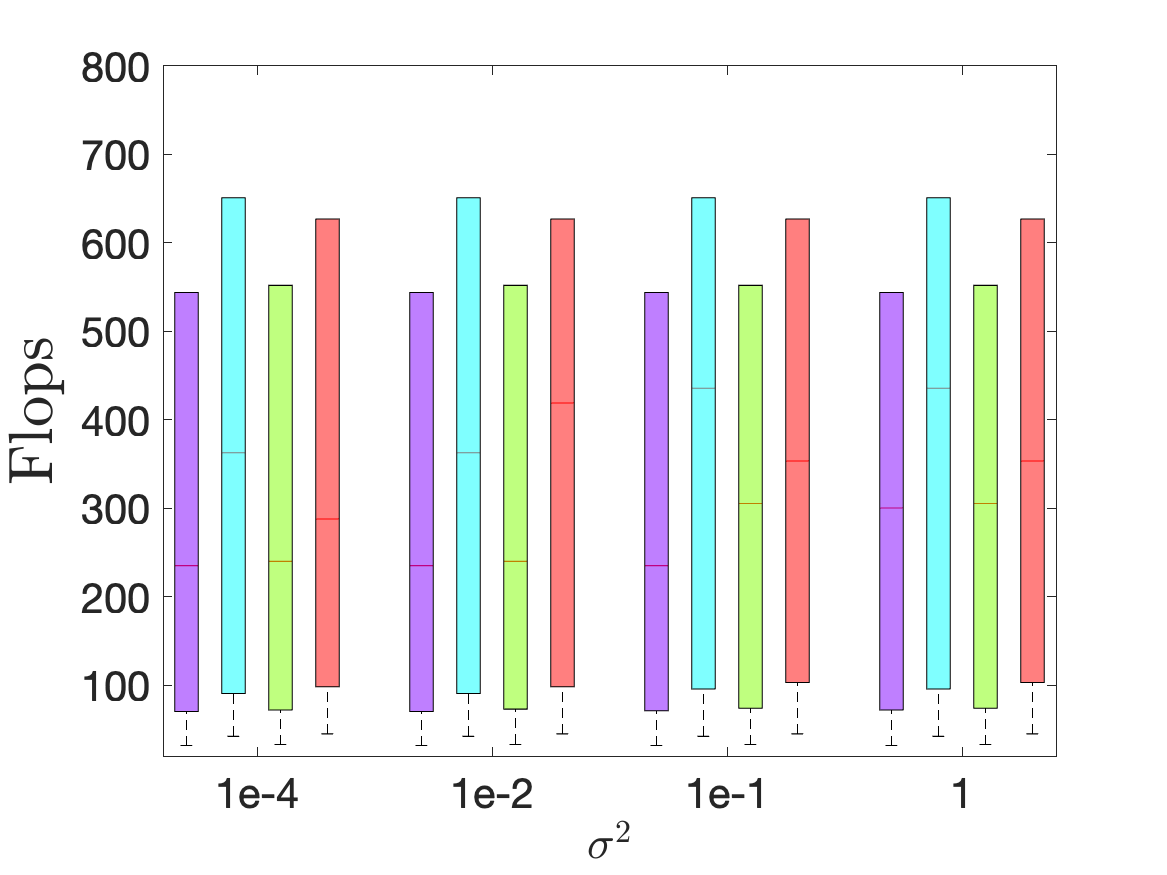}}
\subfigure[Running time]{\includegraphics[width=0.45\textwidth]{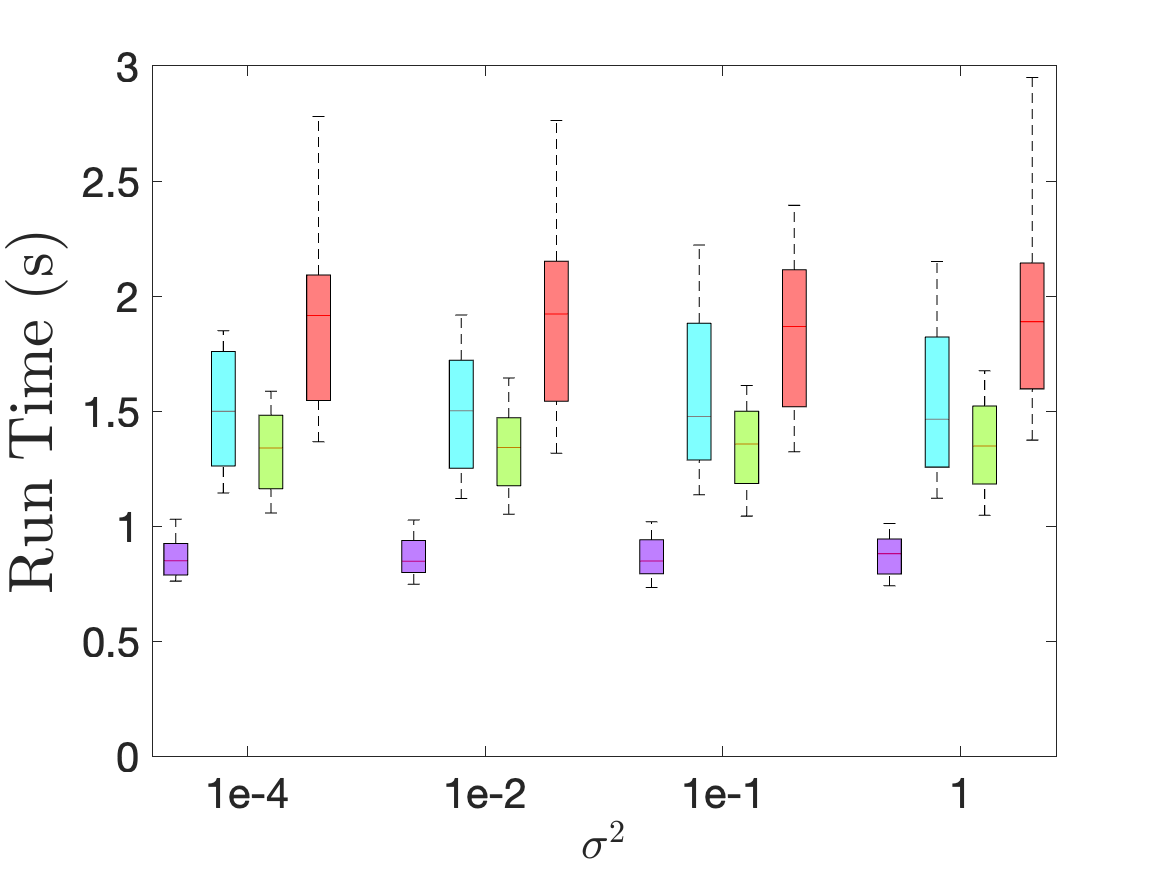}}
\includegraphics[width=0.45\textwidth]{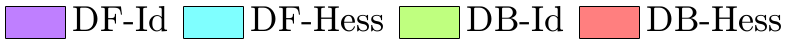}
\vspace{4pt}
\caption{Boxplots over CUTEst problems. Each panel has four different noise levels, and each noise has four different methods.}
\label{fig:1}\vskip-0.4cm
\end{figure}

Not surprisingly, there are considerable disadvantages to not having derivative information, especially in conjunction with additional random noise in the objective value estimates. Hence, we cannot~expect the performance of derivative-free methods to be as competitive as that of derivative-based~methods. From Figure~\ref{fig:1}(a)-(b), we observe that the performance of DF-SSQP degrades, exhibiting higher KKT residuals and iterate errors. This suggests that a near-optimal solution obtained by DF-SSQP is often less accurate than that produced by a derivative-based SSQP method.
On the other hand, for both~types of methods, we do not observe a significant advantage in approximating second-order information from noisy observations for facilitating global convergence; this will, however, become clearer in the local study presented in Section \ref{sec:5.2}. In terms of flops per iteration, all four methods yield comparable results, with first-order methods showing slightly lower costs. This is because all methods have to solve the Newton system \eqref{def:acc2} at each step, which is the dominant computational cost. In terms of running time, we observe that first-order methods reach stationarity faster than second-order methods, and that derivative-free methods are faster than their derivative-based counterparts.

\subsection{Local normality and inference}\label{sec:5.2}

We illustrate the local convergence behavior of DF-SSQP stated in Theorem \ref{thm:normality} by performing statistical inference on $\tx$. In particular, we estimate the limiting covariance matrix using Proposition \ref{prop:2}, and construct entrywise 95\% confidence intervals for $\tx$. We report the average iterate error, coverage rate over 200 runs, confidence interval length, and computational flops on 8 CUTEst problems~under 4 different variance levels $\sigma^2$. The results are summarized in Table \ref{tab:1}.

From the table, we observe that both first- and second-order derivative-based methods (DB-SSQP) generally achieve smaller iterate errors and shorter confidence interval lengths than their derivative-free counterparts (DF-SSQP), with comparable FLOPs, across 8 CUTEst problems and 4 noise levels. This suggests that, when available and reliable, derivative information should be used to compute the step direction. That said, in high-noise regimes ($\sigma^2\in\{0.1, 1\}$), second-order variants may fail to converge or may converge to a stationary point different from the package reference; thus, when second-order estimates are very noisy, incorporating curvature does not necessarily reduce the iterate error.

On the other hand, second-order information significantly improves coverage rate, bringing the~empirical rates closer to the nominal rate 95\%. In particular, for 6 out of 8 CUTEst problems, we observe many settings in which second-order DF- and DB-SSQP attain coverage rate much nearer 95\%, while the corresponding first-order methods exhibit noticeable over-coverage (near 100\%) or under-coverage (below 90\%).
These observations align with Theorem \ref{thm:normality}: local asymptotic normality of SSQP highlights the benefits of Hessian information; without it, the normality \eqref{equ:normality} fails to hold and the limiting covariance is only biasedly estimated, yielding asymptotically mis-calibrated confidence intervals.~\mbox{Notably},~on~problem \texttt{BT1}, DF-SSQP attains a much better coverage rate than DB-SSQP for both first- and second-order variants; and on the remaining 7 problems, DF-SSQP achieves coverage that is no worse than DB-SSQP. Taken together, the results indicate that for solution inference tasks, second-order DF-SSQP~can~be as reliable, and in some cases preferable, as second-order DB-SSQP in terms of coverage, even if DB-SSQP often delivers smaller errors and shorter interval lengths.

\begin{table}[t]
\centering
\small
\setlength{\tabcolsep}{4pt}
\renewcommand{\arraystretch}{0.95}
\begin{adjustbox}{max width=\textwidth, max totalheight=0.9\textheight, keepaspectratio}
\begin{tabular}{ccc|cccc|cccc}
\hline
\multirow{2}{*}{\textbf{Prob}} & \multirow{2}{*}{$\sigma^2$} & \multirow{2}{*}{\textbf{Hess}} & \multicolumn{4}{c|}{\textbf{Derivative-Free SSQP}} & \multicolumn{4}{c}{\textbf{Derivative-Based SSQP}}\\
\cline{4-11}
& & & 
\textbf{Err $(10^{-4})$} & \textbf{Cov $(100\%)$} & \textbf{Len $(10^{-2})$} & \textbf{FLOPs} & \textbf{Err $(10^{-4})$} & \textbf{Cov $(100\%)$} & \textbf{Len $(10^{-2})$} & \textbf{FLOPs}\\
\hline
\multirow{8}{*}{MARATOS} & \multirow{2}{*}{$10^{-4}$} 
& Id & 6.54 & 93.50 & 0.15 & 31.80 & 1.13 & 94.00 & 0.03 & 33.00 \\ & 
& Hess & 6.45 & 92.50 & 0.15 & 42.20 & 1.10 & 94.50 & 0.03 & 45.00 \\
\cline{4-11}
& \multirow{2}{*}{$10^{-2}$} 
& Id & 64.99 & 93.00 & 1.46 & 31.80 & 11.95 & 95.00 & 0.26 & 33.00 \\ &
& Hess & 62.06 & 93.00 & 1.47 & 42.20 & 10.12 & 97.50 & 0.26 & 45.00 \\
\cline{4-11}
& \multirow{2}{*}{$10^{-1}$} 
& Id & 217.15 & 91.50 & 4.61 & 31.80 & 36.24 & 95.00 & 0.82 & 33.00 \\ &
& Hess & 192.85 & 92.00 & 4.64 & 42.20 & 32.98 & 97.00 & 0.82 & 45.00 \\
\cline{4-11}
& \multirow{2}{*}{$1$} 
& Id & 633.13 & 95.00 & 14.55 & 31.80 & 105.72 & 94.50 & 2.61 & 33.00 \\ &
& Hess & 610.65 & 95.00 & 14.77 & 42.20 & 109.65 & 93.00 & 2.61 & 45.00 \\
\hline
\multirow{8}{*}{HS48} & \multirow{2}{*}{$10^{-4}$} 
& Id & 7.75 & 99.40 & 0.11 & 371.01 & 0.98 & 99.70 & 0.01 & 378.01 \\ & 
& Hess & 5.01 & {\red 94.70} & 0.05 & 454.01 & 0.64 & {\red 95.10} & 0.01 & 453.01 \\
\cline{4-11}
& \multirow{2}{*}{$10^{-2}$} 
& Id & 82.23 & 99.30 & 1.13 & 371.01 & 8.68 & 99.80 & 0.14 & 378.01 \\ &
& Hess & 51.78 & {\red 94.30} & 0.48 & 454.01 & 6.58 & {\red 94.10} & 0.06 & 453.01 \\
\cline{4-11}
& \multirow{2}{*}{$10^{-1}$} 
& Id & 253.12 & 99.20 & 3.56 & 371.01 & 29.26 & 99.60 & 0.45 & 378.01 \\ &
& Hess & 180.68 & 91.00 & 1.48 & 454.01 & 19.24 & {\red 95.50} & 0.18 & 453.01 \\
\cline{4-11}
& \multirow{2}{*}{$1$} 
& Id & 811.96 & 99.30 & 11.26 & 371.01 & 91.78 & 99.30 & 1.42 & 378.01 \\ & 
& Hess & 577.03 & {\red 93.90} & 4.70 & 454.01 & 63.69 & {\red 96.40} & 0.57 & 453.01 \\
\hline
\multirow{8}{*}{BT9} & \multirow{2}{*}{$10^{-4}$} 
& Id & 7.40 & 98.25 & 0.15 & 235.20 & 1.18 & 99.25 & 0.03 & 240.00 \\ &
& Hess & 5.12 & {\red 95.50} & 0.08 & 289.60 & 0.80 & {\red 96.75} & 0.01 & 288.01 \\
\cline{4-11}
& \multirow{2}{*}{$10^{-2}$} 
& Id & 66.83 & 100.00 & 1.46 & 235.20 & 11.39 & 99.25 & 0.26 & 240.00 \\ &
& Hess & 7933.70 & {\red 94.30} & 0.89 & 289.60 & 83.67 & {\red 95.57} & 0.13 & 288.01 \\
\cline{4-11}
& \multirow{2}{*}{$10^{-1}$} 
& Id & 236.10 & 98.50 & 4.59 & 235.20 & 36.02 & 99.25 & 0.82 & 240.00 \\ &
& Hess & $\slash$ & 84.81 & 8.76 & 289.60 & $\slash$ & 88.58 & 9.57 & 288.01 \\
\cline{4-11}
& \multirow{2}{*}{$1$} 
& Id & 769.04 & {\blue 95.50} & 14.07 & 235.20 & 124.44 & 99.25 & 2.60 & 240.00 \\ &
& Hess & $\slash$ & 57.69 & 57.84 & 289.60 & $\slash$ & 58.04 & 16.11 & 288.01 \\
\hline
\multirow{8}{*}{BYRDSPHR} & \multirow{2}{*}{$10^{-4}$}
& Id & 8.94 & 83.50 & 0.10 & 137.00 & 1.11 & 83.50 & 0.01 & 140.00 \\ &
& Hess & 14.39 & 88.50 & 0.22 & 168.80 & 1.82 & 92.00 & 0.03 & 167.00 \\
\cline{4-11}
& \multirow{2}{*}{$10^{-2}$}
& Id & 93.26 & 80.50 & 1.03 & 137.00 & 10.06 & 84.50 & 0.13 & 140.00 \\ &
& Hess & 126.14 & {\red 96.25} & 2.17 & 168.80 & 16.76 & {\red 93.50} & 0.27 & 167.00 \\
\cline{4-11}
& \multirow{2}{*}{$10^{-1}$}
& Id & 274.76 & 81.00 & 3.26 & 137.00 & 34.61 & 79.00 & 0.41 & 140.00 \\ &
& Hess & 419.41 & {\red 92.75} & 6.85 & 168.80 & 49.84 & {\red 94.75} & 0.86 & 167.00 \\
\cline{4-11}
& \multirow{2}{*}{$1$}
& Id & 960.05 & 79.00 & 10.31 & 137.00 & 113.15 & 84.25 & 1.30 & 140.00 \\ &
& Hess & 1478.60 & {\red 92.00} & 22.16 & 168.80 & $\slash$ & {\red 95.75} & 8.40 & 167.00 \\
\hline
\multirow{8}{*}{BT1} & \multirow{2}{*}{$10^{-4}$} 
& Id   & 6.54 & 93.50 & 0.15 & 31.80 & 1.13 & 99.00 & 0.04 & 33.00 \\ &
& Hess & 6.45 & 92.50 & 0.15 & 42.20 & 1.10 & 99.50 & 0.04 & 45.00 \\
\cline{4-11}
& \multirow{2}{*}{$10^{-2}$} 
& Id   & 64.99 & 93.00 & 1.46 & 31.80 & 11.95 & 99.50 & 0.40 & 33.00 \\ &
& Hess & 62.06 & 93.00 & 1.47 & 42.20 & 10.12 & 100.00 & 0.40 & 45.00 \\
\cline{4-11}
& \multirow{2}{*}{$10^{-1}$} 
& Id   & 217.15 & 91.50 & 4.61 & 31.80 & 36.24 & 100.00 & 1.26 & 33.00 \\ &
& Hess & 192.85 & 92.00 & 4.64 & 42.20 & 32.98 & 100.00 & 1.27 & 45.00 \\
\cline{4-11}
& \multirow{2}{*}{$1$} 
& Id   & 633.13 & 95.00 & 14.55 & 31.80 & 105.72 & 100.00 & 4.10 & 33.00 \\ &
& Hess & 610.65 & 95.00 & 14.77 & 42.20 & $\slash$ & 100.00 & $\slash$ & 45.00 \\
\hline
\multirow{8}{*}{HS51} & \multirow{2}{*}{$10^{-4}$} 
& Id   & 5.97 & 99.30 & 0.08 & 544.01 & 0.78 & 99.60 & 0.01 & 552.01 \\ &
& Hess & 4.12 & {\red 94.40} & 0.04 & 651.01 & 0.50 & {\red 96.00} & 0.00 & 627.01 \\
\cline{4-11}
& \multirow{2}{*}{$10^{-2}$} 
& Id   & 62.54 & 99.40 & 0.85 & 544.01 & 7.00 & 100.00 & 0.11 & 552.01 \\ &
& Hess & 43.26 & {\red 92.70} & 0.36 & 651.01 & 5.06 & {\red 94.70} & 0.04 & 627.01 \\
\cline{4-11}
& \multirow{2}{*}{$10^{-1}$} 
& Id   & 203.20 & 99.40 & 2.69 & 544.01 & 25.05 & 99.70 & 0.35 & 552.01 \\ &
& Hess & 137.75 & {\red 92.20} & 1.12 & 651.01 & 15.61 & {\red 93.80} & 0.14 & 627.01 \\
\cline{4-11}
& \multirow{2}{*}{$1$} 
& Id   & 691.39 & 99.60 & 8.51 & 544.01 & 74.99 & 99.60 & 1.09 & 552.01 \\ &
& Hess & 435.44 & {\red 93.60} & 3.54 & 651.01 & 45.96 & {\red 96.90} & 0.44 & 627.01 \\
\hline
\multirow{8}{*}{BT12} & \multirow{2}{*}{$10^{-4}$}
& Id   & 10.55 & 87.30 & 0.08 & 544.01 & 1.70   & 88.40 & 0.01 & 552.01 \\ &
& Hess & 11.24 & {\red 93.00} & 0.11 & 651.01 & 1.85   & {\red 95.90} & 0.02 & 627.01 \\
\cline{4-11}
& \multirow{2}{*}{$10^{-2}$}
& Id   & 120.78 & 85.00 & 0.82 & 544.01 & 15.44  & 93.60 & 0.13 & 552.01 \\ &
& Hess & 125.22 & 90.81 & 1.13 & 651.01 & 500.20 & 95.05 & 0.17 & 627.01 \\
\cline{4-11}
& \multirow{2}{*}{$10^{-1}$}
& Id   & 329.67 & 89.30 & 2.59 & 544.01 & 54.91  & 88.70 & 0.41 & 552.01 \\ &
& Hess & $\slash$ & 90.13 & 3.62 & 651.01 & $\slash$ & 92.26 & 0.54 & 627.01 \\
\cline{4-11}
& \multirow{2}{*}{$1$}
& Id   & 1021.90 & 89.80 & 8.19 & 544.01 & 157.90   & {\blue 92.20} & 1.30 & 552.01 \\ &
& Hess & $\slash$ & 87.00 & 12.05 & 651.01 & $\slash$ & 87.12 & 1.69 & 627.01 \\
\hline
\multirow{8}{*}{HS42} & \multirow{2}{*}{$10^{-4}$}
& Id & 5.71 & 99.50 & 0.12 & 235.20 & 0.79 & 99.83 & 0.02 & 240.00 \\ &
& Hess & 3.52 & {\red 94.00} & 0.04 & 289.60 & 0.51 & {\red 92.67} & 0.01 & 288.01 \\
\cline{4-11}
& \multirow{2}{*}{$10^{-2}$}
& Id   & 53.13 & 100.00 & 1.17 & 235.20 & 8.64 & 99.83 & 0.17 & 240.00 \\ &
& Hess & 34.27 & {\red 94.17}  & 0.36 & 289.60 & 5.62 & {\red 92.67} & 0.06 & 288.01 \\
\cline{4-11}
& \multirow{2}{*}{$10^{-1}$}
& Id   & 181.17 & 99.67 & 3.69 & 235.20 & 27.12 & 99.67 & 0.55 & 240.00 \\ &
& Hess & 112.10 & {\red 92.33} & 1.14 & 289.60 & 18.17 & {\red 93.83} & 0.18 & 288.00 \\
\cline{4-11}
& \multirow{2}{*}{$1$}
& Id   & 530.18 & 99.67 & 11.68 & 235.20 & 88.91 & 100.00 & 1.75 & 240.00 \\ &
& Hess & 349.85 & 90.67 & 3.57  & 289.60 & 53.69 & {\red 96.00}  & 0.56 & 288.00 \\
\hline
\end{tabular}
\end{adjustbox}					
\caption{Comparison of DF-SSQP and DB-SSQP on 8 CUTEst problems under four noise variances~$\sigma^2$. ``$\slash$" indicates 
cases where the iterate error exceeds 1 (the methods may converge to a stationary point different from the one given by the package). Red numbers indicate cases where second-order methods achieve coverage closer to the nominal 95\% than first-order methods; blue numbers indicate the converse. Unhighlighted entries are cases where either both first- and second-order methods are near-nominal or both exhibit under- or over-coverage.}
\label{tab:1}
\end{table}

\section{Conclusion}\label{sec:6}

In this work, we proposed DF-SSQP (Algorithm \ref{Alg:DF-SSQP}), a derivative-free, fully stochastic method for solving the constrained stochastic optimization problem \eqref{Intro_StoProb}.
Our method leverages the simultaneous perturbation stochastic approximation (SPSA) technique, generalizes it to estimate both the objective gradient and the constraint Jacobian, and additionally employs an online debiasing, momentum-style strategy that properly aggregates past~gradients~(and Hessians, if local convergence is of interest) to reduce~the stochastic noise inherent in SPSA-based methods. The debiasing strategy avoids excessive memory~costs due to its simple running average scheme.~We established almost-sure global \mbox{convergence}~of~DF-SSQP by showing that the first-order (KKT) optimality conditions are asymptotically satisfied from any initialization.
Furthermore, we complemented the global analysis with local convergence guarantees: we~established the local convergence rate (in expectation) and proved that the rescaled iterates exhibit asymptotic normality. The limiting covariance matrix closely resembles the minimax optimal covariance achieved by derivative-based methods, albeit it is inflated due to the absence of derivative information.
This local result is particularly surprising and significant, not only because it illustrates the trade-off between computational efficiency and statistical efficiency, but also because DF-SSQP relies on highly correlated gradient~estimates due to the debiasing technique; unlike all existing methods that rely on conditionally independent gradient estimates.
Numerical experiments on a subset of benchmark nonlinear problems demonstrate the global and local performance of the proposed~method.

Several interesting avenues remain for future research. First, while our current analysis enables~statistical inference for the last iterate, establishing asymptotic normality for the averaged iterate remains an open problem. Second, it would be valuable to develop derivative-free SSQP algorithms that can~handle cases where the constraint Jacobians are rank-deficient. Finally, our implementation and analysis~assume exact solutions to the Newton system, which can be computationally expensive. Extending the method to allow inexact solutions to the quadratic subproblems could significantly reduce computational costs, though it remains unclear whether the global almost sure convergence and local asymptotic~normality properties of DF-SSQP would still hold under such approximations.

\section*{Acknowledgment}

The author would like to acknowledge Lvmin Wu for initial discussion of the work.

\bibliographystyle{my-plainnat}
\bibliography{ref}

\appendix
\numberwithin{equation}{section}
\makeatletter
\def\@seccntformat#1{%
	\expandafter\ifx\csname the#1\endcsname\thesection
	Appendix~\csname the#1\endcsname.\;
	\else\csname the#1\endcsname.\quad
	\fi}
\makeatother

\section{Preliminary Lemmas}

\begin{lemma}[\cite{Ruszczynski1980Feasible}, Lemma 1]\label{technical lemma:1}
Let $(\Omega,\mF,P)$ be a probability space and let $\{\mF_k\}$ be an increasing sequence of $\sigma$-algebras contained in $\mF$. Let $\{\boldsymbol \eta_{k}, \boldsymbol z_{k}\}$ be sequences of $\mF_k$-measurable $\mR^d$-valued random variables satisfying the relations
\begin{align}
&\boldsymbol{z}_{k+1}=\Pi_{Z}( (1-\rho_k)\boldsymbol z_{k}+\rho_{k}\boldsymbol \xi_{k}) ,\quad \boldsymbol z_{0}\in Z,\\
&\mE[\boldsymbol \xi_{k}\mid\mathcal{F}_{k}]=\boldsymbol \eta_{k}+\boldsymbol b_{k},
\end{align}
where $\rho_k\geq0$, the set $Z\subseteq \mathbb{R}^d$ is convex and closed, and $\Pi_{Z}(\cdot)$ is the projection onto the set $Z$. Suppose the following conditions hold:
\begin{enumerate}[label=(\alph*),topsep=2pt]
\setlength\itemsep{-0.1em}
\item all accumulation points of the sequence ${\boldsymbol \eta_k}$ belong to $Z$ almost surely;
\item there exists a constant $C$ such that $E[\|\boldsymbol \xi_k\|^2\mid\mF_k]\leq C$ for all $k\geq 0$;
\item $\sum_{k=0}^{\infty}\mE[\rho_{k}^{2}+\rho_{k}\|\boldsymbol b_{k}\|]<\infty$ and $\sum_{k=0}^{\infty}\rho_{k}=\infty$ almost surely;
\item $\|\boldsymbol \eta_{k+1}-\boldsymbol \eta_{k}\|/\rho_{k}\to 0$ almost surely.
\end{enumerate}
Then, we have $\boldsymbol z_{k}-\boldsymbol \eta_{k}\rightarrow0$ almost surely.	
	
\end{lemma}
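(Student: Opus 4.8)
The plan is to control the squared error and show it is driven to zero by a supermartingale argument of stochastic-approximation type. First I would reduce to a martingale-difference noise: set $\boldsymbol\zeta_k := \boldsymbol\xi_k - \mE[\boldsymbol\xi_k\mid\mF_k] = \boldsymbol\xi_k - \boldsymbol\eta_k - \boldsymbol b_k$, so that $\mE[\boldsymbol\zeta_k\mid\mF_k]=\boldsymbol 0$ and $\mE[\|\boldsymbol\zeta_k\|^2\mid\mF_k]\le\mE[\|\boldsymbol\xi_k\|^2\mid\mF_k]\le C$ by (b). Condition (c) gives $\sum_k\mE[\rho_k^2]<\infty$ and $\sum_k\mE[\rho_k\|\boldsymbol b_k\|]<\infty$, hence a.s.\ $\rho_k\to0$ and $\sum_k\rho_k\|\boldsymbol b_k\|<\infty$. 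Using nonexpansiveness of $\Pi_Z$ relative to a fixed anchor in $Z$ together with the bounded second moment, I would establish a.s.\ boundedness of $\{\boldsymbol z_k\}$. By (a), since the closed set $Z$ contains every accumulation point of the (bounded) target sequence $\{\boldsymbol\eta_k\}$, the distance $r_k := \mathrm{dist}(\boldsymbol\eta_k, Z)$ tends to zero a.s.

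The core is a one-step inequality for $U_k := \|\boldsymbol z_k - \boldsymbol\pi_k\|^2$, where $\boldsymbol\pi_k := \Pi_Z(\boldsymbol\eta_k)\in Z$; since $\|\boldsymbol\pi_k - \boldsymbol\eta_k\| = r_k\to0$, it suffices to prove $U_k\to0$. Writing $\boldsymbol w_k := (1-\rho_k)\boldsymbol z_k + \rho_k\boldsymbol\xi_k$ and applying the projection inequality at the feasible point $\boldsymbol\pi_k$,
\begin{equation*}
\|\boldsymbol z_{k+1}-\boldsymbol\pi_k\|^2 \le \|\boldsymbol w_k - \boldsymbol\pi_k\|^2,\qquad \boldsymbol w_k - \boldsymbol\pi_k = (1-\rho_k)(\boldsymbol z_k-\boldsymbol\pi_k) + \rho_k(\boldsymbol\eta_k - \boldsymbol\pi_k) + \rho_k\boldsymbol b_k + \rho_k\boldsymbol\zeta_k .
\end{equation*}
The decisive feature is that the infeasibility $\boldsymbol\eta_k-\boldsymbol\pi_k$, of size $r_k$, now enters multiplied by $\rho_k$. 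Taking $\mE[\cdot\mid\mF_k]$ annihilates the cross term with $\boldsymbol\zeta_k$ and leaves $\rho_k^2\mE[\|\boldsymbol\zeta_k\|^2\mid\mF_k]\le C\rho_k^2$; expanding the remaining $\mF_k$-measurable square and bounding its cross terms by Young's inequality — splitting the bias contribution so that $\rho_k\|\boldsymbol b_k\|$ appears linearly (summable by (c)) and the infeasibility contribution as $\rho_k r_k^2$ — and absorbing $\|\boldsymbol\pi_{k+1}-\boldsymbol\pi_k\|\le\|\boldsymbol\eta_{k+1}-\boldsymbol\eta_k\| = o(\rho_k)$ from (d), I obtain
\begin{equation*}
\mE[U_{k+1}\mid\mF_k] \le (1 - c\rho_k + a_k)U_k + c_k + \rho_k\delta_k
\end{equation*}
for a constant $c>0$, with $\sum_k a_k<\infty$, $\sum_k c_k<\infty$ a.s., and $\delta_k = O(r_k^2) + o(1)\to0$ a.s.

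It remains to pass from this recursion to $U_k\to0$. The factor $1-c\rho_k$ is uniformly contractive with $\sum_k\rho_k=\infty$, the perturbations $a_k,c_k$ are summable, and the only non-summable perturbation $\rho_k\delta_k$ has vanishing amplitude $\delta_k\to0$. Shifting by the level-dependent equilibrium — for a fixed $\eta>0$ and $k$ beyond the random index where $\delta_k\le\eta$, replacing $U_k$ by $U_k-\eta/c$ — converts $\rho_k\delta_k$ into a summable term, after which the Robbins--Siegmund almost-supermartingale theorem gives $\limsup_k U_k\le\eta/c$ a.s.; letting $\eta\downarrow0$ yields $U_k\to0$, and therefore $\boldsymbol z_k-\boldsymbol\eta_k\to0$ a.s.

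The step I expect to be the main obstacle is handling the projection and the infeasibility gap so that they enter scaled by $\rho_k$. Anchoring the projection estimate at $\boldsymbol\eta_k$ itself, rather than at the feasible $\boldsymbol\pi_k$, would make $r_k$ appear unscaled, contributing an additive term of order $r_k$ that need not be $o(\rho_k)$ and can genuinely obstruct convergence; using $\boldsymbol\pi_k$, and exploiting that $\boldsymbol z_k\in Z$ keeps $\boldsymbol w_k$ within $O(\rho_k)$ of $Z$, is what reduces the gap to the harmless order $\rho_k r_k^2$. The second delicate point is that the residual perturbation $\rho_k\delta_k$ is not summable and is controlled only through the divergence $\sum_k\rho_k=\infty$ together with $\delta_k\to0$, rather than by a direct supermartingale convergence statement.
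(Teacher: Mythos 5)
This lemma is imported verbatim from \cite{Ruszczynski1980Feasible}; the paper supplies no proof of its own, so there is nothing internal to compare your argument against, and the only question is whether your reconstruction stands on its own. Its architecture is the standard one and is essentially right: center the error at the feasible point $\boldsymbol\pi_k=\Pi_Z(\boldsymbol\eta_k)$ so that nonexpansiveness of $\Pi_Z$ applies, split $\boldsymbol\xi_k$ into target, bias, and martingale noise, and let (c) and (d) turn every perturbation into either a summable term or a term of the form $\rho_k\cdot o(1)$. Your identification of the two delicate points (the infeasibility gap must enter scaled by $\rho_k$, and the $\rho_k\delta_k$ perturbation is non-summable) is exactly where the content of the lemma lies.

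Two steps do not close as literally written. First, the concluding device --- replace $U_k$ by $V_k=U_k-\eta/c$ past the random index where $\delta_k\leq\eta$ and invoke Robbins--Siegmund --- fails on a sign condition: after the shift the recursion reads $\mE[V_{k+1}\mid\mF_k]\leq(1+a_k)V_k-c\rho_k V_k+(\text{summable})$, and Robbins--Siegmund requires the subtracted term $c\rho_k V_k$ to be nonnegative, which it is not when $U_k<\eta/c$; truncating at zero does not commute with the conditional expectation in the needed direction. The standard repair is to first establish a.s.\ boundedness of $U_k$ and convergence of the martingale transform $\sum_k\rho_k\langle\boldsymbol z_k-\boldsymbol\pi_k,\boldsymbol\zeta_k\rangle$ (using $\sum_k\mE[\rho_k^2]<\infty$ from (c) and condition (b)), and then apply a \emph{pathwise deterministic} comparison lemma to $u_{k+1}\leq(1-c\rho_k)u_k+\rho_k\delta_k+\varepsilon_k$ with $\sum_k\varepsilon_k$ convergent, which yields $\limsup_k u_k\leq\limsup_k\delta_k/c=0$ by unrolling the product $\prod(1-c\rho_j)$ and using $\sum_k\rho_k=\infty$. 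Second, your deduction that $r_k=\mathrm{dist}(\boldsymbol\eta_k,Z)\to0$ from (a) silently uses boundedness of $\{\boldsymbol\eta_k\}$ (without it, (a) can hold vacuously along an escaping subsequence while $r_k\not\to0$); boundedness does follow from (b) via Jensen, $\|\boldsymbol\eta_k+\boldsymbol b_k\|\leq\sqrt{C}$, but only modulo a bound on $\boldsymbol b_k$, which the stated hypotheses control only through $\sum_k\mE[\rho_k\|\boldsymbol b_k\|]<\infty$. The same boundedness issue affects your a.s.\ boundedness claim for $\{\boldsymbol z_k\}$, which is asserted rather than derived; it in fact comes out of the $U_k$ recursion itself (the drift is strongly negative when $U_k$ is large), not from nonexpansiveness alone. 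These are repairable technical gaps in an otherwise correct plan, and none of them affects the paper's use of the lemma, where $Z$ is taken so large that $r_k\equiv 0$ and (a) is trivial.
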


\begin{lemma}[Adapted from {\cite[Lemma B.3]{Na2025Statistical}}]\label{technical lemma:2}
Let $\alpha_{k}=\iota_{1}(k+1)^{-p_{1}}$ and $\beta_{k}=\iota_{2}(k+1)^{-p_{2}}$ be two sequences with $\iota_{1},\iota_{2}, p_{1},p_{2}>0$. The following results hold.
\begin{enumerate}[topsep=1pt,itemsep=0em,partopsep=-2pt,parsep=1ex,label=(\alph*),beginpenalty=10000]
\item Let $\chi=0$ if $0<p_{2}<1$ and $\chi=-p_{1}/\iota_{2}$ if $p_{2}=1$. Then, as long as $\sum_{t=1}^{l}a_{t}+\chi>0$, we have
\begin{align*}
&\lim\limits_{k\to\infty}\frac{1}{\alpha_{k}}\sum_{i=0}^{k}\prod_{j=i+1}^{k}\prod_{t=1}^{l}\left(1-a_{t}\beta_{j}\right)\beta_{i}\alpha_{i}=\frac{1}{\sum_{t=1}^{l}a_{t}+\chi}, \\
& \lim\limits_{k\to\infty}\frac{1}{\alpha_{k}}\left\{\sum\limits_{i=0}^{k}\prod\limits_{j=i+1}^{k}\prod\limits_{t=1}^{l}\left(1-a_{t}\beta_{j}\right)\beta_{i}\alpha_{i}e_{i}+b\prod\limits_{j=0}^{k}\prod\limits_{t=1}^{l}\left(1-a_{t}\beta_{j}\right)\right\}=0, 
\end{align*}
where the second result holds for any constant $b$ and sequence $\{e_i\}$ such that $e_i \rightarrow 0$.
\item If $0<p_{2}<p_{1}\leq1$, then
\begin{equation*}
\lim_{k\to\infty}\frac{1}{\alpha_{k}}\sum_{i=0}^{k}\prod_{j=i+1}^{k}\left(1-\alpha_{j}\right)\left(1-\beta_{j}\right)\alpha_{i}\beta_{i}=1.
\end{equation*}
\end{enumerate}
	
\end{lemma}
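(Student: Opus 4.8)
The plan is to rewrite each weighted sum as a one-step linear recursion in $k$ and then reduce the analysis of its ratio to $\alpha_k$ to a single elementary scalar lemma. Set $A=\sum_{t=1}^{l}a_t$ and $\Pi_k=\prod_{t=1}^{l}(1-a_t\beta_k)$, so that the Taylor expansion gives $\Pi_k=1-A\beta_k+O(\beta_k^2)$. Denoting the sum in part (a) by $S_k=\sum_{i=0}^{k}(\prod_{j=i+1}^{k}\Pi_j)\beta_i\alpha_i$, one checks directly that $S_k=\Pi_k S_{k-1}+\beta_k\alpha_k$; likewise, for part (b) the corresponding sum $T_k$ obeys $T_k=(1-\alpha_k)(1-\beta_k)T_{k-1}+\alpha_k\beta_k$. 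The quantity of interest is $r_k:=S_k/\alpha_k$ (respectively $T_k/\alpha_k$), and the whole argument is organized around controlling its recursion.

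First I would record the governing scalar statement: if $r_k=(1-g_k)r_{k-1}+h_k$ with $g_k\to0$, $0\le g_k\le 1$ eventually, $\sum_k g_k=\infty$, and $h_k/g_k\to L$, then $r_k\to L$. This is a standard Silverman--Toeplitz/Chung-type fact, essentially the content of the cited Lemma B.3, so the proof reduces to identifying $g_k$ and $h_k$ in each regime. Using $\alpha_{k-1}/\alpha_k=(1+1/k)^{p_1}=1+p_1/k+O(1/k^2)$, the recursion for $r_k$ in part (a) reads $r_k=\Pi_k(\alpha_{k-1}/\alpha_k)r_{k-1}+\beta_k$, whence $g_k=A\beta_k-p_1/k+(\text{lower order})$ and $h_k=\beta_k$. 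When $0<p_2<1$ one has $\beta_k\gg 1/k$, so $g_k\sim A\beta_k$ and $h_k/g_k\to 1/A=1/(A+\chi)$ with $\chi=0$; when $p_2=1$ both $A\beta_k$ and $p_1/k$ are of order $1/k$, giving $g_k\sim(A\iota_2-p_1)/k$ and $h_k/g_k\to\iota_2/(A\iota_2-p_1)=1/(A+\chi)$ with $\chi=-p_1/\iota_2$. The hypothesis $A+\chi>0$ is exactly what forces $g_k>0$ eventually (hence $\sum_k g_k=\infty$) in the $p_2=1$ case, so the scalar lemma applies in both regimes and delivers the first limit. Part (b) is identical in spirit: since $p_2<p_1\le 1$ makes $\beta_k$ dominate both $\alpha_k$ and $1/k$, the analogous computation yields $g_k\sim\beta_k$ and $h_k=\beta_k$, hence $r_k\to 1$.

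For the second identity in part (a) I would treat its two summands separately. The boundary term $b\prod_{j=0}^{k}\Pi_j$ solves the homogeneous recursion, and after normalization $\frac{1}{\alpha_k}\prod_{j=0}^{k}\Pi_j\approx k^{p_1}\exp(-A\sum_{j\le k}\beta_j)$ up to lower-order corrections; this vanishes super-polynomially when $p_2<1$, and behaves like $k^{p_1-A\iota_2}$ when $p_2=1$, the latter tending to $0$ precisely because $A+\chi>0$. For the $e_i$-weighted sum I would set $w_{i,k}=\frac{1}{\alpha_k}(\prod_{j=i+1}^{k}\Pi_j)\beta_i\alpha_i$ and observe that, by the first identity, the row sums satisfy $\sum_{i=0}^{k}w_{i,k}\to 1/(A+\chi)$, while each column satisfies $w_{i,k}\to 0$ as $k\to\infty$ (again using $A+\chi>0$, by the same estimate as for the boundary term). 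A Toeplitz/Silverman argument then yields $\sum_{i=0}^{k}w_{i,k}e_i\to(\lim_i e_i)\cdot\frac{1}{A+\chi}=0$ since $e_i\to 0$, which completes the claim.

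The main obstacle is the borderline regime $p_2=1$ in part (a), where the effective decay $g_k$ is only of order $1/k$: there $\sum_k g_k$ diverges merely logarithmically, the homogeneous/boundary term decays only polynomially rather than exponentially, and both the limiting constant $1/(A+\chi)$ and the vanishing of the boundary term hinge delicately on the sign condition $A+\chi=A-p_1/\iota_2>0$. Tracking the $O(1/k^2)$ and $O(\beta_k^2)$ remainders carefully enough to confirm they do not perturb the limit, and verifying the hypotheses of the scalar lemma uniformly across $p_2<1$ and $p_2=1$, is the delicate bookkeeping the argument must get right.
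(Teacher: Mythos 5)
Your proposal is correct, but there is nothing in the paper to compare it against: the paper does not prove this lemma at all — it is imported by citation as ``Adapted from Na (2025), Lemma B.3,'' and no proof appears in the appendix. Your argument therefore supplies a self-contained derivation where the paper relies on an external reference. The route you take — rewriting each weighted sum as a one-step affine recursion $r_k=(1-g_k)r_{k-1}+h_k$ for the normalized quantity $r_k=S_k/\alpha_k$, extracting $g_k=A\beta_k-p_1/k+o(\cdot)$ from the expansions of $\prod_t(1-a_t\beta_k)$ and $\alpha_{k-1}/\alpha_k$, and invoking a Chung-type scalar lemma ($g_k\to0$, $\sum g_k=\infty$, $h_k/g_k\to L$ implies $r_k\to L$) — is sound, and the case split between $p_2<1$ (where $\beta_k\gg 1/k$, so $\chi=0$) and $p_2=1$ (where the $-p_1/k$ correction contributes $\chi=-p_1/\iota_2$) correctly reproduces the stated constant $1/(\sum_t a_t+\chi)$; the Toeplitz argument for the $e_i$-weighted sum and the boundary term is likewise standard and correct, with the sign condition $\sum_t a_t+\chi>0$ entering exactly where you say it does. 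For a fully rigorous write-up you would want to (i) state and prove the scalar lemma (it is elementary: subtract $L$, telescope, and use $\sum_{i}\prod_{j>i}(1-g_j)g_i\le 1$), (ii) note that the finitely many early indices where some $1-a_t\beta_j$ may be nonpositive contribute only columns that vanish individually and so do not disturb the Toeplitz step, and (iii) record that $\sum_i|w_{i,k}|$ stays bounded (row sums converge plus finitely many exceptional columns), but none of these is a gap in the idea.
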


\begin{lemma}\label{technical lemma:3}
Let $B\in\mR^{d\times d}$ and $A\in\mR^{m\times d}$. Suppose $AA^T\succeq \gamma_A I$, $\|B\| \leq \Upsilon_{B}$~for some constants $\gamma_A, \Upsilon_{B} > 0$, and $Z\in\mR^{d\times (d-m)}$ is a matrix whose columns are orthonormal and form the basis of $\text{Null}(A)$. Then,
\begin{equation*}
Z^TBZ\succeq \gamma_{RH} I \Longrightarrow \text{there exists } \delta = \delta(\gamma_{RH}, \gamma_A, \Upsilon_{A}) \text{ such that } B + \delta A^TA \succeq 0.5\gamma_{RH} I.
\end{equation*}	
\end{lemma}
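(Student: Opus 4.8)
The plan is to diagonalize the problem against the splitting $\mR^d = \text{Null}(A) \oplus \text{Range}(A^T)$, reduce the quadratic form $x^T(B + \delta A^T A)x$ to a two-variable quadratic in the norms of the two components, and then pick $\delta$ large enough that the (positive, scaled) penalty term dominates the indefinite cross term coming from $B$. First I would introduce a matrix $Y \in \mR^{d\times m}$ whose columns form an orthonormal basis of $\text{Range}(A^T)$; since $AA^T \succeq \gamma_A I$ forces $A$ to have full row rank, $\text{Range}(A^T)$ has dimension $m$, $\text{Null}(A)$ has dimension $d-m$, and $[Z,\,Y]$ is an orthogonal matrix. Any $x\in\mR^d$ then decomposes orthogonally as $x = Zu + Yw$ with $u = Z^Tx$, $w = Y^Tw$, and $\|x\|^2 = \|u\|^2 + \|w\|^2$.

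Next I would expand $x^TBx = u^T(Z^TBZ)u + 2u^T(Z^TBY)w + w^T(Y^TBY)w$ and bound the three pieces using the hypotheses: the reduced-Hessian bound gives $u^TZ^TBZ\,u \geq \gamma_{RH}\|u\|^2$, while $\|B\|\leq\Upsilon_B$ gives $|2u^TZ^TBY\,w|\leq 2\Upsilon_B\|u\|\|w\|$ and $w^TY^TBY\,w \geq -\Upsilon_B\|w\|^2$. For the penalty term I would use $AZu = \0$ (since $Zu\in\text{Null}(A)$), hence $Ax = AYw$ and $x^TA^TA\,x = \|AYw\|^2$.

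The one genuinely substantive step is the lower bound $\|Av\|^2 \geq \gamma_A\|v\|^2$ for every $v\in\text{Range}(A^T)$, which I would obtain from the SVD of $A$: the restriction of $A$ to the span of the right singular vectors with positive singular values has all singular values at least $\sqrt{\gamma_A}$ by $AA^T\succeq\gamma_A I$. Applying this with $v = Yw$ and using $\|Yw\| = \|w\|$ yields $x^TA^TA\,x \geq \gamma_A\|w\|^2$. Collecting the estimates gives $x^T(B+\delta A^TA)x \geq \gamma_{RH}\|u\|^2 - 2\Upsilon_B\|u\|\|w\| - \Upsilon_B\|w\|^2 + \delta\gamma_A\|w\|^2$.

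Finally I would apply Young's inequality in the form $2\Upsilon_B\|u\|\|w\| \leq 0.5\gamma_{RH}\|u\|^2 + (2\Upsilon_B^2/\gamma_{RH})\|w\|^2$ and regroup to obtain $x^T(B+\delta A^TA)x \geq 0.5\gamma_{RH}\|u\|^2 + \bigl(\delta\gamma_A - 2\Upsilon_B^2/\gamma_{RH} - \Upsilon_B\bigr)\|w\|^2$. Choosing $\delta = \gamma_A^{-1}\bigl(0.5\gamma_{RH} + \Upsilon_B + 2\Upsilon_B^2/\gamma_{RH}\bigr)$ makes the $\|w\|^2$-coefficient at least $0.5\gamma_{RH}$, so the right-hand side is $\geq 0.5\gamma_{RH}(\|u\|^2 + \|w\|^2) = 0.5\gamma_{RH}\|x\|^2$, which is the claim; note $\delta$ depends only on $\gamma_{RH}, \gamma_A, \Upsilon_B$ as required. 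The main obstacle is the expansion bound $\|Av\|^2\geq\gamma_A\|v\|^2$ on $\text{Range}(A^T)$ (everything else is bookkeeping), and the only point requiring care is tuning $\delta$ so that the cross term is absorbed while the final constant lands at exactly $0.5\gamma_{RH}$ rather than something smaller.
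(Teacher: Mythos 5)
Your proposal is correct and follows essentially the same route as the paper: orthogonal decomposition of $\bx$ into $\text{Null}(A)\oplus\text{Range}(A^T)$, the bounds $\bx^TB\bx\geq\gamma_{RH}\|\bx\|^2$ on the null-space component, $\|A\by\|^2\geq\gamma_A\|\by\|^2$ on the range component, and absorption of the cross term (your Young's inequality is the same computation as the paper's completion of the square), arriving at the same $\delta=(\Upsilon_B+0.5\gamma_{RH}+2\Upsilon_B^2/\gamma_{RH})/\gamma_A$. The only difference is that you spell out the SVD justification for $\|Av\|^2\geq\gamma_A\|v\|^2$ on $\text{Range}(A^T)$, which the paper leaves implicit.
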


\begin{proof}
For any $\bz\in \mR^d$, we decompose $\bz$ as
\begin{equation}\label{aaequ:2}
\bz=\bx+\by,  \quad \text{where} \quad \bx \in \text{Null}(A) \quad \text{and} \quad \by \in \text{Range}(A^T).
\end{equation}
Then, we can see that
\begin{align*}
&\bz^T(B+\delta A^TA-0.5\gamma_{RH} I)\bz\\
&\stackrel{\mathclap{\eqref{aaequ:2}}}{=}\; \bx^TB\bx+2\bx^TB\by+\by^TB\by+\delta\|A(\bx+\by)\|^2-0.5\gamma_{RH}(\|\bx\|^2+\|\by\|^2)\\
&\geq 0.5\gamma_{RH}\|\bx\|^2-2\Upsilon_{B}\|\bx\|\cdot\|\by\|-\Upsilon_{B}\|\by\|^2+\delta \gamma_A \|\by\|^2-0.5\gamma_{RH}\|\by\|^2\\
&=0.5\gamma_{RH}\rbr{\|\bx\|-\frac{2\Upsilon_{B}}{\gamma_{RH}}\|\by\|}^2+(\delta\gamma_A-\Upsilon_{B}-0.5\gamma_{RH}-\frac{2\Upsilon_{B}^2}{\gamma_{RH}})\|\by\|^2,
\end{align*}
where the inequality follows from $Z^TBZ\succeq \gamma_{RH} I$, $\|B\|\leq \Upsilon_{B}$ and $AA^T\succeq \gamma_A I$. Therefore, $B + \delta A^TA \succeq 0.5\gamma_{RH} I$ as long as $\delta\geq (\Upsilon_{B}+0.5\gamma_{RH}+2\Upsilon_{B}^2/\gamma_{RH})/\gamma_A$. 
\end{proof}

\section{Proofs of Section \ref{sec:3}}

\subsection{Proof of Lemma \ref{lemma:finite difference for gradient}}\label{appendix lemma:finite difference for gradient}

We use the objective gradient estimation as an example, while the same analysis applies to the constraint Jacobian. Our analysis is entrywise. Recall that for any vector $v$, $v^i$ denotes the $i$-th entry of $v$. For any $1\leq i\leq d$, we apply Taylor's expansion and have
\begin{align}\label{aequ:1}
& \mE[\hnabla F^i(\bx_k;\xi_k)-\nabla{f}_{k}^i\mid \mF_{k-1}] \nonumber \\
& \stackrel{\mathclap{\eqref{snequ:2}}}{=} \mE\left[\frac{F\left(\boldsymbol x_{k}+b_{k} \boldsymbol \Delta_{k};\xi_k\right)-F\left(\boldsymbol x_{k}-b_{k} \boldsymbol \Delta_{k};\xi_k\right)}{2 b_{k} \bDelta_{k}^{i}}-\nabla{f}_{k}^i\mid \mF_{k-1}\right] \nonumber\\
& = \mE\left[\frac{f\left(\boldsymbol x_{k}+b_{k} \boldsymbol \Delta_{k}\right)-f\left(\boldsymbol x_{k}-b_{k} \boldsymbol \Delta_{k}\right)}{2 b_{k} \bDelta_{k}^{i}}-\nabla{f}_{k}^i\mid \mF_{k-1}\right] \quad \text{(by Assumption \ref{ass:2-1})} \nonumber\\
& = \frac{1}{12}\mE\left[(b_k\bDelta_{k}^{i})^{-1}\sum_{i_1} \sum_{i_2} \sum_{i_3}b_k^3[\nabla^3f(\boldsymbol x_{k}^{+})+\nabla^3f(\boldsymbol x_{k}^{-})]^{i_1i_2i_3}\bDelta_{k}^{i_1} \bDelta_{k}^{i_2} \bDelta_{k}^{i_3}\mid \mF_{k-1}\right], 
\end{align}
where $\boldsymbol{x}_k^{\pm}$ are some points lying on the line segments between $\boldsymbol{x}_k$ and $\boldsymbol{x}_k \pm b_k \boldsymbol \Delta_k$, respectively, and the last equality also applies the symmetry condition on $\bDelta_k$ in Assumption \ref{ass:Delta}. By the boundedness of $\boldsymbol \Delta_k$ in Assumption \ref{ass:Delta} and boundedness of $\nabla^3f$ in Assumption \ref{ass:1-1}, we further have
\begin{equation}\label{aequ:2}
\mE[\hnabla F^i(\bx_k;\xi_k)-\nabla{f}_{k}^i\mid \mF_{k-1}] \stackrel{\eqref{aequ:1}}{=} O\left(\frac{b_k^2}{6} \sum_{i_1} \sum_{i_2} \sum_{i_3} \mathbb{E}\left|\frac{\bDelta_{k}^{i_1} \bDelta_{k}^{i_2} \bDelta_{k}^{i_3}}{\bDelta_{k}^i}\right|\right) = O(b_k^2).
\end{equation}
This completes the proof of the first part of the lemma. Now, we consider objective Hessian estimation, while noting that the same analysis applies directly to the constraint Hessian. For~any~$1\leq \ell_1, \ell_2\leq d$, we know 
\begin{equation*}
\mE\left[\frac{\delta \tilde{\nabla} F^{\ell_1}(\bx_k \pm b_k \bDelta_k;\xi_k)}{2b_k\bDelta_k^{\ell_2}}\mid \mF_{k-1}\right] \stackrel{\eqref{snequ:11}}{=}\mE\Bigg[\frac{\tilde{\nabla} F^{\ell_1}(\bx_k + b_k \bDelta_k;\xi_k)-\tilde{\nabla} F^{\ell_1}(\bx_k - b_k \bDelta_k;\xi_k)}{2b_{k}\bDelta_{k}^{\ell_2}}\mid \mF_{k-1}\Bigg].
\end{equation*}
Applying the definition \eqref{snequ:12} and following the same analysis as in \eqref{aequ:1} and \eqref{aequ:2}, we can have
\begin{equation*}
\mE[\tilde{\nabla} F^{\ell_1}(\bx_k \pm b_k \bDelta_k;\xi_k)\mid \mF_{k-1},\boldsymbol \Delta_{k}] = \nabla f^{\ell_1}(\boldsymbol x_{k}\pm b_{k}\boldsymbol \Delta_{k})+O(\tilde{b}_k^2).
\end{equation*}
Combining the above two displays and applying the Taylor's expansion, we obtain
\begin{align*}
& \mE\left[\frac{\delta \tilde{\nabla} F^{\ell_1}(\bx_k \pm b_k \bDelta_k;\xi_k)}{2b_k\bDelta_k^{\ell_2}} -\nabla^2 f^{\ell_1 \ell_2}_k\mid \mF_{k-1}\right] \\
& = \mE\left[\frac{\nabla f^{\ell_1}(\boldsymbol x_{k}+ b_{k}\boldsymbol \Delta_{k})-\nabla f^{\ell_1}(\boldsymbol x_{k} - b_{k}\boldsymbol \Delta_{k})}{2b_{k}\bDelta_{k}^{\ell_2}} -\nabla^2 f^{\ell_1 \ell_2}_k \mid \mF_{k-1}\right]+O(\tb_k^2/b_k)\\
& = \frac{1}{4}\mE\sbr{(b_k\bDelta_{k}^{\ell_2})^{-1}\sum_{i_1}\sum_{i_2}b_k^2[\nabla^2(\nabla f^{\ell_1})(\bx_k^+)+\nabla^2(\nabla f^{\ell_1})(\bx_k^-)]^{i_1i_2}\bDelta_{k}^{i_1} \bDelta_{k}^{i_2} \mid \mF_{k-1}} + O(\tb_k^2/b_k)\\
& = O(b_k + \tb_k^2/b_k),
\end{align*}
where we abuse the notation $\boldsymbol{x}_k^{\pm}$ in the second equality from \eqref{aequ:1} to let it denote some points lying on the line segments between $\boldsymbol{x}_k$ and $\boldsymbol{x}_k \pm b_k \boldsymbol \Delta_k$, and the second equality also applies Assumption~\ref{ass:Delta}.
The last equality is due to Assumptions \ref{ass:1-1} and \ref{ass:Delta}. This completes the proof.

\subsection{Proof of Lemma \ref{lemma:average almost sure}}\label{appendix lemma:average almost sure}

By Assumption \ref{ass:1-1}, let us denote $\Upsilon_{\nabla f}>0$ such that $\|\nabla f(\bx)\|\leq \Upsilon_{\nabla f}$, $\forall \bx\in \mathcal{X}$. We use $\bar{\boldsymbol{g}}_k$ as an example, while the same analysis applies to $\bar{G }_k$. We note that $\bar{\boldsymbol g}_k$ satisfies the following relations:$\quad\quad$
\begin{equation*}
\begin{aligned}
& \bar{\boldsymbol g}_k \stackrel{\eqref{snequ:3}}{=}(1-\beta_k) \bar{\boldsymbol g}_{k-1}+\beta_k\hat{\nabla}F(\bx_k;\xi_k),\\
& \mE[\hat{\nabla}F(\bx_k;\xi_k)\mid \mF_{k-1}]=\nabla{f}_k+O(b_k^2) \quad \text{(by Lemma \ref{lemma:finite difference for gradient})}.
\end{aligned}
\end{equation*}
We establish the almost sure convergence of $\bar{\boldsymbol g}_k$ by applying Lemma \ref{technical lemma:1}. We check the conditions in Lemma \ref{technical lemma:1}. Note that condition (a) in Lemma \ref{technical lemma:1} is trivially satisfied. For condition (b), we have$\quad$
\begin{align}\label{aequ:3}
\|\hat{\nabla}F(\bx_k;\xi_k)\|^2 \; & \stackrel{\mathclap{\eqref{snequ:2}}}{=} \; \nbr{ \frac{1}{2b_k}\int_{-b_k}^{b_k}\langle \nabla F(\boldsymbol x_{k}+s{\boldsymbol \Delta_ k};\xi_{k}),\boldsymbol \Delta_k \rangle\boldsymbol \Delta_k^{-1}\; ds}^{2} \nonumber \\
& \leq \|\boldsymbol \Delta_k^{-1}\|^2\cdot~\frac{1}{2b_k}\int_{-b_k}^{b_k}\|\langle \nabla F(\boldsymbol x_{k}+s{\boldsymbol \Delta_ k};\xi_{k}),\boldsymbol \Delta_k\rangle\|^{2}\; ds \nonumber\\
& \leq \|\boldsymbol \Delta_k^{-1}\|^2\|\boldsymbol \Delta_k\|^2 \frac{1}{2b_k}\int_{-b_k}^{b_k}\|\nabla F(\boldsymbol x_{k}+s{\boldsymbol \Delta_ k};\xi_{k})\|^{2}\; ds,
\end{align}
where the first inequality is due to the Jensen's inequality. For any $s\in[-b_k, b_k]$, we know from Assumption \ref{ass:2-1}\eqref{cond:grad:smo} with $r\geq 2$ in \eqref{cond:3.6} that
\begin{align}\label{aequ:B4}
& \mE[\|\nabla F(\boldsymbol x_{k}+s{\boldsymbol \Delta_ k};\xi_{k})\|^2\mid \mathcal{F}_{k-1}, \bDelta_k] \nonumber\\
& \leq  2\mE[\|\nabla F(\boldsymbol x_{k}+s{\boldsymbol \Delta_ k};\xi_{k}) - \nabla f(\bx_k+s\bDelta_k)\|^2\mid \mathcal{F}_{k-1}, \bDelta_k] + 2\|\nabla f(\bx_k+s\bDelta_k)\|^2 \nonumber\\
& \leq 2\{\mE[\|\nabla F(\boldsymbol x_{k}+s{\boldsymbol \Delta_ k};\xi_{k}) - \nabla f(\bx_k+s\bDelta_k)\|^r\mid \mathcal{F}_{k-1}, \bDelta_k]\}^{2/r} + 2\Upsilon_{\nabla f}^2 \nonumber\\
& \leq 2(\Upsilon_{m}^{2/r} + \Upsilon_{\nabla f}^2).
\end{align}
Combining \eqref{aequ:3} and \eqref{aequ:B4}, and applying Assumption \ref{ass:Delta}, we obtain
\begin{equation}\label{aequ:B5}
\mE[\|\hat{\nabla}F(\bx_k;\xi_k)\|^2 \mid \mathcal{F}_{k-1}] \leq 2d^2\kappa_{\bDelta_2}^2\kappa_{\bDelta_1}^{-2}(\Upsilon_{m}^{2/r} + \Upsilon_{\nabla f}^2),
\end{equation}
which verifies condition (b). Condition (c) is immediately satisfied under the conditions $p_2\in(0.5,1]$~and $p_2+2p_3>1$ in \eqref{cond:3.6} of the lemma. For condition (d), we note for the Lipschitz constant $\kappa_{\nabla f}>0$ that
\begin{align}\label{aequ:4}
\|\nabla f_{k+1}-\nabla f_k\| & \leq \kappa_{\nabla f}\|\boldsymbol x_{k+1}-\boldsymbol x_k\| \quad \text{(Lipschitz property)} \nonumber \\
&= \kappa_{\nabla f}\bar{\alpha}_k\|\tilde{\Delta}\boldsymbol x_k\| \stackrel{\eqref{def:acc2},\eqref{snequ:7}}{\leq} \kappa_{\nabla f}\rbr{\frac{\nu_{-1}\alpha_k}{\kappa_{\nabla c}} + \psi \alpha_k^p}\| \tilde{W}_k^{-1}\|(\|\barg_k\|+\|c_k\|).
\end{align}
By Assumption \ref{ass:1-1} and \citep[Lemma 1]{Na2022adaptive}, there exists a constant $\Upsilon_K >0$ such~that~$\|\tilde{W}_k^{-1}\| \leq \Upsilon_K$, $\forall k\geq 0$, and also $\|c_k\|\leq \kappa_{c}$. Furthermore, we follow the same analysis as in \eqref{aequ:3}, \eqref{aequ:B4}, \eqref{aequ:B5},~and obtain
\begin{align}\label{aequ:B7}
& \mE[\|\hat{\nabla}F(\bx_k;\xi_k)\|^r\mid \mF_{k-1}] \leq \mE\sbr{\|\bDelta_k^{-1}\|^r\|\bDelta_k\|^r\frac{1}{2b_k}\int_{-b_k}^{b_k}\|\nabla F(\bx_k+s\bDelta_k;\xi_k)\|^r ds \mid \mF_{k-1}} \nonumber\\
&\leq \mE\sbr{\|\bDelta_k^{-1}\|^r\|\bDelta_k\|^r\frac{2^{r-1}}{2b_k}\int_{-b_k}^{b_k} \rbr{\|\nabla F(\bx_k+s\bDelta_k;\xi_k) - \nabla f(\bx_k+s\bDelta_k)\|^r+\|\nabla f(\bx_k+s\bDelta_k)\|^r}ds\mid \mF_{k-1}} \nonumber\\
& \leq 2^{r-1}d^r\kappa_{\bDelta_2}^r\kappa_{\bDelta_1}^{-r}(\Upsilon_{m}+\Upsilon_{\nabla f}^r).
\end{align}
Thus, let us define
\begin{equation}\label{nequ:10}
\Upsilon_{\barg} \coloneqq \max\{\|\barg_{-1}\|^r,  2^{r-1}d^r\kappa_{\bDelta_2}^r\kappa_{\bDelta_1}^{-r}(\Upsilon_{m}+\Upsilon_{\nabla f}^r)\}.
\end{equation}
Then, we know $\mE[\|\barg_{-1}\|^r] = \|\barg_{-1}\|^r \leq \Upsilon_{\barg}$. For any $k\geq 0$, suppose $\mE[\|\barg_{k-1}\|^r]\leq \Upsilon_{\barg}$, then 
\begin{multline}\label{aequ:B8}
\mE[\|\barg_k\|^r] \leq \mE[((1-\beta_k)\|\barg_{k-1}\| + \beta_k\|\hat{\nabla}F(\bx_k;\xi_k)\|)^r]\\
\leq (1-\beta_k)\mE[\|\barg_{k-1}\|^r] + \beta_k\mE[\|\hat{\nabla}F(\bx_k;\xi_k)\|^r] \stackrel{\eqref{aequ:B7}}{\leq} \Upsilon_{\barg}.
\end{multline}
This shows $\mE[\|\barg_k\|^r]\leq \Upsilon_{\barg}$ for any $k\geq 0$. Combining the above display with \eqref{aequ:4}, and noting that $p\geq 1$, we obtain
\begin{align}\label{aequ:B9}
\mE\sbr{\sum_{k=0}^{\infty}\frac{\|\nabla f_{k+1}-\nabla f_k\|^r}{\beta_k^r}} & = \sum_{k=0}^{\infty}\frac{\mE[\|\nabla f_{k+1}-\nabla f_k\|^r]}{\beta_k^r} \nonumber\\
& \hskip-1cm \leq \sum_{k=0}^{\infty}\frac{\kappa_{\nabla f}^r(\frac{\nu_{-1}\alpha_k}{\kappa_{\nabla c}} + \psi \alpha_k^p)^r\Upsilon_{K}^r2^{r-1}(\mE[\|\barg_k\|^r]+\mE[\|c_k\|^r])}{\beta_k^r} \nonumber\\
& \hskip-1cm \leq \sum_{k=0}^{\infty}\frac{\kappa_{\nabla f}^r(\frac{\nu_{-1}}{\kappa_{\nabla c}} + \psi \alpha_k^{p-1})^r\Upsilon_{K}^r2^{r-1}(\Upsilon_{\barg}+\kappa_{c}^r)\alpha_k^r}{\beta_k^r} = \sum_{k=0}^{\infty}O\rbr{\frac{\alpha_k^r}{\beta_k^r}}<\infty,
\end{align}
where the first equality is due to Tonelli's theorem and the last inequality is due to $r(p_1-p_2)>1$ in \eqref{cond:3.6}. The above result immediately implies $\|\nabla f_{k+1} - \nabla f_k\|/\beta_k \rightarrow 0$ almost surely, which verifies~condition (d). By Lemma \ref{technical lemma:1}, we have $\bar{\boldsymbol g}_{k} - \nabla f_k \rightarrow \boldsymbol 0$ almost surely. 
The same analysis applies to $\barG_k$, and we complete the proof for the first part of the lemma. For the second part, we know for each sample~path, there exists $ K^{\star}_G>0 $ such that for any $k\geq K^\star_G$,
\begin{equation*}
\|\bar{G }_k \bar{G }_k^T  - G_kG_k^T\| \leq \min\{\kappa_{2,\tG} - \kappa_{2,G}, \kappa_{1,G} - \kappa_{1,\tG}\}.
\end{equation*}
By Weyl's inequality \cite[Theorem 4.3.1]{Horn1985Matrix}, we know $\kappa_{1,\tG}\cdot I \preceq \bar{G }_k \bar{G }_k^T \preceq \kappa_{2,\tG}\cdot I$. Since the modification $\delta_{k}^G $ is introduced to modify $\bar{G }_k$ to satisfy this condition, we know there is no need to apply $\delta_{k}^G $ for all $ k \geq K^{\star}_G$. This completes the proof.

\subsection{Proof of Lemma \ref{final fundamental lemma}}\label{appendix lemma:final fundamental lemma}

We use $\barg_k$ as an example, while the same analysis applies to $\barG_k$. We decompose $\barg_k-\nabla f_k$ as follows:
\begin{align}\label{aequ:5}
& \barg_k-\nabla f_k  \stackrel{\mathclap{\eqref{snequ:3}}}{=} \beta_k(\hnabla F(\bx_k;\xi_k)-\nabla f_k)+(1-\beta_k)\left(\barg_{k-1}-\nabla f_{k-1}\right)+(1-\beta_k)\left(\nabla f_{k-1}-\nabla f_k\right) \nonumber\\
& \stackrel{\mathclap{\eqref{snequ:3}}}{=} \beta_{k}(\hnabla F(\bx_k;\xi_k)-\nabla f_k)+(1-\beta_{k})\{\beta_{k-1}(\hnabla F(\bx_{k-1};\xi_{k-1})-\nabla f_{k-1}) \nonumber\\
&\quad +(1-\beta_{k-1})\left(\barg_{k-2}-\nabla f_{k-2}\right) +(1-\beta_{k-1}) (\nabla f_{k-2}-\nabla f_{k-1})\}+(1-\beta_{k}) (\nabla f_{k-1}-\nabla f_k) \nonumber\\
& = \cdots \nonumber\\
& = \sum_{i=0}^{k}\prod_{j=i+1}^{k}(1-\beta_{j})\beta_{i}(\hnabla F(\bx_i;\xi_i)-\nabla f_i)+\sum_{i=0}^{k}\prod_{j=i}^{k}(1-\beta_{j})(\nabla f_{i-1}-\nabla f_i) \nonumber\\
& = \sum_{i=0}^{k}\prod_{j=i+1}^{k}(1-\beta_{j})\beta_{i}(\hnabla F(\bx_i;\xi_i)-\mE[\hnabla F(\bx_i;\xi_i)\mid \mF_{i-1}]) \nonumber\\
& \quad +\sum_{i=0}^{k}\prod_{j=i+1}^{k}(1-\beta_{j})\beta_{i}(\mE[\hnabla F(\bx_i;\xi_i)\mid \mF_{i-1}]-\nabla f_i)\vspace{10 pt}+\sum_{i=0}^{k}\prod_{j=i}^{k}(1-\beta_{j})(\nabla f_{i-1}-\nabla f_i),
\end{align}
where we denote $\nabla f_{-1} = \barg_{-1}$ in the last two equalities for clarity. We now proceed to derive bounds~for each term in \eqref{aequ:5}. In particular, using the martingale difference property, we have
\begin{align*}
&\mE\left[\left\|\sum_{i=0}^{k}\prod_{j=i+1}^{k}(1-\beta_{j})\beta_{i}(\hnabla F(\bx_i;\xi_i)-\mE[\hnabla F(\bx_i;\xi_i)\mid \mF_{i-1}])\right\|^2\right]\\
&=\;\sum_{i=0}^{k}\left(\prod_{j=i+1}^{k}(1-\beta_{j})\right)^2\beta_{i}^2\mE\left[\left\|\hnabla F(\bx_i;\xi_i)-\mE[\hnabla F(\bx_i;\xi_i)\mid \mF_{i-1}]\right\|^2\right]\\
& \stackrel{\mathclap{\eqref{aequ:B5}}}{=}\; O\left(\sum_{i=0}^{k}\left(\prod_{j=i+1}^{k}(1-\beta_{j})\right)^2\beta_{i}^2 \right) = O(\beta_k)\quad \text{(by Lemma \ref{technical lemma:2})},
\end{align*}   
where the last inequality holds since if $p_2=1$, we have $2-1/\iota_2>0\Leftrightarrow \iota_2>0.5$ as in \eqref{cond:3.7}. For~the~second term in \eqref{aequ:5}, we have
\begin{align*}
& \left\|\sum_{i=0}^{k}\prod_{j=i+1}^{k}(1-\beta_{j})\beta_{i}(\mE[\hnabla F(\bx_i;\xi_i)\mid \mF_{i-1}]-\nabla f_i)\right\| \\
&\leq\sum_{i=0}^{k}\prod_{j=i+1}^{k}(1-\beta_{j})\beta_{i}\left\|\mE[\hnabla F(\bx_i;\xi_i)\mid \mF_{i-1}]-\nabla f_i\right\| = O\left(\sum_{i=0}^{k}\prod_{j=i+1}^{k}(1-\beta_{j})\beta_{i}b_i^2\right)\quad \text{(by Lemma \ref{lemma:finite difference for gradient})}\\
&=O(b_k^2)\quad \text{(by Lemma \ref{technical lemma:2})},
\end{align*}
where the last inequality holds since if $p_2=1$, we have $1-2p_3/\iota_2>0\Leftrightarrow p_3<0.5\iota_2$ as in \eqref{cond:3.7}. For~the third term in \eqref{aequ:5}, we have
\begin{align*}
& \mE\sbr{\nbr{\sum_{i=0}^{k}\prod_{j=i}^{k}(1-\beta_{j})(\nabla f_{i-1}-\nabla f_i)}^2} \leq \rbr{\sum_{i=0}^{k}\prod_{j=i}^{k}(1-\beta_j)\sqrt{\mE[\|\nabla f_{i-1}-\nabla f_i\|^2]}}^2\\
& \leq O\left(\cbr{\sum_{i=0}^{k}\prod_{j=i}^{k}(1-\beta_{j})\alpha_{i-1}}^2\right)\quad \text{(by the same analysis of \eqref{aequ:4}, \eqref{aequ:B7}, \eqref{aequ:B8}, \eqref{aequ:B9})}\\
& = O\left(\frac{\alpha_k^2}{\beta_k^2}\right)\quad \text{(by Lemma \ref{technical lemma:2})},
\end{align*}
where we set $\alpha_{-1} = \|\barg_{-1} - \nabla f_0\|$ in the last inequality and the last equality holds since $p_1>p_2$, and if $p_2=1$, $1 - (p_1-p_2)/\iota_2>0\Leftrightarrow p_1<p_2+\iota_{2}$ as in \eqref{cond:3.7}. Combining the above three displays~with~\eqref{aequ:5}, we know $\mE[\|\barg_k-\nabla f_k\|^2]=O(\beta_k+b_k^4+\alpha_k^2/\beta_k^2)$. The same analysis applies to $\barG_k$, thereby completing the proof.

\subsection{Proof of Lemma \ref{thm:convergence1}}\label{appendix thm:convergence1}

Let $k\geq 0$. For the result (a), we note that
\begin{equation*}
\tilde{G}_k \widetilde{\Delta} \boldsymbol{x}_{k} \stackrel{\eqref{decomposition}}{=} \tilde{G}_k(\boldsymbol u_k + \boldsymbol v_k) \stackrel{\eqref{decomposition}}{=} \tilde{G}_k \boldsymbol v_k \stackrel{\eqref{def:acc2}}{=} -c_k.
\end{equation*}
We apply Assumption \ref{ass:1-1} and have $\boldsymbol v_k = -\tilde{G}_k^T(\tilde{G}_k \tilde{G}_k^T)^{-1} c_k$. Thus, we obtain
\begin{equation}\label{aequ:6}
\|\boldsymbol v_{k}\| \leq \| \tilde{G}_k^{T} (\tilde{G}_k \tilde{G}_k^{T})^{-1} \| \|c_k\|\leq \frac{1}{\sqrt{\kappa_{1,\tG}}}\|c_k\| \quad \text{ and }\quad
\|\boldsymbol v_{k}\|^{2} \leq \frac{1}{{\kappa_{1,\tG}}}\|c_k\|^2\leq  \frac{\kappa_c}{{\kappa_{1,\tG}}}\|c_k\|.
\end{equation}
Thus, (a) holds with $\kappa_v=\max\{1/\sqrt{\kappa_{1,\tG}},\kappa_c/{\kappa_{1,\tG}}\}$. For the result (b), we note that
\begin{align}\label{aequ:7}
\tilde{\Delta} \boldsymbol{x}_k^T\tB_k\tilde{\Delta} \boldsymbol{x}_k & \stackrel{\mathclap{\eqref{decomposition}}}{=}\boldsymbol u_k^T\tB_k\boldsymbol u_k+ 2\boldsymbol u_k^T\tB_k\boldsymbol v_k+\boldsymbol v_k^T\tB_k\boldsymbol v_k \nonumber \\
&\geq \kappa_{1,\tB} \|\boldsymbol u_k\|^2-2\kappa_{2,\tB}\|\boldsymbol u_k\|\|\boldsymbol v_k\|-\kappa_{2,\tB}\|\boldsymbol v_k\|^2\quad \text{(by Assumption \ref{ass:1-1})} \nonumber \\
&\geq\rbr{\kappa_{1,\tB}-\frac{2\kappa_{2,\tB}}{\sqrt{\kappa_u}}-\frac{\kappa_{2,\tB}}{\kappa_u}}\|\boldsymbol u_k\|^2\quad\text{(by $\|\boldsymbol u_k\|^2 \geq \kappa _{u} \|\boldsymbol v_k\|^2$)}.
\end{align}
Thus, as long as $\kappa_{u}$ is large enough such that ${2\kappa_{2,\tB}}/{\sqrt{\kappa_u}}+{\kappa_{2,\tB}}/{\kappa_u}\leq {\kappa_{1,\tB}/}{2}$, the result (b) holds. For the result (c), we note that
\begin{equation*}
\Delta q(\tDelta\bx_k; \tau_k, \boldsymbol x_k,\bar{\boldsymbol g}_k,\tB_k) \stackrel{\eqref{reduction}}{\geq}  \frac{1}{2}\tau_k  \max\{\tilde{\Delta} \boldsymbol{x}_k^T\tB_k\tilde{\Delta} \boldsymbol{x}_k, 0\}+\sigma\|c_k\| .
\end{equation*}
If $\|\boldsymbol u_k\|^2 \geq \kappa _{u} \|\boldsymbol v_k\|^2$, we have
\begin{align*}
\Delta q(\tDelta\bx_k; \tau_k, \boldsymbol x_k,\bar{\boldsymbol g}_k,\tB_k) & \stackrel{\mathclap{\eqref{aequ:7}}}{\geq}\;  \frac{1}{4}\tau_k\kappa_{1,\tB}\|\boldsymbol u_k\|^2+\sigma\|c_k\|\\
& \geq\frac{\tau_k\kappa_u\kappa_{1,\tB}}{4(1+\kappa_u)}(\|\boldsymbol u_k\|^2+\|\boldsymbol v_k\|^2)+\sigma\|c_k\|\quad\text{(by $\|\boldsymbol u_k\|^2 \geq \kappa _{u} \|\boldsymbol v_k\|^2$)}\\
& \stackrel{\mathclap{\eqref{decomposition}}}{=} \; \frac{\tau_k\kappa_u\kappa_{1,\tB}}{4(1+\kappa_u)}\|\tDelta \bx_k\|^2+\sigma\|c_k\|.
\end{align*} 
Otherwise, we have 
\begin{align*}
\Delta q(\tDelta\bx_k; \tau_k, \boldsymbol x_k,\bar{\boldsymbol g}_k,\tB_k) & \geq\sigma\|c_k\|\\
& \stackrel{\mathclap{\eqref{aequ:6}}}{\geq} \frac{\sigma}{2\kappa_v(1+\kappa_u)}\|\tDelta \bx_k\|^2+\frac{\sigma}{2}\|c_k\|\quad\text{(by $\|\boldsymbol u_k\|^2 \leq \kappa _{u} \|\boldsymbol v_k\|^2$)}.
\end{align*} 
Combining the above two displays, we know (c) holds for $\kappa_q=\min\{{\kappa_u\kappa_{1,\tB}}/{4(1+\kappa_u)},\sigma/2\tau_{-1},{\sigma}/\{2\kappa_v\tau_{-1}(1+\kappa_u)\}\}$. This completes the proof.

\subsection{Proof of Lemma \ref{parameter stabilize}}\label{appendix parameter stabilize}

From the update of \eqref{def:acc4}, we know $ \tau_k < \tau_{k-1} $ if and only if  both $\bar{\boldsymbol g}_k^T\tilde{\Delta} \boldsymbol{x}_k+ \max\{\tilde{\Delta} \boldsymbol{x}_k^T\tB_k\tilde{\Delta} \boldsymbol{x}_k, 0\} > 0$~and $\tau_{k-1} ( \bar{\boldsymbol g}_k^T\tilde{\Delta} \boldsymbol{x}_k+ \max\{\tilde{\Delta} \boldsymbol{x}_k^T\tB_k\tilde{\Delta} \boldsymbol{x}_k, 0\} ) > (1-\sigma)\|c_k\|$. From \eqref{def:acc2}, we know
\begin{equation*}
\tB_k \tilde{\Delta} \boldsymbol{x}_k +  \tG_k^T \tilde{\Delta} \boldsymbol{\lambda}_k = -\bar{\boldsymbol g}_k - \tG_k^T \boldsymbol{\lambda}_k.
\end{equation*}
Multiplying both sides by $ \boldsymbol u_k^T $, we obtain  
\begin{equation}\label{aequ:8}
\boldsymbol u_k^T \tB_k (\boldsymbol u_k + \boldsymbol v_k) \stackrel{{\eqref{decomposition}}}{=} -\bar{\boldsymbol g}_k^T \boldsymbol u_k.
\end{equation} 
If $\tilde{\Delta} \boldsymbol{x}_k^T\tB_k\tilde{\Delta} \boldsymbol{x}_k\geq 0$, we have for some $\kappa_{\tau,1}>0$ that
\begin{align*}
\bar{\boldsymbol g}_k^T\tilde{\Delta} \boldsymbol{x}_k+ \max\{\tilde{\Delta} \boldsymbol{x}_k^T\tB_k\tilde{\Delta} \boldsymbol{x}_k, 0\} & \;\stackrel{\mathclap{\eqref{decomposition}}}{=}\; \bar{\boldsymbol g}_k^T(\boldsymbol u_k + \boldsymbol v_k) + (\boldsymbol u_k + \boldsymbol v_k)^T \tB_k (\boldsymbol u_k + \boldsymbol v_k) \\
&\; \stackrel{\mathclap{\eqref{aequ:8}}}{=}\; \bar{\boldsymbol g}_k^T \boldsymbol v_k + \boldsymbol v_k^T \tB_k \boldsymbol u_k + \boldsymbol v_k^T \tB_k \boldsymbol v_k \\
&\; \leq\; (\|\bar{\boldsymbol g}_k\| + \kappa_{2,\tB}\|\boldsymbol u_k\|)\|\boldsymbol v_k\| + \kappa_{2,\tB}\|\boldsymbol v_k\|^2 \quad \text{(by Assumption \ref{ass:1-1})}\\
&\; \leq \; \kappa_{\tau,1}\|c_k\|,
\end{align*}
where the existence of $\kappa_{\tau,1}$ in the last inequality is due to the boundedness of $\barg_k$ (similar to \eqref{aequ:B8}), the boundedness of $\tilde{\Delta} \boldsymbol{x}_k$ (hence $\bu_k$) in \eqref{aequ:4}, and Lemma \ref{thm:convergence1}(a). Otherwise $\tilde{\Delta} \boldsymbol{x}_k^T\tB_k\tilde{\Delta} \boldsymbol{x}_k< 0$, we~have~for some $\kappa_{\tau,2}>0$ that
\begin{align*}
\bar{\boldsymbol g}_k^T\tilde{\Delta} \boldsymbol{x}_k+ \max\{\tilde{\Delta} \boldsymbol{x}_k^T\tB_k\tilde{\Delta} \boldsymbol{x}_k, 0\}
&\stackrel{\mathclap{\eqref{decomposition}}}{=}\; \bar{\boldsymbol g}_k^T(\boldsymbol u_k + \boldsymbol v_k)\\
&\stackrel{\mathclap{\eqref{aequ:8}}}{=}\; \bar{\boldsymbol g}_k^T\boldsymbol v_k-\boldsymbol u_k^T \tB_k\boldsymbol u_k - \boldsymbol u_k^T \tB_k\boldsymbol v_k\\
& \leq\; (\|\bar{\boldsymbol g}_k\| + \kappa_{2,\tB}\|\boldsymbol u_k\|)\|\boldsymbol v_k\| \quad \text{(by Assumption \ref{ass:1-1})}\\
&\leq\; \kappa_{\tau,2}\|c_k\|,
\end{align*}
where the existence of $ \kappa_{\tau,2}$ in the last inequality follows from the same reasoning as $\kappa_{\tau,1}$. Combining~the above two displays, we know that, to have $\tau_k < \tau_{k-1}$, we must have $ \tau_{k-1} > (1-\sigma)/\max\{\kappa_{\tau,1},\kappa_{\tau,2}\} $. This, combined with the fact that Algorithm \ref{Alg:DF-SSQP} decreases $\tau_k$ by at least a constant factor whenever it is reduced, implies the existence of a (potentially random) $K^\star_{\tau}>0$ such that $\tau_k = \tau_{K^\star_{\tau}} \geq \tilde{\tau}=(1-\sigma)(1-\epsilon)/\max\{\kappa_{\tau,1},\kappa_{\tau,2}\}$ for all $ k\geq K^\star_{\tau}$. We now proceed to prove the stabilization of $\nu_k$. By Lemma \ref{thm:convergence1}(c) and the lower bound of $\tau_k$ demonstrated above, we have
\begin{equation*}
\nu_k^{\text{trial}}\stackrel{{\eqref{def:acc6}}}{=}\frac{\Delta q(\tDelta\bx_k; \tau_k, \boldsymbol x_k,\bar{\boldsymbol g}_k,\tB_k)}{\|\tilde{\Delta} \boldsymbol{x}_k\|^{2}}\geq\frac{\kappa _{q }\tau_k(\|\tDelta \bx_k\|^2+\|c_k\|)}{\|\tilde{\Delta} \boldsymbol{x}_k\|^{2}}\geq\kappa _{q}\tau_k\geq\kappa _{q}\tilde{\tau}.
\end{equation*}
This, combined with the fact that Algorithm \ref{Alg:DF-SSQP} decreases $\nu_k$ by at least a constant factor whenever it is reduced, implies the existence of a (potentially random) $K^\star_{\nu}>0$ such that $\nu_k$ stabilizes as $\nu_k = \nu_{K^\star_{\nu}} \geq \tilde{\nu}=(1-\epsilon)\kappa _{q}\tilde{\tau}$ for all $ k\geq K^\star_{\nu}$. Letting $K^\star_{\tau\nu}=\max\{K^\star_{\tau},K^\star_{\nu}\}$ completes the proof.

\subsection{Proof of Lemma \ref{convergence2}}\label{appendix convergence2}

By Assumption \ref{ass:3-1} and noting that $p\geq 1$ in \eqref{snequ:7}, we know there exists a (deterministic) $K^\star_1>0$ such that ${\nu_{-1}\alpha_k}/{\kappa_{\nabla c}} + \psi \alpha_k^p\leq 1$ for all $k\geq K_1^\star$. We further apply Lemmas \ref{lemma:average almost sure} and \ref{parameter stabilize}, and know that there exist (potentially random) $K^{\star}_G, K^\star_{\tau\nu}<\infty$ such that $\tilde{G}_k = \bar{G }_k $, $ \tau_k =\tau_{K^\star_{\tau\nu}}$, and $\nu_k=\nu_{K^\star_{\tau\nu}}$ for all $k\geq \max\{K^{\star}_G, K^\star_{\tau\nu}\}$.
To proceed, we first validate the well-definedness of $(\Delta \boldsymbol x_k,\Delta\boldsymbol \lambda_k)$. By Lemma \ref{technical lemma:3} and Assumption \ref{ass:1-1}, we know there exists $\delta = \delta(\kappa_{1,\tG}, \kappa_{1,\tB}, \kappa_{2,\tB})$ such that $\tB_k+\delta\bar G_k^T\bar G_k \succeq 0.5\kappa_{1,\tB}I$. Since we have from Lemma \ref{lemma:average almost sure} that $\barG_k- G_k\rightarrow \0$ as $k\rightarrow\infty$ almost surely, there exists (potentially random) $K^\star_2<\infty$ such that $\tB_k+\delta G_k^TG_k\succeq 0.25\kappa_{1,\tB}I$ for all $k\geq K^\star_2$, which implies $\tB_k\succeq 0.25\kappa_{1,\tB}I~ \text{in Null}(G_k)$. This result, combined with $\kappa_{1,G} I \preceq G_k G_k^T\preceq \kappa_{2,G} I$ in Assumption \ref{ass:1-1}, implies that $(\Delta \bx_k,\Delta\blambda_k)$ is well-defined. Furthermore, following the same analysis as in \eqref{aequ:4}, we have
\begin{equation}\label{aaequ:4}
\|W_k^{-1}\|\coloneqq\left\|\left(\begin{array}{cc}\tB_k & G_k^{T} \\ G_k & \boldsymbol 0\end{array}\right)^{-1}\right\|\leq \Upsilon_K \quad \text{ and }\quad \|\Delta \bx_k\|\leq \Upsilon_K(\Upsilon_{\nabla f}+\kappa_c),
\end{equation}
where we abuse the notation $\Upsilon_K$ in the analysis \eqref{aequ:4} to denote a common upper bound for $\tW_k^{-1}$ and $W_k^{-1}$, and $\Upsilon_{\nabla f}$ denotes the upper bound of $\nabla f$ in the analysis of \eqref{aequ:B4}. We now proceed to establish the convergence guarantee for $k\geq K^\star\coloneqq\max\{K^\star_1, K^\star_2, K^{\star}_G, K^\star_{\tau\nu}\}$. We have
\begin{align*}
& \phi_{\tau_{K^\star_{\tau\nu}}}(\boldsymbol x_{k}+\bar{\alpha}_k\tilde{\Delta}\boldsymbol x_{k})-\phi_{\tau_{K^\star_{\tau\nu}}}(\boldsymbol x_{k})\\
& = {\tau_{K^\star_{\tau\nu}}} f(\boldsymbol x_{k}+\bar{\alpha}_k\tilde{\Delta}\boldsymbol x_{k})+\|c(\boldsymbol x_{k}+\bar{\alpha}_k\tilde{\Delta}\boldsymbol x_{k})\|-{\tau_{K^\star_{\tau\nu}}} f(\boldsymbol x_{k})-\|c(\boldsymbol x_{k})\|\\
& \leq\bar\alpha_{k}{\tau_{K^\star_{\tau\nu}}}\nabla f_{k}^{T}\tilde{\Delta}\boldsymbol x_{k}+\|c_k+\bar{\alpha}_{k}G_k\tilde{\Delta}\boldsymbol x_{k}\| - \|c_k\| + \frac{{\tau_{K^\star_{\tau\nu}}}\kappa_{\nabla f}+\kappa_{\nabla c}}{2}\bar\alpha_{k}^{2}\|\tilde{\Delta}\boldsymbol x_{k}\|^{2} \quad \text{(Lipschitz property)}\\
& \leq\bar\alpha_k{\tau_{K^\star_{\tau\nu}}}\nabla f_{k}^{T}\tilde{\Delta}\boldsymbol x_{k}+\|c_k+\bar\alpha_k\bar{G}_{k}\tilde{\Delta}\boldsymbol x_{k}\|+\bar{\alpha}_{k}\|G_k-\bar{G}_{k}\|\|\tilde{\Delta}\boldsymbol x_{k}\| - \|c_k\|+\frac{{\tau_{K^\star_{\tau\nu}}}\kappa_{\nabla f}+\kappa_{\nabla c}}{2}\bar\alpha_k^2\|\tilde{\Delta}\boldsymbol x_{k}\|^{2}\\
& \stackrel{\mathclap{\eqref{def:acc2}}}{=}\bar\alpha_{k}{\tau_{K^\star_{\tau\nu}}}\nabla f_k^{T}\tilde{\Delta}\boldsymbol x_{k}+|1-\bar\alpha_{k}|\|c_k\| - \|c_k\| + \bar\alpha_{k}\|G_k-\bar{G}_{k}\| \|\tilde{\Delta}\boldsymbol x_{k}\| + \frac{{\tau_{K^\star_{\tau\nu}}}\kappa_{\nabla f}+\kappa_{\nabla c}}{2}\bar\alpha_{k}^{2}\|\tilde{\Delta}\boldsymbol x_{k}\|^{2}\\
& =\bar\alpha_{k}{\tau_{K^\star_{\tau\nu}}}\nabla f_k^{T}\tilde{\Delta}\boldsymbol x_{k}-\bar\alpha_{k}\|c_k\|+\bar\alpha_{k}\|G_k-\bar{G}_{k}\| \|\tilde{\Delta}\boldsymbol x_{k}\| + \frac{{\tau_{K^\star_{\tau\nu}}}\kappa_{\nabla f}+\kappa_{\nabla c}}{2}\bar\alpha_{k}^{2}\|\tilde{\Delta}\boldsymbol x_{k}\|^{2} \quad \text{(by $\bar {\alpha}_k\leq1$)}\\
& =\bar\alpha_{k}{\tau_{K^\star_{\tau\nu}}}\barg_k^{T}\tilde{\Delta}\boldsymbol x_{k}+\bar\alpha_{k}{\tau_{K^\star_{\tau\nu}}}(\nabla f_k-\barg_k)^T\tilde{\Delta}\boldsymbol x_{k}-\bar\alpha_{k}\|c_k\|+\bar\alpha_{k}\|G_k-\bar{G}_{k}\|\|\tilde{\Delta}\boldsymbol x_{k}\|+\frac{{\tau_{K^\star_{\tau\nu}}}\kappa_{\nabla f}+\kappa_{\nabla c}}{2}\bar\alpha_{k}^{2}\|\tilde{\Delta}\boldsymbol x_{k}\|^{2}\\
& \stackrel{\mathclap{\eqref{snequ:10}}}{\leq}-\bar\alpha_k\Delta q(\tDelta\bx_k;{\tau_{K^\star_{\tau\nu}}}, \bx_k, \barg_k, \tB_k)+\bar\alpha_{k}{\tau_{K^\star_{\tau\nu}}}(\nabla f_k-\barg_k)^T\tilde{\Delta}\boldsymbol x_{k}+\bar\alpha_{k}\|G_k-\bar{G}_{k}\|\|\tilde{\Delta}\boldsymbol x_{k}\|\\
&\quad +\frac{{\tau_{K^\star_{\tau\nu}}}\kappa_{\nabla f}+\kappa_{\nabla c}}{2}\bar\alpha_{k}^{2}\|\tilde{\Delta}\boldsymbol x_{k}\|^{2}\\
&\stackrel{\mathclap{\eqref{snequ:7}}}{\leq}-\frac{\nu_{K^\star_{\tau\nu}}\alpha_k}{{\tau_{K^\star_{\tau\nu}}}\kappa_{\nabla f}+\kappa_{\nabla c}}\Delta q(\tDelta\bx_k;{\tau_{K^\star_{\tau\nu}}}, \bx_k, \barg_k, \tB_k)+\rbr{\frac{\nu_{K^\star_{\tau\nu}}\alpha_k}{{\tau_{K^\star_{\tau\nu}}}\kappa_{\nabla f}+\kappa_{\nabla c}}+\psi \alpha_k^p}{\tau_{K^\star_{\tau\nu}}}\|(\nabla f_k-\barg_k)\|\|\tilde{\Delta}\boldsymbol x_{k}\|\\
&\quad +\rbr{\frac{\nu_{K^\star_{\tau\nu}}\alpha_k}{{\tau_{K^\star_{\tau\nu}}}\kappa_{\nabla f}+\kappa_{\nabla c}}+\psi \alpha_k^p}\|G_k-\bar{G}_{k}\|\|\tilde{\Delta}\boldsymbol x_{k}\|+\frac{{\tau_{K^\star_{\tau\nu}}}\kappa_{\nabla f}+\kappa_{\nabla c}}{2}\rbr{\frac{\nu_{K^\star_{\tau\nu}}\alpha_k}{{\tau_{K^\star_{\tau\nu}}}\kappa_{\nabla f}+\kappa_{\nabla c}}+\psi \alpha_k^p}^{2}\|\tilde{\Delta}\boldsymbol x_{k}\|^{2}.
\end{align*}
Taking conditional expectation $\mE[\cdot\mid\F_{k-1}]$ and subtracting $f_{\text{inf}}$ in Assumption \ref{ass:1-1} on both sides,~we~have
\begin{align}\label{aequ:9}
& \mE[\phi_{\tau_{K^\star_{\tau\nu}}}(\boldsymbol x_{k}+\bar{\alpha}_k\tilde{\Delta}\boldsymbol x_{k})- f_{\text{inf}}\mid \F_{k-1}]\leq \phi_{\tau_{K^\star_{\tau\nu}}}(\boldsymbol x_{k})- f_{\text{inf}}-\frac{\nu_{K^\star_{\tau\nu}}\alpha_k}{{\tau_{K^\star_{\tau\nu}}}\kappa_{\nabla f}+\kappa_{\nabla c}}\mE[\Delta q(\tDelta\bx_k;{\tau_{K^\star_{\tau\nu}}}, \bx_k, \barg_k, \tB_k)\mid \F_{k-1}] \nonumber\\
& \quad +\rbr{\frac{\nu_{K^\star_{\tau\nu}}\alpha_k}{{\tau_{K^\star_{\tau\nu}}}\kappa_{\nabla f}+\kappa_{\nabla c}}+\psi \alpha_k^p} \cbr{\tau_{K^\star_{\tau\nu}}\mE[\|\nabla f_k-\barg_k \|\|\tilde{\Delta}\boldsymbol x_{k}\|\mid \F_{k-1}] + \mE[\|G_k-\bar{G}_{k}\|\|\tilde{\Delta}\boldsymbol x_{k}\|\mid \F_{k-1}]} \nonumber\\
&\quad +\frac{{\tau_{K^\star_{\tau\nu}}}\kappa_{\nabla f}+\kappa_{\nabla c}}{2}\rbr{\frac{\nu_{K^\star_{\tau\nu}}\alpha_k}{{\tau_{K^\star_{\tau\nu}}}\kappa_{\nabla f}+\kappa_{\nabla c}}+\psi \alpha_k^p}^{2}\mE[\|\tilde{\Delta}\boldsymbol x_{k}\|^{2}\mid \F_{k-1}] \nonumber\\
& \leq \phi_{\tau_{K^\star_{\tau\nu}}}(\boldsymbol x_{k})- f_{\text{inf}} - \frac{\tilde{\nu}\alpha_k}{\tau_{-1}\kappa_{\nabla f}+\kappa_{\nabla c}}\mE[\Delta q(\tDelta\bx_k;{\tau_{K^\star_{\tau\nu}}}, \bx_k, \barg_k, \tB_k)\mid \F_{k-1}] \nonumber\\
& \quad +\rbr{\frac{\nu_{-1}\alpha_k}{\kappa_{\nabla c}}+\psi \alpha_k^p} \cbr{\tau_{-1}\mE[\|\nabla f_k-\barg_k \|\|\tilde{\Delta}\boldsymbol x_{k}\|\mid \F_{k-1}] + \mE[\|G_k-\bar{G}_{k}\|\|\tilde{\Delta}\boldsymbol x_{k}\|\mid \F_{k-1}]} \nonumber\\ 
&\quad +\frac{\tau_{-1}\kappa_{\nabla f}+\kappa_{\nabla c}}{2}\rbr{\frac{\nu_{-1}\alpha_k}{\kappa_{\nabla c}}+\psi \alpha_k^p}^{2}\mE[\|\tilde{\Delta}\boldsymbol x_{k}\|^{2}\mid \F_{k-1}],
\end{align}
where the last inequality utilizes Lemma \ref{parameter stabilize}. We now derive bounds for each positive conditional~expectation term in \eqref{aequ:9} so that we can apply Robbins-Siegmund theorem \citep{Robbins1985Convergence}. In particular, we have
\begin{align}\label{aequ:10}
& \mE\sbr{\sum_{k=0}^\infty\rbr{\frac{\nu_{-1}\alpha_k}{\kappa_{\nabla c}}+\psi \alpha_k^p} \tau_{-1}\mE[\|\nabla f_k-\barg_k \|\|\tilde{\Delta}\boldsymbol x_{k}\|\mid \mF_{k-1}]} \nonumber\\
& = \sum_{k=0}^\infty\rbr{\frac{\nu_{-1}\alpha_k}{\kappa_{\nabla c}}+\psi \alpha_k^p} \tau_{-1}\mE[\|\nabla f_k-\barg_k \|\|\tilde{\Delta}\boldsymbol x_{k}\|] \quad \text{(by Tonelli's theorem)} \nonumber\\
& \leq \sum_{k=0}^\infty\rbr{\frac{\nu_{-1}\alpha_k}{\kappa_{\nabla c}}+\psi \alpha_k^p} \tau_{-1} \sqrt{\mE[\|\nabla f_k-\barg_k \|^2]}\sqrt{\mE[\|\tilde{\Delta}\boldsymbol x_{k}\|^2]}\quad \text{(by Cauchy-Schwarz inequality)} \nonumber\\
& \stackrel{\mathclap{\eqref{nequ:10}}}{\leq} \sum_{k=0}^\infty\rbr{\frac{\nu_{-1}\alpha_k}{\kappa_{\nabla c}}+\psi \alpha_k^p} \tau_{-1}\Upsilon_{K}\sqrt{2}(\Upsilon_{\barg}^{1/r}+\kappa_c) \sqrt{\mE\left[\|\nabla f_k-\barg_k \|^2\right]} \quad \text{(by the same analysis of \eqref{aequ:B9})} \nonumber\\
& \leq \sum_{k=0}^\infty(\frac{\nu_{-1}\alpha_k}{\kappa_{\nabla c}}+\psi \alpha_k^p) \tau_{-1}\Upsilon_{K}(\Upsilon_{\barg}+\kappa_c) \left(\sqrt{\beta_k}+b_k^2+\frac{\alpha_k}{\beta_k}\right) \quad \text{(by Lemma \ref{final fundamental lemma})} \nonumber\\
& < \infty,
\end{align}
where the last inequality is ensured by $p\geq 1$, and $p_1+0.5p_2>1$, $p_1+2p_3>1$, $2p_1-p_2>1$, as~assumed in \eqref{cond:3.10} in the statement of the lemma. Therefore, we immediately have
\begin{equation}\label{aequ:11}
\mE\sbr{\sum_{k=K^\star}^\infty\rbr{\frac{\nu_{-1}\alpha_k}{\kappa_{\nabla c}}+\psi \alpha_k^p} \tau_{-1}\mE[\|\nabla f_k-\barg_k \|\|\tilde{\Delta}\boldsymbol x_{k}\|\mid \mF_{k-1}] }<\infty 
\end{equation}
and hence 
\begin{equation*}
\mE\sbr{\sum_{k=K^\star}^\infty(\frac{\nu_{-1}\alpha_k}{\kappa_{\nabla c}}+\psi \alpha_k^p) \tau_{-1}\mE[\|\nabla f_k-\barg_k \|\|\tilde{\Delta}\boldsymbol x_{k}\|\mid \mF_{k-1}] \mid \mF_{K^\star-1}}<\infty \text{\quad\quad almost surely}.
\end{equation*}
Following the same analysis as in \eqref{aequ:10} and \eqref{aequ:11}, we have 
\begin{equation*}
\begin{aligned}
&\mE\left[\sum_{k=K^\star}^\infty(\frac{\nu_{-1}\alpha_k}{\kappa_{\nabla c}}+\psi \alpha_k^p)\mE[\|G_k-\bar{G}_{k}\|\|\tilde{\Delta}\boldsymbol x_{k}\|\mid \F_{k-1}]\mid \F_{K^\star-1}\right] <\infty\quad \text{almost surely},\\
& \mE\left[\sum_{k=K^\star}^\infty (\frac{\nu_{-1}\alpha_k}{\kappa_{\nabla c}}+\psi \alpha_k^p)^{2}\mE[\|\tilde{\Delta}\boldsymbol x_{k}\|^{2}\mid \F_{k-1}]\mid \F_{K^\star-1}\right]<\infty \quad \text{almost surely}.
\end{aligned}
\end{equation*}
Combining the above two displays with \eqref{aequ:9}, we have from Robbins-Siegmund theorem \citep{Robbins1985Convergence} that 
\begin{multline}\label{aequ:13}
\sum_{k=K^{\star}}^{\infty}\alpha_k\mE\left[\mE[\Delta q(\tDelta\bx_k;{\tau_{K^\star_{\tau\nu}}}, \bx_k, \barg_k, \tB_k)\mid\F_{k-1}]\mid\F_{K^\star-1}\right]\\ =\sum_{k=K^{\star}}^{\infty}\alpha_k\mE[\Delta q(\tDelta\bx_k;{\tau_{K^\star_{\tau\nu}}}, \bx_k, \barg_k, \tB_k)\mid\F_{K^\star-1}]<\infty,
\end{multline}
which implies $P(\sum_{k=K^{\star}}^{\infty}\alpha_k\Delta q(\tDelta\bx_k;{\tau_{K^\star_{\tau\nu}}}, \bx_k, \barg_k, \tB_k)<\infty\mid\F_{K^\star-1})=1$. Since the result holds~for~any $\F_{K^\star-1}$, we integrate out the randomness of $\mF_{K^\star-1}$ and obtain 
\begin{equation*}
\sum_{k=K^{\star}}^{\infty}\alpha_k\Delta q(\tDelta\bx_k;{\tau_{K^\star_{\tau\nu}}}, \bx_k, \barg_k, \tB_k)<\infty \quad \text{almost surely}.
\end{equation*}
Utilizing $\sum_{k=K^{\star}}^\infty\alpha_k=\infty $ for any run of the algorithm, we know  $\liminf_{k \to \infty} \Delta q(\tDelta\bx_k;{\tau_{k}}, \bx_k, \barg_k, \tB_k)=0$ almost surely. Furthermore, by Lemma \ref{thm:convergence1}(c) and Lemma \ref{parameter stabilize}, we know that $\sum_{k=K^{\star}}^{\infty}\alpha_k(\|\tDelta \bx_k\|^2+\|c_k\|)<\infty$ almost surely. On the other hand, we note for $k\geq K^\star$ that
\begin{align*}
\|\tilde{\Delta} \boldsymbol{x}_k-\Delta \boldsymbol{x}_k\| & \stackrel{\mathclap{\eqref{def:acc2}}}{\leq}\left\|\left(\begin{array}{cc}\tB_k & \bar{G}_k^{T} \\ \bar{G}_k & \boldsymbol 0\end{array}\right)^{-1}\left(\begin{array}{c}\barg_k \\ c_k\end{array}\right)-\left(\begin{array}{cc}\tB_k & G_k^{T} \\ G_k & \boldsymbol 0\end{array}\right)^{-1}\left(\begin{array}{c}\nabla f_{k} \\ c_k\end{array}\right)\right\|\\
& \leq\Upsilon_K^2\left\| \left(\begin{array}{cc}\boldsymbol 0 & \bar{G}_k^{T}-G_k^{T} \\ \bar{G}_k-G_k & \boldsymbol 0\end{array}\right)\right\| \left\| \left(\begin{array}{c}\nabla f_k \\ c_k\end{array}\right)\right\| +\Upsilon_K\left\|\left(\begin{array}{c}\barg_k-\nabla f_k \\ \boldsymbol 0\end{array}\right) \right\|_2\\
& \leq 2\Upsilon_K^2(\Upsilon_{\nabla f}+\kappa_c)\|\bar{G}_k-G_k\| + \Upsilon_K\|\barg_k-\nabla f_k\|,
\end{align*}
where the last inequality is due to the boundedness of $\nabla f_k$ (cf. \eqref{aequ:B4}) and the boundedness of $c_k$ in Assumption \ref{ass:1-1}. Following the same analysis as in \eqref{aequ:10} and applying Lemma \ref{final fundamental lemma}, we have
\begin{equation*}
\mE\sbr{\sum_{k=0}^\infty \alpha_k(\|\bar{G}_k-G_k\|^2 + \|\barg_k-\nabla f_k\|^2)}<\infty.
\end{equation*}
The above two displays imply $\mE[\sum_{k=K^\star}^\infty\alpha_k\|\tilde{\Delta} \boldsymbol{x}_k-\Delta \boldsymbol{x}_k\|^2]<\infty$ and thus, $\sum_{k=K^\star}^\infty\alpha_k\|\tilde{\Delta} \boldsymbol{x}_k-\Delta \boldsymbol{x}_k\|^2<\infty$ almost surely. With this result and $\sum_{k=K^{\star}}^{\infty}\alpha_k(\|\tDelta \bx_k\|^2+\|c_k\|)<\infty$, we have almost surely
\begin{equation*}
\sum_{k=K^{\star}}^{\infty} \alpha_k(\|\Delta \bx_k\|^2 + \|c_k\|) \leq \sum_{k=K^{\star}}^{\infty}\alpha_k(2\|\tDelta \bx_k\|^2+\|c_k\|)+2\sum_{k=K^{\star}}^{\infty}\alpha_k\|\Delta \bx_k-\tDelta \bx_k\|^2<\infty.
\end{equation*}
Utilizing $\sum_{k=K^{\star}}^\infty\alpha_k=\infty $ for any run of the algorithm, we obtain $\liminf_{k \to \infty} (\|\Delta \bx_k\|^2+\|c_k\|)=0$ almost surely. This completes the proof.

\subsection{Proof of Theorem \ref{final convergence}}\label{appendix final convergence}

Let us consider $k\geq K^\star\coloneqq\max\{K^\star_1, K^\star_2, K^{\star}_G, K^\star_{\tau\nu}\}$ and define $\boldsymbol\lambda^{\text{sub}}_k=\boldsymbol\lambda_{k}+\Delta\boldsymbol\lambda_{k}$. By \eqref{def:acc2}, replacing $\barg_k$ with $\nabla f_k$ and $\tilde{G}_k$ with $G_k$, we have $\tB_k{\Delta}\boldsymbol x_k+G_k^T{\Delta}\boldsymbol \lambda_k=-{\nabla} f_k-G_k^T\boldsymbol \lambda_k$. By Assumption \ref{ass:1-1}, we~have
\begin{equation*}
\|{\nabla} f_k+G_k^T\boldsymbol \lambda^{\text{sub}}_k\| = \|\tB_k{\Delta}\boldsymbol x_k\| \leq\kappa_{2,\tB}\|{\Delta}\boldsymbol x_k\|.
\end{equation*}
By Lemma \ref{convergence2}, we know $\sum_{k=K^{\star}}^\infty\alpha_k(\|{\Delta} \boldsymbol{x}_k\|^2+\|c_k\|)<\infty$; thus, $\sum_{k=K^{\star}}^\infty\alpha_k(\|{\nabla} f_k+G_k^T\boldsymbol  \lambda^{\text{sub}}_k\|^2+\|c_k\|)<\infty$ almost surely. Furthermore, if we define $\blambda_k^{\star \text{true}}=-[G_kG_k^T]^{-1}G_k\nabla f_k$, which is indeed well-defined based on Assumption \ref{ass:1-1}, then 
\begin{equation}\label{aaequ:3}
\sum_{k=K^{\star}}^\infty\alpha_k(\|\nabla f_k+ G_k^T\blambda_k^{\star \text{true}}\|^2+\|c_k\|)\leq\sum_{k=K^{\star}}^\infty\alpha_k(\|{\nabla} f_k+G_k^T\boldsymbol  \lambda^{\text{sub}}_k\|^2+\|c_k\|) <\infty.
\end{equation}
Together with $\sum_{k=K^{\star}}^\infty\alpha_k=\infty$, we know almost surely
\begin{equation*}
\liminf_{k\to\infty} (\|\nabla f_k+ G_k^T\blambda_k^{\star \text{true}}\|^2+\|c_k\|)=0.
\end{equation*}
We claim $\lim_{k\to \infty} \|\nabla f_k+ G_k^T\blambda_k^{\star \text{true}}\|+\|c_k\|=0$, and use $\lim_{k\to \infty} \|\nabla f_k+ G_k^T\blambda_k^{\star \text{true}}\|=0$ as an example; the same analysis applies to $\|c_k\|$. Suppose $\limsup_{k\to\infty} [\|\nabla f_k+ G_k^T\blambda_k^{\star \text{true}}\|>0$. For such a run, we can find a sufficiently small number $\epsilon^\star>0$ and two infinite sequences $\{m_i\}$ and $\{n_i\}$ with $K^\star<m_i<n_i$, $\forall i\geq0$, such that
\begin{equation}\label{aequ:17}
\|\nabla f_{m_i}+ G_{m_i}^T\boldsymbol\lambda_{m_i}^{\star \text{true}}\|\geq2\epsilon^\star,\quad\|\nabla f_{n_i}+ G_{n_i}^T\boldsymbol\lambda_{n_i}^{\star \text{true}}\|<\epsilon^\star,\quad \left\|\nabla f_{k}+ G_k^T\blambda_k^{\star \text{true}}\right\|\geq\epsilon^\star\quad\text{for } k\in [m_i,n_i).
\end{equation}
Then, we have for some (potentially random) constant $\Upsilon>0$ that
\begin{align}\label{aequ:18}
\epsilon^\star  &\;\; \stackrel{\mathclap{\eqref{aequ:17}}}{\leq} \; \|(\nabla f_{m_i}+ G_{m_i}^T\boldsymbol\lambda_{m_i}^{\star \text{true}})\|-\|\nabla f_{n_i}+ G_{n_i}^T\boldsymbol\lambda_{n_i}^{\star \text{true}}\| \nonumber\\
& \;\; =\sum_{k=m_i}^{n_i-1}\left(\|\nabla f_{k}+ G_k^T\blambda_k^{\star \text{true}}\|-\|\nabla f_{k+1}+ G_{k+1}^T\boldsymbol\lambda_{k+1}^{\star \text{true}}\|\right) \nonumber\\
&\;\; \leq \sum_{k=m_i}^{n_i-1}\|\nabla f_{k}+ G_k^T\blambda_k^{\star \text{true}}-\nabla f_{k+1}- G_{k+1}^T\boldsymbol\lambda_{k+1}^{\star \text{true}}\| \nonumber\\
&\;\; \leq \sum_{k=m_i}^{n_i-1}(\|\nabla f_{k}-\nabla f_{k+1}\|+\|G_k-G_{k+1}\|\|\blambda_k^{\star \text{true}}\|+\|G_{k+1}\|\|\blambda_k^{\star \text{true}}-\blambda_{k+1}^{\star \text{true}}\|) \nonumber\\
&\;\; \stackrel{\mathclap{\eqref{snequ:7}}}{\leq} \Upsilon\sum_{k=m_i}^{n_i-1}(\frac{\nu_{-1}\alpha_k}{\kappa_{\nabla c}} + \psi \alpha_k^p),
\end{align}
where the existence of $\Upsilon$ in the last inequality is due to the same analysis as in \eqref{aequ:4} (note that $\barg_k$ is bounded for any particular run due to Lemma \ref{lemma:average almost sure} and boundedness of $\nabla f_k$ in Assumption \ref{ass:1-1}). Multiplying both sides of \eqref{aequ:18} by $(\epsilon^\star)^2$, we have
\begin{equation*}
(\epsilon^\star)^3\stackrel{{\eqref{aequ:17}}}{\leq} \Upsilon\sum_{k=m_i}^{n_i-1}\rbr{\frac{\nu_{-1}\alpha_k}{\kappa_{\nabla c}} + \psi \alpha_k^p}\left\|\nabla f_{k}+ G_k^T\blambda_k^{\star \text{true}}\right\|^2,
\end{equation*}
which implies that
\begin{equation*}
\infty\leq\sum_{i=0}^\infty\sum_{k=m_i}^{n_i-1}\rbr{\frac{\nu_{-1}\alpha_k}{\kappa_{\nabla c}} + \psi \alpha_k^p}\left\|\nabla f_{k}+ G_k^T\blambda_k^{\star \text{true}}\right\|^2\leq\sum_{k=K^\star}^\infty(\frac{\nu_{-1}\alpha_k}{\kappa_{\nabla c}} + \psi \alpha_k^p)\left\|\nabla f_{k}+ G_k^T\blambda_k^{\star \text{true}}\right\|^2\stackrel{\eqref{aaequ:3}}{<}\infty.
\end{equation*}
Here, the last inequality also uses the fact that $p\geq 1$. This leads to a contradiction. Thus, we obtain $\lim_{k\to \infty} \|\nabla f_k+ G_k^T\blambda_k^{\star \text{true}}\|+\|c_k\|=0$ almost surely. 
By Lemma \ref{lemma:average almost sure} and the definitions of $\blambda_k^{\star \text{true}}$ and $\blambda_k^\star$, we have $\blambda_k^{\star \text{true}}-\blambda_k^\star\rightarrow \0$ as $k\rightarrow\infty$ almost surely, which implies $\lim_{k \to \infty} \|\nabla f_k+ G_k^T\boldsymbol\lambda_k^\star\|+\|c_k\|=0$ almost surely. This completes the proof.

\section{Proofs of Section \ref{sec:4}}

\subsection{Proof of Lemma \ref{iterate convergence}}\label{appendix iterate convergence}

Recall from the proof of Theorem \ref{final convergence} that $\blambda^{\text{sub}}_k\coloneqq \blambda_{k}+\Delta\blambda_{k}$, where we use $(\Delta \bx_k,\Delta\blambda_k)$ to denote the solution of \eqref{def:acc2} but with $\barg_k$ replaced by $\nabla f_k$ and $\tilde{ G}_k$ replaced by $ G_k$. Let us define $\ttlambda^{\text{sub}}_k = \blambda_k + \tDelta\blambda_k$. By the proof of Lemma \ref{convergence2}, we know for any run of the algorithm, there exists a (potentially~random) $K^\star<\infty$ such that $(\Delta\bx_{k}, \Delta\blambda_{k})$ is well-defined (note that Lemma \ref{lemma:average almost sure} is applicable since \eqref{cond:4.4} implies \eqref{cond:3.6}). By \eqref{def:acc2}, we note for $k\geq K^\star$ that
\begin{equation*}
\begin{pmatrix}
\tB_k & G_k^T\\
G_k & \0
\end{pmatrix}\begin{pmatrix}
\Delta\bx_k\\
\blambda_k^{\text{sub}}
\end{pmatrix} = -\begin{pmatrix}
\nabla f_k\\
c_k
\end{pmatrix}\quad\quad \text{ and }\quad\quad \begin{pmatrix}
\tB_k & (G^\star)^T\\
G^\star & \0
\end{pmatrix}\begin{pmatrix}
\0\\
\tlambda
\end{pmatrix} = -\begin{pmatrix}
\nabla f^\star\\
\0
\end{pmatrix}.
\end{equation*}
Therefore, we have
\begin{align}\label{aaequ:5}
\nbr{\begin{pmatrix}
\Delta\bx_k\\
\blambda_k^\text{sub} - \tlambda
\end{pmatrix}} & = \nbr{\begin{pmatrix}
\tB_k & G_k^T\\
G_k & \0
\end{pmatrix}^{-1}\begin{pmatrix}
\nabla f_k\\
c_k
\end{pmatrix} - \begin{pmatrix}
\tB_k & (G^\star)^T\\
G^\star & \0
\end{pmatrix}^{-1}\begin{pmatrix}
\nabla f^\star\\
\0
\end{pmatrix}} \nonumber\\
& \stackrel{\mathclap{\eqref{aaequ:4}}}{\leq}\; \Upsilon_{K}\nbr{\begin{pmatrix}
\nabla f_k - \nabla f^\star\\
c_k
\end{pmatrix}} + \Upsilon_{K}^2\|\nabla f^\star\|\nbr{\begin{pmatrix}
\0 & G_k^T - (G^\star)^T\\
G_k - G^\star & \0
\end{pmatrix}} \nonumber\\
& \leq\; \Upsilon_{K} (\kappa_{\nabla f} + \kappa_c)\|\bx_k - \tx\| + 2\Upsilon_{K}^2\Upsilon_{\nabla f}\kappa_{\nabla c}\|\bx_k - \tx\|,
\end{align}
where in the last inequality, $\kappa_{\nabla f}, \kappa_{\nabla c}$ denote the Lipschitz constants of $\nabla f$ and $G = \nabla c$; $\Upsilon_{\nabla f}$ is the upper bound of $\nabla f$ over $\mX$ (cf. Appendix \ref{appendix lemma:average almost sure}); and we abuse the notation $\kappa_c$ from Assumption \ref{ass:1-1} to denote the Lipschitz constant of $c$ over $\mX$. Note that $\kappa_c$ always exists since $c$ has bounded Jacobian~as assumed in Assumption \ref{ass:1-1}.
Thus, we have from \eqref{aaequ:5} that $\blambda_k^{\text{sub}}\rightarrow\tlambda$ almost surely. Then, we characterize $\blambda_k^{\text{sub}} - \ttlambda_k^{\text{sub}}$. We have from \eqref{def:acc2} that
\begin{equation*}
\begin{pmatrix}
\tB_k & \tG_k^T\\
\tG_k & \0
\end{pmatrix}\begin{pmatrix}
\tDelta\bx_k\\
\tDelta\blambda_k
\end{pmatrix} = -\begin{pmatrix}
\barg_k+\tG_k^T\blambda_k\\
c_k
\end{pmatrix}\quad\;\text{ and }\quad\; \begin{pmatrix}
\tB_k & G_k^T\\
G_k & \0
\end{pmatrix}\begin{pmatrix}
\Delta\bx_k\\
\Delta\blambda_k
\end{pmatrix} = -\begin{pmatrix}
\nabla f_k + G_k^T\blambda_k\\
c_k
\end{pmatrix}.
\end{equation*}
Following the same derivations as in \eqref{aaequ:5} and applying Lemma \ref{lemma:average almost sure}, we immediately obtain $\|(\tDelta\bx_k - \Delta\bx_k, \tDelta\blambda_k-\Delta\blambda_k)\|\rightarrow 0$ as $k\rightarrow\infty$ almost surely; thus $\|\blambda_{k}^\text{sub} - \ttlambda_k^\text{sub}\|\rightarrow 0$. Combining the above convergence results, we know $\ttlambda_k^{\text{sub}}\rightarrow\tlambda$ as $k\rightarrow\infty$ almost surely. Finally, for any run of the algorithm and any $\epsilon>0$, we abuse the notation $K^\star$ to let $\baralpha_k \leq 1$ and $\|\ttlambda_k^{\text{sub}} - \tlambda\|_2\leq \epsilon$ for $k\geq K^\star$. Then, we know that, for any $k\geq K^\star$,
\begin{align*}
\|\blambda_{k+1}-\tlambda\| & = \|\blambda_k-\tlambda + \baralpha_k\tDelta\blambda_k\|  \leq  (1-\baralpha_k)\|\blambda_k-\tlambda\| + \baralpha_k\|\ttlambda_k^{\text{sub}} - \tlambda\|\\
& \leq \prod_{j=K^\star}^{k}(1-\baralpha_j)\|\blambda_{K^\star} - \tlambda\| + \sum_{i=\tK}^{k}\prod_{j=i+1}^{k}(1-\baralpha_j)\baralpha_i\|\ttlambda_i^{\text{sub}} - \tlambda\|\\
& \leq \prod_{j=K^\star}^{k}(1-\baralpha_j)\|\blambda_{K^\star} - \tlambda\| + \epsilon \sum_{i=\tK}^{k}\prod_{j=i+1}^{k}(1-\baralpha_j)\baralpha_i\\
& = \prod_{j=K^\star}^{k}(1-\baralpha_j)\|\blambda_{K^\star}- \tlambda\|  + \epsilon\{1 - \prod_{j=K^\star}^{k}(1-\baralpha_j)\}\\
& \leq \|\blambda_{K^\star}- \tlambda\| \exp\rbr{-\sum_{j=K^\star}^{k}\baralpha_k} + \epsilon,
\end{align*}
where the third inequality is due to the second inequality and induction. Noting that $\sum_{j=K^\star}^{\infty}\baralpha_k = \infty$ as $p_1\leq 1$, we know there exists $K^{\star\star}\geq K^\star$ such that $\|\blambda_{K^\star}- \tlambda\| \exp(-\sum_{j=K^\star}^{k}\baralpha_k)\leq \epsilon$ for all $k\geq K^{\star\star}$. This implies that $\|\blambda_{k+1} - \tlambda\|\leq 2\epsilon$ for all $k\geq K^{\star\star}$ and we complete the proof.

\subsection{Proof of Lemma \ref{lem:Hessian Convergence}}\label{appendix Hessian Convergence}

We note that by \eqref{snequ:5},
\begin{equation*}
\barB_k = \sum_{i=0}^{k}\prod_{j=i+1}^{k}(1-\beta_j)\beta_i\hnabla_{\bx}^2\mL_i + \prod_{i=0}^{k}(1-\beta_i)\barB_{-1}.
\end{equation*}
Without loss of generality, we suppose $\beta_k\leq 1$ for all $k\geq 0$ (otherwise, we just consider $k$ large enough). We obtain from the above display that
\begin{align}\label{cequ:1}
& \|\barB_k - \nabla_{\bx}^2\mL^\star\|  = \nbr{\sum_{i=0}^{k}\prod_{j=i+1}^{k}(1-\beta_j)\beta_i(\hnabla_{\bx}^2\mL_i - \nabla_{\bx}^2\mL^\star) + \prod_{i=0}^{k}(1-\beta_i)(\barB_{-1} - \nabla_{\bx}^2\mL^\star)}\nonumber\\
& \leq \nbr{\sum_{i=0}^{k}\prod_{j=i+1}^{k}(1-\beta_j)\beta_i(\hnabla_{\bx}^2\mL_i - \nabla_{\bx}^2\mL_i)} + \nbr{\sum_{i=0}^{k}\prod_{j=i+1}^{k}(1-\beta_j)\beta_i(\nabla_{\bx}^2\mL_i - \nabla_{\bx}^2\mL^\star)} \nonumber\\
&\quad + \prod_{i=0}^{k}(1-\beta_i)\|\barB_{-1} - \nabla_{\bx}^2\mL^\star\| \nonumber\\
& \leq \nbr{\sum_{i=0}^{k}\prod_{j=i+1}^{k}(1-\beta_j)\beta_i(\hnabla^2F(\bx_i;\xi_i) - \mE[\hnabla^2F(\bx_i;\xi_i)\mid \mF_{i-1}])} \nonumber\\
& \quad + \sum_{i=0}^{k}\prod_{j=i+1}^{k}(1-\beta_j)\beta_i\cdot\|\mE[\hnabla^2F(\bx_i;\xi_i)\mid \mF_{i-1}] -\nabla^2f_i\| \nonumber\\
& \quad + \sum_{l=1}^{m}\sum_{i=0}^{k}\prod_{j=i+1}^{k}(1-\beta_j)\beta_i\cdot |\blambda_i^l-(\tlambda)^l|\cdot\|\hnabla^2c_i^l - \nabla c_i^l\| \nonumber\\
& \quad + \sum_{l=1}^{m}|(\tlambda)^l|\cdot \nbr{\sum_{i=0}^{k}\prod_{j=i+1}^{k}(1-\beta_j)\beta_i(\hnabla^2c_i^l - \mE[\hnabla^2c_i^l\mid \mF_{i-1}])} \nonumber\\
& \quad + \sum_{l=1}^{m}|(\tlambda)^l|\cdot\sum_{i=0}^{k}\prod_{j=i+1}^{k}(1-\beta_j)\beta_i\cdot \|\mE[\hnabla^2c_i^l\mid \mF_{i-1}]-\nabla^2c_i^l\| \nonumber\\
& \quad + \sum_{i=0}^{k}\prod_{j=i+1}^{k}(1-\beta_j)\beta_i \|\nabla_{\bx}^2\mL_i - \nabla_{\bx}^2\mL^\star\| + \prod_{i=0}^{k}(1-\beta_i)\|\barB_{-1} - \nabla_{\bx}^2\mL^\star\| \nonumber\\
& \eqqcolon \I_1^k + \I_2^k + \I_3^k + \I_4^k + \I_5^k + \I_6^k + \I_7^k.
\end{align}
We analyze each term separately. We first present a generic result. For any sequence $e_i\rightarrow 0$ as $i\rightarrow\infty$, we have $\sum_{i=0}^{k}\prod_{j=i+1}^{k}(1-\beta_j)\beta_ie_i\rightarrow0$ as $k\rightarrow\infty$ as long as $\sum_{i=0}^{\infty}\beta_i = \infty$ (as implied by \eqref{cond:4.5}). In fact, for any $\epsilon>0$, there exists $i'>0$ such that $|e_i|\leq \epsilon$ for any $i\geq i'$. Thus, for $k\geq i'$, we have
\begin{align*}
\abr{\sum_{i=0}^{k}\prod_{j=i+1}^{k}(1-\beta_j)\beta_ie_i} & \leq \sum_{i=0}^{i'-1}\prod_{j=i+1}^{k}(1-\beta_j)\beta_i|e_i| + \sum_{i=i'}^{k}\prod_{j=i+1}^{k}(1-\beta_j)\beta_i|e_i|\\
& \leq \prod_{j=i'}^{k}(1-\beta_j)\cdot\sum_{i=0}^{i'-1}\prod_{j=i+1}^{i'-1}(1-\beta_j)\beta_i|e_i| + \epsilon \sum_{i=i'}^{k}\prod_{j=i+1}^{k}(1-\beta_j)\beta_i\\
& = \prod_{j=i'}^{k}(1-\beta_j)\cdot\sum_{i=0}^{i'-1}\prod_{j=i+1}^{i'-1}(1-\beta_j)\beta_i|e_i| + \epsilon\cbr{1-\prod_{j=i'}^{k}(1-\beta_j)}\\
& \leq \exp\rbr{-\sum_{j=i'}^{k}\beta_j}\cdot\sum_{i=0}^{i'-1}\prod_{j=i+1}^{i'-1}(1-\beta_j)\beta_i|e_i| + \epsilon.
\end{align*}
Since $\sum_{i=0}^{\infty}\beta_i=\infty$, we can find $k'\geq i'$ large enough such that $\exp(-\sum_{j=i'}^{k}\beta_j)\cdot\sum_{i=0}^{i'-1}\prod_{j=i+1}^{i'-1}(1-\beta_j)\beta_i|e_i|\leq \epsilon$ for any $k\geq k'$. Then, we obtain for $k\geq k'$ that
\begin{equation*}
\abr{\sum_{i=0}^{k}\prod_{j=i+1}^{k}(1-\beta_j)\beta_ie_i}\leq 2\epsilon.
\end{equation*}
This shows $\sum_{i=0}^{k}\prod_{j=i+1}^{k}(1-\beta_j)\beta_ie_i\rightarrow0$ as $k\rightarrow\infty$.~With this argument, we study each term~as~follows.
\vskip4pt
\noindent$\bullet$ For $\I_2^k, \I_5^k, \I_6^k$, we know from Lemmas \ref{lemma:finite difference for gradient} and \ref{iterate convergence} that $\|\mE[\hnabla^2F(\bx_i;\xi_i)\mid \mF_{i-1}] -\nabla^2f_i\|\rightarrow 0$, $\|\mE[\hnabla^2c_i^l\mid \mF_{i-1}]-\nabla^2c_i^l\|\rightarrow 0$, $\forall 1\leq l\leq m$, and $\nabla_{\bx}^2\mL_i - \nabla_{\bx}^2\mL^\star\rightarrow 0$ as $i\rightarrow\infty$ almost surely (where we use the conditions $p_3 > 0$ and $2p_4 - p_3 > 0$ from \eqref{cond:4.5}). Thus, $\I_2^k, \I_5^k, \I_6^k\rightarrow 0$ as $k\rightarrow\infty$ almost surely.

\vskip4pt
\noindent$\bullet$ For $\I_7^k$, we have $\prod_{i=0}^{k}(1-\beta_i)\leq \exp(-\sum_{i=0}^{k}\beta_i)\rightarrow 0$ as $k\rightarrow\infty$. Thus, $\I_7^k\rightarrow0$ as $k\rightarrow \infty$.

\vskip4pt
\noindent$\bullet$ For $\I_3^k$, we provide a deterministic upper bound on $\hnabla^2 c_i^l$ for any $1\leq l\leq m$. In particular, we~note~from the definition \eqref{snequ:4} that 
\begin{align}\label{cequ:5}
\hnabla^2 c^l_i & = \frac{\{c^l(\bx_i+b_i\bDelta_i+\tilde{b}_i\tbDelta_i) - c^l(\bx_i+b_i\bDelta_i)\} - \{c^l(\bx_i-b_i\bDelta_i+\tilde{b}_i\tbDelta_i) - c^l(\bx_i-b_i\bDelta_i)\}}{2b_i\tilde{b}_i} \nonumber\\
&\quad\times \frac{\bDelta_i^{-1}\tbDelta_i^{-T}+\tbDelta_i^{-1}\bDelta_i^{-T}}{2} \nonumber\\
& = \frac{1}{2b_i\tb_i}\int_{0}^{\tb_i}\int_{-b_i}^{b_i}\bDelta_i^T\nabla^2c^l(\bx_i+s_1\bDelta_i+s_2\tbDelta_i)\tbDelta_i\;ds_1ds_2\times \frac{\bDelta_i^{-1}\tbDelta_i^{-T}+\tbDelta_i^{-1}\bDelta_i^{-T}}{2}.
\end{align}
By the boundedness of $\nabla^2c^l$ over $\mX$ and Assumption \ref{ass:Delta}, we know there exists a deterministic constant $\Upsilon_{\hnabla^2 c}>0$ such that $\|\hnabla^2c^l_i\|\leq \Upsilon_{\hnabla^2 c}$ for any $i\geq 0$ and $1\leq l\leq m$. With this boundedness property~and the fact that $\blambda_i^l - (\tlambda)^l\rightarrow 0$ as $i\rightarrow\infty$, we know $\I_3^k\rightarrow0$ as $k\rightarrow\infty$ almost surely. 
\vskip4pt
\noindent$\bullet$ For $\I_4^k$, we apply Lemma \ref{technical lemma:2} and have
\begin{equation}\label{aequ:C4}	\sum_{i=0}^k\prod_{j=i+1}^{k}(1-\beta_j)^2\beta_i^2\mE[\|\hnabla^2c_i^l - \mE[\hnabla^2c_i^l\mid \mF_{i-1}]\|^2\mid \mF_{i-1}] = O(\beta_k)\rightarrow 0\quad \text{ as }\;\; k\rightarrow\infty.
\end{equation}
Thus, the martingale convergence theorem \cite[Theorem 2.18]{Hall2014Martingale} implies that $\I_4^k\rightarrow0$ as $k\rightarrow\infty$ almost surely.
\vskip4pt
\noindent$\bullet$ For $\I_1^k$, based on Assumption \ref{ass:4.2}, let us fix any $0<\delta'<\delta$ and let $K'>0$ be a deterministic index such that for any $\bx\in\{\bx: \|\bx-\tx\|\leq \delta'\}$ and for all $k\geq K'$, we have $\bx + s_1\bDelta + s_2\tbDelta\in\{\bx: \|\bx-\tx\|\leq \delta\}$ for any $s_1\in[-b_k, b_k]$, $s_2\in[0, \tilde{b}_k]$, and $\bDelta, \tbDelta\sim \P_{\bDelta}$. Note that such a $K'$ must exist due to Assumption~\ref{ass:Delta} and the fact that $b_k, \tb_k\rightarrow 0$. Then, we have
\begin{align*}
\I_1^k & = \nbr{\sum_{i=0}^{k}\prod_{j=i+1}^{k}(1-\beta_j)\beta_i(\hnabla^2F(\bx_i;\xi_i) - \mE[\hnabla^2F(\bx_i;\xi_i)\mid \mF_{i-1}])}\\
& \leq \sum_{i=0}^{k}\prod_{j=i+1}^{k}(1-\beta_j)\beta_i\|\hnabla^2F(\bx_i;\xi_i) - \mE[\hnabla^2F(\bx_i;\xi_i)\mid \mF_{i-1}]\|\cdot\1_{\|\bx_i - \tx\|>\delta'} \\
& \quad + \prod_{j=K'}^{k}(1-\beta_j)\sum_{i=0}^{K'-1}\prod_{j=i+1}^{K'-1}(1-\beta_j)\beta_i\|\hnabla^2F(\bx_i;\xi_i) - \mE[\hnabla^2F(\bx_i;\xi_i)\mid \mF_{i-1}]\|\cdot\1_{\|\bx_i - \tx\|\leq\delta'} \\
&\quad + \nbr{\sum_{i=K'}^{k}\prod_{j=i+1}^{k}(1-\beta_j)\beta_i(\hnabla^2F(\bx_i;\xi_i) - \mE[\hnabla^2F(\bx_i;\xi_i)\mid \mF_{i-1}])\cdot\1_{\|\bx_i - \tx\|\leq\delta'} }.
\end{align*}
The first term on the right-hand side converges to zero almost surely since $\bx_i-\tx\rightarrow 0$ as $i\rightarrow\infty$.~The second term converges to zero almost surely since $\prod_{j=K'}^{k}(1-\beta_j)\leq \exp(-\sum_{j=K'}^{k}\beta_j)\rightarrow 0$ as $k\rightarrow\infty$. The third term also converges to zero almost surely by following the same derivation as in \eqref{cequ:5} and applying Assumption \ref{ass:4.2} to show that $\mE[\|\hnabla^2 F(\bx_i;\xi_i)\|^2\mid \mF_{i-1}]$ is bounded for $\bx_i\in\mX\cap\{\bx:\|\bx-\tx\|\leq \delta'\}$, thereby obtaining \eqref{aequ:C4}, and then applying the martingale convergence theorem \cite[Theorem 2.18]{Hall2014Martingale}. Thus, we conclude that $\I_1^k\rightarrow0$ as $k\rightarrow\infty$ almost surely.

Combining the above arguments of $\I_1^k,\I_2^k,\I_3^k,\I_4^k,\I_5^k,\I_6^k,\I_7^k$ and plugging into \eqref{cequ:1}, we have shown that $\barB_k\rightarrow\nabla_{\bx}^2\mL^\star$ as $k\rightarrow\infty$ almost surely. 
For the second part of the statement, for each run of the algorithm with $k$ large enough, we know $\|\barB_k\|\leq \kappa_{1,\tB}$. In addition, we let $\tilde{Z}_k, Z^\star\in \mR^{d\times(d-m)}$ be the matrices whose columns are orthonormal and span the spaces of $\text{ker}(\tilde{G}_k)$, $\text{ker}(G^\star)$, respectively. Then, by Davis-Kahan $\sin(\theta)$ theorem \citep{Davis1970Rotation, Pensky2024Davis} and Lemma \ref{lemma:average almost sure}, we know
\begin{equation*}
\inf_{Q\in \mathcal{Q}_{d-m}}\|\tilde{Z}_k - Z^\star Q\| \leq 2\sqrt{2}\|\tilde{Z}_k\tilde{Z}_k^T - Z^\star(Z^\star)^T\|
= \|\tG_k^T(\tG_k\tG_k^T)^{-1}\tG_k - (G^\star)^T(G^\star(G^\star)^T)^{-1}G^\star\|\rightarrow 0,
\end{equation*}
where $\mathcal{Q}_{d-m}$ denotes the set of $(d-m)\times(d-m)$ orthonormal matrices. Thus, we obtain
\begin{equation*}
\lambda_{\min}(\tilde{Z}_k^T\barB_k\tilde{Z}_k) =  \lambda_{\min}(Q\tilde{Z}_k^T\barB_k\tilde{Z}_kQ^T)\rightarrow\lambda_{\min}((Z^\star)^T\nabla_{\bx}^2\mL^\star Z^\star),
\end{equation*}
which implies $\lambda_{\min}(\tilde{Z}_k^T\barB_k\tilde{Z}_k)\geq \kappa_{1,\tB}$ for large enough $k$. This completes the proof.

\subsection{Proof of Lemma \ref{lem:local:rate}}\label{pf:lem:local:rate}

To simplify the notation, we will just fix $\epsilon\in(0, 1 - 0.5/(\zeta\iota_{1})\1_{p_1=1})$ and denote $\tau_{k_0} = \tau_{k_0}(\epsilon)$. We use $\Upsilon_1, \Upsilon_2,\ldots$ to denote generic deterministic constants and may also use $O(\cdot)$ to ignore them. However, when they depend on $k_0$, we denote by $\Upsilon_i(k_0)$ for clarification and do not write $O(\cdot)$. In what follows,~we suppose $k_0$ is large enough (threshold index is deterministic) such that
\begin{equation}\label{equ:k_0:1}
\frac{\nu_{-1}}{\kappa_{\nabla c}}\alpha_k + \psi \alpha_k^p \leq 0.5\epsilon^5 \quad\;\; \forall k\geq k_0.
\end{equation}
To prove Lemma \ref{lem:local:rate}, we need two lemmas, which are proved in Appendices \ref{pf:lem:1} and \ref{pf:lem:2}.

\begin{lemma}\label{lem:1}
Under the conditions of Lemma \ref{lem:local:rate} and suppose \eqref{equ:k_0:1}, there exist constants $\Upsilon_1, \Upsilon_2(k_0)>0$ such that for any $k\geq k_0$,
\begin{align*}
&\hskip0.6cm  \mE\sbr{\|\bz_{k+1}\|^2 \1_{\tau_{k_0}>k+1}} \leq \mE\sbr{\cbr{1 - 2(1-\epsilon)\baralpha_k} \|\bz_k\|^2\1_{\tau_{k_0}>k+1}}  + \Upsilon_1\alpha_k\mE\sbr{\|\bnabla\mL_k - \nabla\mL_k\|^2\1_{\tau_{k_0}>k}},\\
& \mE\sbr{\|\bnabla\mL_{k+1} - \nabla\mL_{k+1}\|^2\1_{\tau_{k_0}>k+1}}  \leq \Upsilon_1(\beta_k + b_k^4)+\Upsilon_2(k_0)\exp\rbr{-\frac{2\iota_{2}k^{1-p_2}}{1-p_2}} \\
& \hskip4.8cm + \Upsilon_1\rbr{\sum_{i=k_0}^{k}\prod_{j=i+1}^{k}(1-\beta_j)\alpha_i\cbr{\mE[(\|\bnabla\mL_i - \nabla\mL_i\|^2 + \|\bz_i\|^2)\1_{\tau_{k_0}>i}]}^{1/2}}^2.
\end{align*}
\end{lemma}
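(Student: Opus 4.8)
The plan is to prove the two inequalities separately, both localized on the good event $\{\tau_{k_0}>k+1\}$, and throughout to exploit the nesting $\1_{\tau_{k_0}>k+1}\leq\1_{\tau_{k_0}>i}$ for every $i\leq k+1$, so that all defining conditions of the stopping time in \eqref{equ:def:tau} may be invoked at each earlier index. For the iterate recursion I would start from the update $\bz_{k+1}=\bz_k+\baralpha_k\tDelta\bz_k$ and the Newton system $\tW_k\tDelta\bz_k=-\bnabla\mL_k$, which give $\bz_{k+1}=\bz_k-\baralpha_k\tW_k^{-1}\bnabla\mL_k$. Splitting $\bnabla\mL_k=\nabla\mL_k+(\bnabla\mL_k-\nabla\mL_k)$ and using the near-stationarity condition $\|\nabla\mL_k-\tW_k\bz_k\|\leq0.25\epsilon^2\|\bz_k\|$ (active on the event) to write $\tW_k^{-1}\nabla\mL_k=\bz_k+\tW_k^{-1}(\nabla\mL_k-\tW_k\bz_k)$, I obtain
\[
\bz_{k+1}=(1-\baralpha_k)\bz_k-\baralpha_k\tW_k^{-1}(\nabla\mL_k-\tW_k\bz_k)-\baralpha_k\tW_k^{-1}(\bnabla\mL_k-\nabla\mL_k).
\]
On $\{\tau_{k_0}>k+1\}$ we have $\|\tW_k^{-1}\|\leq1/\epsilon$, so the middle term is at most $0.25\epsilon\baralpha_k\|\bz_k\|$ in norm. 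Squaring and applying Young's inequality to the cross terms—using that $\baralpha_k\leq1$ is small for $k\geq k_0$ by \eqref{equ:k_0:1}—the $\|\bz_k\|^2$ contributions collapse into the contraction factor $1-2(1-\epsilon)\baralpha_k$ (the gap between the coefficients $1-0.25\epsilon$ and $1-\epsilon$ absorbs the $O(\baralpha_k^2)$ remainder), while the pieces carrying $\bnabla\mL_k-\nabla\mL_k$ are gathered into a term $\Upsilon_1\baralpha_k\|\bnabla\mL_k-\nabla\mL_k\|^2$. Keeping $\1_{\tau_{k_0}>k+1}$ on the $\|\bz_k\|^2$ term, bounding $\1_{\tau_{k_0}>k+1}\leq\1_{\tau_{k_0}>k}$ and $\baralpha_k=O(\alpha_k)$ on the error term, and taking expectations yields the first inequality.

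For the gradient-error recursion I would first note that on $\{\tau_{k_0}>k\}$ the regularizer vanishes, $\tG_k=\barG_k$, so $\bnabla\mL_k-\nabla\mL_k=(\barg_k-\nabla f_k+(\barG_k-G_k)^T\blambda_k,\b0)$; since $\|\blambda_k\|\leq1/\epsilon$ on the event, it suffices to control $\|\barg_k-\nabla f_k\|$ and $\|\barG_k-G_k\|$, each of which obeys the moving-average recursion \eqref{snequ:3}. Unrolling \eqref{snequ:3} back to index $k_0$, exactly as in the proof of Lemma \ref{final fundamental lemma}, decomposes $\barg_{k+1}-\nabla f_{k+1}$ into an initial term $\prod_{j=k_0+1}^{k+1}(1-\beta_j)(\barg_{k_0}-\nabla f_{k_0})$, a martingale-difference sum $A$, a finite-difference-bias sum $B$, and a gradient-increment sum $C=\sum_i\prod_j(1-\beta_j)(\nabla f_{i-1}-\nabla f_i)$. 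Using $\nbr{A+B+C+\text{init}}^2\1_{\tau_{k_0}>k+1}\leq4\rbr{\nbr{\text{init}}^2+\nbr{A}^2+\nbr{B}^2+\nbr{C}^2\1_{\tau_{k_0}>k+1}}$—dropping the indicator on the unconditionally bounded pieces—I would treat the four contributions in turn.

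The initial term gives $\prod_{j=k_0+1}^{k+1}(1-\beta_j)^2\nbr{\barg_{k_0}-\nabla f_{k_0}}^2\leq\Upsilon_2(k_0)\exp\rbr{-\frac{2\iota_{2}k^{1-p_2}}{1-p_2}}$ via $\prod(1-\beta_j)\leq\exp(-\sum\beta_j)$ and an integral lower bound on $\sum(j+1)^{-p_2}$. The martingale sum gives $\mE[\nbr{A}^2]=O(\beta_k)$ by orthogonality of increments and the uniform conditional bound \eqref{aequ:B5} (again as in Lemma \ref{final fundamental lemma}, via Lemma \ref{technical lemma:2}), while Lemma \ref{lemma:finite difference for gradient} forces $\nbr{B}=O(b_k^2)$ pathwise, so $\mE[\nbr{B}^2]=O(b_k^4)$; both feed into $\Upsilon_1(\beta_k+b_k^4)$. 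The decisive step is $C$: on $\{\tau_{k_0}>i\}$ I can write $\nbr{\nabla f_{i-1}-\nabla f_i}\leq\kappa_{\nabla f}\baralpha_{i-1}\nbr{\tDelta\bx_{i-1}}$ and, from $\tDelta\bz_{i-1}=-\tW_{i-1}^{-1}\bnabla\mL_{i-1}$ with $\|\tW_{i-1}^{-1}\|\leq1/\epsilon$ and $\|\nabla\mL_{i-1}\|\leq\|\bz_{i-1}\|/\epsilon$, bound $\nbr{\tDelta\bx_{i-1}}\leq\epsilon^{-1}\rbr{\epsilon^{-1}\nbr{\bz_{i-1}}+\nbr{\bnabla\mL_{i-1}-\nabla\mL_{i-1}}}$. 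This replaces the merely bounded step used in the global analysis of Lemma \ref{final fundamental lemma} by a quantity that vanishes with $\nbr{\bz_{i-1}}$ and the gradient error—precisely what eliminates the $\alpha_k^2/\beta_k^2$ term present there. Applying Minkowski's inequality to $\cbr{\mE[\nbr{C}^2\1_{\tau_{k_0}>k+1}]}^{1/2}$, inserting $\1_{\tau_{k_0}>k+1}\leq\1_{\tau_{k_0}>i}$, and reindexing produces the stated sum, squared; the same analysis for $\barG_{k+1}-G_{k+1}$ contributes identically shaped terms that merge into $\Upsilon_1$.

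I expect the main obstacle to be the indicator bookkeeping that keeps the two recursions genuinely coupled: the estimate for $C$ must express the step at index $i$ through both $\nbr{\bz_i}$ and $\nbr{\bnabla\mL_i-\nabla\mL_i}$ while using only conditions valid on $\{\tau_{k_0}>i\}$, and this is what forces the feedback structure (the iterate bound is driven by the gradient error, and the gradient-error bound is driven in turn by the iterate error). The delicate point is to keep the martingale sum $A$ unconditional—so its variance estimate is not corrupted by the $\mF_i$-measurable indicator—while retaining the indicator exactly on the increment sum $C$; everything else reduces to careful but routine applications of Young's and Minkowski's inequalities together with Lemmas \ref{lemma:finite difference for gradient}, \ref{technical lemma:2}, and the conditional second-moment bound \eqref{aequ:B5}.
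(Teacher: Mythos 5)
Your proposal is correct and follows essentially the same route as the paper's proof: the same splitting of $\bnabla\mL_k$ into $\nabla\mL_k$ plus error with Young's inequality and the stopping-time conditions for the contraction $1-2(1-\epsilon)\baralpha_k$, and the same unrolled moving-average decomposition with the key step of bounding $\|\tDelta\bx_{i-1}\|$ through $\tW_{i-1}^{-1}$ and $\|\nabla\mL_{i-1}\|\leq\|\bz_{i-1}\|/\epsilon$ on the good event, followed by Minkowski. The only cosmetic difference is that you unroll the recursion to index $k_0$ (so the $\Upsilon_2(k_0)\exp(\cdot)$ term comes from the initial condition $\barg_{k_0}-\nabla f_{k_0}$), whereas the paper unrolls to index $0$ and extracts that term from the head of the gradient-increment sum; both yield the identical bound.
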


\begin{lemma}\label{lem:2}
Under the conditions of Lemma \ref{lem:local:rate}, for any $q\geq 0$, there exists a deterministic integer $\bar{k}_0>0$ such that for any $k_0\geq \bar{k}_0$, there exists a constant $\Upsilon_3(k_0)$ such that
\begin{equation*}
\max\cbr{\mE[\|\bz_k\|^2 \1_{\tau_{k_0}>k}],\; \mE[\|\bnabla\mL_k - \nabla\mL_k\|^2\1_{\tau_{k_0}>k}]} \leq \Upsilon_3(k_0)\rbr{\beta_k + b_k^4 + \rbr{\alpha_k/\beta_k}^{2q}}\;\; \text{ for any } k\geq k_0.
\end{equation*}		

\end{lemma}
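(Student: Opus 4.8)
The plan is to obtain Lemma~\ref{lem:2} from the two coupled one-step recursions furnished by Lemma~\ref{lem:1} by an induction on the integer $q$. Abbreviate $a_k \coloneqq \mE[\|\bz_k\|^2\1_{\tau_{k_0}>k}]$ and $e_k \coloneqq \mE[\|\bnabla\mL_k - \nabla\mL_k\|^2\1_{\tau_{k_0}>k}]$, and write $g_k^{(q)} \coloneqq \beta_k + b_k^4 + (\alpha_k/\beta_k)^{2q}$ for the target rate, so the claim is $\max\{a_k,e_k\}\leq \Upsilon(k_0)\,g_k^{(q)}$ for all $k\geq k_0$. The base case $q=0$ is immediate: since $(\alpha_k/\beta_k)^0=1$ we only need $a_k,e_k=O(1)$, and on the event $\{\tau_{k_0}>k\}$ the stopping time \eqref{equ:def:tau} forces $\|\bz_k\|\leq\epsilon^2$ and $\|(\bx_k,\blambda_k)\|\leq 1/\epsilon$, whence $a_k\leq\epsilon^4$ while $e_k$ is controlled by the uniform moment bounds on $\barg_k,\barG_k$ established in the proof of Lemma~\ref{lemma:average almost sure} together with the boundedness of $\blambda_k,\nabla f_k,G_k$ on the same event.

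The heart of the argument is the inductive step from level $q$ to level $q+1$, and the key structural point is that $e_k$ must be sharpened \emph{before} $a_k$. First I feed the hypothesis into the second recursion of Lemma~\ref{lem:1}: bounding $\{e_i+a_i\}^{1/2}=O((g_i^{(q)})^{1/2})=O(\beta_i^{1/2}+b_i^2+(\alpha_i/\beta_i)^q)$ splits the weighted sum into three pieces of the form $\sum_{i=k_0}^k\prod_{j=i+1}^k(1-\beta_j)\alpha_i h_i$ with $h_i\in\{\beta_i^{1/2},b_i^2,(\alpha_i/\beta_i)^q\}$. Writing $\alpha_i h_i=\beta_i(\alpha_i h_i/\beta_i)$ and using that the weights $\prod_{j=i+1}^k(1-\beta_j)\beta_i$ concentrate near $i=k$ (the summation asymptotics of Lemma~\ref{technical lemma:2}, valid because $p_2<1$), each piece is $\Theta(\alpha_k h_k/\beta_k)$, i.e.\ $\alpha_k\beta_k^{-1/2}$, $\alpha_k b_k^2/\beta_k$, and $(\alpha_k/\beta_k)^{q+1}$ respectively. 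Squaring and using $(\alpha_k/\beta_k)^2\beta_k\leq\beta_k$ and $(\alpha_k/\beta_k)^2b_k^4\leq b_k^4$ for large $k$ collapses the bracket to $O(g_k^{(q+1)})$; the term $\Upsilon_2(k_0)\exp(-2\iota_2 k^{1-p_2}/(1-p_2))$ decays super-polynomially and is absorbed into $\beta_k$. This yields $e_k=O(g_k^{(q+1)})$. I then insert this improved bound into the first recursion: on $\{\tau_{k_0}>k\}$ the stopping time pins $\nu_k/(\tau_k\kappa_{\nabla f}+\kappa_{\nabla c})=\zeta$, so by \eqref{snequ:7} $\baralpha_k\geq\zeta\alpha_k$; since the contraction factor is positive for large $k$ and $\1_{\tau_{k_0}>k+1}\leq\1_{\tau_{k_0}>k}$, the recursion reduces to $a_{k+1}\leq(1-c\alpha_k)a_k+\Upsilon_1\alpha_k e_k$ with $c=2(1-\epsilon)\zeta$. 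Unrolling this linear contraction, the forcing sum $\sum_i\prod_j(1-c\alpha_j)\alpha_i e_i$ together with the homogeneous term $\prod_{j=k_0}^k(1-c\alpha_j)a_{k_0}$ is $O(g_k^{(q+1)})$, upgrading $a_k$ to level $q+1$ and closing the induction (using $g_{k+1}^{(q+1)}\asymp g_k^{(q+1)}$); iterating finitely many times reaches the given $q$.

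The step I expect to be the main obstacle is estimating the $a$-recursion in the critical regime $p_1=1$. There the contraction no longer decays super-polynomially: $\prod_{j=i+1}^k(1-c\alpha_j)\asymp(i/k)^{c\iota_{1}}$, so both the homogeneous term and the contribution of any component of $e_i$ decaying strictly faster than $k^{-c\iota_{1}}$ \emph{saturate} at rate $k^{-c\iota_{1}}=k^{-2(1-\epsilon)\zeta\iota_{1}}$ rather than matching that component's (faster) decay. The resolution --- and the only place the hypothesis $\zeta\iota_{1}>0.5$ (equivalently the admissible range of $\epsilon$ in \eqref{cond:4.6}) is used --- is that $\epsilon<1-0.5/(\zeta\iota_{1})$ forces $c\iota_{1}>1>p_2$, so the saturation rate $k^{-c\iota_{1}}$ is itself $o(\beta_k)$ and is therefore always dominated by the $\beta_k$ term present in the target $g_k^{(q+1)}$; thus the saturation never exceeds the target, no matter how large $q$ is. Getting this comparison right, and then choosing a single \emph{deterministic} $\bar{k}_0$ large enough that all asymptotic inputs hold simultaneously for every $k\geq k_0$ --- positivity of $1-c\alpha_k$, $\alpha_k/\beta_k\leq1$, $g_{k+1}/g_k$ near one, and the weight-concentration estimates of Lemma~\ref{technical lemma:2} --- is the delicate part. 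A final bookkeeping point is that each induction step multiplies the constant by fixed factors and adds a $k_0$-dependent contribution from the exponential remainder, so the resulting $\Upsilon_3(k_0)$ stays finite over the finitely many steps to the target $q$, provided one also verifies the $k=k_0$ endpoint of each recursion so that the bound holds from $k_0$ onward rather than only asymptotically.
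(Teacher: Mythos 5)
Your proposal is correct and follows essentially the same route as the paper: induction on $q$, with the base case read off from the stopping-time event and the uniform moment bounds, the inductive step sharpening $\mE[\|\bnabla\mL_k-\nabla\mL_k\|^2\1_{\tau_{k_0}>k}]$ first via the second recursion of Lemma \ref{lem:1} and Lemma \ref{technical lemma:2}, then unrolling the contraction $a_{k+1}\leq(1-2(1-\epsilon)\zeta\alpha_k)a_k+O(\alpha_k e_k)$ for the iterate error. Your observation that in the critical case $p_1=1$ the condition $2(1-\epsilon)\zeta\iota_1>1$ makes the saturation rate $o(\beta_k)$ is exactly the role this condition plays in the paper's invocation of Lemma \ref{technical lemma:2}.
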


By Lemma \ref{lem:2}, we choose $q$ large enough such that $2q(p_1-p_2)>\min\{p_2, 4p_3\}$. Then, we have $(\alpha_k/\beta_k)^{2q} = o(\beta_k+b_k^4)$. This completes the proof.

\subsection{Proof of Lemma \ref{lem:1}}\label{pf:lem:1}

By Algorithm \ref{Alg:DF-SSQP}, we know for any fixed $\epsilon\in(0, 1 - 0.5/(\zeta\iota_{1})\1_{p_1=1})$ and $k\geq k_0$,
\begin{align}\label{nequ:1}
& \|\bz_{k+1}\|^2 \nonumber\\
& = \|\bz_k+ \baralpha_k(\tDelta\bx_k, \tDelta\blambda_k)\|^2 = \|\bz_k - \baralpha_k \tW_k^{-1}\bnabla\mL_k\|^2 = \|\bz_k - \baralpha_k \tW_k^{-1} \nabla\mL_k - \baralpha_k\tW_k^{-1}(\bnabla\mL_k - \nabla\mL_k)\|^2 \nonumber\\
& = \|\bz_k - \baralpha_k \tW_k^{-1} \nabla\mL_k\|^2 + \baralpha_k^2\|\tW_k^{-1}(\bnabla\mL_k - \nabla\mL_k)\|^2 - 2\baralpha_k\langle \bz_k - \baralpha_k \tW_k^{-1} \nabla\mL_k, \tW_k^{-1}(\bnabla\mL_k - \nabla\mL_k)\rangle \nonumber\\
& \leq (1+\epsilon\baralpha_k)\|\bz_k - \baralpha_k \tW_k^{-1} \nabla\mL_k\|^2 + (\baralpha_k^2+\baralpha_k/\epsilon)\|\tW_k^{-1}(\bnabla\mL_k - \nabla\mL_k)\|^2.
\end{align}
For the second term on the right-hand side, we apply the definition of $\tau_{k_0}$ in \eqref{equ:def:tau} and have
\begin{equation}\label{nequ:2}
\|\tW_k^{-1}(\bnabla\mL_k - \nabla\mL_k)\|^2\1_{\tau_{k_0}>k+1} \leq \|\tW_k^{-1}(\bnabla\mL_k - \nabla\mL_k)\|^2\1_{\tau_{k_0}>k} \leq \frac{\|\bnabla\mL_k - \nabla\mL_k\|^2\1_{\tau_{k_0}>k}}{\epsilon^2}.
\end{equation}
For the first term on the right-hand side, we have
\begin{align}\label{nequ:3}
\|\bz_k - & \baralpha_k \tW_k^{-1} \nabla\mL_k\|^2\1_{\tau_{k_0}>k+1} \nonumber \\
& = (\|\bz_k\|^2 - 2\baralpha_k\langle\bz_k,  \tW_k^{-1} \nabla\mL_k\rangle + \baralpha_k^2\|\tW_k^{-1} \nabla\mL_k\|^2)\1_{\tau_{k_0}>k+1} \nonumber\\
& = (1-2\baralpha_k)\|\bz_k\|^2\1_{\tau_{k_0}>k+1} + \baralpha_k ( 2\langle\bz_k, \bz_k - \tW_k^{-1} \nabla\mL_k \rangle + \baralpha_k\|\tW_k^{-1} \nabla\mL_k\|^2)\1_{\tau_{k_0}>k+1} \nonumber \\
& \stackrel{\mathclap{\eqref{equ:def:tau}}}{\leq}(1-2\baralpha_k)\|\bz_k\|^2\1_{\tau_{k_0}>k+1} + \baralpha_k\rbr{\frac{2}{\epsilon}\|\bz_k\| \|\nabla\mL_k - \tW_k\bz_k\| + \frac{\baralpha_k}{\epsilon^2}\|\nabla\mL_k\|^2}\1_{\tau_{k_0}>k+1} \nonumber\\
& \stackrel{\mathclap{\eqref{equ:def:tau}}}{\leq} (1-2\baralpha_k)\|\bz_k\|^2\1_{\tau_{k_0}>k+1} + \baralpha_k\rbr{0.5\epsilon\|\bz_k\|^2+\frac{\baralpha_k}{\epsilon^4}\|\bz_k\|^2}\1_{\tau_{k_0}>k+1} \nonumber\\
& \stackrel{\mathclap{\eqref{equ:k_0:1}}}{\leq} (1-(2-\epsilon)\baralpha_k)\|\bz_k\|^2\1_{\tau_{k_0}>k+1}.
\end{align}
Combining \eqref{nequ:1}, \eqref{nequ:2}, \eqref{nequ:3} and applying \eqref{equ:k_0:1}, we obtain
\begin{multline*}
\|\bz_{k+1}\|^2\1_{\tau_{k_0}>k+1} \leq (1+\epsilon\baralpha_k)(1-(2-\epsilon)\baralpha_k)\|\bz_k\|^2\1_{\tau_{k_0}>k+1} + \rbr{0.5\epsilon^3+\frac{1}{\epsilon^3}}\baralpha_k\|\bnabla\mL_k - \nabla\mL_k\|^2\1_{\tau_{k_0}>k}\\
\stackrel{\mathclap{\eqref{snequ:7}}}{\leq} \cbr{1-2(1-\epsilon)\baralpha_k}\|\bz_k\|^2\1_{\tau_{k_0}>k+1} + \rbr{0.5\epsilon^3+\frac{1}{\epsilon^3}}\rbr{\frac{\nu_{-1}}{\kappa_{\nabla c}}\alpha_k + \psi \alpha_k^p}\|\bnabla\mL_k - \nabla\mL_k\|^2\1_{\tau_{k_0}>k}.
\end{multline*}
This completes the proof of the first part of the result by taking expectation on both sides and setting $\Upsilon_1$ large enough. For the second part of the result, we apply \eqref{equ:def:tau} and note that, for $k_0\leq k <\tau_{k_0}-1$,
\begin{align}\label{nequ:4}
& \bnabla_{\bx} \mL_{k+1} - \nabla_{\bx} \mL_{k+1} \nonumber\\
& = \barg_{k+1} - \nabla f_{k+1} + (\tG_{k+1} - G_{k+1})^T\blambda_{k+1} = \barg_{k+1} - \nabla f_{k+1} + (\barG_{k+1} - G_{k+1})^T\blambda_{k+1} \nonumber\\
& \stackrel{\mathclap{\eqref{snequ:3}}}{=} \beta_{k+1}(\hnabla F(\bx_{k+1}; \xi_{k+1}) - \nabla f_{k+1}) + (1-\beta_{k+1})(\barg_k - \nabla f_k) + (1-\beta_{k+1})(\nabla f_k - \nabla f_{k+1}) \nonumber\\
& \quad + \cbr{\beta_{k+1}(\hnabla c_{k+1} - G_{k+1}) + (1-\beta_{k+1})(\barG_k - G_k) + (1-\beta_{k+1})(G_k - G_{k+1})}^T\blambda_{k+1} \nonumber\\
& \stackrel{\mathclap{\eqref{aequ:5}}}{=}\; \sum_{i=0}^{k+1}\prod_{j=i+1}^{k+1}(1-\beta_j)\beta_i \rbr{\hnabla F(\bx_i; \xi_i) - \mE[\hnabla F(\bx_i; \xi_i)\mid \mF_{i-1}]} \nonumber\\
& \quad + \sum_{i=0}^{k+1}\prod_{j=i+1}^{k+1}(1-\beta_j)\beta_i\rbr{\mE[\hnabla F(\bx_i; \xi_i)\mid \mF_{i-1}] - \nabla f_i} + \sum_{i=0}^{k+1}\prod_{j=i}^{k+1}(1-\beta_j)(\nabla f_{i-1} - \nabla f_i) \nonumber\\
& \quad + \sum_{i=0}^{k+1}\prod_{j=i+1}^{k+1}(1-\beta_j)\beta_i(\hnabla c_i - \mE[\hnabla c_i\mid \mF_{i-1}])^T\blambda_{k+1} + \sum_{i=0}^{k+1}\prod_{j=i+1}^{k+1}(1-\beta_j)\beta_i(\mE[\hnabla c_i\mid \mF_{i-1}] - G_i)^T\blambda_{k+1} \nonumber\\
& \quad + \sum_{i=0}^{k+1}\prod_{j=i}^{k+1}(1-\beta_j)(G_{i-1} - G_i)^T\blambda_{k+1} \eqqcolon \J_1^k + \J_2^k + \J_3^k + \J_4^k + \J_5^k + \J_6^k.
\end{align}
We provide the upper bounds for the terms $\J_1^k$, $\J_2^k$, $\J_3^k$, while the terms $\J_4^k, \J_5^k, \J_6^k$ can be proved~in the same way by noting that $\|\blambda_{k+1}\|^2\1_{\tau_{k_0}>k+1}\leq 1/\epsilon$. For $\J_1^k$, we apply Lemma \ref{technical lemma:2} and have
\begin{align}\label{nequ:5}
\mE[\|\J_1^k\|^2\1_{\tau_{k_0}>k+1}] & \leq \mE[\|\J_1^k\|^2] = \sum_{i=0}^{k+1}\prod_{j=i+1}^{k+1}(1-\beta_j)^2\beta_i^2\mE[\|\hnabla F(\bx_i; \xi_i) - \mE[\hnabla F(\bx_i; \xi_i)\mid \mF_{i-1}]\|^2] \nonumber\\
& \stackrel{\mathclap{\eqref{aequ:B5}}}{\leq} \;  O\rbr{\sum_{i=0}^{k+1}\prod_{j=i+1}^{k+1}(1-\beta_j)^2\beta_i^2}  = O(\beta_k).
\end{align}
For $\J_2^k$, we apply Lemmas \ref{lemma:finite difference for gradient} and \ref{technical lemma:2} and have
\begin{equation}\label{nequ:9}
\mE[\|\J_2^k\|^2\1_{\tau_{k_0}>k+1}]\leq \mE[\|\J_2^k\|^2] = O\rbr{\cbr{\sum_{i=0}^{k+1}\prod_{j=i+1}^{k+1}(1-\beta_j)\beta_ib_i^2}^2} = O(b_k^4).
\end{equation}
For $\J_3^k$, we have
\begin{align}\label{nequ:6}
\|\J_3^k\|^2\1_{\tau_{k_0}>k+1} & \leq \nbr{\sum_{i=0}^{k+1}\prod_{j=i}^{k+1}(1-\beta_j)(\nabla f_{i-1} - \nabla f_i)}^2\1_{\tau_{k_0}>k+1} \nonumber\\
& \stackrel{\mathclap{\eqref{aequ:4}}}{\leq} \kappa_{\nabla f}^2\rbr{\sum_{i=0}^{k+1}\prod_{j=i}^{k+1}(1-\beta_j)\|\bx_i - \bx_{i-1}\|}^2\1_{\tau_{k_0}>k+1} \quad (\text{Lipschitz continuity})\nonumber \\
& = \kappa_{\nabla f}^2\rbr{\sum_{i=0}^{k+1}\prod_{j=i}^{k+1}(1-\beta_j)\baralpha_{i-1}\|\tDelta\bx_{i-1}\|}^2\1_{\tau_{k_0}>k+1}.
\end{align}
We separate the sum on the right-hand side into two parts, $i=0$ to $k_0$ and $i=k_0+1$ to $k+1$. In particular, for the first part, there exists a constant $\Upsilon_2(k_0)>0$ depending on $k_0$ such that
\begin{align}\label{nequ:7}
& \mE\sbr{\rbr{\sum_{i=0}^{k_0}\prod_{j=i}^{k+1}(1-\beta_j)\baralpha_{i-1}\|\tDelta\bx_{i-1}\|}^2\1_{\tau_{k_0}>k+1}}\stackrel{\eqref{aequ:B9}}{\leq} \Upsilon_2(k_0)\prod_{j=k_0}^{k+1}(1-\beta_j)^2 \leq \Upsilon_2(k_0)\exp\rbr{-2\sum_{j = k_0}^{k+1}\beta_j} \nonumber\\
& \leq \Upsilon_2(k_0)\exp\rbr{-\int_{k_0}^{k+2}\frac{2\iota_2}{(j+1)^{p_2}}dj} \leq \Upsilon_2(k_0)\exp\rbr{\frac{2\iota_2(k_0+1)^{1-p_2}}{1-p_2}}\exp\rbr{-\frac{2\iota_2k^{1-p_2}}{1-p_2}}.
\end{align}
For the second part, there exists a constant $\Upsilon_3>0$ such that
\begin{align}\label{nequ:8}
& \mE\sbr{\rbr{\sum_{i=k_0+1}^{k+1}\prod_{j=i}^{k+1}(1-\beta_j)\baralpha_{i-1}\|\tDelta\bx_{i-1}\|}^2\1_{\tau_{k_0}>k+1}} = \mE\sbr{\rbr{\sum_{i=k_0}^{k}\prod_{j=i+1}^{k+1}(1-\beta_j)\baralpha_i\|\tDelta\bx_i\|}^2\1_{\tau_{k_0}>k+1}} \nonumber \\
& \leq \mE\sbr{\rbr{\sum_{i=k_0}^{k}\prod_{j=i+1}^{k}(1-\beta_j)\baralpha_i\|\tDelta\bx_i\|}^2\1_{\tau_{k_0}>k+1}} \stackrel{\substack{\eqref{snequ:7}\\\eqref{equ:def:tau}}}{\leq} \frac{\Upsilon_3}{\epsilon^2}\mE\sbr{\rbr{\sum_{i=k_0}^{k}\prod_{j=i+1}^{k}(1-\beta_j)\alpha_i\|\bnabla\mL_i\|}^2\1_{\tau_{k_0}>k+1}} \nonumber\\
& \leq \frac{\Upsilon_3}{\epsilon^2}\mE\sbr{\rbr{\sum_{i=k_0}^{k}\prod_{j=i+1}^{k}(1-\beta_j)\alpha_i\|\bnabla\mL_i\|\1_{\tau_{k_0}>i}}^2}\leq \frac{\Upsilon_3}{\epsilon^2}\rbr{\sum_{i=k_0}^{k}\prod_{j=i+1}^{k}(1-\beta_j)\alpha_i \cbr{\mE[\|\bnabla\mL_i\|^2\1_{\tau_{k_0}>i}]}^{1/2} }^2 \nonumber\\
& \leq \frac{2\Upsilon_3}{\epsilon^2}\rbr{\sum_{i=k_0}^{k}\prod_{j=i+1}^{k}(1-\beta_j)\alpha_i \cbr{\mE[(\|\bnabla\mL_i-\nabla\mL_i\|^2 + \|\nabla\mL_i\|^2)\1_{\tau_{k_0}>i}]}^{1/2} }^2 \nonumber\\
& \stackrel{\mathclap{\eqref{equ:def:tau}}}{\leq}\; \frac{2\Upsilon_3}{\epsilon^4}\rbr{\sum_{i=k_0}^{k}\prod_{j=i+1}^{k}(1-\beta_j)\alpha_i \cbr{\mE[(\|\bnabla\mL_i-\nabla\mL_i\|^2 + \|\bz_i\|^2)\1_{\tau_{k_0}>i}]}^{1/2} }^2.
\end{align}
Combining \eqref{nequ:4}, \eqref{nequ:5}, \eqref{nequ:9}, \eqref{nequ:6}, \eqref{nequ:7}, \eqref{nequ:8}, and noting that $\|\bnabla\mL_{k+1} - \nabla\mL_{k+1}\| = \|\bnabla_{\bx}\mL_{k+1} - \nabla_{\bx}\mL_{k+1}\|$, we complete the proof of the second part of the result.

\subsection{Proof of Lemma \ref{lem:2}}\label{pf:lem:2}

We prove the statement by induction. Recall that $\epsilon\in(0, 1 - 0.5/(\zeta\iota_{1})\1_{p_1=1})$ is fixed and we denote~$\tau_{k_0} = \tau_{k_0}(\epsilon)$. We have $\mE[\|\bz_k\|^2 \1_{\tau_{k_0}>k}]\leq \epsilon^4$ and
\begin{multline*}
\mE\sbr{\|\bnabla\mL_k - \nabla\mL_k\|^2\1_{\tau_{k_0}>k}} = \mE\sbr{\|\bnabla_{\bx}\mL_k - \nabla_{\bx}\mL_k\|^2\1_{\tau_{k_0}>k}} \\
\leq 2\rbr{\mE\sbr{\|\bnabla_{\bx}\mL_k\|^2\1_{\tau_{k_0}>k}} + \mE\sbr{\|\nabla_{\bx}\mL_k\|^2\1_{\tau_{k_0}>k}}} \stackrel{\eqref{equ:def:tau}}{\leq} 2\rbr{\mE\sbr{\|\bnabla_{\bx}\mL_k\|^2\1_{\tau_{k_0}>k}} + \epsilon^2}.
\end{multline*}
Thus, to prove the result for $q=0$, it suffices to show $\mE[\|\bnabla_{\bx}\mL_k\|^2\1_{\tau_{k_0}>k}]$ is upper bounded. In fact,~we note that
\begin{equation*}
\mE\sbr{\|\bnabla_{\bx}\mL_k\|^2\1_{\tau_{k_0}>k}} = \mE\sbr{\|\barg_k + \barG_k^T\blambda_k\|^2\1_{\tau_{k_0}>k}} \leq 2\rbr{\mE\sbr{\|\barg_k\|^2} + \frac{1}{\epsilon^2}\mE\sbr{\|\barG_k\|^2}}.
\end{equation*}
By \eqref{aequ:B8} we know $\mE[\|\barg_k\|^2]\leq \Upsilon_{\barg}$ for all $k\geq 0$ while the term $\mE[\|\barG_k\|_2^2]$ can be proved in the same way. Thus, combining the above two displays, we know the~result holds for $q=0$. Suppose the result holds for $q\geq 0$, we aim to establish the result for $q+1$. We apply Lemma \ref{lem:1} and obtain for some~constants $\Upsilon_1(k_0), \Upsilon_2(k_0), \Upsilon_3(k_0)>0$ that for any $k\geq k_0$,
\begin{align}\label{nequ:11}
&\mE\sbr{\|\bnabla\mL_{k+1} - \nabla\mL_{k+1}\|^2\1_{\tau_{k_0}>k+1}} \nonumber\\ 
&\hskip1.8cm \leq \Upsilon_1(k_0)\rbr{\beta_k+b_k^4 + \cbr{\sum_{i=k_0}^{k}\prod_{j=i+1}^{k}(1-\beta_j)\alpha_i\rbr{\sqrt{\beta_i}+b_i^2+\rbr{\frac{\alpha_i}{\beta_i}}^q} }^2 } \nonumber\\
&\hskip1.8cm \leq \Upsilon_1(k_0)\rbr{\beta_k+b_k^4 + \cbr{\sum_{i=0}^{k}\prod_{j=i+1}^{k}(1-\beta_j)\alpha_i\rbr{\sqrt{\beta_i}+b_i^2+\rbr{\frac{\alpha_i}{\beta_i}}^q} }^2 } \nonumber\\
& \hskip1.8cm \leq \Upsilon_2(k_0)\rbr{\beta_k+b_k^4+\frac{\alpha_k^2}{\beta_k^2}\cbr{\beta_k + b_k^4 + \rbr{\frac{\alpha_k}{\beta_k}}^{2q} } } \quad\text{ (Lemma \ref{technical lemma:2})} \nonumber\\
& \hskip1.8cm \leq \Upsilon_3(k_0)\rbr{\beta_k+b_k^4+\rbr{\alpha_k/\beta_k}^{2(q+1)} } .
\end{align}
In addition, by Lemma \ref{lem:1}, we also have for some constant $\Upsilon_4>0$ such that for any $k\geq k_0$,
\begin{align*}
& \mE\sbr{\|\bz_{k+1}\|^2 \1_{\tau_{k_0}>k+1}} \\
& \leq \mE\sbr{\cbr{1 - 2(1-\epsilon)\baralpha_k} \|\bz_k\|^2\1_{\tau_{k_0}>k+1}}  + \Upsilon_4\alpha_k\mE\sbr{\|\bnabla\mL_k - \nabla\mL_k\|^2\1_{\tau_{k_0}>k}}\\
& \leq \mE\sbr{\cbr{1 - \frac{2(1-\epsilon)\nu_k\alpha_k}{\tau_k\kappa_{\nabla f} + \kappa_{\nabla c}}} \|\bz_k\|^2\1_{\tau_{k_0}>k+1}} + \Upsilon_4\alpha_k\mE\sbr{\|\bnabla\mL_k - \nabla\mL_k\|^2\1_{\tau_{k_0}>k}}\\
& \leq \cbr{1 - 2(1-\epsilon)\zeta\alpha_k}\mE\sbr{\|\bz_k\|^2\1_{\tau_{k_0}>k}}  + \Upsilon_4\alpha_k\mE\sbr{\|\bnabla\mL_k - \nabla\mL_k\|^2\1_{\tau_{k_0}>k}},
\end{align*}
where the last inequality uses the fact that $2(1-\epsilon)\zeta\alpha_k<1$ (it holds for $k$ large enough with deterministic threshold index). Applying the above inequality recursively with the bound in \eqref{nequ:11}, we know for some constant $\Upsilon_5(k_0)>0$,
\begin{align*}
\mE\sbr{\|\bz_{k+1}\|^2 \1_{\tau_{k_0}>k+1}} & \leq \Upsilon_5(k_0)\sum_{i=k_0}^{k}\prod_{j=i+1}^{k}\cbr{1-2\zeta(1-\epsilon)\alpha_j}\alpha_i\rbr{\beta_i+b_i^4+\rbr{\frac{\alpha_i}{\beta_i}}^{2(q+1)}}\\
& \leq \Upsilon_5(k_0)\sum_{i=0}^{k}\prod_{j=i+1}^{k}\cbr{1-2\zeta(1-\epsilon)\alpha_j}\alpha_i\rbr{\beta_i+b_i^4+\rbr{\frac{\alpha_i}{\beta_i}}^{2(q+1)}}.
\end{align*}
By Lemma \ref{technical lemma:2} and the condition $2\zeta\iota_1(1-\epsilon)>1$ when $p_1=1$, we know
\begin{equation*}
\sum_{i=0}^{k}\prod_{j=i+1}^{k}\cbr{1-2\zeta(1-\epsilon)\alpha_j}\alpha_i \beta_i = O(\beta_k).
\end{equation*}
Without loss of generality, we suppose $\beta_k = o(b_k^4+(\alpha_k/\beta_k)^{2(q+1)})$; otherwise the result is trivial.~Then, Lemma \ref{technical lemma:2} also leads to
\begin{equation*}
\sum_{i=0}^{k}\prod_{j=i+1}^{k}\cbr{1-2\zeta(1-\epsilon)\alpha_j}\alpha_i \rbr{b_i^4+\rbr{\alpha_i/\beta_i}^{2(q+1)}} = O\rbr{b_k^4 + \rbr{\alpha_k/\beta_k}^{2(q+1)}}.
\end{equation*}
Combining the above three displays, we obtain
\begin{equation}\label{nequ:12}
\mE\sbr{\|\bz_{k+1}\|^2 \1_{\tau_{k_0}>k+1}} \leq \Upsilon_6(k_0)\rbr{\beta_k+b_k^4+\rbr{\alpha_k/\beta_k}^{2(q+1)} }.
\end{equation}
Combining \eqref{nequ:11} and \eqref{nequ:12}, we prove that the result holds for $q+1$. This completes the induction step and concludes the proof.

\subsection{Proof of Lemma \ref{lem:local:rate:Hessian}}\label{pf:lem:local:rate:Hessian}

For notational conciseness, we follow Appendix \ref{pf:lem:local:rate} and use $\Upsilon_1, \Upsilon_2,\ldots$ to denote generic deterministic constants. We note that
\begin{multline}\label{nequ:20}
\|\tW_k - W^\star\|^2\1_{\tau_{k_0}>k} \stackrel{\eqref{def:acc2}, \eqref{equ:def:tau}}{=} \nbr{\begin{pmatrix}
\barB_k-\nabla_{\bx}\mL^\star & (\barG_k-G^\star)^T\\
\barG_k - G^\star & \0
\end{pmatrix}}^2\1_{\tau_{k_0}>k} \\ 
\leq 2\|\barB_k-\nabla_{\bx}\mL^\star\|^2\1_{\tau_{k_0}>k} + 2\|\barG_k - G^\star\|^2\1_{\tau_{k_0}>k}.
\end{multline}
We bound $\|\barB_k-\nabla_{\bx}\mL^\star\|^2$ as an example, while the bound of $\|\barG_k-G^\star\|^2$ can be derived in the same~way with only fewer terms, resulting in the same upper bound. We have
\begin{align*}
\barB_k & =  (1-\beta_k)\barB_{k-1} + \beta_k\hnabla_{\bx}^2\mL_k = (1-\beta_k)\barB_{k-1} + \beta_k\rbr{\hnabla^2 F(\bx_k;\xi_k)+\sum_{j=1}^{m}\blambda_k^j\hnabla^2 c^j_k}\\
& = \sum_{h=0}^{k}\prod_{l=h+1}^{k}(1-\beta_l)\beta_h\rbr{\hnabla^2 F(\bx_h;\xi_h)+\sum_{j=1}^{m}\blambda_h^j\hnabla^2 c^j_h} + \prod_{h=0}^{k}(1-\beta_h)\barB_{-1}.
\end{align*}
With the above expression, we have a similar decomposition to \eqref{cequ:1} and obtain
\begin{align*}
& \barB_k - \nabla_{\bx}^2\mL^\star = \sum_{h=0}^{k}\prod_{l=h+1}^{k}(1-\beta_l)\beta_h\rbr{\hnabla^2 F(\bx_h;\xi_h)-\mE[\hnabla^2 F(\bx_h;\xi_h)\mid \mF_{h-1}]} \\
& + \sum_{h=0}^{k}\prod_{l=h+1}^{k}(1-\beta_l)\beta_h\rbr{\mE[\hnabla^2 F(\bx_h;\xi_h)\mid \mF_{h-1}]- \nabla^2 f_h}  + \sum_{h=0}^{k}\prod_{l=h+1}^{k}(1-\beta_l)\beta_h(\nabla^2 f_h - \nabla^2f^\star) \\
& + \sum_{h=0}^{k}\prod_{l=h+1}^{k}(1-\beta_l)\beta_h \rbr{\sum_{j=1}^{m}\blambda_h^j - (\blambda^\star)^j}\hnabla^2 c^j_h  + \sum_{h=0}^{k}\prod_{l=h+1}^{k}(1-\beta_l)\beta_h \sum_{j=1}^{m}(\blambda^\star)^j \rbr{\hnabla^2 c^j_h - \mE[\hnabla^2 c^j_h\mid \mF_{h-1}]} \\
& + \sum_{h=0}^{k}\prod_{l=h+1}^{k}(1-\beta_l)\beta_h \sum_{j=1}^{m}(\blambda^\star)^j \rbr{\mE[\hnabla^2 c^j_h\mid \mF_{h-1}] - \nabla^2c^j_h}  + \sum_{h=0}^{k}\prod_{l=h+1}^{k}(1-\beta_l)\beta_h \sum_{j=1}^{m}(\blambda^\star)^j \rbr{\nabla^2c^j_h - (\nabla^2 c^j)^\star} \\
& + \prod_{h=0}^{k}(1-\beta_h)(\barB_{-1}-\nabla_{\bx}^2\mL^\star) \eqqcolon \K_1^k + \K_2^k + \K_3^k + \K_4^k + \K_5^k + \K_6^k + \K_7^k + \K_8^k.
\end{align*}
We establish the bounds for $\K_1^k$, $\K_2^k$, $\K_3^k$, $\K_4^k$, while the bounds of $\K_5^k$, $\K_6^k$, $\K_7^k$ can be derived similarly to those of $\K_1^k$, $\K_2^k$, $\K_3^k$, and $\|\K_8^k\|^2 = O(\prod_{h=0}^{k}(1-\beta_h)^2)\leq \exp(-2\sum_{h=0}^{k}\beta_h) = o(\beta_k)$ by \eqref{nequ:7} only contributes to the higher-order error. 

\vskip4pt
\noindent$\bullet$ For $\K_1^k$, we know from the proof of Lemma \ref{lem:Hessian Convergence} in Appendix~\ref{appendix Hessian Convergence} that there exist $0<\delta'<\delta$, a deterministic threshold $K'>0$, and a constant $\Upsilon_1>0$ such that for any $k\geq K'$ and any $\bx_k\in\mX_{\delta'}\coloneqq \mX\cap\{\bx:\|\bx-\tx\|\leq \delta'\}$, we have $\mE[\|\hnabla^2 F(\bx_k;\xi_k)\|^2\mid \mF_{k-1}]\leq \Upsilon_1$. With this property,~we~separate~$\K_1^k$ into three terms:
\begin{align*}
\K_1^k & = \sum_{h=0}^{k}\prod_{l=h+1}^{k}(1-\beta_l)\beta_h\rbr{\hnabla^2 F(\bx_h;\xi_h)-\mE[\hnabla^2 F(\bx_h;\xi_h)\mid \mF_{h-1}]}\1_{\bx_h\notin\mX_{\delta'}} \\
& \quad + \sum_{h=0}^{K'-1}\prod_{l=h+1}^{k}(1-\beta_l)\beta_h\rbr{\hnabla^2 F(\bx_h;\xi_h)-\mE[\hnabla^2 F(\bx_h;\xi_h)\mid \mF_{h-1}]}\1_{\bx_h\in\mX_{\delta'}}\\
& \quad + \sum_{h=K'}^{k}\prod_{l=h+1}^{k}(1-\beta_l)\beta_h\rbr{\hnabla^2 F(\bx_h;\xi_h)-\mE[\hnabla^2 F(\bx_h;\xi_h)\mid \mF_{h-1}]}\1_{\bx_h\in\mX_{\delta'}} \eqqcolon \K_{1,1}^k + \K_{1,2}^k + \K_{1,3}^k.
\end{align*}
For $\K_{1,1}^k$, since $\bx_h\in \mX$ and $\bx_h\rightarrow\tx$ as $h\rightarrow\infty$ almost surely (cf. Assumption \ref{ass:4.1}), we know for any run of the sequence~$\{\bx_h\}$, there exist a (potentially random) $\tilde{h}<\infty$ and a constant $\Upsilon_2(\tilde{h})>0$~such~that
\begin{align*}
\|\K_{1,1}^k\| & = \nbr{\sum_{h=0}^{\tilde{h}}\prod_{l=h+1}^{k}(1-\beta_l)\beta_h\rbr{\hnabla^2 F(\bx_h;\xi_h)-\mE[\hnabla^2 F(\bx_h;\xi_h)\mid \mF_{h-1}]}\1_{\bx_h\notin\mX_{\delta'}} }\\
& \leq \sum_{h=0}^{\tilde{h}}\prod_{l=h+1}^{k}(1-\beta_l)\beta_h\nbr{\hnabla^2 F(\bx_h;\xi_h)-\mE[\hnabla^2 F(\bx_h;\xi_h)\mid \mF_{h-1}]}\1_{\bx_h\notin\mX_{\delta'}}\\
& = \sum_{h=0}^{\tilde{h}}\prod_{l=h+1}^{\tilde{h}}(1-\beta_l)\beta_h\nbr{\hnabla^2 F(\bx_h;\xi_h)-\mE[\hnabla^2 F(\bx_h;\xi_h)\mid \mF_{h-1}]}\1_{\bx_h\notin\mX_{\delta'}}\prod_{l=\tilde{h}+1}^{k}(1-\beta_l)\\
& \stackrel{\mathclap{\eqref{nequ:7}}}{\leq}\; \Upsilon_2(\tilde{h})\exp\rbr{-\frac{\iota_{2}k^{1-p_2}}{1-p_2}}.
\end{align*}
This implies that
\begin{equation*}
P\rbr{\bigcap_{M=0}^\infty\bigcap_{K=0}^\infty\mA_{M, K}}\coloneqq P\rbr{\bigcap_{M=0}^\infty\bigcap_{K=0}^\infty\bigcup_{k\geq K}\cbr{k\|\K_{1,1}^k\| \geq M}} = 0.
\end{equation*}
Since $\mA_{M+1,K+1}\subseteq\mA_{M,K}$, we have $\lim_{M\rightarrow\infty,K\rightarrow\infty}P(\mA_{M,K}) = 0$. Thus, for any $\epsilon>0$ there exist $M(\epsilon)$ and $K(\epsilon)$ such that for any $k\geq K(\epsilon)$, $P(k\|\K_{1,1}^k\| \geq M(\epsilon))\leq \epsilon$. This means that $\|\K_{1,1}^k\| = O_p(1/k)$. Following the same analysis, we also obtain $\|\K_{1,2}^k\|=O_p(1/k)$. For $\K_{1,3}^k$, we apply the martingale difference property (noting that $\1_{\bx_h\in\mX_{\delta'}}$ is $\mF_{h-1}$-measurable) and the bounded second moment condition, and obtain
\begin{equation*}
\mE[\|\K_{1,3}^k\|^2] \leq O\rbr{\sum_{h=K'}^{k}\prod_{l=h+1}^{k}(1-\beta_l)^2\beta_h^2}\leq O\rbr{\sum_{h=0}^{k}\prod_{l=h+1}^{k}(1-\beta_l)^2\beta_h^2} = O(\beta_k),
\end{equation*}
where the last equality is due to Lemma \ref{technical lemma:2}. Combining the results of $\K_{1,1}^k$, $\K_{1,2}^k$, $\K_{1,3}^k$, we have
\begin{equation}\label{nequ:16}
\|\K_1^k\|^2\1_{\tau_{k_0}>k} \leq \|\K_1^k\|^2 = O_p(\beta_k).
\end{equation}
\noindent$\bullet$ For $\K_2^k$, we use $p_4>0.5p_3$, apply Lemmas \ref{lemma:finite difference for gradient} and \ref{technical lemma:2}, and have
\begin{equation}\label{nequ:17}
\mE[\|\K_2^k\|^2\1_{\tau_{k_0}>k}]\leq O\rbr{\cbr{\sum_{h=0}^{k}\prod_{l=h+1}^{k}(1-\beta_l)\beta_h(b_h + \tilde{b}_h^2/b_h)}^2} = O(b_k^2 + \tilde{b}_k^4/b_k^2).
\end{equation}
\noindent$\bullet$ For $\K_3^k$, we have
\begin{align*}
& \|\K_3^k\| \1_{\tau_{k_0}>k} \leq \sum_{h=0}^{k}\prod_{l=h+1}^{k}(1-\beta_l)\beta_h\|\nabla^2 f_h - \nabla^2f^\star\| \cdot \1_{\tau_{k_0}>k}\\
& \leq \sum_{h=0}^{k_0-1}\prod_{l=h+1}^{k}(1-\beta_l)\beta_h\|\nabla^2 f_h - \nabla^2f^\star\| + \sum_{h=k_0}^{k}\prod_{l=h+1}^{k}(1-\beta_l)\beta_h\|\nabla^2 f_h - \nabla^2f^\star\| \cdot \1_{\tau_{k_0}>h} \eqqcolon \K_{3,1}^k + \K_{3,2}^k.
\end{align*}
By the same analysis as in $\K_{1,1}^k$, we know $\K_{3,1}^k = O_p(1/k)$. For $\J_{3,2}^k$, we apply the Lipschitz continuity condition and Lemmas \ref{lem:local:rate} and \ref{technical lemma:2}, and have for some constants $\Upsilon_3>0, \Upsilon_4(k_0)>0$, $\Upsilon_5(k_0)>0$,
\begin{align*}
\mE[(\K_{3,2}^k)^2] & \leq \Upsilon_3 \rbr{\sum_{h=k_0}^{k}\prod_{l=h+1}^{k}(1-\beta_l)\beta_h\{\mE[\|\bx_h-\tx\|^2\1_{\tau_{k_0}>h}]\}^{1/2}}^2\\
& \leq \Upsilon_4(k_0)\rbr{\sum_{h=k_0}^{k}\prod_{l=h+1}^{k}(1-\beta_l)\beta_h(\sqrt{\beta_h}+b_h^2)}^2 \leq \Upsilon_5(k_0)(\beta_k+b_k^4).
\end{align*}
Combining the above two displays, we have
\begin{equation}\label{nequ:18}
\|\K_3^k\|^2\1_{\tau_{k_0}>k} = O_p(\beta_k+b_k^4).
\end{equation}
\noindent$\bullet$ For $\K_4^k$, we apply \eqref{cequ:5} and follow the same analysis as $\J_3^k$. We obtain for some constant $\Upsilon_6>0$~that 
\begin{align}\label{nequ:19}
\|\K_4^k\|^2\1_{\tau_{k_0}>k} & \leq \Upsilon_6\cbr{\sum_{h=0}^{k}\prod_{l=h+1}^{k}(1-\beta_l)\beta_h\|\blambda_h-\tlambda\|\1_{\tau_{k_0}>k}}^2 \nonumber \\
& \leq \Upsilon_6\cbr{\rbr{\sum_{h=0}^{k_0-1} + \sum_{h=k_0}^{k}}\prod_{l=h+1}^{k}(1-\beta_l)\beta_h\|\blambda_h-\tlambda\|\1_{\tau_{k_0}>h}}^2 =  O_p\rbr{\beta_k+b_k^4}.
\end{align}
Combining \eqref{nequ:16}, \eqref{nequ:17}, \eqref{nequ:18}, \eqref{nequ:19}, ignoring higher-order error terms, and establishing the same bounds for $\J_5^k$, $\J_6^k$, $\J_7^k$, we obtain
\begin{equation*}
\|\barB_k-\nabla_{\bx}\mL^\star\|^2\1_{\tau_{k_0}>k} = O_p\rbr{\beta_k+b_k^2 + \tilde{b}_k^4/b_k^2}.
\end{equation*}
Following the same analysis, we can derive the same bound for $\|\barG_k-G^\star\|^2$. Plugging into \eqref{nequ:20},~we complete the proof.

\subsection{Proof of Theorem \ref{thm:normality}}\label{pf:thm:normality}

To streamline the proof, we first present a generic lemma, which is proved in Appendix \ref{pf:lem:4}. We will apply this lemma to various terms that appear throughout the proof.

\begin{lemma}\label{lem:4}
Consider a sequence of random variables $\{X_k\}_{k=0}^\infty$ and a sequence of events $\{\mA_k\}_{k=0}^\infty$. Let $\tau_{k_0} = \inf\{k\geq k_0: \mA_k \text{ happens}\}$ be the first index $k$ after $k_0$ such that $\mA_k$ happens. Suppose that for each realization of the sequence, there exists a (potentially random) $\tilde{k}_0<\infty$ such that $\tau_{\tilde{k}_0} = \infty$ (in other words, $\mA_k$ will finally not happen almost surely). 
Also, for the sequence $\alpha_k = \iota_1/(k+1)^{p_1}$ with $p_1\in(0,1]$, suppose there exists a deterministic $\bar{k}_0>0$ such that for any fixed $k_0\geq \bar{k}_0$, $X_k\1_{\tau_{k_0}>k} = o_p(\sqrt{\alpha_k})$.~Then, for any constant $\zeta>0$ satisfying $\zeta\iota_1>0.5$ when $p_1=1$, we have 
\begin{equation*}
\sum_{i=0}^{k}\prod_{j=i+1}^{k}(1-\zeta\alpha_j)\alpha_iX_i = o_p(\sqrt{\alpha_k}).
\end{equation*}	
	
\end{lemma}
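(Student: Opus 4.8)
The plan is to recognize the quantity of interest as a Toeplitz-type weighted average of $\{X_i\}$ with nonnegative weights $w_{i,k}:=\prod_{j=i+1}^{k}(1-\zeta\alpha_j)\,\alpha_i^{3/2}/\sqrt{\alpha_k}$ (legitimate once $\zeta\alpha_j<1$, i.e.\ for large $j$), and to show this average is $o_p(\sqrt{\alpha_k})$ by marrying the hypothesis $X_i\1_{\tau_{k_0}>i}=o_p(\sqrt{\alpha_i})$ with the almost-sure eventual triviality of the stopping time. First I would fix tolerances $\epsilon,\eta>0$. Since $\{\tau_{k_0}=\infty\}$ increases in $k_0$ to an event of probability one (this is exactly the assumption that some finite random $\tilde{k}_0$ satisfies $\tau_{\tilde{k}_0}=\infty$), I would pick a deterministic $k_0\ge\bar{k}_0$ with $P(\tau_{k_0}<\infty)<\eta/2$. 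On $\{\tau_{k_0}=\infty\}$ one has $\1_{\tau_{k_0}>i}=1$ for all $i\ge k_0$, so $X_i=X_i\1_{\tau_{k_0}>i}=:Y_i$ there; hence, up to the event $\{\tau_{k_0}<\infty\}$ of small probability, the full sum splits as an initial segment $\sum_{i<k_0}\prod_{j=i+1}^{k}(1-\zeta\alpha_j)\alpha_iX_i$ plus a tail $R_k:=\sum_{i=k_0}^{k}\prod_{j=i+1}^{k}(1-\zeta\alpha_j)\alpha_iY_i$, and it suffices to show each is $o_p(\sqrt{\alpha_k})$.

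The initial segment is a fixed almost-surely finite random vector times $\prod_{j=k_0}^{k}(1-\zeta\alpha_j)\le\exp(-\zeta\sum_{j=k_0}^{k}\alpha_j)$; comparing with $\sqrt{\alpha_k}$, for $p_1<1$ this product decays like $\exp(-c\,k^{1-p_1})$ and for $p_1=1$ like $k^{-\zeta\iota_1}$, which is $o(k^{-1/2})=o(\sqrt{\alpha_k})$ precisely because $\zeta\iota_1>1/2$. For the tail I would normalize and write $R_k/\sqrt{\alpha_k}=\sum_{i=k_0}^{k}w_{i,k}Z_i$ with $Z_i:=Y_i/\sqrt{\alpha_i}=o_p(1)$. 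The two weight facts I need are: (i) the row sums $\sum_{i=k_0}^{k}w_{i,k}$ are bounded uniformly in $k$, and (ii) each column vanishes, $w_{i,k}\to0$ as $k\to\infty$ for fixed $i$. Both follow from the recursion $a_k=(1-\zeta\alpha_k)a_{k-1}+\alpha_k^{3/2}$ for $a_k:=\sum_{i}\prod_{j=i+1}^{k}(1-\zeta\alpha_j)\alpha_i^{3/2}$ together with $\sqrt{\alpha_{k-1}/\alpha_k}=1+O(1/k)$, in the same spirit as Lemma \ref{technical lemma:2}; the boundedness in the $p_1=1$ case is exactly where $\zeta\iota_1>1/2$ enters (otherwise $a_k/\sqrt{\alpha_k}$ diverges).

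The crux is then the weighted-averaging step: deducing $\sum_i w_{i,k}Z_i=o_p(1)$ from $Z_i=o_p(1)$ together with (i)--(ii). This is the main obstacle, because plain convergence in probability is not enough for a Toeplitz average with roughly $1/\alpha_k$ comparably weighted terms -- a sequence that is $o_p(1)$ but has occasional large excursions can keep the weighted mean bounded away from zero. I would resolve it by upgrading to an $L^1$/uniform-integrability control of $\{Z_i\}$: truncating $Z_i$ at level $\delta$ bounds the truncated contribution by $\delta\sum_i w_{i,k}\le\delta C$, while for the remainder I would use $\mE\,|Z_i|\to0$ to obtain $\mE\big[\sum_i w_{i,k}|Z_i|\big]\le\sum_{i\le L}w_{i,k}\mE|Z_i|+\sup_{i>L}\mE|Z_i|\cdot C$, sending $k\to\infty$ (columns vanish) and then $L\to\infty$. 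In the applications this $L^1$ (indeed $L^2$) control comes for free: the stopping time $\tau_{k_0}$ and the rate estimates of Lemmas \ref{lem:local:rate} and \ref{lem:2} give $\mE[\|Y_i\|^2]=o(\alpha_i)$, i.e.\ $\mE[Z_i^2]\to0$, which is precisely the quantitative form of the $o_p(\sqrt{\alpha_i})$ hypothesis that makes the averaging rigorous. Assembling the three bounds and letting $\eta\downarrow0$ then yields $\sum_{i=0}^{k}\prod_{j=i+1}^{k}(1-\zeta\alpha_j)\alpha_iX_i=o_p(\sqrt{\alpha_k})$.
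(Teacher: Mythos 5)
Your proposal has the same skeleton as the paper's proof: localize with the stopping time, peel off the head of the sum (whose decay $\prod_{j\ge k_0}(1-\zeta\alpha_j)$ is $o(\sqrt{\alpha_k})$ exactly because $\zeta\iota_1>0.5$ when $p_1=1$, as you note), and treat the tail as a Toeplitz average. There are two points of divergence. The first is cosmetic: you discard the entire event $\{\tau_{k_0}<\infty\}$ after choosing $k_0$ with $P(\tau_{k_0}<\infty)<\eta/2$, while the paper keeps the indicator $\1_{\tau_{k_0}\le i}$ inside the sum and kills that contribution via a monotone-events argument ($\mB_{k_0+1}\subseteq\mB_{k_0}$ and $P(\cap_{k_0}\mB_{k_0})=0$); these are interchangeable. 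The second is substantive: at the Toeplitz step the paper applies the deterministic Lemma \ref{technical lemma:2} to the random sequence $X_i\1_{\tau_{k_0}>i}/\sqrt{\alpha_i}=o_p(1)$ in one line, writing the tail as $o_p\big(\sum_i\prod_j(1-\zeta\alpha_j)\alpha_i^{1.5}\big)$. You correctly identify this as the crux and observe that bare convergence in probability does not support it: with on the order of $1/\alpha_k$ comparably weighted terms, a null-in-probability sequence with rare excursions of height comparable to the reciprocal weight keeps the weighted average bounded away from zero with non-vanishing probability, so the conclusion cannot be extracted from the stated $o_p$ hypothesis alone. Your repair --- upgrading to $\mE|Z_i|\to 0$ (or $\mE[Z_i^2]\to0$) and running the truncation/row-sum/vanishing-column argument --- is the right one, and it matches how the lemma is actually invoked: every application in Appendix \ref{pf:thm:normality} certifies the hypothesis either through moment bounds (Lemmas \ref{lem:local:rate} and \ref{lem:local:rate:Hessian}) or through almost-sure $O(1/k)$ bounds, and in either case your argument (or a pathwise use of the deterministic Toeplitz lemma) closes the step. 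The only cost, which you flag explicitly, is that you prove the lemma under a mildly strengthened hypothesis rather than the literal $o_p$ one stated; as written, the paper's one-line invocation of Lemma \ref{technical lemma:2} on a random sequence is the weak point, and your route is the one that actually makes the argument rigorous.
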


We first decompose the primal-dual error term of Algorithm \ref{Alg:DF-SSQP}. We have
\begin{align*}
& \bz_{k+1} = \bz_k - \baralpha_k \tW_k^{-1}\bnabla \mL_k  = \bz_k - \zeta\alpha_k \tW_k^{-1}\bnabla \mL_k - (\baralpha_k-\zeta\alpha_k)\tW_k^{-1}\bnabla \mL_k \\
& = (1 - \zeta\alpha_k) \bz_k -\zeta\alpha_k\tW_k^{-1}(\nabla\mL_k - \tW_k\bz_k) - \zeta\alpha_k\tW_k^{-1}(\bnabla\mL_k - \nabla\mL_k) - (\baralpha_k-\zeta\alpha_k)\tW_k^{-1}\bnabla \mL_k\\
& = (1 - \zeta\alpha_k) \bz_k - \zeta\alpha_k\tW_k^{-1}(\nabla\mL_k - W^\star\bz_k) - \zeta\alpha_k\tW_k^{-1}(W^\star-\tW_k)\bz_k - \zeta\alpha_k(W^\star)^{-1}(\bnabla\mL_k - \nabla\mL_k)\\
& \quad  - \zeta\alpha_k(\tW_k^{-1} - (W^\star)^{-1})(\bnabla\mL_k - \nabla\mL_k) - (\baralpha_k-\zeta\alpha_k)\tW_k^{-1}\bnabla \mL_k\\
& = \prod_{i=0}^{k}(1-\zeta\alpha_i)\bz_0 - \sum_{i=0}^{k}\prod_{j=i+1}^{k}(1-\zeta\alpha_j)\zeta\alpha_i\cbr{\tW_i^{-1}(\nabla\mL_i - W^\star\bz_i) + \tW_i^{-1}(W^\star-\tW_i)\bz_i }\\
& \quad - \sum_{i=0}^{k}\prod_{j=i+1}^{k}(1-\zeta\alpha_j)\zeta\alpha_i\cbr{(\tW_i^{-1} - (W^\star)^{-1})(\bnabla\mL_i - \nabla\mL_i) + \frac{\baralpha_i-\zeta\alpha_i}{\zeta\alpha_i}\tW_i^{-1}\bnabla \mL_i}\\
& \quad - \sum_{i=0}^{k}\prod_{j=i+1}^{k}(1-\zeta\alpha_j)\zeta\alpha_i(W^\star)^{-1}(\bnabla\mL_i - \nabla\mL_i) \eqqcolon \C_1^k - \C_2^k - \C_3^k - \C_4^k.
\end{align*}
In the following proof, we choose the (deterministic) constant $\epsilon$ to be sufficiently small such that, for each run of the algorithm, there exists a (potentially random) $\tilde{k}_0 < \infty$ satisfying $\tau_{\tilde{k}_0}(\epsilon) = \infty$, where~$\tau_{k_0}(\epsilon)$ is defined in \eqref{equ:def:tau}. 
This $\epsilon$ exists because, for each run of the algorithm:
\begin{enumerate}[label=(\alph*),topsep=0pt]
\setlength\itemsep{0.0em}
\item $\|\bz_k\|>\epsilon^2$ and $\|(\bx_k, \blambda_k)\|>1/\epsilon$ will finally not happen since \eqref{cond:4.8} implies \eqref{cond:4.4}, and Lemma \ref{iterate convergence} shows that $\bz_k\rightarrow0$ as $k\rightarrow\infty$ almost surely.
\item $\|\tilde{W}_k^{-1}\|>1/\epsilon$, $\delta_k^G\neq \0$, and $\delta_k^B\neq \0$ will finally not happen since \eqref{cond:4.8} implies \eqref{cond:3.6} and \eqref{cond:4.5},~and Lemmas \ref{lemma:average almost sure} and \ref{lem:Hessian Convergence} show that $\tilde{W}_k\rightarrow W^\star=\nabla^2\mL^\star$ as $k\rightarrow\infty$ almost surely.
\item $\|\nabla\mL_k - \tilde{W}_k\bz_k\|>0.25\epsilon^2\|\bz_k\|$, $\|\nabla\mL_k\|>\|\bz_k\|/\epsilon$, and $\|\nabla\mL_k - W^\star\bz_k\|>\|\bz_k\|^2/\epsilon$ will~\mbox{finally}~not~happen since $\nabla^2\mL$ is Lipschitz continuous near $(\tx,\tlambda)$ by Assumption \ref{ass:1-1} and $\tilde{W}_k\rightarrow W^\star$.
\item $\nu_k/(\tau_k\kappa_{\nabla f}+\kappa_{\nabla c})\neq \zeta$ will finally not happen by Assumption \ref{ass:4.3}.
\end{enumerate}
\vskip4pt
\noindent$\bullet$ For $\C_1^k$, we follow \eqref{nequ:7}, apply $\zeta\iota_{1}>0.5$ when $p_1=1$, and have $\C_1^k = o(\sqrt{\alpha_k})$.
\vskip4pt
\noindent$\bullet$ For $\C_2^k$, we apply Lemmas \ref{lem:local:rate} and \ref{lem:local:rate:Hessian}, and have for $k\geq k_0$,
\begin{align*}
\|\tW_k^{-1}(\nabla\mL_k - W^\star\bz_k) & + \tW_k^{-1}(W^\star-\tW_k)\bz_k\|\1_{\tau_{k_0}>k}  \stackrel{\mathclap{\eqref{equ:def:tau}}}{\leq} \frac{1}{\epsilon^2}\|\bz_k\|^2 \1_{\tau_{k_0}>k} + \frac{1}{\epsilon}\|\tW_k-W^\star\| \|\bz_k\| \1_{\tau_{k_0}>k} \\
& \leq O_p\rbr{\beta_k+b_k^4} + O_p\rbr{\cbr{\sqrt{\beta_k}+b_k+\tilde{b}_k^2/b_k}\cbr{\sqrt{\beta_k}+b_k^2}}\\
& = O_p\rbr{\beta_k + \sqrt{\beta_k}b_k+b_k^3+\sqrt{\beta_k}\tb_k^2/b_k+\tb_k^2b_k}.
\end{align*}
We note that
\begin{multline*}
O_p\rbr{\beta_k + \sqrt{\beta_k}b_k+b_k^3+\sqrt{\beta_k}\tb_k^2/b_k+\tb_k^2b_k} = o_p(\sqrt{\alpha_k})\\
\Longleftarrow \min\{p_2, 0.5p_2+p_3, 3p_3, 0.5p_2+2p_4-p_3, 2p_4+p_3\}>0.5p_1 \Longleftarrow \eqref{cond:4.8}.
\end{multline*}
Thus, we apply Lemma \ref{lem:4} and have $\C_2^k = o_p(\sqrt{\alpha_k})$.
\vskip4pt
\noindent$\bullet$ For $\C_3^k$, we have
\begin{align*}
& \nbr{(\tW_k^{-1} - (W^\star)^{-1})(\bnabla\mL_k - \nabla\mL_k) + \frac{\baralpha_k-\zeta\alpha_k}{\zeta\alpha_k}\tW_k^{-1}\bnabla \mL_k}\1_{\tau_{k_0}>k}\\
&\stackrel{\mathclap{\eqref{equ:def:tau}}}{\leq} \|\tW_k^{-1} - (W^\star)^{-1}\| \|\bnabla\mL_k - \nabla\mL_k\| \1_{\tau_{k_0}>k} + \frac{\psi\alpha_k^{p-1}}{\epsilon\zeta}\|\bnabla\mL_k\|\1_{\tau_{k_0}>k}\\
& \stackrel{\mathclap{\eqref{equ:def:tau}}}{\leq} \frac{\|(W^\star)^{-1}\|}{\epsilon} \|\tW_k - W^\star\| \|\bnabla\mL_k - \nabla\mL_k\|\1_{\tau_{k_0}>k} + \frac{\psi\alpha_k^{p-1}}{\epsilon\zeta}\cbr{\|\bnabla\mL_k-\nabla\mL_k\|+\frac{\|\bz_k\|}{\epsilon}}\1_{\tau_{k_0}>k}\\
& = O_p\rbr{\cbr{\sqrt{\beta_k}+b_k+\tilde{b}_k^2/b_k}\cbr{\sqrt{\beta_k}+b_k^2}} + O_p\rbr{\alpha_k^{p-1}\rbr{\sqrt{\beta_k}+b_k^2}}\\
& = O_p\rbr{\beta_k + \sqrt{\beta_k}b_k+b_k^3+\sqrt{\beta_k}\tb_k^2/b_k+\tb_k^2b_k + \alpha_k^{p-1}\sqrt{\beta_k} + \alpha_k^{p-1}b_k^2}.
\end{align*}
We note that
\begin{align*}
& O_p\rbr{\beta_k + \sqrt{\beta_k}b_k+b_k^3+\sqrt{\beta_k}\tb_k^2/b_k+\tb_k^2b_k + \alpha_k^{p-1}\sqrt{\beta_k} + \alpha_k^{p-1}b_k^2} = o_p(\sqrt{\alpha_k})\\
& \Longleftarrow \min\{p_2, 0.5p_2+p_3, 3p_3, 0.5p_2+2p_4-p_3, 2p_4+p_3, (p-1)p_1+0.5p_2, (p-1)p_1+2p_3\}>0.5p_1\\
& \Longleftarrow \eqref{cond:4.8}.
\end{align*}
Thus, we apply Lemma \ref{lem:4} again and have $\C_3^k = o_p(\sqrt{\alpha_k})$.
\vskip4pt
\noindent$\bullet$ For $\C_4^k$, let us define $\hnabla_{\bx}\mL(\bx,\blambda;\xi) \coloneqq \hnabla F(\bx;\xi)+\hnabla c(\bx)^T\blambda$. We have
\begin{align*}
& \C_4^k  = \sum_{i=0}^{k}\prod_{j=i+1}^{k}(1-\zeta\alpha_j)\zeta\alpha_i(W^\star)^{-1}(\bnabla\mL_i - \nabla\mL_i) =  \sum_{i=0}^{k}\prod_{j=i+1}^{k}(1-\zeta\alpha_j)\zeta\alpha_i(W^\star)^{-1}\begin{pmatrix}
\bnabla_{\bx}\mL_i - \nabla_{\bx}\mL_i\\
\0
\end{pmatrix}\\
& \stackrel{\mathclap{\eqref{nequ:4}}}{=}\;\; \sum_{i=0}^{k}\prod_{j=i+1}^{k}(1-\zeta\alpha_j)\zeta\alpha_i\cdot\sum_{h=0}^{i}\prod_{l=h}^{i}(1-\beta_l)(W^\star)^{-1}\begin{pmatrix}
\nabla_{\bx}\mL(\bx_{h-1},\blambda_i) - \nabla_{\bx}\mL(\bx_h,\blambda_i)\\
\0
\end{pmatrix}\\
& \quad + \sum_{i=0}^{k}\prod_{j=i+1}^{k}(1-\zeta\alpha_j)\zeta\alpha_i\cdot\sum_{h=0}^{i}\prod_{l=h+1}^{i}(1-\beta_l)\beta_h(W^\star)^{-1}\begin{pmatrix}
\hnabla_{\bx}\mL(\bx_h,\blambda_i;\xi_h) - \nabla_{\bx}\mL(\bx_h,\blambda_i)\\
\0
\end{pmatrix}\\
& = \sum_{i=0}^{k}\prod_{j=i+1}^{k}(1-\zeta\alpha_j)\zeta\alpha_i\cdot\sum_{h=0}^{i}\prod_{l=h}^{i}(1-\beta_l)(W^\star)^{-1}\begin{pmatrix}
\nabla_{\bx}\mL(\bx_{h-1},\blambda_i) - \nabla_{\bx}\mL(\bx_h,\blambda_i)\\
\0
\end{pmatrix}\\
& \quad + \sum_{i=0}^{k}\prod_{j=i+1}^{k}(1-\zeta\alpha_j)\zeta\alpha_i\cdot\sum_{h=0}^{i}\prod_{l=h+1}^{i}(1-\beta_l)\beta_h(W^\star)^{-1}\begin{pmatrix}
(\hnabla c_h - \mE[\hnabla c_h\mid \mF_{h-1}])^T(\blambda_i-\tlambda)\\
\0
\end{pmatrix}\\
& \quad + \sum_{i=0}^{k}\prod_{j=i+1}^{k}(1-\zeta\alpha_j)\zeta\alpha_i\cdot\sum_{h=0}^{i}\prod_{l=h+1}^{i}(1-\beta_l)\beta_h(W^\star)^{-1}\begin{pmatrix}
(\mE[\hnabla c_h\mid \mF_{h-1}] - G_h)^T(\blambda_i-\tlambda)\\
\0
\end{pmatrix}\\
& \quad + \sum_{i=0}^{k}\prod_{j=i+1}^{k}(1-\zeta\alpha_j)\zeta\alpha_i\cdot\sum_{h=0}^{i}\prod_{l=h+1}^{i}(1-\beta_l)\beta_h(W^\star)^{-1}\begin{pmatrix}
\mE[\hnabla_{\bx}\mL(\bx_h,\tlambda;\xi_h)\mid \mF_{h-1}] - \nabla_{\bx}\mL(\bx_h,\tlambda)\\
\0
\end{pmatrix}\\
& \quad + \sum_{i=0}^{k}\prod_{j=i+1}^{k}(1-\zeta\alpha_j)\zeta\alpha_i\cdot\sum_{h=0}^{i}\prod_{l=h+1}^{i}(1-\beta_l)\beta_h(W^\star)^{-1}\begin{pmatrix}
\hnabla_{\bx}\mL(\bx_h,\tlambda;\xi_h) - \mE[\hnabla_{\bx}\mL(\bx_h,\tlambda;\xi_h)\mid \mF_{h-1}]\\
\0
\end{pmatrix}\\
& \eqqcolon \C_{4,1}^k + \C_{4,2}^k + \C_{4,3}^k + \C_{4,4}^k + \C_{4,5}^k.
\end{align*}
\noindent$\bullet$$\bullet$ For $\C_{4,1}^k$, we know from Lemma \ref{lem:4} that it suffices to show
\begin{equation*}
\sum_{h=0}^{k}\prod_{l=h}^{k}(1-\beta_l)(W^\star)^{-1}\begin{pmatrix}
\nabla_{\bx}\mL(\bx_{h-1},\blambda_k) - \nabla_{\bx}\mL(\bx_h,\blambda_k)\\
\0
\end{pmatrix}\cdot\1_{\tau_{k_0}>k} = o_p(\sqrt{\alpha_k}).
\end{equation*}
Since $\|\blambda_k\|\leq 1/\epsilon$ when $\tau_{k_0}>k$, we apply Lemma \ref{lem:4} again and know that the above display is~implied~by
\begin{equation*}
(\|\nabla f_{k-1}-\nabla f_k\| + \|G_{k-1}-G_k\|)\1_{\tau_{k_0}>k} = o_p(\beta_k\sqrt{\alpha_k}).
\end{equation*}
By the Lipschitz continuity of $\nabla f$ and $G$, we know for $k\geq k_0+1$ that
\begin{align*}
& (\|\nabla f_{k-1}-\nabla f_k\| + \|G_{k-1}-G_k\|)\1_{\tau_{k_0}>k} \leq (\kappa_{\nabla f} + \kappa_{\nabla c})\|\bx_{k-1}-\bx_k\|\1_{\tau_{k_0}>k-1} \\
& \leq (\kappa_{\nabla f} + \kappa_{\nabla c})\baralpha_{k-1}\|\tDelta\bx_{k-1}\|\1_{\tau_{k_0}>k-1} \stackrel{\mathclap{\eqref{equ:def:tau}}}{\leq} \frac{1}{\epsilon}(\kappa_{\nabla f} + \kappa_{\nabla c})(\zeta\alpha_{k-1}+\psi\alpha_{k-1}^p)\|\bnabla\mL_{k-1}\|\1_{\tau_{k_0}>k-1}\\
& \leq \frac{1}{\epsilon}(\kappa_{\nabla f} + \kappa_{\nabla c})(\zeta\alpha_{k-1}+\psi\alpha_{k-1}^p)(\|\bnabla\mL_{k-1}-\nabla\mL_{k-1}\| + \|\nabla\mL_{k-1}\|)\1_{\tau_{k_0}>k-1} \\
& \stackrel{\mathclap{\eqref{equ:def:tau}}}{\leq} \frac{1}{\epsilon}(\kappa_{\nabla f} + \kappa_{\nabla c})(\zeta\alpha_{k-1}+\psi\alpha_{k-1}^p)\rbr{\|\bnabla\mL_{k-1}-\nabla\mL_{k-1}\| + \frac{\|\bz_{k-1}\|}{\epsilon}}\1_{\tau_{k_0}>k-1} \\ 
& = O_p\rbr{\alpha_k\rbr{\sqrt{\beta_k}+b_k^2}},
\end{align*}
where the last equality is due to Lemma \ref{lem:local:rate}. We note that
\begin{equation*}
O_p\rbr{\alpha_k\rbr{\sqrt{\beta_k}+b_k^2}} = o_p(\beta_k\sqrt{\alpha_k})\Longleftarrow \min\{p_1+0.5p_2, p_1+2p_3\}>p_2+0.5p_1\Longleftarrow \eqref{cond:4.8}.
\end{equation*}
Thus, we obtain $\C_{4,1}^k = o_p(\sqrt{\alpha_k})$.
\vskip4pt
\noindent$\bullet$$\bullet$ For $\C_{4,2}^k$, we still apply Lemma \ref{lem:4}. We have
\begin{align*}
& \nbr{\sum_{h=0}^{k}\prod_{l=h+1}^{k}(1-\beta_l)\beta_h(W^\star)^{-1}\begin{pmatrix}
(\hnabla c_h - \mE[\hnabla c_h\mid \mF_{h-1}])^T(\blambda_k-\tlambda)\\
\0
\end{pmatrix}}\1_{\tau_{k_0}>k}\\
& \leq \nbr{\sum_{h=0}^{k}\prod_{l=h+1}^{k}(1-\beta_l)\beta_h(W^\star)^{-1}\begin{pmatrix}
\hnabla c_h - \mE[\hnabla c_h\mid \mF_{h-1}]\\
\0
\end{pmatrix}}^2\1_{\tau_{k_0}>k} + \|\blambda_k-\tlambda\|^2\1_{\tau_{k_0}>k}\\
& = O_p(\beta_k+b_k^4) \stackrel{\eqref{cond:4.8}}{=}o_p(\sqrt{\alpha_k}),
\end{align*}
where the second equality from the last is due to \eqref{nequ:5} and Lemma \ref{lem:local:rate}. Thus, Lemma \ref{lem:4} suggests~that $\C_{4,2}^k = o_p(\sqrt{\alpha_k})$.
\vskip4pt
\noindent$\bullet$$\bullet$ For $\C_{4,3}^k$, we have a similar derivation. In particular, we note that
\begin{align*}
&\nbr{\sum_{h=0}^{k}\prod_{l=h+1}^{k}(1-\beta_l)\beta_h(W^\star)^{-1}\begin{pmatrix}
(\mE[\hnabla c_h\mid \mF_{h-1}] - G_h)^T(\blambda_k-\tlambda)\\
\0
\end{pmatrix}}\1_{\tau_{k_0}>k} \\
& \leq \nbr{\sum_{h=0}^{k}\prod_{l=h+1}^{k}(1-\beta_l)\beta_h(W^\star)^{-1}\begin{pmatrix}
\mE[\hnabla c_h\mid \mF_{h-1}] - G_h\\
\0
\end{pmatrix}}\|\blambda_k-\tlambda\|\1_{\tau_{k_0}>k} \\
& =O_p(b_k^2(\sqrt{\beta_k}+b_k^2))\stackrel{\eqref{cond:4.8}}{=}o_p(\sqrt{\alpha_k}),
\end{align*}
where the second equality from the last is due to Lemmas \ref{lemma:finite difference for gradient} and \ref{lem:local:rate} and \eqref{nequ:9}. Thus, Lemma \ref{lem:4}~suggests that $\C_{4,3}^k = o_p(\sqrt{\alpha_k})$.
\vskip4pt
\noindent$\bullet$$\bullet$ For $\C_{4,4}^k$, we apply Lemma \ref{lemma:finite difference for gradient} and have
\begin{equation*}
\nbr{\sum_{h=0}^{k}\prod_{l=h+1}^{k}(1-\beta_l)\beta_h(W^\star)^{-1}\begin{pmatrix}
\mE[\hnabla_{\bx}\mL(\bx_h,\tlambda;\xi_h)\mid \mF_{h-1}] - \nabla_{\bx}\mL(\bx_h,\tlambda)\\
\0
\end{pmatrix}}\1_{\tau_{k_0}>k} = O(b_k^2) \stackrel{\eqref{cond:4.8}}{=} o(\sqrt{\alpha_k}).
\end{equation*}
Thus, Lemma \ref{lem:4}~suggests that $\C_{4,4}^k = o_p(\sqrt{\alpha_k})$.
\vskip4pt
\noindent$\bullet$$\bullet$ For $\C_{4,5}^k$, we aim to show
\begin{equation}\label{dequ:4}
1/\sqrt{\zeta\alpha_k}\cdot\C_{4,5}^k\stackrel{d}{\longrightarrow}\mN(\0,\omega\cdot (W^\star)^{-1}\tOmega(W^\star)^{-1}).
\end{equation}
We have
\begin{align}\label{nequ:25}
\C_{4,5}^k & = \sum_{i=0}^{k}\sum_{h=0}^{i}\prod_{j=i+1}^{k}(1-\zeta\alpha_j)\zeta\alpha_i\prod_{l=h+1}^{i}(1-\beta_l)\beta_h(W^\star)^{-1}\begin{pmatrix}
\hnabla_{\bx}\mL(\bx_h,\tlambda;\xi_h) - \mE[\hnabla_{\bx}\mL(\bx_h,\tlambda;\xi_h)\mid \mF_{h-1}]\\
\0
\end{pmatrix} \nonumber\\
& = \sum_{h=0}^{k}\sum_{i=h}^{k}\prod_{j=i+1}^{k}(1-\zeta\alpha_j)\zeta\alpha_i\prod_{l=h+1}^{i}(1-\beta_l)\beta_h(W^\star)^{-1}\begin{pmatrix}
\hnabla_{\bx}\mL(\bx_h,\tlambda;\xi_h) - \mE[\hnabla_{\bx}\mL(\bx_h,\tlambda;\xi_h)\mid \mF_{h-1}]\\
\0
\end{pmatrix} \nonumber\\
& \eqqcolon \sum_{h=0}^{k} a_{h,k}\cdot\bphi_{h}.
\end{align}
We claim that $\mE[\bphi_h\bphi_h^T\mid \mF_{h-1}]\rightarrow (W^\star)^{-1}\tOmega(W^\star)^{-1}$ as $h\rightarrow\infty$ almost surely. In fact, we have
\begin{align*}
& \mE\sbr{(\hnabla_{\bx}\mL(\bx_h,\tlambda;\xi_h) - \mE[\hnabla_{\bx}\mL(\bx_h,\tlambda;\xi_h)\mid \mF_{h-1}])(\hnabla_{\bx}\mL(\bx_h,\tlambda;\xi_h) - \mE[\hnabla_{\bx}\mL(\bx_h,\tlambda;\xi_h)\mid \mF_{h-1}])^T\mid \mF_{h-1}} \\
& = \mE\sbr{\hnabla_{\bx}\mL(\bx_h,\tlambda;\xi_h)\hnabla_{\bx}^T\mL(\bx_h,\tlambda;\xi_h)\mid \mF_{h-1}} - \mE[\hnabla_{\bx}\mL(\bx_h,\tlambda;\xi_h)\mid \mF_{h-1}]\mE[\hnabla_{\bx}\mL(\bx_h,\tlambda;\xi_h)\mid \mF_{h-1}]^T.
\end{align*}
Since $\mE[\hnabla_{\bx}\mL(\bx_h,\tlambda;\xi_h)\mid \mF_{h-1}]\rightarrow \nabla_{\bx}\mL^\star=\0$ as $h\rightarrow\infty$ by Lemma \ref{lemma:finite difference for gradient}, we only consider the first~term. We have
\begin{align}\label{nequ:24}
& \mE\sbr{\hnabla_{\bx}\mL(\bx_h,\tlambda;\xi_h)\hnabla_{\bx}^T\mL(\bx_h,\tlambda;\xi_h)\mid \mF_{h-1}} \nonumber\\
& = \mE\sbr{(\hnabla F(\bx_h;\xi_h)+\hnabla^Tc(\bx_h)\tlambda)(\hnabla F(\bx_h;\xi_h)+\hnabla^Tc(\bx_h)\tlambda)^T\mid \mF_{h-1}} \nonumber\\
& = \frac{1}{4b_h^2}\mE\sbr{\bDelta_h^{-1}
\cbr{\delta\rbr{F(\bx_h\pm b_h\bDelta_h;\xi_h)+c^T(\bx_h\pm b_h\bDelta_h)\tlambda} }^2 \bDelta_h^{-T}\mid \mF_{h-1}} \nonumber\\
& = \frac{1}{4b_h^2}\mE\sbr{\bDelta_h^{-1}\bDelta_h^T\int_{-b_h}^{b_h}\int_{-b_h}^{b_h}\nabla_{\bx}\mL(\bx_h+s_1\bDelta_h,\tlambda;\xi_h)\nabla_{\bx}^T\mL(\bx_h+s_2\bDelta_h,\tlambda;\xi_h)ds_1ds_2\bDelta_h\bDelta_h^{-T}\mid \mF_{h-1} },
\end{align}
where in the second equality, we follow the definition in \eqref{snequ:11} and define
\begin{multline*}
\delta\rbr{F(\bx_h\pm b_h\bDelta_h;\xi_h)+c^T(\bx_h\pm b_h\bDelta_h)\tlambda} \coloneqq \rbr{F(\bx_h+ b_h\bDelta_h;\xi_h)+c^T(\bx_h+ b_h\bDelta_h)\tlambda}\\
- \rbr{F(\bx_h- b_h\bDelta_h;\xi_h)+c^T(\bx_h- b_h\bDelta_h)\tlambda}.
\end{multline*}
For \eqref{nequ:24}, we first condition on both $\bx_h$ and $\bDelta_h$, and focus on the integrand. 
For each run of the~algorithm, we consider $h$ to be sufficiently large (with a potentially random threshold index) such that $\bx_h \in \{\bx : \|\bx - \tx\| \leq \delta'\}$, where $\delta' \in (0, \delta)$ is chosen to ensure that $\bx + s \bDelta \in \{\bx : \|\bx - \tx\| \leq \delta\}$~for~any $s \in [-b_h, b_h]$ and $\bDelta \sim \P_{\bDelta}$. For the above $\bx_h$ and any $-b_h\leq s_1, s_2\leq b_h$, we have
\begin{align}\label{dequ:1}
& \mE\sbr{\nabla_{\bx}\mL(\bx_h+s_1\bDelta_h,\tlambda;\xi_h)\nabla_{\bx}^T\mL(\bx_h+s_2\bDelta_h,\tlambda;\xi_h) \mid \bx_h,\bDelta_h} - \mE\sbr{\nabla_{\bx}\mL(\tx,\tlambda;\xi)\nabla_{\bx}^T\mL(\tx,\tlambda;\xi)} \nonumber\\
& = \mE\sbr{\nabla F(\bx_h+s_1\bDelta_h;\xi_h)\nabla^T F(\bx_h+s_2\bDelta_h;\xi_h) - \nabla F(\tx;\xi_h)\nabla^T F(\tx;\xi_h)\mid \bx_h,\bDelta_h} \nonumber\\
& \quad + \nabla f(\bx_h+s_1\bDelta_h)(\tlambda)^TG(\bx_h+s_2\bDelta_h) - \nabla f^\star (\tlambda)^TG^\star \nonumber\\
& \quad + G^T(\bx_h+s_1\bDelta_h)\tlambda\nabla^T f(\bx_h+s_2\bDelta_h) - (G^\star)^T\tlambda\nabla^T f^\star \nonumber\\
& \quad + G^T(\bx_h+s_1\bDelta_h)\tlambda(\tlambda)^TG(\bx_h+s_2\bDelta_h) - (G^\star)^T\tlambda(\tlambda)^TG^\star.
\end{align}
For the first term in \eqref{dequ:1}, we can further bound it as
\begin{align*}
& \nbr{\mE\sbr{\nabla F(\bx_h+s_1\bDelta_h;\xi_h)\nabla^T F(\bx_h+s_2\bDelta_h;\xi_h) - \nabla F(\tx;\xi_h)\nabla^T F(\tx;\xi_h)\mid \bx_h,\bDelta_h}}\\
& \leq \mE\sbr{\|\nabla F(\bx_h+s_1\bDelta_h;\xi_h) - \nabla F(\tx;\xi_h)\|\cdot\|\nabla F(\bx_h+s_2\bDelta_h;\xi_h) - \nabla F(\tx;\xi_h)\| \mid \bx_h,\bDelta_h}\\
& \quad + \mE\sbr{\|\nabla F(\bx_h+s_1\bDelta_h;\xi_h) - \nabla F(\tx;\xi_h)\|\cdot\|\nabla F(\tx;\xi_h)\| \mid \bx_h,\bDelta_h}\\
& \quad + \mE\sbr{\|\nabla F(\bx_h+s_2\bDelta_h;\xi_h) - \nabla F(\tx;\xi_h)\|\cdot\|\nabla F(\tx;\xi_h)\| \mid \bx_h,\bDelta_h}\\
& \leq \prod_{q=1}^{2}\cbr{\mE\sbr{\|\nabla F(\bx_h+s_q\bDelta_h;\xi_h) - \nabla F(\tx;\xi_h)\|^2 \mid \bx_h,\bDelta_h}}^{1/2} \\
&\quad + \cbr{\mE[\|\nabla F(\tx;\xi_h)\|^2]}^{1/2}\cdot\sum_{q=1}^{2}\cbr{\mE\sbr{\|\nabla F(\bx_h+s_q\bDelta_h;\xi_h) - \nabla F(\tx;\xi_h)\|^2 \mid \bx_h,\bDelta_h}}^{1/2}. 
\end{align*}
Note from Assumptions \ref{ass:4.2} and \ref{ass:Delta} that for $q=1,2$,
\begin{align*}
& \mE\sbr{\nbr{\nabla F(\bx_h+s_q\bDelta_h;\xi_h)-\nabla F(\tx;\xi_h)}^2\mid \bx_h, \bDelta_h} \\
& = \mE\sbr{\nbr{\int_{0}^1 \nabla^2 F(\bx_h+s_q\bDelta_h + t(\bx_h+s_q\bDelta_h-\tx);\xi_h)(\bx_h+s_q\bDelta_h-\tx) dt }^2\mid \bx_h, \bDelta_h}\\
& \leq \int_{0}^{1}\mE[\|\nabla^2 F(\bx_h+s_q\bDelta_h + t(\bx_h+s_q\bDelta_h-\tx);\xi_h)\|^2\mid \bx_h, \bDelta_h] dt\cdot \|\bx_h+s_q\bDelta_h-\tx\|^2\\
& = O(\|\bx_h-\tx\|^2 + b_h^2) \rightarrow 0\quad \text{ as }\;\; h\rightarrow \infty.
\end{align*}
The above two displays imply almost surely,
\begin{multline*}
\max_{-b_h\leq s_1,s_2\leq b_h}\big\{\mE\sbr{\nabla F(\bx_h+s_1\bDelta_h;\xi_h)\nabla^T F(\bx_h+s_2\bDelta_h;\xi_h)\mid \bx_h,\bDelta_h} \\ - \mE[\nabla F(\tx;\xi)\nabla^T F(\tx;\xi)]\big\}\rightarrow 0\quad \text{ as }\;\; h\rightarrow \infty.
\end{multline*}
For the second, third, and fourth terms in \eqref{dequ:1}, it is trivial to verify that they achieve the same almost sure convergence as the above display, due to the Lipschitz continuity of $\nabla f$ and $G$ and the fact that $|s_1|, |s_2|\leq b_h\rightarrow \infty$. Therefore, we combine \eqref{nequ:24} and \eqref{dequ:1} and obtain almost surely,
\begin{align}\label{dequ:5}
& \mE\sbr{\hnabla_{\bx}\mL(\bx_h,\tlambda;\xi_h)\hnabla_{\bx}^T\mL(\bx_h,\tlambda;\xi_h)\mid \mF_{h-1}} \nonumber\\
&\hskip3cm \longrightarrow \mE\sbr{\bDelta^{-1}\bDelta^T\mE\sbr{\nabla_{\bx}\mL(\tx,\tlambda;\xi)\nabla_{\bx}^T\mL(\tx,\tlambda;\xi)}\bDelta\bDelta^{-T}} \nonumber\\
& \hskip3cm \quad = \mE\sbr{\bDelta^{-1}\bDelta^T\text{Cov}\rbr{\nabla_{\bx}\mL(\tx,\tlambda;\xi)}\bDelta\bDelta^{-T}} \nonumber\\
& \hskip3cm\quad = \mE\sbr{\bDelta^{-1}\bDelta^T\text{Cov}\rbr{\nabla F(\tx;\xi)}\bDelta\bDelta^{-T}}.
\end{align}
This, together with \eqref{nequ:24} and the definition of $\bphi_h$ in \eqref{nequ:25}, implies $\mE[\bphi_h \bphi_h^T \mid \mF_{h-1}] \rightarrow (W^\star)^{-1} \tOmega (W^\star)^{-1}$ as $h \rightarrow \infty$ almost surely. With this result, we then analyze the conditional variance process. We have
\begin{align*}
& \frac{1}{\zeta\alpha_k}\sum_{h=0}^{k} a_{h,k}^2\mE[\bphi_h\bphi_h^T\mid \mF_{h-1}]\\
& = \frac{1}{\zeta\alpha_k} \sum_{h=0}^{k}\sum_{i=h}^k\sum_{i'=h}^{k}\prod_{j=i+1}^{k}(1-\zeta\alpha_j)\zeta\alpha_i\prod_{l=h+1}^{i}(1-\beta_l)\beta_h\prod_{j'=i'+1}^{k}(1-\zeta\alpha_{j'})\zeta\alpha_{i'}\prod_{l'=h+1}^{i'}(1-\beta_{l'})\beta_h\mE[\bphi_h\bphi_h^T\mid \mF_{h-1}]\\
& = \frac{1}{\zeta\alpha_k}\sum_{i=0}^{k}\sum_{i'=0}^{k}\prod_{j=i+1}^{k}(1-\zeta\alpha_j)\zeta\alpha_i\prod_{j'=i'+1}^{k}(1-\zeta\alpha_{j'})\zeta\alpha_{i'}\sum_{h=0}^{\min\{i,i'\}}\prod_{l=h+1}^{i}(1-\beta_l)\prod_{l'=h+1}^{i'}(1-\beta_{l'})\beta_h^2\mE[\bphi_h\bphi_h^T\mid \mF_{h-1}]\\
& = \frac{2}{\zeta\alpha_k}\sum_{i=0}^{k}\sum_{i'=0}^{i}\prod_{j=i+1}^{k}(1-\zeta\alpha_j)\zeta\alpha_i\prod_{j'=i'+1}^{k}(1-\zeta\alpha_{j'})\zeta\alpha_{i'}\sum_{h=0}^{i'}\prod_{l=h+1}^{i}(1-\beta_l)\prod_{l'=h+1}^{i'}(1-\beta_{l'})\beta_h^2\mE[\bphi_h\bphi_h^T\mid \mF_{h-1}]\\
& \quad - \frac{1}{\zeta\alpha_k}\sum_{i=0}^{k}\prod_{j=i+1}^{k}(1-\zeta\alpha_j)^2\zeta^2\alpha_i^2\sum_{h=0}^{i}\prod_{l=h+1}^{i}(1-\beta_l)^2\beta_h^2\mE[\bphi_h\bphi_h^T\mid \mF_{h-1}]\\
& = \frac{2}{\zeta\alpha_k}\sum_{i=0}^{k}\prod_{j=i+1}^{k}(1-\zeta\alpha_j)^2\zeta\alpha_i\sum_{i'=0}^{i}\prod_{j'=i'+1}^{i}(1-\zeta\alpha_{j'})(1-\beta_{j'})\zeta\alpha_{i'}\sum_{h=0}^{i'}\prod_{l'=h+1}^{i'}(1-\beta_{l'})^2\beta_h^2\mE[\bphi_h\bphi_h^T\mid \mF_{h-1}]\\
& \quad - \frac{1}{\zeta\alpha_k}\sum_{i=0}^{k}\prod_{j=i+1}^{k}(1-\zeta\alpha_j)^2\zeta^2\alpha_i^2\sum_{h=0}^{i}\prod_{l=h+1}^{i}(1-\beta_l)^2\beta_h^2\mE[\bphi_h\bphi_h^T\mid \mF_{h-1}].
\end{align*}
We apply Lemma \ref{technical lemma:2} and note that
\begin{align*}
& \lim\limits_{i\rightarrow\infty}\frac{1}{\beta_i}\sum_{h=0}^{i}\prod_{l=h+1}^{i}(1-\beta_l)^2\beta_h^2\mE[\bphi_h\bphi_h^T\mid \mF_{h-1}] = \frac{1}{2}(W^\star)^{-1}\tOmega(W^\star)^{-1},\\
& \lim\limits_{i\rightarrow \infty}\frac{1}{\zeta\alpha_i}\sum_{i'=0}^{i}\prod_{j'=i'+1}^{i}(1-\zeta\alpha_{j'})(1-\beta_{j'})\zeta\alpha_{i'}\beta_{i'} = 1,\\
& \lim\limits_{k\rightarrow\infty}\frac{1}{\zeta\alpha_k} \sum_{i=0}^{k}\prod_{j=i+1}^{k}(1-\zeta\alpha_j)^2\zeta^2\alpha_i^2 = \omega \coloneqq \begin{cases}
0.5, & \text{if }\; p_1\in(0,1),\\
\frac{\zeta\iota_{1}}{2\zeta\iota_{1}-1}, & \text{if }\; p_1=1,
\end{cases}\\
& \lim\limits_{k\rightarrow\infty}\frac{1}{\zeta\alpha_k} \sum_{i=0}^{k}\prod_{j=i+1}^{k}(1-\zeta\alpha_j)^2\zeta^2\alpha_i^2\beta_i = 0.
\end{align*}
Combining the above two displays, we obtain almost surely,
\begin{equation}\label{dequ:2}
\lim\limits_{k\rightarrow\infty}\frac{1}{\zeta\alpha_k}\sum_{h=0}^{k} a_{h,k}^2\mE[\bphi_h\bphi_h^T\mid \mF_{h-1}] = \omega\cdot(W^\star)^{-1}\tOmega(W^\star)^{-1}.
\end{equation}
Next, we verify the Lindeberg condition. We aim to show that for any $\epsilon>0$,
\begin{equation}\label{dequ:3}
\lim\limits_{k\rightarrow\infty}\frac{1}{\alpha_k}\sum_{h=0}^{k}a_{h,k}^2\mE\sbr{\|\bphi_h\|^2 1_{\|a_{h,k}\bphi_h\|\geq \epsilon\sqrt{\alpha_k}} \mid \mF_{h-1}} \leq \lim\limits_{k\rightarrow\infty}\frac{1}{\epsilon\alpha_k^{1.5}}\sum_{h=0}^{k}a_{h,k}^3\mE[\|\bphi_h\|^3\mid \mF_{h-1}] = 0.
\end{equation}
Since $r\geq 3$ in \eqref{cond:4.8}, we know from \eqref{aequ:B7} that $\mE[\|\bphi_h\|^3\mid \mF_{h-1}]$ is uniformly bounded. Thus, it suffices~to show $\sum_{h=0}^{k}a_{h,k}^3=o(\alpha_k^{1.5})$. We have
\begin{align*}
\sum_{h=0}^{k}a_{h,k}^3 & = \sum_{h=0}^k\sum_{i=h}^{k}\sum_{i'=h}^{k}\sum_{i''=h}^{k}\prod_{j=i+1}^{k}(1-\zeta\alpha_j)\zeta\alpha_i\prod_{l=h+1}^{i}(1-\beta_l)\beta_h\prod_{j'=i'+1}^{k}(1-\zeta\alpha_{j'})\zeta\alpha_{i'}\prod_{l'=h+1}^{i'}(1-\beta_{l'})\beta_h\cdot\\
&\quad\quad \prod_{j''=i''+1}^{k}(1-\zeta\alpha_{j''})\zeta\alpha_{i''}\prod_{l''=h+1}^{i''}(1-\beta_{l''})\beta_h\\
& = \sum_{i=0}^{k}\sum_{i'=0}^{k}\sum_{i''=0}^{k}\prod_{j=i+1}^{k}(1-\zeta\alpha_j)\zeta\alpha_i\prod_{j'=i'+1}^{k}(1-\zeta\alpha_{j'})\zeta\alpha_{i'}\prod_{j''=i''+1}^{k}(1-\zeta\alpha_{j''})\zeta\alpha_{i''}\cdot\\
& \quad\quad \sum_{h=0}^{\min\{i,i',i''\}}\prod_{l=h+1}^{i}(1-\beta_l)\prod_{l'=h+1}^{i'}(1-\beta_{l'})\prod_{l''=h+1}^{i''}(1-\beta_{l''}) \beta_h^3\\
& \leq 6\sum_{i=0}^{k}\sum_{i'=0}^{i}\sum_{i''=0}^{i'}\prod_{j=i+1}^{k}(1-\zeta\alpha_j)\zeta\alpha_i\prod_{j'=i'+1}^{k}(1-\zeta\alpha_{j'})\zeta\alpha_{i'}\prod_{j''=i''+1}^{k}(1-\zeta\alpha_{j''})\zeta\alpha_{i''}\cdot\\
& \quad\quad\quad \sum_{h=0}^{i''}\prod_{l=h+1}^{i}(1-\beta_l)\prod_{l'=h+1}^{i'}(1-\beta_{l'})\prod_{l''=h+1}^{i''}(1-\beta_{l''}) \beta_h^3 \quad\quad (i\geq i'\geq i'')\\
& = 6\sum_{i=0}^{k}\prod_{j=i+1}^{k}(1-\zeta\alpha_j)^3\zeta\alpha_i\sum_{i'=0}^{i}\prod_{j'=i'+1}^{i}(1-\zeta\alpha_{j'})^2(1-\beta_{j'})\zeta\alpha_{i'}\cdot\\
&\quad\quad\quad \sum_{i''=0}^{i'}\prod_{j''=i''+1}^{i'}(1-\zeta\alpha_{j''})(1-\beta_{j''})^2\zeta\alpha_{i''}\sum_{h=0}^{i''}\prod_{l''=h+1}^{i''}(1-\beta_{l''})^3 \beta_h^3.
\end{align*}
We apply Lemma \ref{technical lemma:2} and \cite[Lemma B.3(b)]{Na2025Statistical} and note that
\begin{align*}
&\lim\limits_{i''\rightarrow\infty} \frac{1}{\beta_{i''}^2}\sum_{h=0}^{i''}\prod_{l''=h+1}^{i''}(1-\beta_{l''})^3 \beta_h^3 = \frac{1}{3},\\
& \lim\limits_{i'\rightarrow\infty} \frac{1}{\zeta\alpha_{i'}\beta_{i'}}\sum_{i''=0}^{i'}\prod_{j''=i''+1}^{i'}(1-\zeta\alpha_{j''})(1-\beta_{j''})^2\zeta\alpha_{i''}\beta_{i''}^2 = \frac{1}{2},\\
& \lim\limits_{i\rightarrow\infty} \frac{1}{(\zeta\alpha_i)^2}\sum_{i'=0}^{i}\prod_{j'=i'+1}^{i}(1-\zeta\alpha_{j'})^2(1-\beta_{j'})(\zeta\alpha_{i'})^2\beta_{i'} = 1,\\
& \lim\limits_{k\rightarrow\infty}\frac{1}{(\zeta\alpha_k)^{1.5}}\sum_{i=0}^{k}\prod_{j=i+1}^{k}(1-\zeta\alpha_j)^3(\zeta\alpha_i)^3 = 0,
\end{align*}
where the last equality applies $\zeta\iota_{1}>0.5$ when $p_1=1$. Thus, we have $\sum_{h=0}^{k}a_{h,k}^3 = o(\alpha_k^{1.5})$. By the central limit theorem of martingale arrays \cite[Corollary 3.1]{Hall2014Martingale}, the results \eqref{dequ:2}~and \eqref{dequ:3} lead to \eqref{dequ:4}.

Finally, we combine the result of $\C_{4,5}^k$ in \eqref{dequ:4} with all the results of $\C_1^k, \C_2^k, \C_3^k, \C_{4,1}^k, \C_{4,2}^k, \C_{4,3}^k, \C_{4,4}^k$, for which we have shown that each is of order $o_p(\sqrt{\alpha_k})$. We obtain
\begin{equation*}
1/\sqrt{\zeta\alpha_k}\cdot(\bx_k-\tx, \blambda_k-\tlambda)\stackrel{d}{\longrightarrow}\mN(\0,\omega\cdot (W^\star)^{-1}\tOmega(W^\star)^{-1}).
\end{equation*}
Noting that $\baralpha_k/(\zeta\alpha_k)\rightarrow 1$ almost surely and applying Slutsky's theorem, we complete the proof.

\subsection{Proof of Lemma \ref{lem:4}}\label{pf:lem:4}

We aim to show that for any $\epsilon, \delta>0$, there exists $K = K(\epsilon, \delta)>0$ such that for any $k\geq K(\epsilon, \delta)$,
\begin{equation}\label{cequ:22}
P\rbr{\frac{1}{\sqrt{\alpha_k}}\abr{\sum_{i=0}^{k}\prod_{j=i+1}^{k}(1-\zeta\alpha_j)\alpha_iX_i}\geq \epsilon}\leq \delta.
\end{equation}
For the above fixed $\epsilon, \delta>0$, we know from $P(\cup_{k_0=0}^{\infty}\{\tau_{k_0} =\infty\}) = 1$ that
\begin{align*}
P\rbr{\bigcap_{k_0=0}^{\infty}\mB_{k_0}}\coloneqq P\rbr{\bigcap_{k_0=0}^{\infty}\bigcup_{k_0'\geq k_0}\bigcup_{k\geq k_0'}\cbr{\frac{1}{\sqrt{\alpha_k}}\sum_{i=k_0'}^{k}\prod_{j=i+1}^{k}|(1-\zeta\alpha_j)\alpha_iX_i|\1_{\tau_{k_0}\leq i} \geq \frac{\epsilon}{3} } } = 0.
\end{align*}
Since
\begin{align*}
\mB_{k_0+1} & = \bigcup_{k_0'\geq k_0+1}\bigcup_{k\geq k_0'}\cbr{\frac{1}{\sqrt{\alpha_k}}\sum_{i=k_0'}^{k}\prod_{j=i+1}^{k}|(1-\zeta\alpha_j)\alpha_iX_i|\1_{\tau_{k_0+1}\leq i} \geq \frac{\epsilon}{3} } \\
& \subseteq \bigcup_{k_0'\geq k_0+1}\bigcup_{k\geq k_0'}\cbr{\frac{1}{\sqrt{\alpha_k}}\sum_{i=k_0'}^{k}\prod_{j=i+1}^{k}|(1-\zeta\alpha_j)\alpha_iX_i|\1_{\tau_{k_0}\leq i} \geq \frac{\epsilon}{3} } \quad(\text{ since } \tau_{k_0+1}\geq \tau_{k_0})\\
& \subseteq \bigcup_{k_0'\geq k_0}\bigcup_{k\geq k_0'}\cbr{\frac{1}{\sqrt{\alpha_k}}\sum_{i=k_0'}^{k}\prod_{j=i+1}^{k}|(1-\zeta\alpha_j)\alpha_iX_i|\1_{\tau_{k_0}\leq i} \geq \frac{\epsilon}{3} } = \mB_{k_0},
\end{align*}
the above two displays imply that $\lim_{k_0\rightarrow\infty}P(\mB_{k_0}) = 0$. Thus, there exists $k_0(\delta)\geq \bar{k}_0$ such that~for~any $k\geq k_0(\delta)$,
\begin{align}\label{nequ:13}
& P\rbr{\frac{1}{\sqrt{\alpha_k}}\abr{\sum_{i=k_0(\delta)}^{k}\prod_{j=i+1}^{k}(1-\zeta\alpha_j)\alpha_iX_i\1_{\tau_{k_0(\delta)}\leq i}}\geq \frac{\epsilon}{3} } \nonumber\\
& \leq P\rbr{\frac{1}{\sqrt{\alpha_k}}\sum_{i=k_0(\delta)}^{k}\prod_{j=i+1}^{k}|(1-\zeta\alpha_j)\alpha_iX_i|\1_{\tau_{k_0(\delta)}\leq i}\geq \frac{\epsilon}{3}} \nonumber\\
& \leq P\rbr{\bigcup_{k\geq k_0(\delta)}\cbr{\frac{1}{\sqrt{\alpha_k}}\sum_{i=k_0(\delta)}^{k}\prod_{j=i+1}^{k}|(1-\zeta\alpha_j)\alpha_iX_i|\1_{\tau_{k_0(\delta)}\leq i} \geq \frac{\epsilon}{3} }} \leq P(\mB_{k_0(\delta)}) \leq \frac{\delta}{3}. 
\end{align}
For the above $k_0(\delta)$ fixed, we apply Lemma \ref{technical lemma:2} and have
\begin{equation*}
\frac{1}{\sqrt{\alpha_k}} \sum_{i=k_0(\delta)}^{k}\prod_{j=i+1}^{k}(1-\zeta\alpha_j)\alpha_iX_i\1_{\tau_{k_0(\delta)}>i} = o_p\rbr{\frac{1}{\sqrt{\alpha_k}}\sum_{i=k_0(\delta)}^{k}\prod_{j=i+1}^{k}(1-\zeta\alpha_j)\alpha_i^{1.5} } = o_p(1).
\end{equation*}
Thus, there exists $K^1 = K^1(\epsilon, \delta)\geq k_0(\delta)$ such that for any $k\geq K^1(\epsilon, \delta)$,
\begin{equation}\label{nequ:14}
P\rbr{\frac{1}{\sqrt{\alpha_k}}\abr{\sum_{i=k_0(\delta)}^{k}\prod_{j=i+1}^{k}(1-\zeta\alpha_j)\alpha_iX_i\1_{\tau_{k_0(\delta)}>i}} \geq \frac{\epsilon}{3} }\leq \frac{\delta}{3}.
\end{equation}
Finally, we note that with probability 1,
\begin{align*}
\frac{1}{\sqrt{\alpha_k}}\abr{\sum_{i=0}^{k_0(\delta)-1}\prod_{j=i+1}^{k}(1-\zeta\alpha_j)\alpha_iX_i} & \leq \frac{1}{\sqrt{\alpha_k}}\sum_{i=0}^{k_0(\delta)-1}\prod_{j=i+1}^{k_0(\delta)-1}\abr{(1-\zeta\alpha_j)\alpha_iX_i}\cdot\prod_{j=k_0(\delta)}^{k}\abr{1-\zeta\alpha_j} \\ & \stackrel{\eqref{nequ:7}}{\longrightarrow} 0 \quad\;\; \text{ as }\;\; k\rightarrow\infty.
\end{align*}
This implies that
\begin{equation*}
P\rbr{\bigcap_{k\geq k_0(\delta)}\mC_k} \coloneqq P\rbr{\bigcap_{k\geq k_0(\delta)}\bigcup_{k'\geq k}\cbr{\frac{1}{\sqrt{\alpha_k}}\abr{\sum_{i=0}^{k_0(\delta)-1}\prod_{j=i+1}^{k'}(1-\zeta\alpha_j)\alpha_iX_i}\geq \frac{\epsilon}{3}}} = 0.
\end{equation*}
Since $\mC_{k+1}\subseteq \mC_k$, we have $\lim_{k\rightarrow\infty}P(\mC_k) = 0$. Thus, there exists $K^2(\epsilon, \delta)\geq k_0(\delta)$ such that for any $k\geq K^2(\epsilon, \delta)$,
\begin{equation}\label{nequ:15}
P\rbr{\frac{1}{\sqrt{\alpha_k}}\abr{\sum_{i=0}^{k_0(\delta)-1}\prod_{j=i+1}^{k}(1-\zeta\alpha_j)\alpha_iX_i}\geq \frac{\epsilon}{3}} \leq P(\mC_k) \leq \frac{\delta}{3}.
\end{equation}
Combining \eqref{nequ:13}, \eqref{nequ:14}, \eqref{nequ:15}, and letting $K(\epsilon, \delta) \coloneqq \max\{K^1(\epsilon, \delta), K^2(\epsilon, \delta)\}$, we have \mbox{$\forall k\geq K(\epsilon, \delta)$},
\begin{align*}
& P\rbr{\frac{1}{\sqrt{\alpha_k}}\abr{\sum_{i=0}^{k}\prod_{j=i+1}^{k}(1-\zeta\alpha_j)\alpha_iX_i}\geq \epsilon}\\
& \leq P\rbr{\frac{1}{\sqrt{\alpha_k}}\abr{\sum_{i=0}^{k_0(\delta)-1}\prod_{j=i+1}^{k}(1-\zeta\alpha_j)\alpha_iX_i}\geq \frac{\epsilon}{3}} + P\rbr{\frac{1}{\sqrt{\alpha_k}}\abr{\sum_{i=k_0(\delta)}^{k}\prod_{j=i+1}^{k}(1-\zeta\alpha_j)\alpha_iX_i\1_{\tau_{k_0(\delta)}>i}} \geq \frac{\epsilon}{3} }\\
& \quad + P\rbr{\frac{1}{\sqrt{\alpha_k}}\abr{\sum_{i=k_0(\delta)}^{k}\prod_{j=i+1}^{k}(1-\zeta\alpha_j)\alpha_iX_i\1_{\tau_{k_0(\delta)}\leq i}} \geq \frac{\epsilon}{3} } \leq \frac{\delta}{3}+\frac{\delta}{3}+\frac{\delta}{3}= \delta.
\end{align*}
This verifies \eqref{cequ:22} and completes the proof.

\subsection{Proof of Proposition \ref{prop:1}}\label{pf:prop:1}

By the definition of $\bSigma^\star$, $\bSigma^\star_{op}$ and $\tOmega$, we note that
\begin{align*}
\bSigma^\star-\bSigma^\star_{op} & = (W^\star)^{-1}\rbr{\tOmega - \diag\rbr{\text{Cov}(\nabla F(\tx;\xi)), \0}} (W^\star)^{-1}\\
& = (W^\star)^{-1}\diag\rbr{\mE\sbr{\bDelta^{-1}\bDelta^{T}\text{Cov}(\nabla F(\tx;\xi))\bDelta\bDelta^{-T}}-\text{Cov}(\nabla F(\tx;\xi)),\; \0 }(W^\star)^{-1}\\
& = (W^\star)^{-1}\diag\rbr{\mE\sbr{\rbr{\bDelta^{-1}\bDelta^{T} - I} \text{Cov}(\nabla F(\tx;\xi))\rbr{\bDelta\bDelta^{-T} - I}},\; \0 }(W^\star)^{-1}\succeq \0,
\end{align*}
where the third equality is due to $\mE[\bDelta^{-1}\bDelta^{T}] = \mE[\bDelta\bDelta^{-T}] = I$ by Assumption \ref{ass:Delta}. For the second~part of the result, we follow the above result and have
\begin{align*}
\|\bSigma^\star-\bSigma^\star_{op}\| & \geq \frac{1}{\|W^\star\|^2}\nbr{\diag\rbr{\mE\sbr{\rbr{\bDelta^{-1}\bDelta^{T} - I} \text{Cov}(\nabla F(\tx;\xi))\rbr{\bDelta\bDelta^{-T} - I}},\; \0 }}\\
& = \frac{1}{\|W^\star\|^2} \nbr{\mE\sbr{\rbr{\bDelta^{-1}\bDelta^{T} - I} \text{Cov}(\nabla F(\tx;\xi))\rbr{\bDelta\bDelta^{-T} - I}}}\\
& \geq \frac{\lambda_{\min}(\text{Cov}(\nabla F(\tx;\xi)))}{\|W^\star\|^2}\nbr{\mE\sbr{\rbr{\bDelta^{-1}\bDelta^{T} - I}\rbr{\bDelta\bDelta^{-T} - I}}}\\
& = \frac{\lambda_{\min}(\text{Cov}(\nabla F(\tx;\xi)))}{\|W^\star\|^2}\nbr{\mE[\bDelta^T\bDelta\cdot \bDelta^{-1}\bDelta^{-T}] -I}\\
& = \frac{\lambda_{\min}(\text{Cov}(\nabla F(\tx;\xi)))}{\|W^\star\|^2}\cdot(d-1)\mE[\bDelta^2]\mE[\frac{1}{\bDelta^2}]\quad (\text{by Assumption \ref{ass:Delta}}).
\end{align*}
On the other hand, we also have
\begin{align*}
\|\bSigma^\star-\bSigma^\star_{op}\| & \leq \|(W^\star)^{-1}\|^2 \nbr{\diag\rbr{\mE\sbr{\rbr{\bDelta^{-1}\bDelta^{T} - I} \text{Cov}(\nabla F(\tx;\xi))\rbr{\bDelta\bDelta^{-T} - I}},\; \0 }}\\
&=  \|(W^\star)^{-1}\|^2\nbr{\mE\sbr{\rbr{\bDelta^{-1}\bDelta^{T} - I} \text{Cov}(\nabla F(\tx;\xi))\rbr{\bDelta\bDelta^{-T} - I}}}\\
& \leq \|(W^\star)^{-1}\|^2\lambda_{\max}(\text{Cov}(\nabla F(\tx;\xi))) \nbr{\mE\sbr{\rbr{\bDelta^{-1}\bDelta^{T} - I}\rbr{\bDelta\bDelta^{-T} - I}}}\\
& = \|(W^\star)^{-1}\|^2\lambda_{\max}(\text{Cov}(\nabla F(\tx;\xi))) \nbr{\mE[\bDelta^T\bDelta\cdot \bDelta^{-1}\bDelta^{-T}] -I}\\
& = \|(W^\star)^{-1}\|^2\lambda_{\max}(\text{Cov}(\nabla F(\tx;\xi)))\cdot(d-1)\mE[\bDelta^2]\mE[\frac{1}{\bDelta^2}].
\end{align*}
This completes the proof.

\subsection{Proof of Proposition \ref{prop:2}}\label{pf:prop:2}

By Lemmas \ref{lemma:average almost sure} and \ref{lem:Hessian Convergence}, we know $\tW_k\rightarrow W^\star$ as $k\rightarrow\infty$ almost surely. Thus, it suffices to show 
\begin{multline*}
\frac{1}{k+1}\sum_{t=0}^{k}\rbr{\hnabla F(\bx_t;\xi_t)+\hnabla^T c(\bx_t)\blambda_t}\rbr{\hnabla F(\bx_t;\xi_t)+\hnabla^T c(\bx_t)\blambda_t}^T\\ \longrightarrow \mE\sbr{\bDelta^{-1}\bDelta^T\text{Cov}\rbr{\nabla F(\tx;\xi)}\bDelta\bDelta^{-T}}\quad \text{ as }\;\; k\rightarrow \infty\;\; \text{ almost surely}.
\end{multline*}
Recall from the proof of $\C_4^k$ in Appendix \ref{pf:thm:normality} that we define $\hnabla_{\bx}\mL(\bx_t,\blambda_t;\xi_t) \coloneqq \hnabla F(\bx_t;\xi_t)+\hnabla^T c(\bx_t)\blambda_t$.
Since $r\geq 4$, we apply \eqref{aequ:B7} and the strong law of large number for square integrable martingales~\cite[Theorem 1.3.15]{Duflo1997Random}, and know that
\begin{equation}\label{eequ:1}
\frac{1}{k+1}\sum_{t=0}^{k}\rbr{\hnabla_{\bx}\mL(\bx_t,\blambda_t;\xi_t)\hnabla_{\bx}^T\mL(\bx_t,\blambda_t;\xi_t) - \mE\sbr{\hnabla_{\bx}\mL(\bx_t,\blambda_t;\xi_t)\hnabla_{\bx}^T\mL(\bx_t,\blambda_t;\xi_t) \mid \mF_{t-1}}}\rightarrow 0
\end{equation}
as $k\rightarrow\infty$ almost surely. Furthermore, we have
\begin{align*}
& \mE\sbr{\hnabla_{\bx}\mL(\bx_t,\blambda_t;\xi_t)\hnabla_{\bx}^T\mL(\bx_t,\blambda_t;\xi_t) \mid \mF_{t-1}} - \mE\sbr{\hnabla_{\bx}\mL(\bx_t,\tlambda;\xi_t)\hnabla_{\bx}^T\mL(\bx_t,\tlambda;\xi_t) \mid \mF_{t-1}}\\
& = \mE[\hnabla^T c(\bx_t)(\blambda_t-\tlambda)\hnabla^T f(\bx_t)\mid \mF_{t-1}] + \mE[\hnabla f(\bx_t)(\blambda_t-\tlambda)^T\hnabla c(\bx_t)\mid \mF_{t-1}]\\
&\quad + \mE[\hnabla^T c(\bx_t)(\blambda_t-\tlambda)(\blambda_t-\tlambda)^T\hnabla c(\bx_t)\mid \mF_{t-1}]\\
& = O(\|\blambda_t-\tlambda\| + \|\blambda_t-\tlambda\|^2) \rightarrow 0 \quad \text{ as }\;\; t\rightarrow \infty\;\; \text{ almost surely},
\end{align*}
where the second equality is due to the boundedness of $\hnabla f(\bx_t)$ and $\hnabla c(\bx_t)$, which is as shown in \eqref{aequ:3}. Therefore, the Stolz–Cesa\`ro theorem suggests that
\begin{equation*}
\frac{1}{k+1}\sum_{t=0}^{k}\rbr{\mE\sbr{\hnabla_{\bx}\mL(\bx_t,\blambda_t;\xi_t)\hnabla_{\bx}^T\mL(\bx_t,\blambda_t;\xi_t) \mid \mF_{t-1}} - \mE\sbr{\hnabla_{\bx}\mL(\bx_t,\tlambda;\xi_t)\hnabla_{\bx}^T\mL(\bx_t,\tlambda;\xi_t) \mid \mF_{t-1}}} \rightarrow 0
\end{equation*}
as $k\rightarrow \infty$ almost surely. Finally, applying \eqref{dequ:5} and the Stolz–Cesa\`ro theorem again, we obtain
\begin{equation*}
\frac{1}{k+1}\sum_{t=0}^{k}\mE\sbr{\hnabla_{\bx}\mL(\bx_t,\tlambda;\xi_t)\hnabla_{\bx}^T\mL(\bx_t,\tlambda;\xi_t) \mid \mF_{t-1}} \rightarrow \mE\sbr{\bDelta^{-1}\bDelta^T\text{Cov}\rbr{\nabla F(\tx;\xi)}\bDelta\bDelta^{-T}}
\end{equation*}
as $k\rightarrow \infty$ almost surely. Combining the above two displays with \eqref{eequ:1}, we complete the proof.

\end{document}